\newtheorem{theorem}{Theorem}[section]
 \newtheorem{lemma}[theorem]{Lemma}
 \newtheorem{prop}[theorem]{Proposition}
 \theoremstyle{definition}
 \newtheorem{definition}[theorem]{Definition}
 \newtheorem{example}[theorem]{Example}
 \newtheorem{remark}[theorem]{Remark}
 \theoremstyle{remark}
\newcommand{\comment}[1]{}
\newcommand{\bigamp}{\mathop{\mbox{\Large \&}}}
\newcommand{\amp}{\mathop{\&}}
\newcommand{\marginnote}[1]{\marginpar{\raggedright\tiny{#1}}}
\newcommand{\starfor}{{/\!\!}_{\star}}
\newcommand{\circfor}{{/\!}_{\circ}}
\newcommand{\starback}{\backslash_{\star}}
\newcommand{\circback}{\backslash_{\circ}}
\newcommand{\blhd}{\blacktriangleleft}
\newcommand{\brhd}{\blacktriangleright}
\newcommand{\bba}{\mathbb{A}}
\newcommand{\bbA}{\mathbb{A}}
\newcommand{\bbas}{\mathbb{A}^{\delta}}
\newcommand{\kbbas}{K(\mathbb{A}^{\delta})}
\newcommand{\obbas}{O(\mathbb{A}^{\delta})}
\newcommand{\Cc}{\mathbb{C}}
\newcommand{\p}{\mathcal{P}}
\newcommand{\jir}{J^{\infty}(\bba^{\delta})}
\newcommand{\mir}{M^{\infty}(\bba^{\delta})}
\newcommand{\jira}{J^{\infty}(\bba)}
\newcommand{\mira}{M^{\infty}(\bba)}
\newcommand{\jty}{J^{\infty}}
\newcommand{\mty}{M^{\infty}}
\newcommand{\nomi}{\mathbf{i}}
\newcommand{\nomj}{\mathbf{j}}
\newcommand{\nomk}{\mathbf{k}}
\newcommand{\cnomm}{\mathbf{m}}
\newcommand{\cnomn}{\mathbf{n}}
\renewcommand{\phi}{\varphi}
\renewcommand{\emptyset}{\varnothing}
\newcommand{\X}{\mathbb{X}}
\newcommand{\z}{\mathbf{z}}
\newcommand{\STP}{\mathrm{ST}^{(+)}}
\newcommand{\STN}{\mathrm{ST}^{(-)}}
\title{Algorithmic correspondence and canonicity for non-distributive logics}
\author{Willem Conradie\thanks{The research of the first author was supported by the National Research Foundation of South Africa.} \ and Alessandra Palmigiano\thanks{The research of the second author was supported by the Vidi grant 016.138.314 of the Netherlands Organization for Scientific Research
    (NWO), by the NWO Aspasia grant 015.008.054, and by a Delft Technology Fellowship awarded in 2013.} }
\begin{document}
\maketitle

\begin{abstract}%
We extend the theory of unified correspondence to a very broad class of logics with algebraic semantics given by varieties of normal lattice expansions (LEs), also known as `lattices with operators'. Specifically, we introduce a very general syntactic definition of the class of Sahlqvist formulas and inequalities, which applies uniformly to each LE-signature  and is given purely in terms of the order-theoretic properties of the algebraic interpretations of the logical connectives. Together with this, we introduce a variant of the algorithm ALBA, specific to the setting of LEs, which effectively computes first-order correspondents of LE-inequalities, and is guaranteed to succeed on a wide class of inequalities (the so-called inductive inequalities) which significantly extend the Sahlqvist class. Further, we show that every inequality on which ALBA succeeds is  canonical. 
The projection of these results yields state-of-the-art correspondence theory for many well known substructural logics, such as the Lambek calculus and its extensions, the Lambek-Grishin calculus, the logic of (not necessarily distributive) de Morgan lattices, and the multiplicative-additive fragment of linear logic. \\
\\
{\em Keywords:} Modal logic, substructural logics, Lambek-Grishin calculus, Sahlqvist correspondence, algorithmic correspondence, canonicity, non-distributive lattices.\\
{\em Math.\ Subject Class.}  03B45, 03B47, 03B60, 06D50, 06D10, 03G10, 06E15.
\end{abstract}

\section*{Introduction}

Sahqvist theory has a long and distinguished history within modal logic, going back to \cite{Sah75} and \cite{vB83}. Sahlqvist's theorem \cite{Sah75} gives a syntactic definition of a class of modal formulas, the {\em Sahlqvist class}, each member of which defines an elementary class of frames and is canonical. These are two highly desirable properties: the canonicity of an axiomatization guarantees the strong Kripke completeness of its associated logic, while elementarity brings with it all the computational and theoretical advantages of first-order logic over second-order logic. As it turns out, both these properties (singularly and in combination) are algorithmically undecidable \cite{Chagrov:Chagrova:2006}, so a decidable approximation, like the Sahlqvist class, is very desirable.

Over the years, many extensions, variations and analogues of this result have appeared, including alternative proofs in e.g.\ \cite{Sambin:Vacarro:89}, generalizations to arbitrary modal signatures \cite{DeRijke:Venema:95}, variations of the correspondence language \cite{Ohlbach:Schmidt:97, vanBenthem:Fix:Points:2006}, Sahlqvist-type results for hybrid logics  \cite{TenCate:Marx:Viana},  various substructural logics \cite{Kurtonina,Suzuki:RSL:2013,Gehrke}, mu calculus  \cite{Van:Bent:Bezh:Hodk:Studia}, enlargements of the Sahlqvist class to e.g.\ the {\em inductive} formulas of \cite{Goranko:Vakarelov:2006}, to mention but a few.

However, this literature displays very different and uneven degrees of development of Sahlqvist theory for logics not based on classical normal modal logic. More fundamentally, what is lacking is an explicit, overarching theoretical framework which would provide a mathematically grounded way to compare (hierarchies of) Sahlqvist-type classes belonging to different logical settings.
\footnote{Being able to compare Sahlqvist classes of different logical signatures is particularly useful when focusing on substructural logics, since in this setting it is common to treat fragments or expansions of a given initial logical signature. In Section \ref{Inductive:Fmls:Section}, we will discuss  a comparison between Sahlqvist classes of logical settings which differ not only in the signature  but also in the underlying axiomatization.} Along with this lack of {\em uniformity}, each Sahlqvist-type results is tied to a particular choice of relational semantics for the relevant logic. Such a choice could be motivated by the fact that a logic has a uniquely established set-based semantics, but for many logics, like substructural logics, this is not the case.
%
%
Hence, it is desirable to have a {\em modular} Sahlqvist theory that would distinguish core characteristics from incidental details relating to a particular choice of relational semantics.
%

A theory which subsumes the previous results and which satisfies the desiderata of uniformity and modularity is currently emerging, and has been dubbed \emph{unified correspondence} \cite{UnifiedCor}. It is built on duality-theoretic insights \cite{ConPalSur} 
and uniformly exports the state-of-the-art in Sahlqvist theory from normal modal logic to a wide range of logics which include, among others, intuitionistic and distributive lattice-based (normal modal) logics \cite{ALBAPaper}, non-normal (regular) modal logics of arbitrary modal signature \cite{PaSoZh15r}, hybrid logics \cite{ConRob}, and mu-calculus \cite{CoCr14,CFPS}.

	
	The breadth of this work has also stimulated many and varied applications. Some are closely related to the core concerns of the theory itself, such as the understanding of the relationship between different methodologies for obtaining canonicity results \cite{PaSoZh14,CPZ:constructive}, or of the phenomenon of pseudo-correspondence \cite{CGPSZ14}. Other, possibly surprising applications include the dual characterizations of classes of finite lattices \cite{FrPaSa14}, and the identification of the syntactic shape of axioms which can be translated into analytic structural rules of a proper display calculus \cite{GMPTZ}. Finally, the insights of unified correspondence theory have made it possible to determine the extent to which the Sahlqvist theory of classes of normal DLEs can be reduced to the Sahlqvist theory of normal Boolean expansions, by means of G\"{o}del-type translations \cite{CPZ:Trans}.

The most important technical tools of unified correspondence are: (a) very general syntactic definitions of the class of Sahlqvist formulas and of the strict superclass of inductive formulas, which apply uniformly to all logical signatures; 
(b) the algorithm ALBA (an acronym for `Ackermann Lemma Based Algorithm'), uniformly based on the order theoretic properties of the connectives of each logical signature, and which  effectively computes first-order correspondents of all inductive formulas while simultaneously proving their canonicity.

The synergy between duality theory and algebraic semantics provides the mathematical underpinning of these tools, and makes it possible to address both  desiderata of uniformity and   modularity. Indeed, thanks to duality,  correspondence theory can be transferred from the model-theoretic setting of its origins to  the environment of perfect normal lattice expansion (cf.\ Definition \ref{def:perfect LE}). In this environment, the mechanisms of correspondence are laid bare and can be very perspicuously explained in terms of the order-theoretic properties of the interpretation of the logical connectives (cf.\ \cite{ConPalSur}). This  allows for a treatment which abstracts away from any specific signature, while at the same time providing the ground for comparisons between signatures based on these order-theoretic properties.
Moreover, adopting this perspective makes it possible to divide the computation of first-order correspondents neatly into two stages: first, a {\em reduction} stage, computed by ALBA, in which input formulas or inequalities are equivalently transformed into sets of quasi-inequalities in an expanded  language, naturally interpreted on perfect algebras of suitable signature; second, a {\em translation} stage, in which each quasi-inequality in the output of the reduction stage is translated into a formula of the first-order correspondence language, via a suitably defined standard translation. While the latter
stage depends on the particular choice of relational semantics, the former does not, and hence can be performed
once and for all state-based settings, thus making the treatment modular.  The successful completion of the first stage alone is also enough to guarantee canonicity, both in the ordinary sense and constructively (cf.\ Section \ref{sec:conclusions}).


The contributions of the present paper are core to the research program of unified correspondence. We extend the two main tools of unified correspondence (namely, the uniform definition of Sahlqvist and inductive inequalities, and the algorithm ALBA) to a vast class of logics (referred to as {\em LE-logics}) which are captured algebraically by varieties of so-called {\em normal lattice expansions} (cf.\ Definition \ref{def:DLE}). This class includes the axiomatic extensions of basic orthomodular logic \cite{Goldblatt:Ortho:74}, the logic of the non-distributive de Morgan algebras \cite{balbes}, the Lambek-Grishin calculus \cite{Grishin1983, Moortgat}, the multiplicative-additive fragment of linear logic (MALL) \cite{GaJiKoOn07}.  While being  based on the same fundamental engine (the Ackermann lemma), the version of ALBA defined here generalizes those in \cite{ALBAPaper} and \cite{Conradie:et:al:SQEMAI} in a non trivial way. Indeed, due to the fact that in the setting of general lattices the completely join-irreducible (resp.\ meet-irreducible) elements are not necessarily completely join-prime (resp.\ meet-prime), the so-called {\em approximation rules} in the style of those defined  in \cite{ALBAPaper} and \cite{UnifiedCor} are not sound anymore. Hence, a significantly different type of approach needs to be adopted (for more discussion see Example \ref{Examp:Hopeless} and Remark \ref{rem: towards constructive can}).   
The contributions of the present paper include a uniform proof of the canonicity of any LE-inequality on which ALBA succeeds.

\paragraph{Structure of the paper.} In section \ref{Prelim:Section}, we introduce the syntax and semantics of the basic LE-logics, 
together with some other necessary preliminaries. In  section \ref{subsec: dual frames}, we outline two set-based semantic environments for LE-languages, and for each of them we provide the corresponding standard translation, with a special focus on the languages of LML (defined in Section \ref{subsec: dual frames}) and the Lambek-Grishin calculus.
In section \ref{Inductive:Fmls:Section}, we define the inductive and Sahlqvist inequalities for any basic LE-logic. 
We show how the Sahlqvist and inductive class of the Lambek-Grishin calculus project appropriately onto the corresponding classes in \cite{GNV} and \cite{ALBAPaper}. The non-distributive ALBA algorithm is introduced in section \ref{Spec:Alg:Section}. ALBA attempts to eliminate all propositional variables from inequalities in favour of special variables ranging over the join- and meet-irreducible elements of perfect lattices. This is done by applying rewrite rules which exploit the residuated and distributive behaviour of the operations corresponding to the connectives of the language. Section \ref{Examples:Section} provides examples of ALBA-reductions of Sahlqvist, inductive, and non-inductive inequalities in various signatures.
In section \ref{Crrctnss:Section} we prove that ALBA is correct, i.e., that the outputs returned by it are indeed equivalent to the input in the appropriate sense.
In section \ref{Complete:For:Inductive:Section} we prove that ALBA successfully reduces all inductive inequalities.  This, together with the canonicity of all inequalities suitably reducible by ALBA, which is proved in section \ref{section:canonicity}, implies that all inductive inequalities are elementary and canonical. Final remarks on constructive canonicity are collected in section \ref{sec:conclusions}.  Technical lemmas are relegated to section \ref{appendix}.

\section{Preliminaries}\label{Prelim:Section}

In this section we present the languages under consideration together with their associated minimal logics. We collect various preliminaries related to the algebraic semantics of these languages.

\subsection{Language and axioms}
Our base language is an unspecified but fixed language $\mathcal{L}_\mathrm{LE}$, to be interpreted over  lattice expansions of compatible similarity type. As mentioned in the introduction, this setting uniformly accounts for many well known logical systems.
	
	In our treatment, we will make heavy use of the following auxiliary definition: an {\em order-type} over $n\in \mathbb{N}$\footnote{Throughout the paper, order-types will be typically associated with arrays of variables $\vec p: = (p_1,\ldots, p_n)$. When the order of the variables in $\vec p$ is not specified, we will sometimes abuse notation and write $\varepsilon(p) = 1$ or $\varepsilon(p) = \partial$.} is an $n$-tuple $\epsilon\in \{1, \partial\}^n$. For every order type $\epsilon$, we denote its {\em opposite} order type by $\epsilon^\partial$, that is, $\epsilon^\partial_i = 1$ iff $\epsilon_i=\partial$ for every $1 \leq i \leq n$. For any lattice $\bba$, we let $\bba^1: = \bba$ and $\bba^\partial$ be the dual lattice, that is, the lattice associated with the converse partial order of $\bba$. For any order type $\varepsilon$, we let $\bba^\varepsilon: = \Pi_{i = 1}^n \bba^{\varepsilon_i}$.
	
	The language $\mathcal{L}_\mathrm{LE}(\mathcal{F}, \mathcal{G})$ (from now on abbreviated as $\mathcal{L}_\mathrm{LE}$) takes as parameters: 1) a denumerable set $\mathsf{PROP}$ of proposition letters, elements of which are denoted $p,q,r$, possibly with indexes; 2) disjoint sets of connectives $\mathcal{F}$ and $\mathcal{G}$. Each $f\in \mathcal{F}$ and $g\in \mathcal{G}$ has arity $n_f\in \mathbb{N}$ (resp.\ $n_g\in \mathbb{N}$) and is associated with some order-type $\varepsilon_f$ over $n_f$ (resp.\ $\varepsilon_g$ over $n_g$).\footnote{Unary $f$ (resp.\ $g$) will be sometimes denoted as $\Diamond$ (resp.\ $\Box$) if the order-type is 1, and $\lhd$ (resp.\ $\rhd$) if the order-type is $\partial$.} The terms (formulas) of $\mathcal{L}_\mathrm{LE}$ are defined recursively as follows:
	\[
	\phi ::= p \mid \bot \mid \top \mid \phi \wedge \phi \mid \phi \vee \phi \mid f(\overline{\phi}) \mid g(\overline{\phi})
	\]
	where $p \in \mathsf{AtProp}$, $f \in \mathcal{F}$, $g \in \mathcal{G}$. Terms in $\mathcal{L}_\mathrm{LE}$ will be denoted either by $s,t$, or by lowercase Greek letters such as $\varphi, \psi, \gamma$ etc. The set of all $\mathcal{L}_\mathrm{LE}$-inequalities $\phi \leq \psi$ where $\phi, \psi$ are $\mathcal{L}_\mathrm{LE}$-terms will be denoted  $\mathrm{LE}$. The set of all $\mathcal{L}_\mathrm{LE}$-quasi-inequalities, i.e., expressions of the form $(\phi_1 \leq \psi_1 \amp \cdots \amp \phi_n \leq \psi_n) \Rightarrow \phi \leq \psi$ where $\phi_1, \ldots, \phi_n, \psi_1, \ldots \psi_n, \phi, \psi \in \mathcal{L}_\mathrm{LE}$, will be denoted by $\mathcal{L}_\mathrm{LE}^{\mathit{quasi}}$.

The formulas of \emph{distributive modal logic} (cf.\ \cite{GNV}, \cite{ALBAPaper}), denoted $\mathrm{DML}_{\mathit{term}}$, are obtained by instantiating $\mathcal{F}: = \{\Diamond, {\lhd}\}$ with $n_\Diamond = n_\lhd = 1$, $\varepsilon_\Diamond = 1$  and $\varepsilon_\lhd = \partial$, and   $\mathcal{G} = \{\Box, {\rhd}\}$ with $n_\Box = n_\rhd = 1$, $\varepsilon_\Box = 1$  and $\varepsilon_\rhd = \partial$. The formulas of the Full Lambek calculus \cite{la61} are obtained by instantiating $\mathcal{F}: = \{\circ\}$ with $n_\circ  = 2$, $\varepsilon_\circ = (1, 1)$   and   $\mathcal{G} = \{\backslash, /\}$ with $n_\backslash = n_/ = 2$, $\varepsilon_\backslash = (\partial, 1)$  and $\varepsilon_/ = (1, \partial)$. The formulas of the  Lambek-Grishin calculus (cf.\ \cite{Moortgat}) 
are obtained by instantiating $\mathcal{F}: = \{\circ, \starfor, \starback \}$ with $n_\circ = n_{\starback} = n_{\starfor} = 2$, $\varepsilon_\circ = (1, 1)$, $\varepsilon_{\starback} = (\partial, 1)$, $\varepsilon_{\starfor} = (1, \partial)$   and   $\mathcal{G} := \{\star, \circfor, \circback\}$ with $n_\star = n_{\circfor} = n_{\circback} = 2$, $\varepsilon_\star = (1, 1)$,  $\varepsilon_{\circback} = (\partial, 1)$, $\varepsilon_{\circfor} = (1, \partial)$.

\subsection{Normal lattice expansions, and their canonical extensions}

	\begin{definition}
		\label{def:DLE}
		For any tuple $(\mathcal{F}, \mathcal{G})$ of disjoint sets of function symbols as above, a {\em  lattice expansion} (abbreviated as LE) is a tuple $\bba = (L, \mathcal{F}^\bbA, \mathcal{G}^\bbA)$ such that $L$ is a bounded  lattice, $\mathcal{F}^\bbA = \{f^\bbA\mid f\in \mathcal{F}\}$ and $\mathcal{G}^\bbA = \{g^\bbA\mid g\in \mathcal{G}\}$, such that every $f^\bbA\in\mathcal{F}^\bbA$ (resp.\ $g^\bbA\in\mathcal{G}^\bbA$) is an $n_f$-ary (resp.\ $n_g$-ary) operation on $\bbA$. An LE is {\em normal} if every $f^\bbA\in\mathcal{F}^\bbA$ (resp.\ $g^\bbA\in\mathcal{G}^\bbA$) preserves finite (hence also empty) joins (resp.\ meets) in each coordinate with $\epsilon_f(i)=1$ (resp.\ $\epsilon_g(i)=1$) and reverses finite (hence also empty) meets (resp.\ joins) in each coordinate with $\epsilon_f(i)=\partial$ (resp.\ $\epsilon_g(i)=\partial$).\footnote{\label{footnote:DLE vs DLO} Normal LEs are sometimes referred to as {\em  lattices with operators} (LOs). This terminology derives from the setting of Boolean algebras with operators, in which operators are understood as operations which preserve finite (hence also empty) joins in each coordinate. Thanks to the Boolean negation, operators are typically taken as primitive connectives, and all the other operations are reduced to these. However, this terminology results somewhat ambiguous in the lattice setting, in which primitive operations are typically maps which are operators if seen as $\bbA^\epsilon\to \bbA^\eta$ for some order-type $\epsilon$ on $n$ and some order-type $\eta\in \{1, \partial\}$. Rather than speaking of lattices with $(\varepsilon, \eta)$-operators, we then speak of normal LEs.} Let $\mathbb{LE}$ be the class of LEs. Sometimes we will refer to certain LEs as $\mathcal{L}_\mathrm{LE}$-algebras when we wish to emphasize that these algebras have a compatible signature with the logical language we have fixed.
	\end{definition}
In the remainder of the paper,
we will abuse notation and write e.g.\ $f$ for $f^\bbA$ when this causes no confusion.
Normal LEs constitute the main semantic environment of the present paper. Henceforth, since every LE is assumed to be normal, the adjective will be typically dropped. The class of all LEs is equational, and can be axiomatized by the usual lattice identities and the following equations for any $f\in \mathcal{F}$ (resp.\ $g\in \mathcal{G}$) and $1\leq i\leq n_f$ (resp.\ for each $1\leq j\leq n_g$):
	\begin{itemize}
		\item if $\varepsilon_f(i) = 1$, then $f(p_1,\ldots, p\vee q,\ldots,p_{n_f}) = f(p_1,\ldots, p,\ldots,p_{n_f})\vee f(p_1,\ldots, q,\ldots,p_{n_f})$ and\\ $f(p_1,\ldots, \bot,\ldots,p_{n_f}) = \bot$,
		\item if $\varepsilon_f(i) = \partial$, then $f(p_1,\ldots, p\wedge q,\ldots,p_{n_f}) = f(p_1,\ldots, p,\ldots,p_{n_f})\vee f(p_1,\ldots, q,\ldots,p_{n_f})$ and\\ $f(p_1,\ldots, \top,\ldots,p_{n_f}) = \bot$,
		\item if $\varepsilon_g(j) = 1$, then $g(p_1,\ldots, p\wedge q,\ldots,p_{n_g}) = g(p_1,\ldots, p,\ldots,p_{n_g})\wedge g(p_1,\ldots, q,\ldots,p_{n_g})$ and\\ $g(p_1,\ldots, \top,\ldots,p_{n_g}) = \top$,
		\item if $\varepsilon_g(j) = \partial$, then $g(p_1,\ldots, p\vee q,\ldots,p_{n_g}) = g(p_1,\ldots, p,\ldots,p_{n_g})\wedge g(p_1,\ldots, q,\ldots,p_{n_g})$ and\\ $g(p_1,\ldots, \bot,\ldots,p_{n_g}) = \top$.
	\end{itemize}
	Each language $\mathcal{L}_\mathrm{LE}$ is interpreted in the appropriate class of LEs. In particular, for every LE $\bba$, each operation $f^\bba\in \mathcal{F}^\bbA$ (resp.\ $g^\bba\in \mathcal{G}^\bbA$) is finitely join-preserving (resp.\ meet-preserving) in each coordinate when regarded as a map $f^\bba: \bba^{\varepsilon_f}\to \bba$ (resp.\ $g^\bba: \bba^{\varepsilon_g}\to \bba$).
Typically, lattice-based logics of this kind are not expressive enough to allow an implication-like term to be defined out of the primitive connectives. Therefore the entailment relation cannot be recovered from the set of tautologies, hence the deducibility has to be defined in terms of sequents. This motivates the following:	
	\begin{definition}
		\label{def:DLE:logic:general}
		For any language $\mathcal{L}_\mathrm{LE} = \mathcal{L}_\mathrm{LE}(\mathcal{F}, \mathcal{G})$, the {\em basic}, or {\em minimal} $\mathcal{L}_\mathrm{LE}$-{\em logic} is a set of sequents $\phi\vdash\psi$, with $\phi,\psi\in\mathcal{L}_\mathrm{LE}$, which contains the following axioms:
		\begin{itemize}
			\item Sequents for lattice operations:
			\begin{align*}
				&p\vdash p, && \bot\vdash p, && p\vdash \top, & &  &\\
				&p\vdash p\vee q, && q\vdash p\vee q, && p\wedge q\vdash p, && p\wedge q\vdash q, &
			\end{align*}
			\item Sequents for additional connectives:
			\begin{align*}
				& f(p_1,\ldots, \bot,\ldots,p_{n_f}) \vdash \bot,~\mathrm{for}~ \varepsilon_f(i) = 1,\\
				& f(p_1,\ldots, \top,\ldots,p_{n_f}) \vdash \bot,~\mathrm{for}~ \varepsilon_f(i) = \partial,\\
				&\top\vdash g(p_1,\ldots, \top,\ldots,p_{n_g}),~\mathrm{for}~ \varepsilon_g(i) = 1,\\
				&\top\vdash g(p_1,\ldots, \bot,\ldots,p_{n_g}),~\mathrm{for}~ \varepsilon_g(i) = \partial,\\
				&f(p_1,\ldots, p\vee q,\ldots,p_{n_f}) \vdash f(p_1,\ldots, p,\ldots,p_{n_f})\vee f(p_1,\ldots, q,\ldots,p_{n_f}),~\mathrm{for}~ \varepsilon_f(i) = 1,\\
				&f(p_1,\ldots, p\wedge q,\ldots,p_{n_f}) \vdash f(p_1,\ldots, p,\ldots,p_{n_f})\vee f(p_1,\ldots, q,\ldots,p_{n_f}),~\mathrm{for}~ \varepsilon_f(i) = \partial,\\
				& g(p_1,\ldots, p,\ldots,p_{n_g})\wedge g(p_1,\ldots, q,\ldots,p_{n_g})\vdash g(p_1,\ldots, p\wedge q,\ldots,p_{n_g}),~\mathrm{for}~ \varepsilon_g(i) = 1,\\
				& g(p_1,\ldots, p,\ldots,p_{n_g})\wedge g(p_1,\ldots, q,\ldots,p_{n_g})\vdash g(p_1,\ldots, p\vee q,\ldots,p_{n_g}),~\mathrm{for}~ \varepsilon_g(i) = \partial,
			\end{align*}
		\end{itemize}
		and is closed under the following inference rules:
		\begin{displaymath}
			\frac{\phi\vdash \chi\quad \chi\vdash \psi}{\phi\vdash \psi}
			\quad
			\frac{\phi\vdash \psi}{\phi(\chi/p)\vdash\psi(\chi/p)}
			\quad
			\frac{\chi\vdash\phi\quad \chi\vdash\psi}{\chi\vdash \phi\wedge\psi}
			\quad
			\frac{\phi\vdash\chi\quad \psi\vdash\chi}{\phi\vee\psi\vdash\chi}
		\end{displaymath}
		\begin{displaymath}
			\frac{\phi\vdash\psi}{f(p_1,\ldots,\phi,\ldots,p_n)\vdash f(p_1,\ldots,\psi,\ldots,p_n)}{~(\varepsilon_f(i) = 1)}
		\end{displaymath}
		\begin{displaymath}
			\frac{\phi\vdash\psi}{f(p_1,\ldots,\psi,\ldots,p_n)\vdash f(p_1,\ldots,\phi,\ldots,p_n)}{~(\varepsilon_f(i) = \partial)}
		\end{displaymath}
		\begin{displaymath}
			\frac{\phi\vdash\psi}{g(p_1,\ldots,\phi,\ldots,p_n)\vdash g(p_1,\ldots,\psi,\ldots,p_n)}{~(\varepsilon_g(i) = 1)}
		\end{displaymath}
		\begin{displaymath}
			\frac{\phi\vdash\psi}{g(p_1,\ldots,\psi,\ldots,p_n)\vdash g(p_1,\ldots,\phi,\ldots,p_n)}{~(\varepsilon_g(i) = \partial)}.
		\end{displaymath}
		The minimal LE-logic is denoted by $\mathbf{L}_\mathrm{LE}$. For any LE-language $\mathcal{L}_{\mathrm{LE}}$, by an {\em $\mathrm{LE}$-logic} we understand any axiomatic extension of the basic $\mathcal{L}_{\mathrm{LE}}$-logic in $\mathcal{L}_{\mathrm{LE}}$.
	\end{definition}
	
	For every LE $\bba$, the symbol $\vdash$ is interpreted as the lattice order $\leq$. A sequent $\phi\vdash\psi$ is valid in $\bba$ if $h(\phi)\leq h(\psi)$ for every homomorphism $h$ from the $\mathcal{L}_\mathrm{LE}$-algebra of formulas over $\mathsf{PROP}$ to $\bba$. The notation $\mathbb{LE}\models\phi\vdash\psi$ indicates that $\phi\vdash\psi$ is valid in every LE. Then, by means of a routine Lindenbaum-Tarski construction, it can be shown that the minimal LE-logic $\mathbf{L}_\mathrm{LE}$ is sound and complete with respect to its correspondent class of algebras $\mathbb{LE}$, i.e.\ that any sequent $\phi\vdash\psi$ is provable in $\mathbf{L}_\mathrm{LE}$ iff $\mathbb{LE}\models\phi\vdash\psi$. 
	
	\subsection{The `tense' language $\mathcal{L}_\mathrm{LE}^*$}
	\label{ssec:expanded tense language}
	Any given language $\mathcal{L}_\mathrm{LE} = \mathcal{L}_\mathrm{LE}(\mathcal{F}, \mathcal{G})$ can be associated with the language $\mathcal{L}_\mathrm{LE}^* = \mathcal{L}_\mathrm{LE}(\mathcal{F}^*, \mathcal{G}^*)$, where $\mathcal{F}^*\supseteq \mathcal{F}$ and $\mathcal{G}^*\supseteq \mathcal{G}$ are obtained by expanding $\mathcal{L}_\mathrm{LE}$ with the following connectives:
	\begin{enumerate}
		\item the $n_f$-ary connective $f^\sharp_i$ for $0\leq i\leq n_f$, the intended interpretation of which is the right residual of $f\in\mathcal{F}$ in its $i$th coordinate if $\varepsilon_f(i) = 1$ (resp.\ its Galois-adjoint if $\varepsilon_f(i) = \partial$);
		\item the $n_g$-ary connective $g^\flat_i$ for $0\leq i\leq n_g$, the intended interpretation of which is the left residual of $g\in\mathcal{G}$ in its $i$th coordinate if $\varepsilon_g(i) = 1$ (resp.\ its Galois-adjoint if $\varepsilon_g(i) = \partial$).
		\footnote{The adjoints of the unary connectives $\Box$, $\Diamond$, $\lhd$ and $\rhd$ are denoted $\Diamondblack$, $\blacksquare$, $\blhd$ and $\brhd$, respectively.}
	\end{enumerate}
	We stipulate that
	$f^\sharp_i\in\mathcal{G}^*$ if $\varepsilon_f(i) = 1$, and $f^\sharp_i\in\mathcal{F}^*$ if $\varepsilon_f(i) = \partial$. Dually, $g^\flat_i\in\mathcal{F}^*$ if $\varepsilon_g(i) = 1$, and $g^\flat_i\in\mathcal{G}^*$ if $\varepsilon_g(i) = \partial$. The order-type assigned to the additional connectives is predicated on the order-type of their intended interpretations. That is, for any $f\in \mathcal{F}$ and $g\in\mathcal{G}$,
	\begin{enumerate}
		\item if $\epsilon_f(i) = 1$, then $\epsilon_{f_i^\sharp}(i) = 1$ and $\epsilon_{f_i^\sharp}(j) = (\epsilon_f(j))^\partial$ for any $j\neq i$.
		\item if $\epsilon_f(i) = \partial$, then $\epsilon_{f_i^\sharp}(i) = \partial$ and $\epsilon_{f_i^\sharp}(j) = \epsilon_f(j)$ for any $j\neq i$.
		\item if $\epsilon_g(i) = 1$, then $\epsilon_{g_i^\flat}(i) = 1$ and $\epsilon_{g_i^\flat}(j) = (\epsilon_g(j))^\partial$ for any $j\neq i$.
		\item if $\epsilon_g(i) = \partial$, then $\epsilon_{g_i^\flat}(i) = \partial$ and $\epsilon_{g_i^\flat}(j) = \epsilon_g(j)$ for any $j\neq i$.
	\end{enumerate}
	
	For instance, if $f$ and $g$ are binary connectives such that $\varepsilon_f = (1, \partial)$ and $\varepsilon_g = (\partial, 1)$, then $\varepsilon_{f^\sharp_1} = (1, 1)$, $\varepsilon_{f^\sharp_2} = (1, \partial)$, $\varepsilon_{g^\flat_1} = (\partial, 1)$ and $\varepsilon_{g^\flat_2} = (1, 1)$.\footnote{Warning: notice that this notation heavily depends from the connective which is taken as primitive, and needs to be carefully adapted to well known cases. For instance, consider the  `fusion' connective $\circ$ (which, when denoted  as $f$, is such that $\varepsilon_f = (1, 1)$). Its residuals
$f_1^\sharp$ and $f_2^\sharp$ are commonly denoted $/$ and
$\backslash$ respectively. However, if $\backslash$ is taken as the primitive connective $g$, then $g_2^\flat$ is $\circ = f$, and
$g_1^\flat(x_1, x_2): = x_2/x_1 = f_1^\sharp (x_2, x_1)$. This example shows
that, when identifying $g_1^\flat$ and $f_1^\sharp$, the conventional order of the coordinates is not preserved, and depends of which connective
is taken as primitive.}

	\begin{definition}
		For any language $\mathcal{L}_\mathrm{LE}(\mathcal{F}, \mathcal{G})$, the {\em basic `tense'} $\mathcal{L}_\mathrm{LE}$-{\em logic} is defined by specializing Definition \ref{def:DLE:logic:general} to the language $\mathcal{L}_\mathrm{LE}^* = \mathcal{L}_\mathrm{LE}(\mathcal{F}^*, \mathcal{G}^*)$ 
		and closing under the following residuation rules for each $f\in \mathcal{F}$ and $g\in \mathcal{G}$:
			$$
			\begin{array}{cc}
			\AxiomC{$f(\varphi_1,\ldots,\phi,\ldots, \varphi_{n_f}) \vdash \psi$}
			\doubleLine
			\LeftLabel{$(\epsilon_f(i) = 1)$}
			\UnaryInfC{$\phi\vdash f^\sharp_i(\varphi_1,\ldots,\psi,\ldots,\varphi_{n_f})$}
			\DisplayProof
			&
			\AxiomC{$\phi \vdash g(\varphi_1,\ldots,\psi,\ldots,\varphi_{n_g})$}
			\doubleLine
			\RightLabel{$(\epsilon_g(i) = 1)$}
			\UnaryInfC{$g^\flat_i(\varphi_1,\ldots, \phi,\ldots, \varphi_{n_g})\vdash \psi$}
			\DisplayProof
			\end{array}
			$$
			$$
			\begin{array}{cc}
			\AxiomC{$f(\varphi_1,\ldots,\phi,\ldots, \varphi_{n_f}) \vdash \psi$}
			\doubleLine
			\LeftLabel{$(\epsilon_f(i) = \partial)$}
			 \UnaryInfC{$f^\sharp_i(\varphi_1,\ldots,\psi,\ldots,\varphi_{n_f})\vdash \phi$}
			\DisplayProof
			&
			\AxiomC{$\phi \vdash g(\varphi_1,\ldots,\psi,\ldots,\varphi_{n_g})$}
			\doubleLine
			\RightLabel{($\epsilon_g(i) = \partial)$}
			\UnaryInfC{$\psi\vdash g^\flat_i(\varphi_1,\ldots, \phi,\ldots, \varphi_{n_g})$}
			\DisplayProof
			\end{array}
			$$
		The double line in each rule above indicates that the rule should be read both top-to-bottom and bottom-to-top.
		Let $\mathbf{L}_\mathrm{LE}^*$ be the minimal  `tense' $\mathcal{L}_\mathrm{LE}$-logic. 
For any language $\mathcal{L}_{\mathrm{LE}}$, by a {\em tense $\mathrm{LE}$-logic} we understand any axiomatic extension of the basic tense  $\mathcal{L}_{\mathrm{LE}}$-logic in $\mathcal{L}^*_{\mathrm{LE}}$.
	\end{definition}
	
	The algebraic semantics of $\mathbf{L}_\mathrm{LE}^*$ is given by the class of `tense' $\mathcal{L}_\mathrm{LE}$-algebras, defined as tuples $\bba = (L, \mathcal{F}^*, \mathcal{G}^*)$ such that $L$ is a lattice, and moreover,
		\begin{enumerate}
			
			\item for every $f\in \mathcal{F}$ s.t.\ $n_f\geq 1$, all $a_1,\ldots,a_{n_f}\in L$ and $b\in L$, and each $1\leq i\leq n_f$,
			\begin{itemize}
				\item
				if $\epsilon_f(i) = 1$, then $f(a_1,\ldots,a_i,\ldots a_{n_f})\leq b$ iff $a_i\leq f^\sharp_i(a_1,\ldots,b,\ldots,a_{n_f})$;
				\item
				if $\epsilon_f(i) = \partial$, then $f(a_1,\ldots,a_i,\ldots a_{n_f})\leq b$ iff $a_i\leq^\partial f^\sharp_i(a_1,\ldots,b,\ldots,a_{n_f})$.
			\end{itemize}
			\item for every $g\in \mathcal{G}$ s.t.\ $n_g\geq 1$, any $a_1,\ldots,a_{n_g}\in D$ and $b\in L$, and each $1\leq i\leq n_g$,
			\begin{itemize}
				\item if $\epsilon_g(i) = 1$, then $b\leq g(a_1,\ldots,a_i,\ldots a_{n_g})$ iff $g^\flat_i(a_1,\ldots,b,\ldots,a_{n_g})\leq a_i$.
				\item
				if $\epsilon_g(i) = \partial$, then $b\leq g(a_1,\ldots,a_i,\ldots a_{n_g})$ iff $g^\flat_i(a_1,\ldots,b,\ldots,a_{n_g})\leq^\partial a_i$.
			\end{itemize}
		\end{enumerate}
		It is also routine to prove using the Lindenbaum-Tarski construction that $\mathbf{L}_\mathrm{LE}^*$ (as well as any of its sound axiomatic extensions) is sound and complete w.r.t.\ the class of  `tense' $\mathcal{L}_\mathrm{LE}$-algebras (w.r.t.\ the suitably defined equational subclass, respectively). 

\subsection{Perfect algebras and canonical extensions}
The way the algebraic and the relational semantics of any classical modal logic are linked to one another is very well known: every Boolean algebra with operators (BAO) can be associated with its ultrafilter frame, and with every Kripke frame is associated its complex algebra. To close this triangle, the J\'onsson-Tarski expansion of Stone representation theorem states that every BAO $\bba$ canonically embeds in the complex algebra of its ultrafilter frame. This complex algebra, which is called the {\em perfect}, or {\em canonical extension} of $\bba$, has several additional properties, both intrinsic to it (for instance, it is a {\em powerset} algebra, and not just an algebra of sets) and also relative to its embedded subalgebra. These properties can be expressed purely algebraically, hence independently of the ultrafilter frame construction, and characterize the canonical extension up to an isomorphism fixing the embedded algebra. Analogously well behaved constructions can be performed also for normal (distributive) lattice expansions ((D)LEs), of which we will not give a full account here. Interestingly, whereas the counterparts of the ultrafilter frames look rather different from their Boolean versions, the canonical extension of an LE is defined exactly as the one of a BAO, and the definition is based on the following:
\begin{definition}
Let $\bba$ be a (bounded) sublattice of a complete lattice $\bba'$.
\begin{enumerate}
\item  $\bba$ is {\em dense} in $\bba'$ if every element of $\bba'$ can be expressed both as a join of meets and as
a meet of joins of elements from $\bba$.
\item $\bba$ is {\em compact} in $\bba'$ if, for all $S, T \subseteq \bba'$, if $\bigvee S\leq \bigwedge T$ then $\bigvee S'\leq \bigwedge T'$ for some finite $S'\subseteq S$ and $T'\subseteq T$.
\item The {\em canonical extension} of a lattice $\bba$ is a complete lattice $\bbas$ containing $\bba$
as a dense and compact sublattice.
\end{enumerate}
\end{definition}

Given a lattice $\bba$, its canonical extension, besides being unique up to an isomorphism fixing $\bba$, always exists \cite[Propositions 2.6 and 2.7]{GH01}.
Early on, we mentioned the   intrinsic special properties of canonical extensions. Just like the canonical extension of a Boolean algebra (BA) can be shown to be a {\em perfect} BA (i.e.\ a BA  isomorphic to the powerset algebra of some set), the canonical extension of any lattice is a perfect lattice \cite[Corollary 2.10]{DGP}:
\begin{definition}
\label{def: perfect lattice}
A  lattice $\bba$ is {\em perfect} if 
$\bba$ is complete, and is both completely join-generated by the set $\jira$ of the completely
join-irreducible elements of $\bba$, and completely meet-generated by the set $\mira$ of
the completely meet-irreducible elements of $\bba$.
\end{definition}

The density implies that $\jir$ is contained in the meet closure $\kbbas$ of $\bba$ in $\bbas$ and that $\mir$ is contained in the join closure $\obbas$ of $\bba$ in $\bbas$ \cite{DGP}.\label{Page:JIr:Clsd:MIr:Opn} The elements of $\kbbas$ are referred to as {\em closed} elements, and elements of $\obbas$ as {\em open} elements.
The canonical extension of an LE $\bba$ will be defined as a suitable expansion of the canonical extension of the underlying lattice of $\bba$.
Before turning to this definition, recall that
taking the canonical extension of a lattice  commutes with
taking order-duals and products, namely:
${(\bba^\partial)}^\delta = {(\bbas)}^\partial$ and ${(\bba_1\times \bba_2)}^\delta = \bba_1^\delta\times \bba_2^\delta$ (cf.\ \cite[Theorem 2.8]{DGP}).
Hence,  ${(\bba^\partial)}^\delta$ can be  identified with ${(\bbas)}^\partial$,  ${(\bba^n)}^\delta$ with ${(\bbas)}^n$, and
${(\bba^\varepsilon)}^\delta$ with ${(\bbas)}^\varepsilon$ for any order type $\varepsilon$. Thanks to these identifications,
in order to extend operations of any arity which are monotone or antitone in each coordinate from a lattice $\bba$ to its canonical extension, treating the case
of {\em monotone} and {\em unary} operations suffices:
\begin{definition}
For every unary, order-preserving operation $f : \bba \to \bba$, the $\sigma$-{\em extension} of $f$ is defined firstly by declaring, for every $k\in \kbbas$,
$$f^\sigma(k):= \bigwedge\{ f(a)\mid a\in \bba\mbox{ and } k\leq a\},$$ and then, for every $u\in \bbas$,
$$f^\sigma(u):= \bigvee\{ f^\sigma(k)\mid k\in \kbbas\mbox{ and } k\leq u\}.$$
The $\pi$-{\em extension} of $f$ is defined firstly by declaring, for every $o\in \obbas$,
$$f^\pi(o):= \bigvee\{ f(a)\mid a\in \bba\mbox{ and } a\leq o\},$$ and then, for every $u\in \bbas$,
$$f^\pi(u):= \bigwedge\{ f^\pi(o)\mid o\in \obbas\mbox{ and } u\leq o\}.$$
\end{definition}
It is easy to see that the $\sigma$- and $\pi$-extensions of $\varepsilon$-monotone maps are $\varepsilon$-monotone. More remarkably,
the $\sigma$-extension of a map which sends (finite) joins or meets in the domain to (finite) joins in the
codomain sends {\em arbitrary} joins
or meets in the domain to {\em arbitrary} joins in the codomain. Dually, the $\pi$-extension of a map which sends (finite) joins or meets in the domain to (finite) meets in the
codomain sends {\em arbitrary} joins
or meets in the domain to {\em arbitrary} meets in the codomain.
Therefore, depending on the properties of the original operation, it is more convenient to use one or the other extension. This justifies the following
\begin{definition}
The canonical extension of an
$\mathcal{L}_\mathrm{LE}$-algebra $\bbA = (L, \mathcal{F}^\bbA, \mathcal{G}^\bbA)$ is the   $\mathcal{L}_\mathrm{LE}$-algebra
$\bbA^\delta: = (L^\delta, \mathcal{F}^{\bbA^\delta}, \mathcal{G}^{\bbA^\delta})$ such that $f^{\bbA^\delta}$ and $g^{\bbA^\delta}$ are defined as the
$\sigma$-extension of $f^{\bbA}$ and as the $\pi$-extension of $g^{\bbA}$ respectively, for all $f\in \mathcal{F}$ and $g\in \mathcal{G}$.
\end{definition}
The canonical extension of an LE $\bba$ can be shown to be a {\em perfect} LE:
\begin{definition}
\label{def:perfect LE}
An LE $\bbA = (L, \mathcal{F}^\bbA, \mathcal{G}^\bbA)$ is perfect if $L$ is a perfect lattice (cf.\ Definition \ref{def: perfect lattice}), and moreover the following infinitary distribution laws are satisfied for each $f\in \mathcal{F}$, $g\in \mathcal{G}$, $1\leq i\leq n_f$ and $1\leq j\leq n_g$: for every $S\subseteq L$,
\begin{center}
\begin{tabular}{c c }
$f(x_1,\ldots, \bigvee S, \ldots, x_{n_f}) =\bigvee \{ f(x_1,\ldots, x, \ldots, x_{n_f}) \mid x\in S \}$  & if $\varepsilon_f(i) = 1$\\

$f(x_1,\ldots, \bigwedge S, \ldots, x_{n_f}) =\bigvee \{ f(x_1,\ldots, x, \ldots, x_{n_f}) \mid x\in S \}$  & if $\varepsilon_f(i) = \partial$\\

$g(x_1,\ldots, \bigwedge S, \ldots, x_{n_g}) =\bigwedge \{ g(x_1,\ldots, x, \ldots, x_{n_g}) \mid x\in S \}$  & if $\varepsilon_g(i) = 1$\\

$g(x_1,\ldots, \bigvee S, \ldots, x_{n_g}) =\bigwedge \{ g(x_1,\ldots, x, \ldots, x_{n_g}) \mid x\in S \}$  & if $\varepsilon_g(i) = \partial$.\\

\end{tabular}
\end{center}

\end{definition}
Before finishing the present subsection, let us spell out and further simplify the definitions of the extended operations.
First of all, we recall that taking the order-dual interchanges closed and open elements:
$K({(\bbas)}^\partial) = O(\bbas)$ and $O({(\bbas)}^\partial) =\kbbas$;  similarly, $K({(\bba^n)}^\delta) =\kbbas^n$, and $O({(\bba^n)}^\delta) =\obbas^n$. Hence,  $K({(\bbas)}^\epsilon) =\prod_i K(\bbas)^{\epsilon(i)}$ and $O({(\bbas)}^\epsilon) =\prod_i O(\bbas)^{\epsilon(i)}$ for every LE $\bba$ and every order-type $\epsilon$ on any $n\in \mathbb{N}$, where
\begin{center}
\begin{tabular}{cc}
$K(\bbas)^{\epsilon(i)}: =\begin{cases}
K(\bbas) & \mbox{if } \epsilon(i) = 1\\
O(\bbas) & \mbox{if } \epsilon(i) = \partial\\
\end{cases}
$ &
$O(\bbas)^{\epsilon(i)}: =\begin{cases}
O(\bbas) & \mbox{if } \epsilon(i) = 1\\
K(\bbas) & \mbox{if } \epsilon(i) = \partial.\\
\end{cases}
$\\
\end{tabular}
\end{center}
Denoting by $\leq^\epsilon$ the product order on $(\bbas)^\epsilon$, we have for every $f\in \mathcal{F}$, $g\in \mathcal{G}$,  $\overline{u}\in (\bbas)^{n_f}$ and $\overline{v}\in (\bbas)^{n_g}$,
\begin{center}
\begin{tabular}{l l}
$f^\sigma (\overline{k}):= \bigwedge\{ f( \overline{a})\mid \overline{a}\in (\bbas)^{\epsilon_f}\mbox{ and } \overline{k}\leq^{\epsilon_f} \overline{a}\}$ & $f^\sigma (\overline{u}):= \bigvee\{ f^\sigma( \overline{k})\mid \overline{k}\in K({(\bbas)}^{\epsilon_f})\mbox{ and } \overline{k}\leq^{\epsilon_f} \overline{u}\}$ \\
$g^\pi (\overline{o}):= \bigvee\{ g( \overline{a})\mid \overline{a}\in (\bbas)^{\epsilon_g}\mbox{ and } \overline{a}\leq^{\epsilon_g} \overline{o}\}$ & $g^\pi (\overline{v}):= \bigwedge\{ g^\pi( \overline{o})\mid \overline{o}\in O({(\bbas)}^{\epsilon_g})\mbox{ and } \overline{v}\leq^{\epsilon_g} \overline{o}\}$. \\
\end{tabular}
%
\end{center}

Notice that the algebraic completeness of the logics $\mathbf{L}_\mathrm{LE}$ and $\mathbf{L}_\mathrm{LE}^*$ and the canonical embedding of LEs into their canonical extensions immediately give completeness of $\mathbf{L}_\mathrm{LE}$ and $\mathbf{L}_\mathrm{LE}^*$ w.r.t.\ the appropriate class of perfect LEs.

\subsection{The expanded language of perfect LEs}\label{Subsec:Expanded:Land}

The enhanced environment of perfect LEs has two, strictly related features which will be critical for the development of correspondence theory. Firstly, since perfect LEs are in particular complete lattices, and since the  operations of a perfect LE satisfy the additional infinitary distribution laws, 
by general and well known order-theoretic facts, these operations have (coordinatewise) {\em adjoints}. Namely, any perfect $\mathcal{L}_\mathrm{LE}$-algebra is endowed with the additional structure needed to support the interpretation of  the  language $\mathcal{L}_\mathrm{LE}^*$ (cf.\ Section \ref{ssec:expanded tense language}).

\noindent Secondly, in a perfect LE $\bba$, the sets $\jira$ and $\mira$ play the same  role as the set of {\em atoms}
in a perfect BAO. Namely, the elements of $\jira$ (resp.\ $\mira$) join-generate (resp.\ meet-generate) $\bba$, and form the set of {\em states} (resp.\ {\em co-states}) of its associated relational structure\footnote{Restricted to the Boolean setting, the relational structure associated with a perfect LO $(\bba, \Diamond)$ can be identified with its  {\em atom-structure} $\bba_+$, defined as the Kripke structure $(At(\bba), R)$, such that $At(\bba)$ is the set of atoms of $\bba$, and  $b Ra$ iff $b\leq \Diamond a$ for every $a, b\in At(\bba)$.}
$\bba_+$ (the construction of which will be outlined in the next subsection).  In the Boolean and distributive setting, the role of the completely meet-irreducible elements is typically left implicit because then $\mira$ is order-isomorphic to $\jira$. However, in the setting of  general lattices, the isomorphism between  $\jira$ and $\mira$ falls apart and so both families are essential.

The expanded language of perfect LEs will include the connectives corresponding to all the
adjoint operations, as well as a denumerably infinite set of sorted variables $\mathsf{NOM}$ called {\em nominals},
ranging over the completely join-irreducible elements of perfect LEs, and a denumerably infinite set of
sorted variables $\mathsf{CO\text{-}NOM}$, called {\em co-nominals}, ranging over the completely meet-irreducible elements of
perfect LEs. The elements of $\mathsf{NOM}$ will be denoted with with $\nomi, \nomj$, possibly indexed, and those of
$\mathsf{CO\text{-}NOM}$ with $\cnomm, \cnomn$, possibly indexed.

Let us introduce the expanded language formally: the \emph{formulas} $\phi$ of $\mathcal{L}_\mathrm{LE}^{+}$ are given by the following recursive definition:
\begin{center}
\begin{tabular}{r c |c|c|c|c|c|c c c c c c c}
$\phi ::= $ &$\nomj$ & $\cnomm$ & $\psi$ & $\phi\wedge\phi$ & $\phi\vee\phi$ & $f(\overline{\phi})$ &$g(\overline{\phi})$
\end{tabular}
\end{center}
with $\psi  \in \mathcal{L}_\mathrm{LE}$, $\nomj \in \mathsf{NOM}$ and $\cnomm \in \mathsf{CO\text{-}NOM}$,  $f\in \mathcal{F}^*$ and $g\in \mathcal{G}^*$.

As in the case of $\mathcal{L}_\mathrm{LE}$, we can form inequalities and quasi-inequalities based on $\mathcal{L}_\mathrm{LE}^{+}$. Let $\mathcal{L}_\mathrm{LE}^{+\leq}$ and $\mathcal{L}_\mathrm{LE}^{+\mathit{quasi}}$ respectively denote the set of inequalities between terms in $\mathcal{L}_\mathrm{LE}^+$, and  the set of quasi-inequalities formed out of $\mathcal{L}_\mathrm{LE}^{+\leq}$.
Members of $\mathcal{L}_\mathrm{LE}^+$, $\mathcal{L}_\mathrm{LE}^{+\leq}$, and $\mathcal{L}_\mathrm{LE}^{+\mathit{quasi}}$  not containing any propositional variables (but possibly containing nominals and co-nominals) will be called \emph{pure}.

\noindent Summing up, we will be working with six sets of syntactic objects, as reported in the following table:

\begin{center}
\begin{tabular}{| c || l | l |}
\hline &Base language & Expanded Language\\
\hline \hline Formulas / terms &${\mathcal{L}_\mathrm{LE}}$ &${\mathcal{L}_\mathrm{LE}^+}$\\
\hline Inequalities &$\mathcal{L}_\mathrm{LE}^{\leq}$ &$\mathcal{L}_\mathrm{LE}^{+\leq}$\\
\hline Quasi-inequalities &$\mathcal{L}_\mathrm{LE}^{\mathit{quasi}}$ &$\mathcal{L}_\mathrm{LE}^{+\mathit{quasi}}$\\
\hline
\end{tabular}
\end{center}

If $\mathbb{A}$ is a perfect LE,  then an \emph{assignment} for ${\mathcal{L}_\mathrm{LE}^+}$ on $\mathbb{A}$ is a map $v: \mathsf{PROP} \cup \mathsf{NOM} \cup \mathsf{CO\mbox{-}NOM} \rightarrow \bbA$ sending propositional variables to elements of $\mathbb{A}$, sending nominals to $\jty(\mathbb{A})$ and co-nominals to $\mty(\mathbb{A})$. For any LE $\mathbb{A}$, an \emph{admissible assignment}\label{admissible:assignment} for ${\mathcal{L}_\mathrm{LE}^+}$  on $\mathbb{A}$ is an assignment $v$ for ${\mathcal{L}_\mathrm{LE}^+}$ on $\mathbb{A}^{\sigma}$, such that $v(p) \in \bba$ for each $p \in \mathsf{PROP}$. In other words, the assignment $v$ sends propositional variables to elements of the subalgebra $\bba$, while nominals and co-nominals get sent to the completely join-irreducible and the completely meet-irreducible elements of $\bbas$, respectively. This means that the value of ${\mathcal{L}_\mathrm{LE}}$-terms under an admissible assignment will belong to $\bba$, whereas ${\mathcal{L}_\mathrm{LE}^+}$-terms in general will not.

\section{Modular correspondence: two case studies in relational semantics}
\label{subsec: dual frames}
As discussed in \cite{UnifiedCor}, the unified correspondence approach bases (the soundness of) the core algorithmic computation on  algebras rather than on state-based, relational structures. Specifically, the algorithmic reductions will take place in the language $\mathcal{L}_\mathrm{LE}^+$, which as mentioned earlier is naturally interpreted on perfect LEs. Successful outputs, in the form of  pure $\mathcal{L}_\mathrm{LE}^+$-sentences,  can be further translated into the first-order language of the state-based structures associated  with perfect LEs via some duality. For LE-logics, this modular approach  proves particularly advantageous. Indeed, there is no uniquely established  relational semantics for LE-logics, as there is for logics canonically associated with Boolean expansions, which uniformly accounts for all signatures and for all axiomatic extensions in any given signature. On the contrary,  various alternative proposals  of state-based semantics exist in the literature for the same LE-logic, or for different axiomatic extensions of a given LE-logic (cf.\ e.g.\ \cite{dunn1991gaggle}). The unified correspondence approach, being inherently algebraic,  makes it possible to introduce a neat division of labour between (a) {\em reduction} to pure  $\mathcal{L}_\mathrm{LE}^+$-quasi-inequalities, and  (b) {\em translation} into the correspondence language appropriate to each type of relational structure. While the latter stage depends on the different state-based settings, the former does not, and hence can be performed once and for all state-based settings. Namely, if an LE-inequality can be equivalently transformed by ALBA into the conjunction of pure $\mathcal{L}_\mathrm{LE}^+$-quasi inequalities, the same ALBA output can be then interpreted in the correspondence language associated with each different state-based setting. The present paper  mainly focuses on the reduction stage. However, in the present section, we will outline two different proposals of state-based semantics for the specific LE-logic LML, defined by instantiating $\mathcal{F}: = \{\Diamond, {\lhd}, \circ\}$, with $n_\Diamond = n_{\lhd} = 1$ and $n_{\circ} = 2$ and $\mathcal{G}: = \{\Box, {\rhd}, \star\}$, with $n_\Box = n_{\rhd} = 1$ and $n_{\star} = 2$, and $\epsilon_\Diamond = \epsilon_\Box = 1$, $\epsilon_\lhd = \epsilon_\rhd = \partial$ and $\epsilon_\circ = \epsilon_\star = (1, 1)$. For each state-based setting, we will introduce its  relative standard translation, and provide examples of translations of  the ALBA-outputs of well known axioms  into first-order conditions in each  semantic setting. None of the material in the present section will be used  in following sections, which can hence be read independently.

\subsection{RS-frames}
RS-frames are the first type of state-based semantics  we are going to report on. They are based on structures, referred to as {\em RS-polarities}, which are closely related to the structures dual to general lattices in Urquhart and Hartonas' dualities. In this context, the duality between perfect lattices and RS-frames  can be understood as the `discrete case' of \cite{hartonas1997stone}, in the same way that the duality between perfect distributive lattices and posets is the `discrete Priestley duality'.  
The most prominent feature of these structures  is that they are based not on one but on {\em two} domains, each of which providing a natural interpretation for a first-order language in which individual variables  come in {\em two sorts}. Indeed, the relational structures associated with perfect LEs are 
based on {\em polarities} (cf.\ Definition \ref{def:polarity} below). 


\begin{definition}
\label{def:polarity} A {\em polarity} is a triple $(X, Y,R)$ where $X$ and $Y$ are non-empty sets and $R \subseteq X \times Y$
is a binary relation.
\end{definition}
The relation $R$ induces the following ``specialization'' preorders  on both $X$ and $Y$: for all $x, x'\in X$ and $y, y'\in Y$,
$$x\leq x' \mbox{ iff } R[x']\subseteq R[x] \quad \mbox{ and }\quad y\leq y' \mbox{ iff } R^{-1}[y]\subseteq R^{-1}[y'].$$
For any perfect lattice $\bba$, let $\bba_+: = (\jira, \mira, \leq)$ be its associated polarity, where $\leq$ is the restriction of the lattice order to $\jira\times \mira.$ Conversely, a complete lattice $\mathcal{G}(P)$ can be associated with any  polarity $(X, Y, R)$, arising as the complete sub $\bigwedge$-semilattice of $(\mathcal{P}(X), \bigcap)$,
consisting of the Galois-stable\footnote{The {\em Galois-stable} elements of a Galois connection $(u: P\to Q; \ell: Q\to P)$ are those $x\in P$ s.t.\ $\ell(u(x)) = x$ and those $y\in Q$ s.t.\ $u(\ell(y)) = y$.} elements of the Galois connection $(u: \mathcal{P}(X)\to\mathcal{P}(Y); \ell: \mathcal{P}(Y)\to\mathcal{P}(X))$, defined as follows: for every $U\subseteq X, V\subseteq Y$, $$u(U) = \bigcap \{R[x]\mid x\in U\} = \{y\mid \forall x[x\in U\rightarrow xRy]\} \quad \ell(V) = \bigcap \{R^{-1}[y]\mid y\in V\} = \{x\mid \forall y[y\in V\rightarrow xRy]\},$$
or as the dual of the complete sub $\bigwedge$-semilattice of $(\mathcal{P}(Y), \bigcap)$, consisting of the Galois-stable elements of the same Galois connection. However, it is well known that {\em any} complete lattice (and not just the perfect ones) is isomorphic to one arising from some polarity. In \cite{DGP,Ge06}, the polarities dually corresponding to perfect lattices have been characterized as follows (here we report on the presentation in \cite{Ge06}):
\begin{definition}
A polarity $(X, Y, R)$ is:
\begin{enumerate}
\item {\em separating} if the following conditions are satisfied:
\begin{enumerate}
\item for all $x, x'\in X$, if $x\neq x'$ then $R[x]\neq R[x']$, and
\item for all $y, y'\in Y$, if $y\neq y'$ then $R^{-1}[y]\neq R^{-1}[y']$.
\end{enumerate}
\item {\em reduced} if the following conditions are satisfied:
\begin{enumerate}
\item for every $x\in X$, some $y\in Y$ exists s.t.\ $x$ is a minimal element in $\{x'\in X\mid (x', y)\notin R \}$.
\item for every $y\in Y$, some $x\in X$ exists s.t.\ $y$ is a maximal element in $\{y'\in Y\mid (x, y')\notin R\}$.
\end{enumerate}
\item an {\em RS-polarity}\footnote{In \cite{Ge06}, RS-polarities are referred to as RS-frames. Here we reserve the term RS-frame for RS-polarities endowed with extra relations used to interpret the operations of the lattice expansion. } if it is separating and reduced.
\end{enumerate}
\end{definition}

\subsubsection{From algebraic to RS-semantics} In the Boolean setting and some distributive settings (e.g.\ that of intuitionistic logic), duality bridges between pre-existing and independently established algebraic and state-based semantics for a given logic. In the present subsection, we discuss how RS-semantics  for LML (i.e.\ both the structures referred to as RS-frames for LML and the interpretation of LML-formulas in those structures) can be defined from  perfect LML-algebras as algebraic models of LML, via the duality between perfect lattices and RS-polarities. This illustrates a strategy that is particularly useful for obtaining state-based semantics for arbitrary LE-logics in a uniform and modular way.

The  specification of the RS-semantics of LML hinges on the  dual characterization of any homomorphic assignment $v: \mathrm{LML} \rightarrow \Cc$, where $\Cc$ is any perfect LML-algebra,  as a pair of relations $(\Vdash, \succ)$ such that $\Vdash\ \subseteq\  \jty(\Cc)\times \mathrm{LML}$ and  $\succ\ \subseteq\ \mty(\Cc)\times \mathrm{LML}$.
Before expanding on how the relations $\Vdash$ and  $\succ$ are obtained, let us briefly recall the better known converse direction, from $\Vdash$ to homomorphic assignments, in the Boolean and distributive settings.

In the Boolean and distributive settings, for any given modal propositional language $\mathrm{ML}$,  any Kripke structure $\mathbb{F}$ for ML, and any satisfaction relation $\Vdash \ \subseteq \ W\times \mathrm{ML}$ between states of $\mathbb{F}$ and formulas, an interpretation $\overline{v}: \mathrm{ML}\to \mathbb{F}^+$ can be defined, which is an $\mathrm{ML}$-homomorphism, and is obtained as the unique homomorphic extension of the equivalent functional representation of the relation $\Vdash$ as a map $v: \mathsf{PROP}\to \mathbb{F}^+$, defined as $v(p) = {\Vdash}^{-1}[p]$\footnote{Notice that in order for this equivalent functional representation to be well defined, we need to assume that the relation $\Vdash$ is $\mathbb{F}^+$-{\em compatible}, i.e.\ that ${\Vdash}^{-1}[p]\in \mathbb{F}^+$ for every $p\in \mathsf{PROP}$. In the Boolean case, every relation from $W$ to $\mathrm{LML}$ is clearly $\mathbb{F}^+$-compatible, but already in the distributive case this is not so: indeed ${\Vdash}^{-1}[p]$ needs to be an upward- or downward-closed subset of $\mathbb{F}$. This gives rise to the persistency condition, e.g.\ in the relational semantics of intuitionistic logic.}. In this way, interpretations can be derived from satisfaction relations, so that for every $x\in \jty(\mathbb{F}^+)$ and every formula $\phi$, \begin{equation}\label{eq: desiderata satisfaction} x\Vdash \phi\quad \mbox{ iff }\quad x\leq \overline{v}(\phi),\end{equation}

\noindent where, on the left-hand side, $x\in \jty(\mathbb{F}^+)$ is identified with a state of $\mathbb{F}$ via the isomorphism $\mathbb{F}\cong (\mathbb{F}^+)_+$. 
Conversely, given the perfect LML-algebra $\Cc$ as the complex algebra $\mathbb{F}^+$ of some relational structure $\mathbb{F}$ based on an RS-polarity, and given an interpretation $v: \mathrm{LML}\to \mathbb{F}^+$, the satisfaction relation $\Vdash$ on $\mathbb{F}$ we aim to define should satisfy the condition (\ref{eq: desiderata satisfaction}).



First of all, let us recall  how  the usual satisfaction relation clauses can be retrieved from the algebraic interpretation in the Boolean and distributive case.

Suppose for instance that our signature $\mathrm{ML}$ contains a unary diamond $\Diamond$.
Since $\overline{v}: \mathrm{ML}\to \mathbb{F}^+$ is a homomorphism,  $\overline{v}(\Diamond \phi) = \Diamond^{\mathbb{F}^+}\overline{v}(\phi)$. By the general semantics of distributive lattice-based modal logics (cf.\ e.g.\ \cite{ALBAPaper}), it holds that $\mathbb{F}^+ = \p^{\uparrow}(W)$, that  $\jty(\mathbb{F}^+) = \{x{\uparrow}\mid x\in W\}$, and that $\Diamond^{\mathbb{F}^+}u = R^{-1}[u]$ for every $u\in \mathbb{F}^+$.

The satisfaction relation $\Vdash_{\overline{v}}$ needs to be defined inductively, taking equation (\ref{eq: desiderata satisfaction}) into account. 
As for the basic step,  for every $x\in \jty(\mathbb{F}^+)$ and every $p\in \mathsf{PROP} \cup \{\top, \bot \}$, we define \begin{equation}\label{eq: basic case} x\Vdash_{\overline{v}} p\quad \mbox{ iff }\quad x\leq \overline{v}(p).\end{equation}

For the inductive step, suppose that $\phi = \Diamond \psi$ and   (\ref{eq: desiderata satisfaction}) holds for any $\psi$ of strictly lower complexity than $\phi$. Since $\mathbb{F}^+$ is a perfect distributive lattice and $\overline{v}(\Diamond\psi)\in \mathbb{F}^+$, we get $\overline{v}(\psi) = \bigvee\{x'\in \jty(\mathbb{F}^+)\mid x'\leq \overline{v}(\psi)\} = \bigvee\{x'\in \jty(\mathbb{F}^+)\mid x'\Vdash \psi\}$. Since by assumption $\overline{v}$ is a homomorphism, $\overline{v}(\Diamond \psi) = \Diamond^{\mathbb{F}^+}\overline{v}(\psi) = \Diamond^{\mathbb{F}^+}(\bigvee\{x'\in \jty(\mathbb{F}^+)\mid x'\Vdash \psi\})$, and since $\Diamond^{\mathbb{F}^+}$ is completely join-preserving, we get:

$$\overline{v}(\Diamond \psi) = \bigvee\{\Diamond^{\mathbb{F}^+}x'\mid x'\in \jty(\mathbb{F}^+)\mbox{ and } x'\Vdash \psi\}.$$

\noindent Hence, for any $x\in \jty(\mathbb{F}^+)$ (recall that $x$ is completely join-prime, and we are identifying $\mathbb{F}$ and $ (\mathbb{F}^+)_+$),
\begin{center}
\begin{tabular}{r c l l}
$x\leq \overline{v}(\Diamond\psi)$& iff & $x\leq \bigvee\{\Diamond^{\mathbb{F}^+}x'\mid x'\in \jty(\mathbb{F}^+)\mbox{ and } x'\Vdash \psi\}$ &\\
& iff & $\exists x'( x'\Vdash \psi\ \&\ x\leq \Diamond^{\mathbb{F}^+}x')$ & ($x$ completely join-prime)\\
& iff & $\exists x'( x'\Vdash \psi\ \&\ x{\uparrow}\subseteq R^{-1}[x'])$ & ($\mathbb{F}\cong (\mathbb{F}^+)_+$)\\
& iff & $\exists x'( x'\Vdash \psi\ \&\ x\in R^{-1}[x'])$ & ($R^{-1}[x'] = \Diamond^{\mathbb{F}^+}x' \in \p^{\uparrow}(W)$)\\
& iff & $\exists x'( x'\Vdash \psi\ \&\ x Rx')$.\\
\end{tabular}
\end{center}

\noindent The chain of equivalences above is an instance of a dual characterization argument. By this argument, in the distributive and Boolean setting, the usual satisfaction clause for $\Diamond$-formulas has been recovered from a given algebraic interpretation. This satisfaction clause can then form part of the inductive definition of the corresponding satisfaction relation. The same can be done in the general lattice case. However, the chain of equivalences above breaks down in the second step, since $x\in \jty(\mathbb{F}^+)$ are not  in general completely join-{\em prime} anymore, but only completely join-irreducible.  However, we can obtain a reduction also in this case, by crucially making use of the elements $y\in \mty(\mathbb{F}^+)$:

\begin{center}
\begin{tabular}{r c l }
$x\leq \overline{v}(\Diamond\psi)$& iff & $x\leq \bigwedge\{y\in \mty(\mathbb{F}^+)\mid  \overline{v}(\Diamond\psi)\leq y\}$\\
& iff & $\forall y[ \overline{v}(\Diamond\psi)\leq y\ \Rightarrow\  x\leq y]$\\
& iff & $\forall y[ \bigvee\{\Diamond^{\mathbb{F}^+}x'\mid x'\in \jty(\mathbb{F}^+)\mbox{ and } x'\leq \overline{v}(\psi)\} \leq y\ \Rightarrow\ x\leq y]$\\
& iff & $\forall y[\forall  x'[x'\leq \overline{v}(\psi)\ \Rightarrow\ \Diamond^{\mathbb{F}^+}x'\leq y]\ \Rightarrow\ x\leq y]$\\
& iff & $\forall y[\forall  x'[x'\Vdash\psi\ \Rightarrow\ y R_{\Diamond}x']\ \Rightarrow\ x\leq y]$,\\
\end{tabular}
\end{center}
\noindent where we take  $\Diamond^{\mathbb{F}^+}x\leq y$ as the definition of $y R_{\Diamond}x$. By the argument above we were able to dually characterize the condition $x\leq \overline{v}(\Diamond\psi)$ also in the perfect lattice setting, and hence we can take the last line displayed above as the definition of $x\Vdash \Diamond \psi$:
\begin{equation}
\label{eq:satisf diamond}
x\Vdash \Diamond\psi\ \ \mbox{ iff } \ \ \forall y[\forall  x'[x'\Vdash\psi\ \Rightarrow\ y R_{\Diamond}x']\ \Rightarrow\ x\leq y].\end{equation}

\noindent The calculation above also provides  the definition of the relation $R_{\Diamond}$ which is dual to the given $\Diamond$ operation.
Moreover, it  provides the inductive step for $\Diamond$-formulas in the recursive definition of the relation $\succ\ \subseteq \mty(\mathbb{F}^+)\times \mathrm{LML}$. In the Boolean and distributive settings, $\succ$ is completely determined by $\Vdash$, and is hence not mentioned explicitly there. Indeed, since every $\overline{v}(\psi)$ is an element of a perfect lattice, we would want to define  $\succ$ inductively in such a way that for every $y\in \mty(\mathbb{F}^+)$ and every $\psi\in \mathrm{LML}$,
\begin{equation}\label{eq:co-satisf} y\succ \overline{v}(\psi)\quad \mbox{ iff }\quad \overline{v}(\psi)\leq y.\end{equation}

Again, the base case is
\begin{equation}\label{eq:co-satisf basic case} y\succ \overline{v}(p)\quad \mbox{ iff }\quad \overline{v}(p)\leq y.\end{equation}

Specializing the clause above to powerset algebras $\mathcal{P}(W)$, we would have $y \succ_{V} p$ iff $V(p) \leq y$ iff $V(p) \subseteq W/\{ x \}$ for some $x \in W$ iff $\{ x\} \not\subseteq V(p)$ iff $x \notin V(p)$ iff $x \not \Vdash p$, which shows that  the relation $\succ$ can be regarded as an upside-down description of the satisfaction relation $\Vdash$, namely a {\em co-satisfaction}, or {\em refutation}.

As mentioned, the inductive case for $\Diamond$-formulas has already been developed within the chain of equivalences above:

\begin{equation}
\label{eq:co-satisf diamond}
y\succ\Diamond\psi\ \ \mbox{ iff }\ \ \forall  x'[x'\Vdash\psi\ \Rightarrow\ y R_{\Diamond}x'].\end{equation}

Summing up, any interpretation $\overline{v}:\mathrm{LML}\to \Cc$ into a perfect LML-algebra can be dually characterized as a pair $(\Vdash, \succ)$ where $\Vdash\ \subseteq \ \jty(\Cc)\times \mathrm{LML}$ is a satisfaction relation between states and formulas, and  $\succ\ \subseteq \ \mty(\Cc)\times \mathrm{LML}$ is a {\em co-satisfaction relation} between co-states and formulas. This dual characterization is in fact itself a correspondence argument, where moreover the crucial steps are based on the same order-theoretic facts guaranteeing the soundness of ALBA rules.  The RS-semantics for the full LML signature, together with the definition of the relational duals of each operation,  can be derived by  arguments similar to the one given above. The definition of RS-frames for LML is motivated by these arguments and is reported below. The remaining satisfaction and co-satisfaction clauses are collected in Definition \ref{def:RS:model}.

\begin{definition}\label{Def:RSFrame}
An \emph{RS-frame} for LML is a structure $\mathbb{F} = (X,Y,R,\mathcal{R})$ where $(X,Y,R)$ is an RS-polarity, and $\mathcal{R} = (R_{\Diamond}, R_{\Box}, R_{\lhd}, R_{\rhd}, R_{\circ}, R_{\star})$ is a tuple of additional relations (encoding unary and binary modal operations, respectively) such that $R_{\Diamond}\subseteq Y\times X$,  $R_{\Box}\subseteq X\times Y$, $R_{\lhd}\subseteq Y\times Y$, $R_{\rhd}\subseteq X\times X$, $R_{\circ}\subseteq Y\times X\times X$, and $R_{\star}\subseteq X\times Y\times Y$,  satisfying additional compatibility conditions guaranteeing that the operations associated with the relations in $\mathcal{R}$ map Galois-stable sets to Galois-stable sets.
\end{definition}

Because the specifics of the compatibility conditions play no role in the development of the present paper, we will not discuss them. However, every RS-frame is isomorphic to one arising from a perfect LML algebra $\Cc$. In this case, the following relations, arising from the remaining computations, provide a sufficient specification for the purposes of the present development. For all $x, x', x_1, x_2 \in \jty(\Cc)$ and all $y, y', y_1, y_2 \in \mty(\Cc)$,
\begin{center}
\begin{tabular}{rclcrclcrclcr}
$y R_{\Diamond}x$ &iff &$y \geq \Diamond x$ &  &$x R_{\Box} y$ &iff &$x \leq \Box y$ & &$yR_{\lhd}y'$ &iff &$y \geq {\lhd} y'$ & &\\
$x R_{\rhd} x'$ &iff &$x \leq {\rhd} x'$ & &$R_{\circ}(y, x_1, x_2)$ &iff &$y \geq x_1 \circ x_2$ & &$R_{\star}(x, y_1, y_2)$ &iff &$x \leq y_1 \star y_2$ & &
\end{tabular}
\end{center}

\comment{
%

In all the  clauses below, $x, x'$ and $y, y'$ range in $\jty(\Cc)$ and in $\mty(\Cc)$ respectively.

\begin{center}
\begin{tabular}{lll}
\label{eq:satisf box}
$x\Vdash \Box\psi\ \mbox{ iff } \ \forall y[y\succ \psi\ \Rightarrow\  x R_{\Box}y]$ & $y\succ {\lhd}\psi\ \mbox{ iff } \ \forall  y'[y'\succ\psi\ \Rightarrow\ y R_{\lhd}y']$ & $x\Vdash {\rhd}\psi\ \mbox{ iff } \ \forall x'[x'\Vdash \psi\ \Rightarrow\  x R_{\rhd}x']$\\

$y\succ\Box\psi\ \mbox{ iff } \ \forall  x[x\Vdash \Box\psi \Rightarrow\ x\leq y]$ & $x\Vdash{\lhd}\psi\ \mbox{ iff } \ \forall y[y\succ{\lhd}\psi\ \Rightarrow\ x\leq y]$ & $y\succ{\rhd}\psi\ \mbox{ iff } \ \forall  x[x\Vdash {\rhd}\psi \Rightarrow\ x\leq y]$\\
\end{tabular}
\end{center}

\begin{equation*}
\label{eq:satisf rhd}
y\succ \phi\circ \psi\ \mbox{ iff } \ \forall x_1, x_2[(x_1\Vdash\phi\ \mbox{ and }\ x_2\Vdash\psi )\ \Rightarrow\  R_{\circ}(y, x_1, x_2)].\end{equation*}
\begin{equation*}
\label{eq:co-satisf rhd}
x\Vdash\phi\circ \psi\ \mbox{ iff } \ \forall  y[y\succ \phi\circ \psi \Rightarrow\ x\leq y].\end{equation*}

\vspace{2mm}

\begin{equation*}
\label{eq:satisf rhd}
x\Vdash \phi\star \psi\ \mbox{ iff } \ \forall y_1, y_2[(y_1\succ\phi\ \mbox{ and }\ y_2\succ\psi) \ \Rightarrow\  R_{\star}(x, y_1, y_2)].\end{equation*}
\begin{equation*}
\label{eq:co-satisf rhd}
y\succ\phi\star \psi\ \mbox{ iff } \ \forall  x[x\Vdash \phi\star \psi \Rightarrow\ x\leq y].\end{equation*}

In the clauses above the relations are defined as follows: for all $x, x', x_1, x_2 \in \jty(\Cc)$ and all $y, y', y_1, y_2 \in \mty(\Cc)$,

\begin{center}
\begin{tabular}{rclcrclcrclcr}
$y R_{\Diamond}x$ &iff &$y \geq \Diamond x$ &  &$x R_{\Box} y$ &iff &$x \leq \Box y$ & &$yR_{\lhd}y'$ &iff &$y \geq {\lhd} y'$ & &($\ast$)\\
$x R_{\rhd} x'$ &iff &$x \leq {\rhd} x'$ & &$R_{\circ}(y, x_1, x_2)$ &iff &$y \geq x_1 \circ x_2$ & &$R_{\star}(x, y_1, y_2)$ &iff &$x \leq y_1 \star y_2$ & &($\ast$$\ast$)
\end{tabular}
\end{center}

\marginnote{edit this. recall that this is one of the possible dualities for perfect lattices}
As discussed in \cite{Ge06}, every RS-polarity $\mathbb{F} = (X,Y, R)$ is dual to a perfect lattice $\Cc$ so that $X$ and $Y$ can be identified with $\jty(\Cc)$ and $\mty(\Cc)$, respectively, via the following identifications:
 \[
 x\mapsto \overline{x}: = \bigcap\{ R^{-1}[y]\mid xRy\} = \{x'\mid \forall y[xRy\Rightarrow x'Ry]\} \]

 \[y\mapsto \overline{y}: = \bigcap\{ R[x]\mid xRy\} = \{y'\mid \forall x[xRy\Rightarrow xRy']\}
 \]

Moreover, every RS-frame $\mathbb{F} = (X,Y, R, \mathcal{R})$ is dual to a perfect LML-algebra, namely its complex algebra, and the relations derived from the additional operations as indicated in ($\ast$) and ($\ast \ast$) above can be identified with the original relations. Thus the satisfaction and co-satisfaction relations defined in the clauses above apply as they stand to all RS-frames. Moreover, the interpretation of formulas in the extended language $\mathrm{LML}^{+}_{\mathit{term}}$ can be given also on RS-frames. In what follows, we write $R^{-1}$ for the converse of a binary relation $R$, and $T^{-1}$ and $T^{-2}$ for the relations obtained from a relation $T \subseteq  A_0 \times A_1 \times A_2$ by swapping zeroth and first coordinates, and zeroth and second coordinates, respectively.

\begin{center}
\begin{tabular}{ll}
\label{eq:satisf box}
$x\Vdash \blacksquare \psi\ \mbox{ iff } \ \forall y[y\succ \psi\ \Rightarrow\  x R^{-1}_{\Diamond}y]$ & $y \succ {\Diamondblack}\psi\ \mbox{ iff } \ \forall  x'[x'\Vdash \psi \ \Rightarrow\ y R^{-1}_{\Box}x']$\\
$y\succ\blacksquare\psi\ \mbox{ iff } \ \forall  x[x\Vdash \blacksquare \psi \Rightarrow\ x\leq y]$ & $x\Vdash{\Diamondblack}\psi\ \mbox{ iff } \ \forall y [y\succ{\Diamondblack}\psi\ \Rightarrow\ x\leq y]$\\
\end{tabular}
\end{center}

\begin{center}
\begin{tabular}{ll}
$y\succ {\blacktriangleleft}\psi\ \mbox{ iff } \ \forall  y'[y'\succ\psi\ \Rightarrow\ y R^{-1}_{\lhd}y']$ & $x\Vdash {\blacktriangleright} \psi\ \mbox{ iff } \ \forall x'[x'\Vdash \psi\ \Rightarrow\  x R^{-1}_{\rhd}x']$\\
$x\Vdash{\blacktriangleleft}\psi\ \mbox{ iff } \ \forall y[y\succ{\blacktriangleleft}\psi\ \Rightarrow\ x\leq y]$ &
$y\succ{\blacktriangleright}\psi\ \mbox{ iff } \ \forall  x[x\Vdash {\blacktriangleright}\psi \Rightarrow\ x\leq y]$\\
\end{tabular}
\end{center}
\begin{equation*}
y\succ \phi\starfor \psi\ \mbox{ iff } \ \forall x_1, y_2[(x_1\Vdash\phi\ \mbox{ and }\ y_2\succ\psi )\ \Rightarrow\  R^{-1}_{\star}(y, x_1, y_2)].\end{equation*}
\begin{equation*}
\label{eq:co-satisf rhd}
x\Vdash\phi \starfor \psi\ \mbox{ iff } \ \forall  y[y\succ \phi\starfor \psi \Rightarrow\ x\leq y].\end{equation*}

\vspace{2mm}

\begin{equation*}
\label{eq:satisf rhd}
y\succ \phi\starback \psi\ \mbox{ iff } \ \forall y_1, x_2[(y_1\succ\phi\ \mbox{ and }\ x_2\Vdash\psi )\ \Rightarrow\  R^{-2}_{\star}(y, y_1, x_2)].\end{equation*}
\begin{equation*}
\label{eq:co-satisf rhd}
x\Vdash\phi\starback \psi\ \mbox{ iff } \ \forall  y[y\succ \phi\starback \psi \Rightarrow\ x\leq y].\end{equation*}

\vspace{2mm}

\begin{equation*}
\label{eq:satisf rhd}
x\Vdash \phi \circfor \psi\ \mbox{ iff } \ \forall y_1, x_2[(y_1\succ\phi\ \mbox{ and }\ x_2 \Vdash \psi) \ \Rightarrow\  R^{-1}_{\circ}(x, y_1, x_2)].\end{equation*}
\begin{equation*}
\label{eq:co-satisf rhd}
y \succ \phi \circfor \psi\ \mbox{ iff } \ \forall  x[x \Vdash \phi \circfor \psi \Rightarrow\ x\leq y].\end{equation*}

\vspace{2mm}

\begin{equation*}
\label{eq:satisf rhd}
x\Vdash \phi \circback \psi\ \mbox{ iff } \ \forall x_1, y_2 [(x_1 \Vdash \phi\ \mbox{ and }\ y_2 \succ \psi) \ \Rightarrow\  R^{-2}_{\circ}(x, x_1, y_2)].\end{equation*}
\begin{equation*}
\label{eq:co-satisf rhd}
y \succ \phi \circback \psi\ \mbox{ iff } \ \forall  x[x \Vdash \phi \circback \psi \Rightarrow\ x\leq y].\end{equation*}

Early on, we mentioned that the constructions of complex algebras and ultrafilter frames generalize to LMAs. More than this, the triangle of correspondences between BAOs, Kripke frames and perfect BAOs
generalizes to a triangle between lattices, RS-polarities and perfect lattices (we do not treat modal operators in this subsection). The `complex algebra' of a polarity is the complete lattice of the Galois-stable elements of its associated Galois connection. In order to define the counterpart of  the ultrafilter frame construction we need some preliminary definitions:
\begin{definition}
For any lattice $\bba$, a  {\em filter-ideal pair} is a tuple $(F, I)$ s.t.\ $F$ is a filter of $\bba$ (notation: $F\in Fi(\bba)$), $I$ is an ideal of $\bba$ (notation: $I\in Id(\bba)$), $F\cap I = \varnothing$. A filter-ideal pair is {\em maximal}   if the following conditions are satisfied:
\begin{enumerate}
\item $F$ is maximal in $\{F'\in Fi(\bba)\mid F'\cap I = \varnothing\}$.
\item $I$ is maximal in $\{I'\in Id(\bba)\mid F\cap I' = \varnothing\}$.
\end{enumerate}
\end{definition}
By a standard application of Zorn's lemma is easy to show that every filter-ideal pair can be extended to a maximal one (we will refer to this fact as the {\em optimal filter theorem}).
\begin{definition}
For any lattice $\bba$ and any $F\in Fi(\bba)$, $F$ is an {\em optimal} filter if $F$ is the first coordinate of some maximal filter-ideal pair.
An ideal $I$ is {\em optimal} if it is the second coordinate of some maximal filter-ideal pair. Let $Of(\bba)$ and $Oi(\bba)$ respectively denote the collections of optimal filters and of optimal ideals of $\bba$.
\end{definition}
\begin{prop}
\label{prop:optimal filt charact}
For any $F\in Fi(\bba)$ and $I\in Id(\bba)$,
\begin{enumerate}
\item $F$ is optimal iff some $I\in Id(\bba)$ exists s.t.\ $F$ is maximal in $\{F'\in Fi(\bba)\mid F'\cap I = \varnothing\}$.
\item $I$ is optimal iff some $F\in Fi(\bba)$ exists s.t.\ $I$ is maximal in $\{I'\in Id(\bba)\mid F\cap I' = \varnothing\}$.
\end{enumerate}
\end{prop}
We are now ready to introduce the polarity on which the ultrafilter frame construction for LMAs is based:
\begin{definition}
For every lattice $\bba$, its associated {\em optimal polarity} is  $$\bba_{\bullet} = (Of(\bba), Oi(\bba), R),$$ where $F R I$ iff $F\cap I\neq \varnothing$.
\end{definition}
By the optimal filter theorem and  proposition \ref{prop:optimal filt charact}, $\bba_{\bullet}$ is an RS-polarity. The following proposition provides a (non constructive) way to show the existence of the canonical extension of a lattice, and, together with the uniqueness theorem, proves that the canonical extension of any lattice is a perfect lattice:

\begin{prop}
For every lattice $\bba$, the perfect lattice $\bba_{\bullet}^+$ is a dense and compact completion of $\bba$, hence it is (up to isomorphism) its canonical extension.
\end{prop}

}

Let $\mathrm{LML}^+$ denote the expanded language $\mathcal{L}_{\mathrm{LE}}^+(\mathcal{F}, \mathcal{G})$ (cf.\ Section \ref{Subsec:Expanded:Land}) for $\mathcal{F}$ and $\mathcal{G}$ specified at the beginning of the present section. Clearly, the dual characterization procedure described above applies also to the connectives and variables of  $\mathrm{LML}^+$, which motivates the fact that this language can be interpreted in LML-models as specified below.
\begin{definition}\label{def:RS:model}
An \emph{RS-model} for LML is a tuple $\mathbb{M} = (\mathbb{F}, V)$ where $\mathbb{F}$ is an RS-frame for LML and $V$ maps any $p \in \mathsf{PROP}$ to a pair $V(p) = (P_1, P_2)$ such that $P_1 \subseteq X$, $P_2 \subseteq Y$ and moreover $u(P_1) = P_2$ and $\ell(P_2) = P_1$.

An \emph{RS-model} for LML$^+$ is a tuple $\mathbb{M}$ as above such that the map $V$ is extended to $\mathsf{NOM}$ and $\mathsf{CO}$-$\mathsf{NOM}$ as follows:  any nominal $\nomi$ is assigned to a pair $(V_1(\nomj), V_2(\nomj)) = (\ell(u(x)), u(x))$ for some $x\in X$ (cf.\ notation introduced at the beginning of the present section) and any conominal $\cnomm$ to a pair $(V_1(\cnomm), V_2(\cnomm)) =(\ell(y), u(\ell(y)))$ for some $y\in Y$.
\end{definition}

For every RS-model $\mathbb{M} = (X,Y,R,\mathcal{R},V)$, $x \in X$, $y \in Y$ and any LML$^+$-formula $\phi$, the satisfaction and co-satisfaction relations $\mathbb{M}, x \Vdash \phi$ and $\mathbb{M}, y \succ \phi$ are defined by simultaneous recursion as follows:
\begin{flushleft}
\begin{tabular}{lllllll}
$\mathbb{M}, x \Vdash \bot$ && never & &$\mathbb{M}, y \succ \bot$ && always\\
$\mathbb{M}, x \Vdash \top$ &&always & &$\mathbb{M}, y \succ \top$ &&never\\
$\mathbb{M}, x \Vdash p$ & iff & $x\in V_1(p)$ & &$\mathbb{M}, y \succ p$ & iff & $y\in V_2(p)$\\
$\mathbb{M}, x \Vdash \nomi$ & iff & $x\in V_1(\nomi)$ & &$\mathbb{M}, y \succ \nomi$ & iff & $y\in V_2(\nomi)$\\
$\mathbb{M}, x \Vdash \cnomm$ & iff & $x\in V_1(\cnomm)$ & &$\mathbb{M}, y \succ \cnomm$ & iff & $y\in V_2(\cnomm)$\\
\end{tabular}

\begin{tabular}{m{2cm}lll}
$\mathbb{M}, x \Vdash \phi\wedge \psi$ & iff & $\mathbb{M}, x \Vdash \phi$ and $\mathbb{M}, x \Vdash  \psi$ & \\
$\mathbb{M}, y \succ \phi\wedge \psi$ & iff & for all $x\in X$, if $\mathbb{M}, x \Vdash \phi\wedge \psi$, then $x R y$\\
$\mathbb{M}, x \Vdash \phi\vee \psi$ & iff & for all $y\in Y$, if $\mathbb{M}, y \succ \phi\wedge \psi$, then $x R y$  & \\
$\mathbb{M}, y \succ \phi\vee \psi$ & iff &  $\mathbb{M}, y \succ \phi$ and $\mathbb{M}, y \succ  \psi$ &\\
$\mathbb{M}, x \Vdash \Diamond\phi$ & iff & for all $y\in Y$, if $\mathbb{M}, y \succ \Diamond\phi$, then $x R y$  & \\
$\mathbb{M}, y \succ \Diamond\phi$ & iff &  for all $x\in X$, if $\mathbb{M}, x \Vdash \phi$, then $y R_{\Diamond} x$ \\
$\mathbb{M}, x \Vdash \Box\phi$ & iff & for all $y\in Y$, if $\mathbb{M}, y \succ \phi$, then $x R_{\Box} y$& \\
$\mathbb{M}, y \succ \Box\phi$ & iff & for all $x\in X$, if $\mathbb{M}, x \Vdash \Box\phi$, then $x R y$\\
$\mathbb{M}, x \Vdash \lhd\phi$ & iff & for all $y\in Y$, if $\mathbb{M}, y \succ \lhd\phi$, then $x R y$  & \\
$\mathbb{M}, y \succ \lhd\phi$ & iff &  for all $y'\in Y$, if $\mathbb{M}, y' \succ \phi$, then $y R_{\lhd} y'$ \\
$\mathbb{M}, x \Vdash \rhd\phi$ & iff & for all $x'\in X$, if $\mathbb{M}, x' \Vdash \phi$, then $x R_{\rhd} x'$& \\
$\mathbb{M}, y \succ \rhd\phi$ & iff & for all $x\in X$, if $\mathbb{M}, x \Vdash \rhd\phi$, then $x R y$\\
$\mathbb{M}, x \Vdash \phi \circ \psi$ & iff & for all $y\in Y$, if $\mathbb{M}, y \succ \phi \circ \psi$, then $x R y$  & \\
$\mathbb{M}, y \succ \phi \circ \psi$ & iff  & for all $x_1, x_2\in X$, if $\mathbb{M}, x_1 \Vdash \phi$ and $\mathbb{M}, x_2 \Vdash \psi$, then $R_{\circ}(y, x_1, x_2)$ \\
$\mathbb{M}, x \Vdash \phi \star \psi$ & iff & for all $y_1, y_2\in Y$, if $\mathbb{M}, y_1 \succ \phi$ and $\mathbb{M}, y_2 \succ \psi$, then $R_{\star}(x, y_1, y_2) $& \\
$\mathbb{M}, y \succ \phi \star \psi$ & iff & for all $x\in X$, if $\mathbb{M}, x \Vdash \phi \star \psi$, then $x R y$\\
\end{tabular}

\begin{tabular}{m{2cm}llll}
$\mathbb{M}, x \Vdash \Diamondblack\phi$ & iff & for all $y\in Y$, if $\mathbb{M}, y \succ \Diamondblack\phi$, then $x R y$   &\\
$\mathbb{M}, y \succ \Diamondblack\phi$ & iff &  for all $x\in X$, if $\mathbb{M}, x \Vdash \phi$, then $x R_{\Box} y$ \\
$\mathbb{M}, x \Vdash \blacksquare\phi$ & iff & for all $y\in Y$, if $\mathbb{M}, y \succ \phi$, then $y R_{\Diamond} x$& \\
$\mathbb{M}, y \succ \blacksquare\phi$ & iff & for all $x\in X$, if $\mathbb{M}, x \Vdash \blacksquare\phi$, then $x R y$\\
$\mathbb{M}, x \Vdash {\blacktriangleleft}\phi$ & iff & for all $y\in Y$, if $\mathbb{M}, y \succ {\blacktriangleleft}\phi$, then $x R y$  & \\
$\mathbb{M}, y \succ {\blacktriangleleft}\phi$ & iff &  for all $y'\in Y$, if $\mathbb{M}, y' \succ \phi$, then $y' R_{\lhd} y$ \\
$\mathbb{M}, x \Vdash {\blacktriangleright}\phi$ & iff & for all $x'\in X$, if $\mathbb{M}, x' \Vdash \phi$, then $x' R_{\rhd} x$& \\
$\mathbb{M}, y \succ {\blacktriangleright}\phi$ & iff & for all $x\in X$, if $\mathbb{M}, x \Vdash {\blacktriangleright}\phi$, then $x R y$\\
\end{tabular}

\begin{tabular}{lllll}
$\mathbb{M}, x \Vdash \phi \starfor \psi$ & iff & for all $y\in Y$, if $\mathbb{M}, y \succ \phi \starfor \psi$, then $x R y$  & \\
$\mathbb{M}, y \succ \phi \starfor \psi$ & iff  & for all $x_1\in X$, $y_2\in Y$, if $\mathbb{M}, x_1 \Vdash \phi$ and $\mathbb{M}, y_2 \succ \psi$, then $R_{\star}(x_1, y, y_2)$ \\
$\mathbb{M}, x \Vdash \phi \starback \psi$ & iff & for all $y\in Y$, if $\mathbb{M}, y \succ \phi \starback \psi$, then $x R y$  & \\
$\mathbb{M}, y \succ \phi \starback \psi$ & iff &  for all $y_1\in Y$, $x_2\in X$, if $\mathbb{M}, y_1 \succ \phi$ and $\mathbb{M}, x_2 \Vdash \psi$, then $R_{\star}(x_2, y_1, y)$ \\
$\mathbb{M}, x \Vdash \phi \circfor \psi$ & iff & for all $y_1\in Y$, $x_2\in X$, if $\mathbb{M}, y_1 \succ \phi$ and $\mathbb{M}, x_2 \Vdash \psi$, then $R_{\circ}(y_1, x, x_2) $& \\
$\mathbb{M}, y \succ \phi \circfor \psi$ & iff & for all $x\in X$, if $\mathbb{M}, x \Vdash \phi \circfor \psi$, then $x R y$\\
$\mathbb{M}, x \Vdash \phi \circback \psi$ & iff & for all $x_1\in X$, $y_2\in Y$, if $\mathbb{M}, x_1 \Vdash \phi$ and $\mathbb{M}, y_2 \succ \psi$, then $R_{\circ}(y_2, x_1, x) $& \\
$\mathbb{M}, y \succ \phi \circback \psi$ & iff & for all $x\in X$, if $\mathbb{M}, x \Vdash \phi \circback \psi$, then $x R y$\\
\end{tabular}
\end{flushleft}

An $\mathrm{LML}^{+}$-inequality $\phi \leq \psi$ is \emph{true} in $\mathbb{M}$, denoted $\mathbb{M} \Vdash \phi \leq \psi$, if for all $x \in X$ and all $y \in Y$, if $\mathbb{M}, x \Vdash \phi$ and $\mathbb{M}, y \succ \psi$ then $xRy$. An $\mathrm{LML}^{+}$ $\phi \leq \psi$ is \emph{valid} in $\mathbb{F}$ if it is true in every model based on $\mathbb{F}$.

\subsubsection{Standard translation on RS-frames for $\mathrm{LML}^{+}$}

As in the Boolean case, each RS-model $\mathbb{M}$ for LML can be seen as a two-sorted first-order structure. Accordingly, we define correspondence languages as follows.

Let $L_1$ be the two-sorted first-order language with equality  built over the denumerable and disjoint sets of individual variables $X$ and $Y$, with binary relation symbols $\leq$, $R_{\Diamond}$, $R_{\Box}$, $R_{\lhd}$, $R_{\rhd}$, ternary relation symbols $R_{\circ}$, $R_{\star}$ and two unary predicate symbols $P_1, P_2$ for each propositional variable $p \in \mathsf{PROP}$.\footnote{The intended interpretation links $P_1$ and $P_2$ in the way suggested by the definition of LML-valuations. Indeed, every $p \in \mathsf{PROP}$ is mapped to a pair $(V_1(p), V_2(p))$ of Galois-stable sets as indicated in Definition \ref{def:RS:model}. Accordingly, the interpretation of pairs $(P_1, P_2)$ of predicate symbols is restricted to such pairs of Galois-stable sets, and hence the interpretation of universal second-order quantification is also restricted to range over such sets.}

We will further assume that $L_1$ contains denumerably many individual variables $i, j, \ldots$ corresponding to the nominals $\nomi, \nomj, \ldots \in \mathsf{NOM}$ and $n, m, \ldots$ corresponding to the co-nominals $\cnomn, \cnomm \in \mathsf{CO\text{-}NOM}$. Let $L_0$ be the sub-language which does not contain the unary predicate symbols corresponding to the propositional variables.
Let us now define the \emph{standard translation} of $\mathrm{LML}^+$ into $L_1$ recursively:
\begin{center}
\begin{tabular}{ll}
$\mathrm{ST}_x(\bot) := x \neq x$ & $\mathrm{ST}_y(\bot) := y  = y$ \\
$\mathrm{ST}_x(\top) := x = x$ & $\mathrm{ST}_y(\top) := y \neq y$\\
$\mathrm{ST}_x(p) := P_1(x)$& $\mathrm{ST}_y(p) :=  P_2(y)$\\
$\mathrm{ST}_x(\nomj) := \forall y[j\leq y \rightarrow x\leq y]$& $\mathrm{ST}_y(\nomj) := j \leq y$  \\
$\mathrm{ST}_x(\cnomm) := x \leq m$ & $\mathrm{ST}_y(\cnomm) := \forall x[x\leq m \rightarrow x\leq y]$\\
$\mathrm{ST}_x(\phi \vee \psi) := \forall y[\mathrm{ST}_y(\phi\vee \psi)\rightarrow x\leq y]$& $\mathrm{ST}_y(\phi \vee \psi) := \mathrm{ST}_y(\phi) \wedge \mathrm{ST}_y(\psi)$\\
$\mathrm{ST}_x(\phi \wedge \psi) := \mathrm{ST}_x(\phi) \wedge \mathrm{ST}_x(\psi)$& $\mathrm{ST}_y(\phi \wedge \psi) := \forall x[\mathrm{ST}_x(\phi\wedge \psi)\rightarrow x\leq y]$\\
$\mathrm{ST}_x(\Diamond \phi) := \forall y [\mathrm{ST}_y(\Diamond \phi)\rightarrow x\leq y]$& $\mathrm{ST}_y(\Diamond \phi) := \forall x[\mathrm{ST}_x(\phi)\rightarrow yR_{\Diamond} x]$\\
 $\mathrm{ST}_x(\Box \phi) := \forall y [\mathrm{ST}_y(\phi)  \rightarrow xR_{\Box} y]$ & $\mathrm{ST}_y(\Box \phi) := \forall x [\mathrm{ST}_x(\Box \phi)\rightarrow x\leq y]$ \\
$\mathrm{ST}_x(\lhd \phi) := \forall y [\mathrm{ST}_y(\lhd \phi) \rightarrow x \leq y]$ &  $\mathrm{ST}_y(\lhd \phi) := \forall y' [\mathrm{ST}_{y'}(\phi) \rightarrow y R_{\lhd} y']$\\
$\mathrm{ST}_x(\rhd \phi) := \forall x' [\mathrm{ST}_{x'}(\phi)  \rightarrow xR_{\rhd}x']$ & $\mathrm{ST}_y(\rhd \phi) := \forall x [\mathrm{ST}_x(\rhd \phi) \rightarrow x \leq y]$\\
$\mathrm{ST}_x(\phi \circ \psi) := \forall y [\mathrm{ST}_{y}(\phi \circ \psi)  \rightarrow x \leq y]$ & $\mathrm{ST}_y(\phi \circ \psi) := \forall x_1 \forall x_2 [\mathrm{ST}_{x_1}(\phi) \wedge \mathrm{ST}_{x_2}(\psi) \rightarrow R_{\circ}(y,x_1, x_2)]$\\
$\mathrm{ST}_x(\phi \star \psi) := \forall y_1 \forall y_2 [\mathrm{ST}_{y_1}(\phi) \wedge \mathrm{ST}_{y_1}(\psi)  \rightarrow R_{\star}(x, y_1, y_2)]$ & $\mathrm{ST}_y(\phi \star \psi) := \forall x [\mathrm{ST}_{x}(\phi \star \psi)  x \leq y]$\\
\end{tabular}
\begin{tabular}{ll}
$\mathrm{ST}_x(\Diamondblack \phi) := \forall y [\mathrm{ST}_{y}(\Diamondblack \phi)\rightarrow x \leq y]$& $\mathrm{ST}_y(\Diamondblack \phi) := \forall x[\mathrm{ST}_x(\phi)\rightarrow yR_{\Box}^{-1} x]$\\
$\mathrm{ST}_x({\blacksquare} \phi) := \forall y [\mathrm{ST}_y(\phi)  \rightarrow xR^{-1}_{\Diamond} y]$ & $\mathrm{ST}_y({\blacksquare} \phi) := \forall x [\mathrm{ST}_x({\blacksquare} \phi)\rightarrow x\leq y]$ \\
$\mathrm{ST}_x({\blacktriangleleft} \phi) := \forall y [\mathrm{ST}_y({\blacktriangleleft} \phi) \rightarrow x \leq y]$ &  $\mathrm{ST}_y({\blacktriangleleft} \phi) := \forall y' [\mathrm{ST}_{y'}(\phi) \rightarrow y R^{-1}_{\lhd} y']$\\
$\mathrm{ST}_x({\blacktriangleright} \phi) := \forall x' [\mathrm{ST}_{x'}(\phi)  \rightarrow xR^{-1}_{\rhd}x']$ & $\mathrm{ST}_y({\blacktriangleright} \phi) := \forall x [\mathrm{ST}_x({\blacktriangleright} \phi) \rightarrow x \leq y]$\\
$\mathrm{ST}_x(\phi \circback \psi) := \forall x_1 \forall y_2 [\mathrm{ST}_{x_1}(\phi) \wedge \mathrm{ST}_{y_2}(\psi)  \rightarrow R^{-2}_{\circ}(x, x_1, y_2)]$ & $\mathrm{ST}_y(\phi \circback \psi) := \forall x [\mathrm{ST}_{x}(\phi \circback \psi)  x \leq y]$\\
$\mathrm{ST}_x(\phi \circfor \psi) := \forall y_1 \forall x_2 [\mathrm{ST}_{y_1}(\phi) \wedge \mathrm{ST}_{x_2}(\psi)  \rightarrow R^{-1}_{\circ}(x, y_1, x_2)]$ & $\mathrm{ST}_y(\phi \circfor \psi) := \forall x [\mathrm{ST}_{x}(\phi \circfor \psi)  x \leq y]$\\
$\mathrm{ST}_x(\phi \starback \psi) := \forall y [\mathrm{ST}_{y}(\phi \starback \psi)  \rightarrow x \leq y]$ & $\mathrm{ST}_y(\phi \starback \psi) := \forall y_1 \forall x_2 [\mathrm{ST}_{y_1}(\phi) \wedge \mathrm{ST}_{x_2}(\psi) \rightarrow R^{-2}_{\star}(y,y_1, x_2)]$\\
$\mathrm{ST}_x(\phi \starfor \psi) := \forall y [\mathrm{ST}_{y}(\phi \starfor \psi)  \rightarrow x \leq y]$ & $\mathrm{ST}_y(\phi \starfor \psi) := \forall x_1 \forall y_2 [\mathrm{ST}_{x_1}(\phi) \wedge \mathrm{ST}_{y_2}(\psi) \rightarrow R^{-1}_{\star}(y,x_1, y_2)]$\\
\end{tabular}
\end{center}


Observe that if $\mathrm{ST}_x$ is applied to pure formulas (cf.\ Section \ref{Subsec:Expanded:Land}), it produces formulas in the sublanguage $L_0$. This will be most important for our purposes. The following lemma is proved by a routine induction.

\begin{lemma}\label{lemma:Standard:Translation}
For any RS-model $\mathbb{M} = (X,Y,R,\mathcal{R}, V)$ for LML, any RS-frame $\mathbb{F}$ for LML, , $w \in X$, $v \in Y$ and for all $LML^+$-formulas $\phi$ and $\psi$, the following hold:
\begin{enumerate}

\item $\mathbb{M}, w \Vdash \phi$ iff $\mathbb{M} \models \mathrm{ST}_x(\phi)[x:= w]$

\item $\mathbb{M}, v \succ \psi$ iff $\mathbb{M} \models \mathrm{ST}_y(\psi)[y:= v]$

\item
\begin{eqnarray*}
\mathbb{M} \Vdash \phi \leq \psi &\mbox{iff} &\mathbb{M} \models \forall x \forall y [(\mathrm{ST}_x(\phi) \wedge  \mathrm{ST}_y(\psi)) \rightarrow xRy]\\
&\mbox{iff} &\mathbb{M} \models \forall x  [\mathrm{ST}_x(\phi) \rightarrow  \forall y( \mathrm{ST}_y(\psi) \rightarrow xRy)]\\
&\mbox{iff} &\mathbb{M} \models \forall y [\mathrm{ST}_y(\psi) \rightarrow  \forall x( \mathrm{ST}_x(\phi) \rightarrow xRy)].
\end{eqnarray*}

\item\begin{eqnarray*}
\mathbb{F} \Vdash \phi \leq \psi &\mbox{iff} &\mathbb{F} \models \forall \overline{P} \forall \overline{j} \forall \overline{m}\forall x \forall y [(\mathrm{ST}_x(\phi) \wedge  \mathrm{ST}_y(\psi)) \rightarrow xRy]\\
&\mbox{iff} &\mathbb{F} \models \forall\overline{P} \forall \overline{j} \forall \overline{m} \forall x  [\mathrm{ST}_x(\phi) \rightarrow  \forall y( \mathrm{ST}_y(\psi) \rightarrow xRy)]\\
&\mbox{iff} &\mathbb{F} \models \forall\overline{P} \forall \overline{j} \forall \overline{m} \forall y [\mathrm{ST}_y(\psi) \rightarrow  \forall x( \mathrm{ST}_x(\phi) \rightarrow xRy)].
\end{eqnarray*}
where $\overline{P}$, $\overline{j}$, and $\overline{m}$ are, respectively, the vectors of all predicate symbols corresponding to propositional variables, individual variables corresponding to nominals, and individual variables corresponding to co-nominals, occurring in $\mathrm{ST}_x(\phi)$ and $\mathrm{ST}_y(\psi)$.

\end{enumerate}

\end{lemma}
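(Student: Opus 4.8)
The plan is to prove (1) and (2) by a single simultaneous induction on the complexity of the $\mathrm{LML}^+$-formula, since the satisfaction and co-satisfaction clauses of Definition \ref{def:RS:model} for $\wedge$, $\vee$ and each primitive or adjoint connective are mutually recursive between $\Vdash$ and $\succ$; parts (3) and (4) will then follow by short, purely logical arguments from (1)--(2) together with the definitions of truth in a model and validity in a frame.

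For the base cases I would juxtapose, for each of $\bot$, $\top$, $p$, $\nomj$, $\cnomm$, the relevant pair of clauses in Definition \ref{def:RS:model} with the corresponding pair of $\mathrm{ST}$-clauses. The cases $\bot,\top,p$ are immediate (e.g.\ $\mathbb{M}, w \Vdash p$ iff $w \in V_1(p)$ iff $\mathbb{M} \models P_1(w)$ iff $\mathbb{M} \models \mathrm{ST}_x(p)[x:= w]$). For a nominal $\nomj$, recall $V(\nomj) = (\ell(u(x_0)), u(x_0))$ for some witness $x_0 \in X$, which is the intended interpretation of the individual variable $j$ of $L_1$; unwinding $u(x_0) = R[x_0]$ and $\ell(u(x_0)) = \{ w \mid \forall y\,(x_0 R y \rightarrow w R y)\}$, and reading the cross-sort symbol $\leq$ as $R$, gives $w \in V_1(\nomj)$ iff $\forall y\,(j \leq y \rightarrow w \leq y)$ (which is $\mathrm{ST}_x(\nomj)[x:=w]$) and $v \in V_2(\nomj)$ iff $j \leq v$ (which is $\mathrm{ST}_y(\nomj)[y:=v]$); the co-nominal $\cnomm$ is handled symmetrically using the dual description $V(\cnomm) = (\ell(y_0), u(\ell(y_0)))$.

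For the inductive step I would use the observation that each semantic clause of Definition \ref{def:RS:model} is, syntactically, the ``de-translation'' of the matching $\mathrm{ST}$-clause. For example, $\mathbb{M}, v \succ \Diamond \phi$ iff for all $x \in X$, $\mathbb{M}, x \Vdash \phi$ implies $v R_\Diamond x$; the induction hypothesis (part (1) for $\phi$) rewrites the antecedent as $\mathrm{ST}_x(\phi)$, giving $\mathbb{M} \models \mathrm{ST}_y(\Diamond\phi)[y:=v]$; next, $\mathbb{M}, w \Vdash \Diamond \phi$ iff for all $y \in Y$, $\mathbb{M}, y \succ \Diamond \phi$ implies $w R y$, which by the $\succ$-clause just established becomes $\mathbb{M} \models \mathrm{ST}_x(\Diamond\phi)[x:=w]$. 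One traverses $\wedge, \vee, \Box, \lhd, \rhd, \circ, \star$ and the adjoints $\Diamondblack, \blacksquare, \blhd, \brhd, \circfor, \circback, \starfor, \starback$ in exactly this fashion; for the adjoint connectives the only difference is that the relation on the right of the implication is a converse or coordinate-permutation ($R_\Diamond^{-1}$, $R_\circ^{-1}$, $R_\star^{-2}$, and so on), which by construction matches the corresponding $\mathrm{ST}$-clause. In every case the induction hypothesis is applied only to the immediate subformulas, which are of strictly smaller complexity.

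For (3), unfolding the definition of truth in a model, $\mathbb{M} \Vdash \phi \leq \psi$ iff for all $w \in X$ and $v \in Y$, $\mathbb{M}, w \Vdash \phi$ and $\mathbb{M}, v \succ \psi$ imply $w R v$; substituting the equivalences from (1) and (2) yields the first displayed line, and the other two follow by moving the universal quantifier over $x$ (resp.\ $y$) to the front. For (4), $\mathbb{F} \Vdash \phi \leq \psi$ means that the conclusion of (3) holds for every model $\mathbb{M} = (\mathbb{F}, V)$ on $\mathbb{F}$; since a valuation $V$ is precisely a choice of a pair of Galois-stable sets for each propositional variable together with a witness in $X$ (resp.\ $Y$) for each nominal (resp.\ co-nominal), quantifying over all such $V$ is captured by the Galois-stable-restricted second-order quantifier $\forall \overline{P}$ and the first-order quantifiers $\forall\overline{j}\,\forall\overline{m}$. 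I expect the only point requiring care to be the correct set-up of the simultaneous induction in (1)--(2): the induction hypothesis must assert both (1) and (2) for all formulas of smaller complexity, so that the $\Vdash$-clause of a connective may legitimately invoke the just-proved $\succ$-clause for the same formula and conversely; once this is arranged, together with the small computation matching the nominal and co-nominal clauses against the description of Galois-stable sets, the argument is a mechanical traversal of the syntax.
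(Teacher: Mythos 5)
Your proposal is correct and coincides with what the paper has in mind: the paper dispatches this lemma with the single remark that it ``is proved by a routine induction,'' and your simultaneous induction on (1)--(2) (establishing, within each connective case, the $\succ$-equivalence first and then feeding it into the $\Vdash$-equivalence), followed by the unfolding of the truth and validity definitions for (3)--(4), is precisely that routine induction spelled out. The points you flag as needing care --- the mutual recursion between $\Vdash$ and $\succ$ and the matching of the nominal/co-nominal clauses against $\ell$ and $u$ --- are indeed the only nontrivial bookkeeping, and you handle both correctly.
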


\begin{example}
Consider the LML-inequality $p\leq \Diamond p$. The algorithm $\mathsf{ALBA}$ introduced in Section \ref{Spec:Alg:Section} produces an equivalent pure quasi-inequality which simplifies to $\nomi \leq \Diamond \nomi$. We illustrate the standard translation by instantiating the statement of Lemma \ref{lemma:Standard:Translation}.4 with $\nomi$ for $\phi$ and $\Diamond \nomi$ for $\psi$:

\begin{center}
\begin{tabular}{c l}
&$\mathbb{F} \Vdash \nomi \leq \Diamond \nomi$\\
iff &$\mathbb{F} \models \forall x \forall y \forall i [(\mathrm{ST}_{x}(\nomi) \wedge (\mathrm{ST}_{y}(\Diamond \nomi)) \rightarrow x \leq y]$\\
iff &$\mathbb{F} \models \forall x \forall y \forall i [(\forall y'[i \leq y' \rightarrow x \leq y'] \wedge \forall x'[\mathrm{ST}_{x'}(\nomi) \rightarrow y R_{\Diamond} x' ]) \rightarrow x \leq y]$\\
iff &$\mathbb{F} \models \forall x \forall y \forall i [(\forall y'[i \leq y' \rightarrow x \leq y'] \wedge \forall x'[\forall y''[i \leq y'' \rightarrow x' \leq y''] \rightarrow y R_{\Diamond} x' ]) \rightarrow x \leq y]$.
\end{tabular}
\end{center}

\comment{
$\quad$\\
\begin{center}
\begin{tabular}{c l}
& $\forall p[p\leq \Diamond p]$\\
iff & $\forall p\forall \nomi\forall \cnomm[(\nomi\leq p\ \&\ \Diamond p\leq \cnomm )\Rightarrow i\leq \cnomm]$\\
iff & $\forall \nomi\forall \cnomm[\Diamond \nomi\leq \cnomm \Rightarrow i\leq \cnomm]$\\
iff & $\forall \nomi[\nomi \leq \Diamond \nomi]$\\
iff & $\forall x[\overline{x} \subseteq \langle R_{\Diamond}\rangle \overline{x}]$\\
iff & $\forall x\forall x'[x'\in \overline{x} \Rightarrow x'\in  \langle R_{\Diamond}\rangle \overline{x}]$\\
iff & $\forall x\forall x'[\forall y[xRy\Rightarrow x'Ry] \Rightarrow
\forall y[\forall  x_1[x_1\in \overline{x}\ \Rightarrow\ y R_{\Diamond}x_1]\ \Rightarrow\ x'\leq y]]$\\
iff & $\forall x\forall x'[\forall y[xRy\Rightarrow x'Ry] \Rightarrow
\forall y[\forall  x_1[\forall y[xRy\Rightarrow x_1Ry] \Rightarrow\ y R_{\Diamond}x_1]\ \Rightarrow\ x'\leq y]]$\\

\end{tabular}
\end{center}
}
The  theory developed in the present paper (cf.\ Theorems  \ref{Crctns:Theorem} and \ref{Thm:ALBA:Canonicity}) guarantees that the LE-logic obtained by adding $p\leq \Diamond p$ to the basic logic $\mathbf{L}_\mathrm{LML}$ (cf.\ Definition \ref{def:DLE:logic:general}) is sound and complete w.r.t.\ the elementary class of RS-frames for LML defined by the sentence above.
\end{example}

\begin{example}
\label{example:linear involution}
The treatment for $\mathrm{LML}$ and its expanded language $\mathrm{LML}^+$ can be applied also to inequalities in the language of the multiplicative-additive fragment of linear logic (MALL).\footnote{In \cite{Gehrke},  MALL is understood as the logic of  a class of normal LEs $(L, \circ, \slash, \backslash, \bot, 1)$ such that $(L, \circ, \slash, \backslash)$ is a residuated lattice, $\circ$ is commutative (hence $\slash$ and $\backslash$ can be identified), associative and with unit $1$, and the identity
$a = a^{\bot\bot}$ holds.} In \cite{Gehrke}, the first-order correspondent of  the inequality $p^{\bot\bot} \leq p$, where ${(\cdot)}^\bot$ is defined as ${(\cdot)}\backslash \bot$, has been computed, together with those of other inequalities concurring to the axiomatization of MALL, by means of a dual characterization argument, although in absence of a general theory identifying Sahlqvist and inductive inequalities across different signatures, or providing a formal definition of standard translation. In this computation, the two stages of reduction and translation mentioned at the beginning of the present section can be clearly recognized. In \cite{Gehrke}, the reduction stage is argued as follows:

\begin{quote}
{\it Furthermore, the mapping [$a \mapsto a^{\bot\bot}$] is completely
join-preserving and therefore it again suffices to consider completely join-irreducible elements.}
\end{quote}
In the notation of the present paper, this corresponds to performing an ALBA-reduction on $p^{\bot\bot} \leq p$ which delivers $\nomi^{\bot\bot} \leq \nomi$. The ensuing translation stage computed in \cite{Gehrke} takes (the notational counterpart of) the  pure inequality $\nomi^{\bot\bot} \leq \nomi$ as input.
The quoted claim above is not explicitly justified, but is a consequence of the following observation, which appears early on in the text \cite{Gehrke}:
\begin{quote}
{\it Implication sends joins in the
first coordinate to meets, hence ${(\cdot)}^\bot$ sends joins to meets. As ${(\cdot)}^\bot$ is a bijection, it follows that it
is a (bijective) lattice homomorphism $L \to L^\partial$, where $L^\partial$ is the lattice obtained by reversing the
order in $L$.}
\end{quote}
If $L$ is a perfect lattice, the fact that ${(\cdot)}^\bot: L \to L^\partial$ is a complete lattice homomorphism implies that it is both completely join- and meet-reversing, from which it follows that the assignment $a \mapsto a^{\bot\bot}$ is indeed completely join-preserving.
Moreover,  not only ${(\cdot)}^{\bot\bot}$ being completely join-preserving follows from the validity of the identity $a = a^{\bot\bot}$, but  also does not follow from the remaining MALL-axioms. Indeed, all the axioms of classical linear algebras (identifying $\circ$ and $\wedge$)  hold on    Heyting algebras but  the identity $a = a^{\bot\bot}$, and the 5-element Heyting algebra with two atoms 
is an example of one in which ${(\cdot)}^\bot$ is not a bijection and  ${(\cdot)}^{\bot\bot}$ is not join-preserving (indeed, letting $a$ be the meet-irreducible element which is not join-irreducible, it is easy to verify that $a^\bot = \bot = \top^\bot$, which implies that ${(\cdot)}^{\bot\bot}$ does not preserve the join of the two join-irreducible elements below $a$). Thus, in their reduction stage for the inequality $p^{\bot\bot} \leq p$, the authors of \cite{Gehrke} rely on a property which depends on the same inequality $p^{\bot\bot} \leq p$. This is different from the common practice of relying on the additional properties provided by other, logically independent axioms. Thus, the first-order formula they obtain is not the equivalent first-order correspondent of the input inequality over the class of frames defined by the first-order correspondents of the remaining axioms, but rather, over a strictly smaller class. Hence, the duality result claimed in  \cite[Theorem 15]{Gehrke} does not hold as stated. This theorem can be easily amended. The amendment hinges on performing an alternative  ALBA-reduction step  of the inequality $p^{\bot\bot} \leq p$ without relying on the additional assumption of $a \mapsto a^{\bot\bot}$ being completely join-preserving on perfect lattices. Such a  reduction, performed in Example \ref{example: reduction involution}, yields $\cnomm^{\bot\bot} \leq \cnomm$. Below, we shall compute the standard translation of this inequality, which provides the required amendment to  \cite[Theorem 15]{Gehrke}. To simplify the computation, we will make use of the fact that $\mathrm{ST}_{y}(\bot)$ is defined as the tautology $y=y$ for every $y$ in $Y$.

\begin{center}
\begin{tabular}{c l}
&$\mathbb{F} \Vdash \cnomm^{\bot\bot} \leq \cnomm$\\
iff &$\mathbb{F} \models \forall x \forall y \forall m [(\mathrm{ST}_{x}(\cnomm^{\bot\bot}) \wedge \mathrm{ST}_{y}(\cnomm)) \rightarrow x \leq y]$\\
iff &$\mathbb{F} \models \forall x \forall y \forall m [(\mathrm{ST}_{x}((\cnomm\backslash \bot)\backslash \bot) \wedge \forall x'[x'\leq m \rightarrow x'\leq y]) \rightarrow x \leq y]$\\
iff &$\mathbb{F} \models \forall x \forall y \forall m [(\forall x_1 \forall y_2 [(\mathrm{ST}_{x_1}(\cnomm\backslash \bot) \wedge \mathrm{ST}_{y_2}(\bot)) \rightarrow R^{-2}_{\circ}(x, x_1, y_2)] \wedge$\\
  & $\quad\quad \forall x'[x'\leq m \rightarrow x'\leq y]) \rightarrow x \leq y]$\\
iff &$\mathbb{F} \models \forall x \forall y \forall m [(\forall x_1 [\mathrm{ST}_{x_1}(\cnomm\backslash \bot)  \rightarrow \forall y_2 [R^{-2}_{\circ}(x, x_1, y_2)]] \wedge$\\
  & $\quad\quad \forall x'[x'\leq m \rightarrow x'\leq y]) \rightarrow x \leq y]$\\
iff &$\mathbb{F} \models \forall x \forall y \forall m [(\forall x_1 [\forall x_1' \forall y_2' [(\mathrm{ST}_{x_1'}(\cnomm) \wedge \mathrm{ST}_{y_2'}(\bot))\rightarrow  R^{-2}_{\circ}(x_1, x_1', y_2')]$\\
& $\quad\quad\rightarrow \forall y_2[R^{-2}_{\circ}(x, x_1, y_2)]] \wedge \forall x'[x'\leq m \rightarrow x'\leq y]) \rightarrow x \leq y]$\\
iff &$\mathbb{F} \models \forall x \forall y \forall m [(\forall x_1 [\forall x_1'  [x_1'\leq m \rightarrow \forall y_2'[R^{-2}_{\circ}(x_1, x_1', y_2')]]$\\
& $\quad\quad \rightarrow \forall y_2 [R^{-2}_{\circ}(x, x_1, y_2)]] \wedge \forall x'[x'\leq m \rightarrow x'\leq y]) \rightarrow x \leq y]$.\\
\end{tabular}
\end{center}

\end{example}

\subsection{TiRS graph semantics}
\label{ssec:TiRS}

TiRS-graphs are the second type of state-based semantics  we are going to report on. They are based on structures, referred to as {\em TiRS-graphs}, which are closely related to the topological structures dual to general lattices in Plo\v{s}\v{c}ica's duality \cite{Pl95}. 
Indeed, the definition of TiRS-graphs describes the structures obtained by forgetting the topology of the dual spaces in  Plo\v{s}\v{c}ica's duality. Hence, TiRS-graphs are the natural candidates for a `discrete Plo\v{s}\v{c}ica's duality', which has been explored in \cite{CGH15}, 
 and which provides an alternative to finite RS-polarities as duals of finite lattices.
Unlike RS-polarities, TiRS-graphs are based  on one  domain:
\begin{definition}
A \emph{TiRS graph} is a reflexive directed graph $\X=(X,E)$ that satisfies the following conditions:

\begin{enumerate}
\item[(S)] for every $x, y \in X$, if $x \neq y$ then $xE\neq yE$ or $Ex \neq Ey$;
\item[(R)]
\begin{itemize}
\item[(i)] for all $x, z \in X$, if $zE\subsetneq xE$ then $(z,x) \notin E$;
\item[(ii)] for all $y, z \in X$, if $Ez \subsetneq Ey$ then $(y,z) \notin E$;
\end{itemize}
\item[(Ti)$'$] for all $x,y \in X$, if $(x,y) \in E$, then there exists $z$ such that
$(x,z)\in E$ and $(z,y)\in E$
and for every $w\in X$, $(z,w)\in E$
implies $(x,w)\in E$
and $(w,z)\in E$
implies $(w,y) \in E$,
\end{enumerate}
where
$xE=\{\, y \in X \mid (x,y) \in E \,\}$\ and $Ex=\{\, y \in X \mid (y,x) \in E  \,\}$.
\end{definition}

We sketch the duality between finite TiRS-graphs and finite lattices, and refer the reader to \cite{CGH15} for a more general treatment. For any finite lattice $\bba$, its associated TiRS-graph is given by $\bba_{\bullet}: = (\mathbb{X}, E)$ where $X = \{ (x,y) \mid x \in \jty(\bba), y \in \mty(\bba) \ \mbox{ and } \ x \not \leq y\}$ and $E \subseteq X \times Y$ such that $(x,y)E(x',y')$ iff $x \nleq y$.

Conversely, any finite TiRS-graph $\X = (X, E)$  defines the RS-polarity $\rho(\X) = (X,X, E^{c})$ where $E^{c}$ is the relative complement of $E$ in $X \times X$.  Hence the lattice $\X^{\bullet}$ associated with $\X$ is the one given by the Galois-stable sets of $\rho(\X)$.
Notice that this correspondence induces a representation of the states $\z$ of a TiRS-graph as tuples $(\z_J, \z_M)\subseteq \jty(\X^\bullet)\times \mty(\X^\bullet)$ such that  $\z_J\nleq\z_M$.
\begin{prop}[\cite{CGH15} Corollary 3.2]\label{Prop:Finite:Tirs:Dual}
For every finite lattice $\bba$ and finite TiRS-graph $\X$ we have
\[
\bbA \cong (\bba_{\bullet})^{\bullet} \quad \mbox{ and } \quad  \X \cong (\X^{\bullet})_{\bullet}.
\]
\end{prop}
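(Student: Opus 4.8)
The plan is to prove the two isomorphisms directly, in each case by writing down an explicit map and checking it is a bijection compatible with the relevant structure. Because everything here is finite, every lattice in sight is automatically perfect (its completely join-irreducibles join-generate it and its completely meet-irreducibles meet-generate it), so no density or compactness argument is needed and all the work is finite and combinatorial.

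For $\bbA\cong(\bba_\bullet)^\bullet$, write $\bba_\bullet=(X,E)$ with $X=\{(x,y)\mid x\in\jira,\ y\in\mira,\ x\not\leq y\}$ and $(x,y)E(x',y')$ iff $x\not\leq y'$, so that $\rho(\bba_\bullet)=(X,X,E^c)$ has $(x,y)E^c(x',y')$ iff $x\leq y'$, and $(\bba_\bullet)^\bullet$ is the lattice of Galois-stable subsets of $X$ under the induced connection $(u,\ell)$. I would define $h\colon\bbA\to(\bba_\bullet)^\bullet$ by $h(a):=\{(x,y)\in X\mid x\leq a\}$ and then check, in order: (i) $h(a)$ is Galois-stable, by computing $u(h(a))=\{(x',y')\in X\mid a\leq y'\}$ and then $\ell u(h(a))=h(a)$, both steps coming down to $\jira$ join-generating $\bbA$ and $\mira$ meet-generating it (and to each $x\in\jira$ having $x\not\leq y$ for some $y\in\mira$, so that $X$ ``sees'' it, and dually); (ii) $h$ is order-preserving, trivially, and order-reflecting, since $\jira$ join-generates $\bbA$; (iii) $h$ is surjective, since for Galois-stable $U$ the element $a_U:=\bigvee\{x\mid(x,y)\in U\text{ for some }y\}$ satisfies $h(a_U)=\ell u(U)=U$ by the same generating facts. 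An order-isomorphism of lattices is a lattice isomorphism, which settles this case.

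For $\X\cong(\X^\bullet)_\bullet$, put $L:=\X^\bullet$, the lattice of Galois-stable subsets of $\rho(\X)=(X,X,E^c)$ under $(u,\ell)$; the states of $(\X^\bullet)_\bullet=L_\bullet$ are the pairs $(p,q)$ with $p\in\jty(L)$, $q\in\mty(L)$, $p\not\subseteq q$, with edge $(p,q)E'(p',q')$ iff $p\not\subseteq q'$. For $z\in X$ set $z_J:=\ell u(\{z\})=\{x\in X\mid xE\subseteq zE\}$ and $z_M:=\ell(\{z\})=X\setminus Ez$; both are Galois-stable, and reflexivity of $E$ gives $z\in z_J\setminus z_M$. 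I would verify: (a) $z_J\in\jty(L)$ and $z_M\in\mty(L)$, so that $\Phi(z):=(z_J,z_M)$ is a well-defined state of $L_\bullet$; this is the first use of clause (R) of the definition of a TiRS-graph, which forces these smallest and largest Galois-stable sets around $z$ to be genuinely irreducible rather than mere closures. (b) $\Phi$ is injective, since $z_J=z'_J$ forces $zE=z'E$ (as $z\in z'_J$ and $z'\in z_J$) and $z_M=z'_M$ forces $Ez=Ez'$, whence $z=z'$ by clause (S). (c) $\Phi$ preserves and reflects edges, since $z_J\not\subseteq z'_M$ iff some $w$ has $wE\subseteq zE$ and $(w,z')\in E$, which is equivalent to $(z,z')\in E$ (forward: take $w:=z$; backward: $(w,z')\in E$ forces $z'\in wE\subseteq zE$). (d) $\Phi$ is surjective: a completely join-irreducible $p$ of $L$, with unique lower cover $p_\ast$, equals $z_J$ for every $z\in p\setminus p_\ast$, dually for meet-irreducibles, and the delicate point is that whenever $p\not\subseteq q$ a \emph{single} $z$ realizes $p=z_J$ and $q=z_M$ at once; this is exactly the content of clause (Ti)$'$, which internalizes at the level of the graph the implication ``$p\not\subseteq q$ implies $\exists z\,(z\in p\ \text{and}\ z\notin q)$''.

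The main obstacle is items (a) and (d) in the second step: that $X$ does not merely embed into, but coincides with, the set of irreducible pairs of $\X^\bullet$ is exactly what the three defining clauses (S), (R), (Ti)$'$ of a TiRS-graph are engineered to guarantee. The cleanest route, I expect, is first to set up, following the polarity theory recalled just after Definition \ref{def:polarity} (where ``separating'' is what makes the point-to-irreducible maps injective and ``reduced'' what makes them surjective), the two correspondences between $X$ and $\jty(\X^\bullet)$ and between $X$ and $\mty(\X^\bullet)$, and only afterwards to fuse a compatible join-irreducible and meet-irreducible into a single state via (Ti)$'$; the remaining verifications are then routine and finite.
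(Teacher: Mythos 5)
First, a point of order: the paper does not prove this proposition at all --- it is imported verbatim from \cite{CGH15} (Corollary 3.2), and the surrounding text only ``sketches the duality''. So there is no in-paper proof to compare yours against; what follows evaluates your argument on its own terms. Your first half, $\bbA \cong (\bba_{\bullet})^{\bullet}$ via $h(a)=\{(x,y)\mid x\leq a\}$, is correct and complete in outline (it is the standard finite argument, using only that $\jty(\bbA)$ join-generates, $\mty(\bbA)$ meet-generates, and that every nontrivial irreducible occurs as a coordinate of some pair). In the second half, your steps (b) and (c) are also correct and fully carried out: injectivity from (S), and the computation showing $z_J\not\subseteq z'_M$ iff $(z,z')\in E$ is exactly right.

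The genuine gap is step (d), and it is not fillable from the definitions as printed in this paper. Clause (Ti)$'$ applied to the edge $(w,v)$ hands you a $z$ with $zE\subseteq wE$ and $Ez\subseteq Ev$, i.e.\ only $z_J\subseteq p$ and $q\subseteq z_M$; your claim that a single $z$ realizing $p=z_J$ \emph{and} $q=z_M$ ``is exactly the content of clause (Ti)$'$'' silently upgrades these inclusions to equalities, and nothing in your outline (nor in clause (R) as stated here) performs that upgrade. Concretely, take $X=\{a,b\}$ with $E=\{(a,a),(b,b),(a,b)\}$. This reflexive graph satisfies (S), (R)(i), (R)(ii) and (Ti)$'$ exactly as printed (the only witness ever needed is $z=x$ itself), yet $\rho(\X)$ has Galois-stable sets $\varnothing\subsetneq\{b\}\subsetneq X$, so $\X^{\bullet}$ is the three-element chain and $(\X^{\bullet})_{\bullet}$ has three vertices: $\Phi(a)=(X,\{b\})$, $\Phi(b)=(\{b\},\varnothing)$, and the state $(X,\varnothing)$ is missed --- the (Ti)$'$-witness for the edge $(a,b)$ is $z=a$, for which $q=\varnothing\subsetneq z_M=\{b\}$. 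So the proposition is actually false for the definition of TiRS graph as transcribed in this paper; conditions (R) and (Ti) here are a weakened rendering of those in \cite{CGH15} (where the reducedness-type clause is an existential minimality condition mirroring that of reduced polarities, strong enough to force $zE=wE$ and $Ez=Ev$). A correct proof of (a) and (d) therefore has to start from the original formulation in \cite{CGH15}, not from Definition of TiRS graphs given here.
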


In the light of the duality given by Proposition \ref{Prop:Finite:Tirs:Dual}, we can define TiRS-graph semantics for LML. We illustrate this only in the case of $\Diamond$ and $\Box$, and refer the reader to \cite{conradie-craig} for a more complete picture.  As in the previous section, our starting point is an assignment $v: \mathrm{LML} \rightarrow \Cc$ into a finite LML-algebra $\Cc$ the lattice-reduct of which we identify with a TiRS-graph $\X^{\bullet}$ for some finite TiRS-graph $\X$. We aim to dualize the assignment $v$ as a pair $(\Vdash, \succ)$ consisting of a satisfaction (assertion) relation $\Vdash \subseteq X \times \mathrm{LML}$ and a co-satisfaction (denial) relation $\succ \subseteq X \times \mathrm{LML}$. Analogously to the considerations for RS-frames above, the main desiderata of this dualization are that  for every $\z\in X$,

\begin{equation}\label{eq:desiderata:satisfaction:Tirs}
\z \Vdash \phi\quad \mbox{ iff }\quad \z_J\leq \overline{v}(\phi) \quad \mbox{ and } \quad \z \succ \phi\quad \mbox{ iff }\quad \overline{v}(\phi) \leq \z_M.
\end{equation}
The satisfaction and co-satisfaction relations $\Vdash_{\overline{v}}$ and $\succ_{\overline{v}}$ need to be defined inductively, taking the conditions above into account. As for the basic step,  for every $x \in \jty(\Cc)$, $y \in \mty(\Cc)$ and every $p\in \mathsf{PROP} \cup \{\top, \bot \}$, we define
\begin{equation}
\z \Vdash p\quad \mbox{ iff }\quad \z_j \leq \overline{v}(p) \quad \mbox{ and } \quad \z \succ p \quad \mbox{ iff }\quad \overline{v}(p) \leq \z_M.
\end{equation}
For the inductive step, suppose that $\phi = \Box \psi$, and (\ref{eq:desiderata:satisfaction:Tirs}) holds for any $\psi$ of strictly lower complexity than $\phi$. Since $\Cc$ is a finite (and hence perfect) lattice and $\overline{v}(\Box\psi)\in \Cc$, we get $\overline{v}(\psi) = \bigwedge\{y' \in \mty(\Cc) \mid \overline{v}(\psi) \leq y'\} = \bigwedge\{\z_M\in \mty(\Cc)\mid \z \succ \psi\}$.\footnote{Note that $\Cc$ being perfect implies that  for any $y' \in \mty(\Cc)$ there exists at least one $x \in \jty(\Cc)$ such that $(x,y') \in \Cc_{\bullet}$, i.e., $x \nleq y'$. Indeed, otherwise $x = \bigwedge \mty(\Cc) =  \bot$. This implies that for every $y'\in \mty(\Cc)$ there exists at least one $\z\in \X$ such that $y' = \z_M$. Furthermore, if $\z \succ \psi$, then $\z' \succ \psi$ for every $\z'\in \X$ such that $\z'_M = \z_M$. Similar considerations apply to every $x'\in \jty(\Cc)$.} Since by assumption $\overline{v}$ is a LML-homomorphism, $\overline{v}(\Box \psi) = \Box^{\Cc}\overline{v}(\psi) = \Box^{\Cc}(\bigwedge\{\z'_M\in \mty(\Cc)\mid \z' \succ \psi\})$, and since $\Box^{\Cc}$ is (completely) meet-preserving, we get:
\[
\overline{v}(\Box \psi) = \bigwedge\{\Box^{\Cc}\z'_M \mid y'\in \mty(\Cc)\mbox{ and } \z' \succ \psi\}.
\]
Hence, for any $\z \in \X$, we have

\begin{center}
\begin{tabular}{c l}
$\z_J \leq \overline{v}(\Box \psi) $ &iff $\quad\z_J \leq \bigwedge\{\Box^{\Cc}\z'_M \mid y'\in \mty(\Cc)\mbox{ and } \z' \succ \psi\}$\\
&iff  $\quad\forall \z'[\z' \succ \psi \,\Rightarrow\, \z_J \leq \Box^{\Cc} \z'_M]$\\
&iff $\quad\forall \z'[\z_J \nleq \Box^{\Cc} \z'_M\ \Rightarrow \z' \not{\succ} \psi]$\\
&iff $\quad\forall \z'[\z R_{\Box} \z'\ \Rightarrow \z' \not{\succ} \psi],$\\
\end{tabular}
\end{center}
where $R_\Box\subseteq X\times X$ is defined by  $\z R_{\Box} \z'$ iff $\z_J \nleq \Box^{\Cc} \z'_M$. The last line of the displayed chain of equivalences above can be taken as the defining clause of the satisfaction relation for $\Box$-formulas.
As for the co-satisfaction, for any $\z \in \X$, we have

\begin{center}
\begin{tabular}{c l}
$\overline{v}(\Box \psi)\leq\z_M  $ &iff $\bigvee\{\z'_J\in \jty(\Cc)\mid \z'_J\leq \overline{v}(\Box \psi)\}\leq \z_M$\\
&iff $\bigvee\{\z'_J\in \jty(\Cc)\mid  \z'\Vdash \Box \psi\}\leq \z_M$\\
&iff $\forall \z'[ \z'\Vdash \Box \psi\ \Rightarrow\  \z'_J\leq \z_M]$\\
&iff $\forall \z'[\z'_J\nleq \z_M \ \Rightarrow\  \z'\not \Vdash \Box \psi]$\\
&iff $\forall \z'[\z'E\z \ \Rightarrow\  \z'\not \Vdash \Box \psi]$.\\
\end{tabular}
\end{center}

Summing up, any interpretation $\overline{v}:\mathrm{LML}\to \Cc$ into a finite LML-algebra can be dually characterized as a pair of (satisfaction and co-satisfaction) relations $(\Vdash, \succ)$ on the states of the TiRS-graph associated with $\Cc$. Again, as observed in the previous section, this dual characterization is in fact itself a correspondence argument.

The TiRS-graph semantics for the full LML signature, together with the definition of the relational duals of each operation,  can be derived by  arguments similar to the one given above. We are not going to present it in full here, but in the remainder of this section we are only going to consider the $\{\Diamond, \Box\}$-fragment of LML.

Informally, a \emph{TiRS-frame} for  LML is a structure $\mathbb{F} = (X, E,\mathcal{R})$ where $(X,E)$ is a TiRS-graph, and $\mathcal{R} = (R_{\Diamond}, R_{\Box}, R_{\lhd},$ $R_{\rhd}, R_{\circ}, R_{\star})$ is a tuple of additional relations on $X$ such that $R_{\Diamond}$,  $R_{\Box}$, $R_{\lhd}$, $R_{\rhd}$ are binary and $R_{\circ}$, and $R_{\star}$ are ternary. These relations satisfy additional compatibility conditions guaranteeing that the operations associated with them map Galois-stable sets to Galois-stable sets.

Because the specifics of the compatibility conditions play no role in the development of the present paper, we will not discuss them in full. The definition of $R_{\Diamond}$ in $\Cc_\bullet$  arises from a computation analogous to the one above, and is given by
\[\z R_{\Diamond}\z'  \quad \text{iff} \quad \Diamond \z'_J \nleq \z_M.\]
A \emph{TiRS-model} for LML is a tuple $\mathbb{M} = (\mathbb{X}, \mathcal{R}, V)$ where $(\mathbb{X}, \mathcal{R})$ is an TiRS-frame for LML and $V$ maps any $p \in \mathsf{PROP}$ to a subset  $V(p)\subseteq X$ which is a Galois-stable set of $\rho(\mathbb{X})$.

A \emph{TiRS-model} for LML$^+$ is a tuple $\mathbb{M}$ as above such that the map $V$ is extended to $\mathsf{NOM}$ and $\mathsf{CO}$-$\mathsf{NOM}$ as follows:  any nominal $\nomi$ is assigned to $\ell(u(\{\z\}))$ for some $\z\in X$ and any co-nominal   $\cnomm$ to $\ell(\{\z\})$ for some $\z\in X$, where $u$ and $l$ are the Galois connection maps of $\rho(\X)$.

For every TiRS-model $\mathbb{M} = (\mathbb{X}, \mathcal{R}, V)$, $\z \in X$, and any formula $\phi$ of the $\{\Diamond, \Box\}$-fragment of LML, the satisfaction and co-satisfaction relations $\mathbb{M}, \z \Vdash \phi$ and $\mathbb{M}, \z \succ \phi$ are defined by simultaneous recursion as follows:
\begin{flushleft}
\begin{tabular}{lllllll}
$\mathbb{M}, \z \Vdash \bot$ && never & &$\mathbb{M}, \z \succ \bot$ && always\\
$\mathbb{M}, \z \Vdash \top$ &&always & &$\mathbb{M}, \z \succ \top$ &&never\\
$\mathbb{M}, \z \Vdash p$ & iff & $\z\in V(p)$ & &$\mathbb{M}, \z \succ p$ & iff & $\forall \z'[\z'E\z \Rightarrow \z' \not\Vdash p]$\\
$\mathbb{M}, \z \Vdash \nomi$ & iff & $\z \in V(\nomi)$ & &$\mathbb{M}, \z \succ \nomi$ & iff & $\forall \z' [\z'E\z \Rightarrow \z' \not\Vdash \nomi]$\\
$\mathbb{M}, \z \Vdash \cnomn$ & iff & $\forall \z' [\z E\z' \Rightarrow \mathbb{M},\z' \not\succ \cnomn]$ &  &$\mathbb{M}, \z \succ \cnomn$ & iff & $\z \in V(\cnomn)$\\
$\mathbb{M}, \z \succ \phi \vee \psi$ &iff &$\mathbb{M},\z\succ \phi \text{ and } \mathbb{M},\z \succ \psi$%
& &$\mathbb{M}, \z \Vdash \phi \vee \psi$ &iff &$\forall \z' [\z E\z'   \Rightarrow \mathbb{M},\z' \not\succ \phi \vee \psi]$\\
$\mathbb{M}, \z \Vdash \phi \wedge \psi$ &iff &$\mathbb{M},\z\Vdash \phi \text{ and } \mathbb{M},\z \Vdash \psi$%
& &$\mathbb{M},\z \succ \phi \wedge\psi$ &iff &$\forall \z'[\z' E\z \Rightarrow \mathbb{M},\z'\not\Vdash \phi \wedge \psi]$\\
$\mathbb{M}, \z \succ \Diamond\phi$ &iff &$\forall \z' [\z R_{\Diamond}\z'  \Rightarrow \mathbb{M},\z' \not \Vdash \phi]$%
& &$\mathbb{M}, \z \Vdash \Diamond\phi$ &iff &$\forall \z'[\z E\z' \Rightarrow \mathbb{M},\z' \not\succ \Diamond \phi]$\\
$\mathbb{M}, \z \Vdash \Box \psi$ &iff &$\forall \z' [\z R_{\Box}\z'  \Rightarrow \mathbb{M},\z' \not\succ \psi]$%
& &$\mathbb{M}, \z \succ \Box\psi$ &iff &$\forall \z' [\z' E\z  \Rightarrow \mathbb{M},\z' \not\Vdash \Box \psi]$
\end{tabular}
\end{flushleft}

An $\mathrm{LML}^{+}$-inequality $\phi \leq \psi$ is \emph{true} in $\mathbb{M}$, denoted $\mathbb{M} \Vdash \phi \leq \psi$, if for all $\z, \z' \in X$, if $\mathbb{M}, \z \Vdash \phi$ and $\mathbb{M}, \z' \succ \psi$ then $\neg (\z E z')$. An $\mathrm{LML}^{+}$-inequality $\phi \leq \psi$ is \emph{valid} in $\mathbb{F}$ if it is true in every model based on $\mathbb{F}$.

Let $L_1$ be the first-order language with equality  built over a denumerable set of individual variables $Z$, with binary relation symbols $E$, $R_{\Diamond}$, $R_{\Box}$, $R_{\lhd}$, $R_{\rhd}$, ternary relation symbols $R_{\circ}$, $R_{\star}$ and a predicate symbol $P$ for each propositional variable $p \in \mathsf{PROP}$.\footnote{The intended interpretation of a predicate symbol $P$ in a TiRS-model $\mathbb{M} = (\X, \mathcal{R}, V)$, seen as a first-order structure, is $V(p)$ which, as indicated above, is a Galois-stable set.  Hence the interpretation of universal second-order quantification is also restricted to range over such sets.} We will further assume that $L_1$ contains denumerably many individual variables $i, j, \ldots$ corresponding to the nominals $\nomi, \nomj, \ldots \in \mathsf{NOM}$ and $n, m, \ldots$ corresponding to the co-nominals $\cnomn, \cnomm \in \mathsf{CO\text{-}NOM}$. Let $L_0$ be the sub-language which does not contain the unary predicate symbols corresponding to the propositional variables.

We are now ready to present the standard translation of the $\{\Diamond, \Box\}$-fragment of LML over TiRS-models. In this setting we need two standard translations, $\STP$ and $\STN$, respectively corresponding to satisfaction and co-satisfaction. These are defined by simultaneous induction as follows: 

\begin{center}
\begin{tabular}{ll}
$\STP_z(\bot) := z \neq z$ & $\STN_z(\bot) := z  = z$ \\
$\STP_z(\top) := z = z$ & $\STN_z(\top) := z \neq z$\\
$\STP_z(p) := P(z)$& $\STN_z(p) := \forall z' [z' E z \rightarrow \neg \STP_{z'}(p)] $\\
$\STP_z(\nomi) := \forall z'[\neg (i E z') \rightarrow \neg (z E z')]$ & $\STN_z(\nomi) := \forall z' [z' E z \rightarrow \neg \STP_{z'}(\nomi)] $\\
$\STP_z(\cnomn) := \forall z' [z E z' \rightarrow \neg \STN_{z'}(\cnomn)]$ & $\STN_z(\cnomn) :=  \neg (z E n)$\\
$\STP_z(\phi \vee \psi) := \forall z'[z E z' \rightarrow \neg \STN_{z'}(\phi \vee \psi)]$& $\STN_z(\phi \vee \psi) := \STN_{z}(\phi) \wedge \STN_{z}(\psi)$\\
$\STP_z(\phi \wedge \psi) := \STP_z(\phi) \wedge \STP_z(\psi)$& $\STN_z(\phi \wedge \psi) := \forall z'[z'Ez \rightarrow \neg \STP_{z'}(\phi \wedge \psi)]$\\
$\STP_z(\Diamond \phi) := \forall z' [z E z' \rightarrow \neg \STN_{z'}(\Diamond \phi)]$& $\STN_z(\Diamond \phi) := \forall z'[z R_{\Diamond} z' \rightarrow \neg \STP_{z'}(\phi)]$\\
$\STP_z(\Box \phi) := \forall z' [z R_{\Box} z' \rightarrow \neg \STN_{z'}(\phi)]$ & $\STN_{z}(\Box \phi) := \forall z' [z' E z \rightarrow \neg \STP_{z'}(\Box \phi)]$\\
\end{tabular}
\end{center}

\begin{lemma}\label{lemma:Standard:Trnsltn:TiRS}
For any TiRS-model $\mathbb{M} = (X,E,\mathcal{R}, V)$ for LML, any TiRS-frame $\mathbb{F}$ for LML, $\z \in X$, $z' \in Y$ and for all $LML^+$-formulas $\phi$ and $\psi$, the following hold:
\begin{enumerate}

\item $\mathbb{M}, \z \Vdash \phi$ iff $\mathbb{M} \models \STP_z(\phi)[z:= \z]$

\item $\mathbb{M}, \z \succ \psi$ iff $\mathbb{M} \models \STN_{z}(\psi)[z:= \z]$

\item
\begin{eqnarray*}
\mathbb{M} \Vdash \phi \leq \psi &\mbox{iff} &\mathbb{M} \models \forall z \forall z' [(\STP_z(\phi) \wedge  \STN_{z'}(\psi)) \rightarrow \neg zEy]\\
&\mbox{iff} &\mathbb{M} \models \forall z [\STP_z(\phi) \rightarrow \forall z'[ \STN_{z'}(\psi) \rightarrow \neg zEy]]\\
&\mbox{iff} &\mathbb{M} \models \forall z' [\STN_{z'}(\psi) \rightarrow \forall z  [\STP_z(\phi) \rightarrow \neg zEy]].
\end{eqnarray*}

\item
\begin{eqnarray*}
\mathbb{F} \Vdash \phi \leq \psi &\mbox{iff} &\mathbb{F} \models \forall \overline{P} \forall \overline{j} \forall \overline{m}\forall z \forall z' [(\STP_z(\phi) \wedge  \STN_{z'}(\psi)) \rightarrow \neg zEy]\\
&\mbox{iff} &\mathbb{F} \models \forall \overline{P} \forall \overline{j} \forall \overline{m}\forall z [\STP_z(\phi) \rightarrow \forall z'[ \STN_{z'}(\psi) \rightarrow \neg zEy]]\\
&\mbox{iff} &\mathbb{F} \models \forall \overline{P} \forall \overline{j} \forall \overline{m}\forall z' [\STN_{z'}(\psi) \rightarrow \forall z  [\STP_z(\phi) \rightarrow \neg zEy]].
\end{eqnarray*}
where $\overline{P}$, $\overline{j}$, and $\overline{m}$ are, respectively, the vectors of all predicate symbols corresponding to propositional variables, individual variables corresponding to nominals, and individual variables corresponding to co-nominals, occurring in $\STP_{z}(\phi)$ and $\STN_{z'}(\psi)$.

\end{enumerate}

\end{lemma}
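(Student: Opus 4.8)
The plan is to establish items (1) and (2) simultaneously, by induction on the complexity of the $\mathrm{LML}^+$-formula, and then to read off items (3) and (4) by unravelling the relevant definitions. All the work is in (1)--(2); (3) and (4) are purely bookkeeping.

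For the base of the induction, the clauses for $\bot$, $\top$, $p$, $\nomi$ and $\cnomn$ are handled by comparing the defining clauses of $\STP_z$ and $\STN_z$ with the corresponding lines of the inductive definition of $\mathbb{M},\z\Vdash(\cdot)$ and $\mathbb{M},\z\succ(\cdot)$. For $p$ and $\nomi$ one additionally uses that in a TiRS-model $V(p)$ (resp.\ $V(\nomi)=\ell(u(\{\z_0\}))$, $V(\cnomn)=\ell(\{\z_0\})$) is exactly the Galois-stable set of $\rho(\X)$ prescribed in the definition of a TiRS-model, so that e.g.\ $\STP_z(\nomi)$ unfolds $\z\in\ell(u(\{\z_0\}))$ correctly. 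For the inductive step, fix a formula $\chi$ and assume (1) and (2) for all formulas of strictly lower complexity. For each connective exactly one of the two polarities is \emph{direct}, namely the one whose defining clause does not refer back to the other polarity of $\chi$ itself: $\STP$ for $\wedge$ and $\Box$, $\STN$ for $\vee$ and $\Diamond$. First I would verify this direct polarity, where the semantic clause is obtained by applying the induction hypothesis to the immediate subformulas (for $\Box$, using $\STN$ of the subformula via $R_\Box$; for $\Diamond$, $\STP$ of the subformula via $R_\Diamond$; for $\wedge,\vee$, componentwise) and recognising the matching clause of $\STP_z$ (resp.\ $\STN_z$). Then I would verify the remaining \emph{derived} polarity: its defining clause is of the shape $\forall z'\,[\,z'Ez\to\neg\STP_{z'}(\chi)\,]$ or $\forall z'\,[\,zEz'\to\neg\STN_{z'}(\chi)\,]$, it refers only to the just-established direct polarity of $\chi$ together with strictly smaller formulas, and it matches the corresponding semantic clause (e.g.\ $\mathbb{M},\z\succ\Box\psi$ iff $\forall\z'[\z'E\z\Rightarrow\mathbb{M},\z'\not\Vdash\Box\psi]$) literally. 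This two-phase argument inside each induction step is what makes the simultaneous recursion well founded, and checking that no circularity creeps in is the main (if mild) obstacle; everything else is routine.

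Item (3) is then immediate: by definition $\mathbb{M}\Vdash\phi\leq\psi$ means that for all $\z,z'\in X$, if $\mathbb{M},\z\Vdash\phi$ and $\mathbb{M},z'\succ\psi$ then $\neg(\z E z')$. Substituting the equivalences (1) and (2) turns this into $\mathbb{M}\models\forall z\,\forall z'\,[(\STP_z(\phi)\wedge\STN_{z'}(\psi))\to\neg(zEz')]$, and the two further reformulations follow by currying the implication and drawing one of the universal quantifiers inward.

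Finally, item (4) follows from (3) by quantifying over valuations. By definition $\mathbb{F}\Vdash\phi\leq\psi$ holds iff $\phi\leq\psi$ is true in every TiRS-model $\mathbb{M}=(\mathbb{F},V)$; once $\mathbb{M}$ is regarded as a first-order $L_1$-structure, a choice of $V$ is nothing but a choice of interpretation for the predicate symbols $\overline P$ (ranging over Galois-stable sets of $\rho(\mathbb{X})$) and, in the $\mathrm{LML}^+$ case, of the individual parameters $\overline j$, $\overline m$ corresponding to nominals and co-nominals, whose admissible values are the ones prescribed in the definition of a TiRS-model for $\mathrm{LML}^+$. Hence ``true in every model based on $\mathbb{F}$'' is precisely the assertion that $\mathbb{F}$, viewed as an $L_0$-structure, satisfies the universal closure over $\overline P$, $\overline j$, $\overline m$ of the $L_1$-formula produced in (3); applying (3) inside this closure yields the three displayed equivalences. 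As in the RS-case of Lemma~\ref{lemma:Standard:Translation}, the one point to keep in mind is that $\forall\overline P$, $\forall\overline j$, $\forall\overline m$ are interpreted in the restricted way dictated by the admissibility constraints on TiRS-valuations, so the resulting correspondent is first-order over this restricted class of structures.
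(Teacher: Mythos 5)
Your proof is correct and follows the route the paper intends but omits: the lemma is stated without proof (its RS-frame analogue, Lemma \ref{lemma:Standard:Translation}, is explicitly dismissed as ``proved by a routine induction''), and your simultaneous induction for items (1)--(2) followed by unravelling the definitions of truth and validity for (3)--(4) is exactly that routine argument. The two-phase organisation of each induction step --- first the ``direct'' polarity, whose clause depends only on strictly smaller formulas, then the ``derived'' polarity, whose clause refers back to the direct polarity of the same formula --- is the one place where the simultaneous recursion genuinely needs justification, and you identify and handle it correctly.
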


\begin{example}
Consider again the LML-inequality $p\leq \Diamond p$ and the simplified pure inequality $\nomi \leq \Diamond \nomi$ resulting from running ALBA on it. We illustrate the standard translation on TiRS-frames by instantiating the statement of Lemma \ref{lemma:Standard:Trnsltn:TiRS}4 with $\nomi$ for $\phi$ and $\Diamond \nomi$ for $\psi$:

\begin{center}
\begin{tabular}{c l}
&$\mathbb{F} \Vdash \nomi \leq \Diamond \nomi$\\
iff &$\mathbb{F} \models \forall z \forall z' \forall i [(\STP_{z}(\nomi) \wedge \STN_{z'}(\Diamond \nomi)) \rightarrow \neg (z E z')]$\\
iff &$\mathbb{F} \models \forall z \forall z' \forall i [(\forall z''[\neg (i E z'') \rightarrow \neg (z E z'')] \wedge \forall u [z' R_{\Diamond} u \rightarrow \neg \STP_{u}(\nomi)]) \rightarrow \neg (z E z')]$\\
iff &$\mathbb{F} \models \forall z \forall z' \forall i [(\forall z''[\neg (i E z'') \rightarrow \neg (z E z'')] \wedge \forall u [z' R_{\Diamond} u \rightarrow \neg \forall v[\neg (i E v) \rightarrow \neg (u E v)]]) \rightarrow \neg (z E z')]$\\
iff &$\mathbb{F} \models \forall z \forall z' \forall i [(\forall z''[z E z'' \rightarrow i E z''] \wedge \forall u [z' R_{\Diamond} u \rightarrow \exists v[\neg (i E v) \wedge u E v]]) \rightarrow \neg (z E z')]$\\

\end{tabular}
\end{center}

The  theory developed in the present paper (cf.\ Theorems  \ref{Crctns:Theorem} and \ref{Thm:ALBA:Canonicity}) guarantees that the LE-logic obtained by adding $p\leq \Diamond p$ to the basic logic $\mathbf{L}_\mathrm{LML}$ (cf.\ Definition \ref{def:DLE:logic:general}) is sound and complete w.r.t.\ the elementary class of RS-frames for LML defined by the sentence above.
\end{example}

\section{Inductive and Sahlqvist inequalities}\label{Inductive:Fmls:Section}
In this section we  introduce the $\mathcal{L}_\mathrm{LE}$-analogue of the inductive inequalities in \cite{ALBAPaper}. This class of inequalities 
enjoys  canonicity and elementarity properties.  We will not give a direct proof that all inductive inequalities are elementary and canonical, but this will follow from the facts that they are all reducible by the {\sf ALBA}-algorithm and that all inequalities so reducible are elementary and canonical.

\subsection{Inductive inequalities}
		
		In the present subsection, we report on the definition of {\em inductive} $\mathcal{L}_\mathrm{LE}$-inequalities.
		on which the algorithm ALBA defined in Section \ref{Spec:Alg:Section} will be shown to succeed. 

		\begin{definition}[\textbf{Signed Generation Tree}]
			\label{def: signed gen tree}
			The \emph{positive} (resp.\ \emph{negative}) {\em generation tree} of any $\mathcal{L}_\mathrm{LE}$-term $s$ is defined by labelling the root node of the generation tree of $s$ with the sign $+$ (resp.\ $-$), and then propagating the labelling on each remaining node as follows:
			\begin{itemize}
				\item For any node labelled with $ \lor$ or $\land$, assign the same sign to its children nodes.
				\item For any node labelled with $h\in \mathcal{F}\cup \mathcal{G}$ of arity $n_h\geq 1$, and for any $1\leq i\leq n_h$, assign the same (resp.\ the opposite) sign to its $i$th child node if $\varepsilon_h(i) = 1$ (resp.\ if $\varepsilon_h(i) = \partial$).
			\end{itemize}
			Nodes in signed generation trees are \emph{positive} (resp.\ \emph{negative}) if are signed $+$ (resp.\ $-$).
		\end{definition}
		
		Signed generation trees will be mostly used in the context of term inequalities $s\leq t$. In this context we will typically consider the positive generation tree $+s$ for the left-hand side and the negative one $-t$ for the right-hand side. We will also say that a term-inequality $s\leq t$ is \emph{uniform} in a given variable $p$ if all occurrences of $p$ in both $+s$ and $-t$ have the same sign, and that $s\leq t$ is $\epsilon$-\emph{uniform} in a (sub)array $\vec{p}$ of its variables if $s\leq t$ is uniform in $p$, occurring with the sign indicated by $\epsilon$, for every $p$ in $\vec{p}$\footnote{\label{footnote:uniformterms}The following observation will be used at various points in the remainder of the present paper: if a term inequality $s(\vec{p},\vec{q})\leq t(\vec{p},\vec{q})$ is $\epsilon$-uniform in $\vec{p}$ (cf.\ discussion after Definition \ref{def: signed gen tree}), then the validity of $s\leq t$ is equivalent to the validity of $s(\overrightarrow{\top^{\epsilon(i)}},\vec{q})\leq t(\overrightarrow{\top^{\epsilon(i)}},\vec{q})$, where $\top^{\epsilon(i)}=\top$ if $\epsilon(i)=1$ and $\top^{\epsilon(i)}=\bot$ if $\epsilon(i)=\partial$. }.
		
		For any term $s(p_1,\ldots p_n)$, any order type $\epsilon$ over $n$, and any $1 \leq i \leq n$, an \emph{$\epsilon$-critical node} in a signed generation tree of $s$ is a leaf node $+p_i$ with $\epsilon_i = 1$ or $-p_i$ with $\epsilon_i = \partial$. An $\epsilon$-{\em critical branch} in the tree is a branch from an $\epsilon$-critical node. The intuition, which will be built upon later, is that variable occurrences corresponding to $\epsilon$-critical nodes are \emph{to be solved for}, according to $\epsilon$.
		
		For every term $s(p_1,\ldots p_n)$ and every order type $\epsilon$, we say that $+s$ (resp.\ $-s$) {\em agrees with} $\epsilon$, and write $\epsilon(+s)$ (resp.\ $\epsilon(-s)$), if every leaf in the signed generation tree of $+s$ (resp.\ $-s$) is $\epsilon$-critical.
		In other words, $\epsilon(+s)$ (resp.\ $\epsilon(-s)$) means that all variable occurrences corresponding to leaves of $+s$ (resp.\ $-s$) are to be solved for according to $\epsilon$. We will also write $+s'\prec \ast s$ (resp.\ $-s'\prec \ast s$) to indicate that the subterm $s'$ inherits the positive (resp.\ negative) sign from the signed generation tree $\ast s$. Finally, we will write $\epsilon(\gamma) \prec \ast s$ (resp.\ $\epsilon^\partial(\gamma_h) \prec \ast s$) to indicate that the signed subtree $\gamma$, with the sign inherited from $\ast s$, agrees with $\epsilon$ (resp.\ with $\epsilon^\partial$).
		\begin{definition}
			\label{def:good:branch}
			Nodes in signed generation trees will be called \emph{$\Delta$-adjoints}, \emph{syntactically left residual (SLR)}, \emph{syntactically right residual (SRR)}, and \emph{syntactically right adjoint (SRA)}, according to the specification given in Table \ref{Join:and:Meet:Friendly:Table}.
			A branch in a signed generation tree $\ast s$, with $\ast \in \{+, - \}$, is called a \emph{good branch} if it is the concatenation of two paths $P_1$ and $P_2$, one of which may possibly be of length $0$, such that $P_1$ is a path from the leaf consisting (apart from variable nodes) only of PIA-nodes, and $P_2$ consists (apart from variable nodes) only of Skeleton-nodes. 
A branch is \emph{excellent} if it is good and in $P_1$ there are only SRA-nodes. A good branch is \emph{Skeleton} if the length of $P_1$ is $0$ (hence Skeleton branches are excellent), and  is {\em SLR}, or {\em definite}, if  $P_2$ only contains SLR nodes.
			\begin{table}[\here]
				\begin{center}
                \bgroup
                \def\arraystretch{1.2}
					\begin{tabular}{| c | c |}
						\hline
						Skeleton &PIA\\
						\hline
						$\Delta$-adjoints & Syntactically Right Adjoint (SRA) \\
						\begin{tabular}{ c c c c c c}
							$+$ &$\vee$ &\\
							$-$ &$\wedge$ \\
							\hline
						\end{tabular}
						&
						\begin{tabular}{c c c c }
							$+$ &$\wedge$ &$g$ & with $n_g = 1$ \\
							$-$ &$\vee$ &$f$ & with $n_f = 1$ \\
							\hline
						\end{tabular}
						\\
						Syntactically Left Residual (SLR) &Syntactically Right Residual (SRR)\\
						\begin{tabular}{c c c c }
							$+$ &  &$f$ & with $n_f \geq 1$\\
							$-$ &  &$g$ & with $n_g \geq 1$ \\
						\end{tabular}
						&\begin{tabular}{c c c c}
							$+$ & &$g$ & with $n_g \geq 2$\\
							$-$ &  &$f$ & with $n_f \geq 2$\\
						\end{tabular}
						\\
						\hline
					\end{tabular}
                \egroup
				\end{center}
				\caption{Skeleton and PIA nodes for $\mathrm{LE}$.}\label{Join:and:Meet:Friendly:Table}
				\vspace{-1em}
			\end{table}
		\end{definition}
\begin{remark}\label{Remark:Pos:VS:Neg:Classification}
The classification above follows the general principles of unified correspondence as discussed in \cite{UnifiedCor}. As the names suggest, the subclassification of nodes as SLR, SRR, SRA and $\Delta$-adjoints refers to the inherent order theoretic properties of the operations interpreting these connectives, whereas the grouping of these classifications into Skeleton and PIA\footnote{The acronym PIA stands for ``Positive Implies Atomic'', and was introduced by van Benthem in \cite{vanbenthem2005}. The salient property of PIA-formulas is the intersection property, which means that, as term functions, they preserve certain meets.}  Indeed, as we will see later, the reduction strategy involves roughly two tasks, namely approximation and display. The order theoretic properties of Skeleton nodes facilitate approximation while those of PIA nodes facilitate display.  This will be further discussed and illustrated in sections \ref{Examples:Section} and \ref{Complete:For:Inductive:Section}. 

In \cite{ALBAPaper}, following \cite{GNV}, the nodes of the signed generation trees were classified according to the  \emph{choice} and \emph{universal} terminology. The reader is referred to \cite[Section 1.7.2]{UnifiedCor} for an expanded comparison of these two approaches.

The convention of considering the positive generation tree of the left-hand side and the negative generation tree of the right-hand side of an inequality also dates from \cite{GNV}. Although this might seem counter-intuitive at first glance, it is by now well established in this line of research, and we therefore  maintain it to facilitate easier comparisons.
\end{remark}

\begin{definition}[Inductive inequalities]\label{Inducive:Ineq:Def}
For any order type $\epsilon$ and any irreflexive and transitive relation $\Omega$ on $p_1,\ldots p_n$, the signed generation tree $*s$ $(* \in \{-, + \})$ of a term $s(p_1,\ldots p_n)$ is \emph{$(\Omega, \epsilon)$-inductive} if
			\begin{enumerate}
				\item for all $1 \leq i \leq n$, every $\epsilon$-critical branch with leaf $p_i$ is good (cf.\ Definition \ref{def:good:branch});
				\item every $m$-ary SRR-node occurring in the critical branch is of the form $ \circledast(\gamma_1,\dots,\gamma_{j-1},\beta,\gamma_{j+1}\ldots,\gamma_m)$, where for any $h\in\{1,\ldots,m\}\setminus j$: 
\begin{enumerate}
\item $\epsilon^\partial(\gamma_h) \prec \ast s$ (cf.\ discussion before Definition \ref{def:good:branch}), and
%
\item $p_k <_{\Omega} p_i$ for every $p_k$ occurring in $\gamma_h$ and for every $1\leq k\leq n$.
\end{enumerate}
	\end{enumerate}
			
			We will refer to $<_{\Omega}$ as the \emph{dependency order} on the variables. An inequality $s \leq t$ is \emph{$(\Omega, \epsilon)$-inductive} if the signed generation trees $+s$ and $-t$ are $(\Omega, \epsilon)$-inductive. An inequality $s \leq t$ is \emph{inductive} if it is $(\Omega, \epsilon)$-inductive for some $\Omega$ and $\epsilon$.
		\end{definition}
		
		In what follows, we will find it useful to refer to formulas $\phi$ such that only PIA nodes occur in $+\phi$ (resp.\ $-\phi$) as {\em positive} (resp.\ {\em negative}) {\em PIA-formulas}, and to formulas $\xi$ such that only Skeleton nodes occur in $+\xi$ (resp.\ $-\xi$) as {\em positive} (resp.\ {\em negative}) {\em Skeleton-formulas}\label{page: positive negative PIA}.

\begin{definition}\label{Sahlqvist:Ineq:Def}
Given an order type $\epsilon$, the signed generation tree $\ast s$, $\ast \in \{-, + \}$, of a term $s(p_1,\ldots p_n)$ is \emph{$\epsilon$-Sahlqvist} if every $\epsilon$-critical branch is excellent. An inequality $s \leq t$ is \emph{$\epsilon$-Sahlqvist} if the trees $+s$ and $-t$ are both $\epsilon$-Sahlqvist.  An inequality $s \leq t$ is \emph{Sahlqvist} if it is $\epsilon$-Sahlqvist for some $\epsilon$.
\end{definition}
\begin{example}\label{Inductive:And:Sahlqvist:Example}
The LML-inequality $\phi_1 \leq \psi_1:= (\Box p_1 \circ \Box {\lhd} p_1) \circ \Diamond p_2 \leq {\lhd} p_1 \star \Diamond\Box p_2$ is $\epsilon$-Sahlqvist for $\epsilon = (\partial,1)$, and for no other order type.

The LML-inequality $\phi_2 \leq \psi_2:= \Box ({\lhd}p_1 \star p_2) \wedge \Box p_1 \leq \Diamond (p_1 \circ p_2)$ is $(\Omega, \epsilon)$-inductive with $p_1 <_{\Omega} p_2$ and $\epsilon = (1,1)$, and also for $p_1 <_{\Omega} p_2$ and $\epsilon = (1,\partial)$.

The LML-inequality $\phi_3 \leq \psi_3:= \Diamond(\Box {\lhd}(q \circ r)  \wedge \Box (p \star \Box q)) \leq {\lhd}\Box (p \wedge r) \vee \Diamond p$ is $(\Omega, \epsilon)$-inductive with $p <_{\Omega} q$ and $r$  unrelated to $p$ and $q$ by $<_{\Omega}$, and $\epsilon_p = \partial$, $\epsilon_q = \epsilon_r = 1$.

The LML-inequality $\phi_4 = s \leq t := \Box ({\lhd} q \star p) \leq \Diamond ({\lhd} q \circ \Box p)$ is \emph{not} inductive. Indeed, for every $\epsilon$-critical branch in $+s$ and $-t$ to be good, the only possible $\epsilon$ is $(1,1)$. Given this $\epsilon$, if $+s$ is to be $(\Omega, \epsilon)$-inductive, it will have to be the case that $q <_{\Omega} p$, and similarly, for $-t$ to be $(\Omega, \epsilon)$-inductive, it will have to be the case that $p <_{\Omega} q$, which is impossible if $<_{\Omega}$ is to be a strict partial order.
\end{example}

\begin{example}
\label{ex:grishin interact princ}
The following Grishin interaction principles \cite{Grishin1983} are formulated in the language $\mathcal{L}_{\mathrm{LE}}(\mathcal{F}, \mathcal{G})$, with $\mathcal{F} = \{\circ, \starfor, \starback\}$ and $\mathcal{G} = \{\star, \circfor, \circback\}$:

\begin{center}
\begin{tabular}{c r c l c c r  c l  c c }
(a)&$(q\star r)\circ p$ &$\vdash$&$q\star(r\circ p)$ &\quad& (d) &$(p\starback q)\circback r $&$\vdash$& $q\circback (p\star r)$&$\quad$&\\
(b)&$p\star (r\circfor q) $&$\vdash$&$(p\star r)\circfor q$ &\quad& (e) &$(r\circ q)\starfor p $&$\vdash$&$r\starfor (p\circfor q)$ &$\quad$& (I)\\
(c)&$ p\starback (r\circ q)$&$\vdash$&$(p\starback r)\circ q$ &\quad& (f) &$ p\circ(r\circback q)$&$\vdash$&$(p\starfor r)\star q $&$\quad$&\\
   & &      & &           & &      & &       &\\
(a)&$(q\circback r)\circ p$ &$\vdash$& $q\circback (r\circ p)$&$\quad$& (d) & $(p\circ q)\circback r$&$\vdash$&$q\circback(p\circback r)$ &$\quad$&\\
(b)& $p\circback (r \circfor q)$&$\vdash$& $(p \circback r)\circfor q$ &$\quad$& (e) & $(p\circfor q)\circfor r$&$\vdash$&$p\circfor (r \circ q)$ &$\quad$& (II)\\
(c)&$p\circ (r\circ q)$ &$\vdash$&$(p\circ r)\circ q$ &$\quad$& (f) &$p\circ (r\circback q)$ &$\vdash$& $(r \circfor p)\circback q$ &$\quad$&\\
   & &      & &           & &      & &       &\\
(a)&$p\starfor(r\star q)$ &$\vdash$&$(p\starback r)\star q$ &$\quad$& (d) &$q\starfor (r\star p)$ &$\vdash$& $(q\starfor p)\starfor r$ &$\quad$&\\
(b)&$(p\star r)\star q$ &$\vdash$&$p\star (r\star q)$ &$\quad$& (e) & $p\starback (q\starback r)$ & $\vdash$& $(q\star p) \starback r$&$\quad$& (III)\\
(c)& $(p\starback r)\starfor q$&$\vdash$& $p\starback(r\starfor q)$&$\quad$& (f) &$(r\starfor q)\starback p$ &$\vdash$& $q\star (r\starback p)$ &$\quad$&\\
   & &      & &           & &      & &       &\\
(a)&$(q\circback r)\starfor p$ &$\vdash$&$q\circback(r\starfor a)$ &$\quad$& (d) &$(p \circback r)\starback q$ &$\vdash$& $r\starback (p\circ q)$ &$\quad$&\\
(b)&$q\circback(r\star p)$ &$\vdash$& $(q \circback r)\star p$ &$\quad$& (e) & $(p\star q)\circfor r$&$\vdash$&$p \circfor(r\starfor q)$ &$\quad$& (IV)\\
(c)&$p\circ(r\starfor q)$ &$\vdash$& $(p\circ r) \starfor q$ &$\quad$& (f) &$p\starfor (q\starback r)$ &$\vdash$&$(r\circfor p)\circback q$ &$\quad$&\\
\end{tabular}
\end{center}
The canonicity and correspondence of these axioms has been computed on a case-by-case basis in \cite{Mai-Lorijn-lambek-grishin}. The theory developed in the present paper subsumes the results in \cite{Mai-Lorijn-lambek-grishin}. Indeed, it is not difficult to see that all these axioms are either $\epsilon$-Sahlqvist or $(\epsilon, \Omega)$-inductive for at least one order type $(\epsilon_p, \epsilon_q, \epsilon_r)$ and strict partial order $\Omega$. For instance, (II c), (III b) and (III e) are Sahlqvist for $(1,1,1)$, $(\partial, \partial, \partial)$ and $(\partial, \partial, 1)$ respectively; (I a) is not Sahlqvist for any order-type, but is inductive e.g.\ for  $(1, \partial,1)$ with $q< r$, and for $(\partial, \partial,1)$ with $r< p$.
\end{example}

The following example has also been discussed in \cite[Example 2.24]{MaZhao15}.

\begin{example}
\label{ex:kurtonina}
In \cite[Definition 2.2.4, Theorem 2.2.5]{kurtonina:PhD}, Kurtonina defines the syntactic shape of a class of sequents in the $\{\circ, \backslash, /\}$-fragment of full Lambek calculus for which  she proves a  Sahlqvist-type correspondence result with the minimal valuation proof-strategy (there referred to as `canonical valuation'). It is not difficult to see that this class can be described using the notational conventions of the present paper as those sequents $\alpha\vdash \beta$ detailed below. Firstly, each non-leaf node of the signed generation tree $-\beta$ is required to have labels in  $\{+\circ, - /, - \backslash\}$.  This implies that there is exactly one  negative variable occurrence $q$ in $-\beta$ (i.e.\ the argument of the positive coordinate of the innermost node in $-\beta$ with label in $\{- /, - \backslash\}$). Let $\overline{p}$ be the vector of the  variables in $\beta$ different from $q$. The above requirement also implies that each variable in $\overline{p}$ occurs positively in $-\beta$. Hence, $+\alpha$ is further required to agree with some order-type $\epsilon^\partial$ such that $\epsilon(p) = 1$ for every $p$ in $\overline{p}$.

It is clear from this description that these requirements identify a proper subclass of Sahlqvist inequalities: indeed,  $-\beta$ is a Skeleton-formula/tree, and hence each of its branches is excellent. Each sequent described above corresponds then to an $\epsilon$-Sahlqvist inequality for the order-type $\epsilon$ determined by its antecedent $\alpha$ as described above. In Example \ref{ex:kurtonina:ALBA}, we provide a general description of the ALBA runs on such sequents, which  encodes the steps in the proof of \cite[Theorem 2.2.5]{kurtonina:PhD}.
\end{example}

\begin{example}
\label{rem:ghilardi suzuki}  In \cite{Suzuki:RSL:2013}, Suzuki gives a Sahlqvist-type theorem for full Lambek calculus. The basic logic treated in \cite{Suzuki:RSL:2013} is the normal LE logic with the additional requirements that $\backslash$ (resp.\ $/$) are right  residuals of $\circ$ (in the first and in the second coordinate, respectively). The Sahlqvist result in \cite{Suzuki:RSL:2013} covers  a syntactically defined class of sequents, described in terms of the  families defined by simultaneous recursion as follows:
\[ \cup\mbox{-terms}\quad s:: = b  \mid s\vee s\mid s\circ s\]
\[ \cap\mbox{-terms}\quad  t:: = d\mid  t\wedge t \mid s\backslash t \mid t/ s\]
\[ \quad\quad b:: = p\mid \top \mid \bot \mid b\wedge b\mid b/ c\mid c\backslash b\mid c/ d\mid d\backslash c \]
\[ \quad\quad  d:: = p\mid \top \mid \bot\mid d\vee d\mid d\circ c \mid c\circ d\]

where $c$ stands for any constant term.
The inequalities proven to be canonical in \cite[Theorem 5.10]{Suzuki:RSL:2013}   are of the form $\phi\leq \psi$, such that  \[\phi = s(\overline{x},  \overline{a_1(\overline{x})}/\overline{z})\quad \quad \psi = t(\overline{x},  \overline{a_2(\overline{x})}/\overline{z})\] where $s(\overline{x}, \overline{z})$ is a $\cup$-term, $t(\overline{x}, \overline{z})$ is a $\cap$-term, and  there exists some order-type $\epsilon$ on $\overline{x}$ such that $\epsilon^{\partial}(a_1(\overline{x}))\prec +\phi$ and $\epsilon^{\partial}(a_2(\overline{x}))\prec -\psi$ for each $a_1$ in $\overline{a_1}$ and each $a_2$ in $\overline{a_2}$.

From the description   given above, it is not difficult to see that the inequality $\phi\leq \psi$ described above falls under the definition of $\epsilon$-Sahlqvist inequalities (cf.\ Definition \ref{Sahlqvist:Ineq:Def}), when one takes into account the following observation.  Holding constant all coordinates but one in an operation corresponding to an SRR node turns it into an SRA node. Therefore, the trees $+b$ and $-d$ can be recognized as consisting entirely of PIA-nodes, and in particular  entirely of SRA nodes. Further, the generation trees $+s$ and $-t$ of $\cup$- and $\cap$-terms are constructed by taking trees consisting entirely of skeleton nodes and inserting subtrees $+b \prec +s, -t$ and $-d \prec +s, -t$ at leaves. Thus, \emph{all} branches in $+s$ and $-t$ are excellent. Substituting $\overline{a_1(\overline{x})}$ and $\overline{a_2(\overline{x})}$ into $s$ and $t$ as indicated possibly introduces non-good branches into $+ s(\overline{x},  \overline{a_1(\overline{x})}/\overline{z})$ and $- t(\overline{x},  \overline{a_2(\overline{x})}/\overline{z})$, but these will be non-critical according to $\epsilon$, and therefore both trees will be $\epsilon$-Sahlqvist.

The same analysis applies to the scope of the Sahlqvist canonicity result of Ghilardi and Meloni \cite{GhMe97} (cf.\ \cite[Remark 12]{CGPSZ14}). Indeed, 
Suzuki's treatment   extracts the syntactic definition from Ghilardi-Meloni's order-theoretic insights and transfers it to the setting of general lattice expansions. 
%
%
\end{example}

\subsection{The distributive setting}

When interpreting our language on perfect distributive lattice expansions (DLEs), the logical disjunction is interpreted by means of the coordinatewise completely $\wedge$-preserving join operation of the lattice, and the logical conjunction with the coordinatewise completely $\vee$-preserving meet operation of the lattice. Hence we are justified in listing $+\wedge$ and $-\vee$ among the SLRs, and $+\vee$ and $-\wedge$ among the SRRs, as is done in table \ref{Distr:Join:and:Meet:Friendly:Table}. Moreover, in the distributive setting, nominals and co-nominals are interpreted as completely join- (resp.\ meet-) prime elements of the perfect algebra. For reasons that will be discussed in Example \ref{Examp:Hopeless}, this makes it possible to apply $\Delta$-adjunction on $+\wedge$ and $-\vee$ nodes as part of the approximation task (cf.\ remark \ref{Remark:Pos:VS:Neg:Classification}), which justifies listing them among the Skeleton nodes in table \ref{Distr:Join:and:Meet:Friendly:Table}.

Consequently, we obtain enlarged classes of Sahlqvist and inductive inequalities
by simply applying definitions \ref{def:good:branch}, \ref{Sahlqvist:Ineq:Def} and \ref{Inducive:Ineq:Def} with respect to table \ref{Join:and:Meet:Friendly:Table:DLE}.

\begin{table}[\here]
				\begin{center}
					\begin{tabular}{| c | c |}
						\hline
						Skeleton &PIA\\
						\hline
						$\Delta$-adjoints & SRA \\
						\begin{tabular}{ c c c c c c}
							$+$ &$\vee$ &$\wedge$ &$\phantom{\lhd}$ & &\\
							$-$ &$\wedge$ &$\vee$\\
							\hline
						\end{tabular}
						&
						\begin{tabular}{c c c c }
							$+$ &$\wedge$ &$g$ & with $n_g = 1$ \\
							$-$ &$\vee$ &$f$ & with $n_f = 1$ \\
							\hline
						\end{tabular}
						\\
						SLR &SRR\\
						\begin{tabular}{c c c c }
							$+$ & $\wedge$  &$f$ & with $n_f \geq 1$\\
							$-$ & $\vee$ &$g$ & with $n_g \geq 1$ \\
						\end{tabular}
						&\begin{tabular}{c c c c}
							$+$ & $\vee$ &$g$ & with $n_g \geq 2$\\
							$-$ & $\wedge$  &$f$ & with $n_f \geq 2$\\
						\end{tabular}
						\\
						\hline
					\end{tabular}
				\end{center}
				\caption{Skeleton and PIA nodes for $\mathcal{L}_\mathrm{DLE}$.}\label{Join:and:Meet:Friendly:Table:DLE}
				\vspace{-1em}
			\end{table}

\paragraph{Distributive LML and DML.} The classification of nodes adopted in \cite{GNV} and \cite{ALBAPaper}  appears on the left half of the following table \ref{Distr:Join:and:Meet:Friendly:Table}.  In the right half,  we have specialized the table \ref{Join:and:Meet:Friendly:Table:DLE} above to the LML-signature, which is the running example in the present paper:

\begin{table}[\here]
\begin{center}
\begin{tabular}{| c | c || c | c |}
\hline
choice & universal & Skeleton  &PIA\\
\hline
&&$\Delta$-adjoints  & SRA \\
\begin{tabular}{ c  c  c  c }
$+$ &$\vee$ &$\Diamond$ &$\lhd$\\

$-$ &$\wedge$ &$\Box$ &$\rhd$\\
\hline
\end{tabular}
&
\begin{tabular}{ c  c  c}
$+$ &$\Box$ &$\rhd$\\

 $-$ & $\Diamond$ &$\lhd$\\
 \hline
\end{tabular}
&
\begin{tabular}{ c c c c c  c}
$+$ &$\vee$ &$\wedge$ &$\phantom{\lhd}$ &$\phantom{\circ}$ &$\phantom{\circ}$\\
$-$ &$\wedge$ &$\vee$\\
\hline
\end{tabular}
&
\begin{tabular}{c c c c}
$+$ &$\Box$ &$\rhd$ &$\wedge$ \\
$-$ &$\Diamond$ &$\lhd$ &$\vee$ \\
\hline
\end{tabular}

\\
&&SLR  &SRR\\
&&
\begin{tabular}{c c c c c c}
$+$ & $\wedge$ & $\Diamond$ &$\lhd$ &$\circ$\\
$-$ & $\vee$ &$\Box$ &$\rhd$ &$\star$ &\\
\end{tabular}
&\begin{tabular}{c c c c}
$+$ &$\vee$ &$\star$ &$\phantom{\wedge}$\\
$-$ & $\wedge$ &$\circ$\\
\end{tabular}
\\
\hline
\end{tabular}
\end{center}
\caption{choice and universal nodes for $\mathrm{DML}$, and Skeleton and PIA nodes for distributive $\mathrm{LML}$.}\label{Distr:Join:and:Meet:Friendly:Table}
\end{table}

\noindent The LML-signature  can be projected onto that of DML (cf.\ \cite{GNV} and \cite{ALBAPaper}) by identifying the occurrences of $\star$ and $\circ$ with $\vee$  and $\wedge$ respectively. Let us denote this projection by $\pi: \mathrm{LML} \rightarrow \mathrm{DML}$. Under this projection, the LML-inductive inequalities coincide with the DML-inductive inequalities of \cite{ALBAPaper}.

\begin{prop}
The projection $\pi$ maps the inductive and Sahlqvist $\mathrm{LML}$-inequalities onto the inductive and Sahlqvist $\mathrm{DML}$-inequalities, respectively.
\end{prop}
\begin{proof}
We sketch the proof in the case of inductive inequalities.
Let $s \leq t$ be an  $(\Omega, \epsilon)$-inductive $\mathrm{LML}$-inequality, and let $s' \leq t'$ be its image under $\pi$.
Given that all choice nodes are Skeleton, that all universal nodes are PIA, and that all $\epsilon$-critical paths in $+s$ and $-t$ are good, it follows that the only choice nodes in the scope of universal nodes on $\epsilon$-critical paths in $+s'$ and $-t'$ must be $+\vee$ and $-\wedge$ nodes resulting from the translation of $+\star$ and $- \circ$ nodes. But the condition imposed on these $+\star$ and $- \circ$ nodes by definition \ref{Inducive:Ineq:Def} are clearly equivalent to those imposed on $+\vee$ and $-\wedge$ nodes by the definition in the DML case. Hence $s' \leq t'$ is $(\Omega, \epsilon)$-inductive.

In order to show that every inductive $\mathrm{DML}$-inequality is in the range of $\pi$, let $s' \leq t'$ be an $(\Omega, \epsilon)$-inductive $\mathrm{DML}$-inequality. On all $\epsilon$-critical paths in the generation trees $+s'$ and $-t'$, begin by replacing every $+\vee$ and $-\wedge$ which has a universal node as ancestor with $+\star$ and $-\circ$ respectively. Then, on these same branches, replace every occurrence of $+\wedge$ and $-\vee$ which has a choice node as descendant  with $+\circ$ and $-\star$, respectively.  Let $t, s \in \mathrm{LML}$ be the terms corresponding to the resulting trees.

It is immediate that $\pi(s) = s'$ and $\pi(t) = t'$. It is also easy to see that all $\epsilon$-critical branches in $+s$ and $-t$ are good. Moreover, the introduced binary SRR-nodes $+\star$ and $-\circ$ replace binary choice nodes which were descendants of universal nodes and hence satisfied the conditions imposed by the definition of  inductive $\mathrm{DML}$-inequalities. Hence the introduced $+\star$ and $-\circ$ nodes will satisfy definition \ref{Inducive:Ineq:Def}.
\end{proof}

\begin{example}\label{Projection:Pi:Example}
Consider the DML-inequality $\Diamond (\Diamond p \wedge \Diamond q) \leq \Diamond \Box(p \wedge q)$, which is $(1,1)$-Sahlqvist according to definition 3.5 in \cite{ALBAPaper}. Note that it is not an LML-Sahlqvist inequality, so it cannot be its own image under $\pi$ restricted to Sahlqvist inequalities. However, it is the $\pi$-image of the Sahlqvist LML-inequality $\Diamond (\Diamond p \circ \Diamond q) \leq \Diamond \Box(p \wedge q)$, obtained by applying the strategy outlined in the proof above.
\end{example}

\comment{
\subsection{Inductive and Sahlqvist $\mathrm{LML}$-inequalities}\label{Inductive:Fmls:Subsection}

We begin by introducing the following auxiliary definitions and notation: exactly like in \cite{GNV}, the two {\em signed generation trees} associated with any $s\in \mathrm{LML}_{\mathit{term}}^+$ are denoted $+s$ and $-s$ respectively, and are obtained by assigning signs ($+$ and $-$) to the nodes of the generation tree of $s$ as follows:
\begin{itemize}
\item the root node of $+s$ (resp.\ $-s$) is the root node of the generation tree of $s$, signed with $+$ (resp.\ $-$);
\item if a node is $\vee, \wedge, \circ, \star, \Diamond, \Box$, then assign the same sign to its immediate successors;
\item if a node is $\lhd, \rhd$, then assign the opposite sign to its immediate successor.
\end{itemize}
A \emph{path} in a generation tree is a sequence $n_1 n_2 \ldots n_m$ of nodes such that $n_{i+1}$ is the parent of $n_i$ for all $1 \leq i < n$. A \emph{path from a node n} is a path with $n = n_1$. The \emph{length} of a path is the number of nodes minus 1. A \emph{branch} is a path from a leaf to the root.

It will sometimes be convenient to think of the label of a node in a signed generation tree as including the sign, and sometimes as not including the sign. This slight ambiguity will cause no problems in practice. We will use $\prec$ to indicate the `signed subtree' relation.

We will write $\phi(!x)$ to indicate that the variable $x$ occurs exactly once in $\phi$.

An occurrence of a variable $x$ in a signed generation tree of a term $\phi$ is called \emph{positive} (respectively, \emph{negative}) if the sign of the corresponding leaf is a $+$ (respectively, $-$). A signed generation tree of $\phi$ is said to be \emph{positive} (respectively, \emph{negative}) in $x$ if every occurrence of $x$ in it is positive (respectively, negative).

A term $\phi$ is \emph{positive} (\emph{negative}) in $x$ if the positive tree of $\phi$ is positive (negative) in $x$. An inequality $\phi \leq \psi$ is \emph{positive} (\emph{negative})\label{Def:Pos:Neg:Ineq} in a propositional variable $p$ if $\phi$ is negative (positive) in $p$ and $\psi$ is positive (negative) in $p$.

\begin{table}[\here]
\begin{center}
\begin{tabular}{| c | c |}
\hline
 Skeleton  &PIA\\
\hline
$\Delta$-adjoints  & SRA \\
\begin{tabular}{ c c c c  }
$+$ &$\vee$ &$\phantom{\wedge}$ &$\phantom{\circ}$ \\
$-$ &$\wedge$ &$\phantom{\vee}$\\
\hline
\end{tabular}
&
\begin{tabular}{c c c c }
$+$ &$\Box$ &$\rhd$ &$\wedge$ \\
$-$ &$\Diamond$ &$\lhd$ &$\vee$ \\
\hline
\end{tabular}

\\
SLR  &SRR\\
\begin{tabular}{c c c c}
$+$ &$\Diamond$ &$\lhd$ &$\circ$\\
$-$ &$\Box$ &$\rhd$ &$\star$ \\
\end{tabular}
&\begin{tabular}{c c c c }
$+$ &$\star$ &$\phantom{\rhd}$ &$\phantom{\wedge}$\\
$-$ &$\circ$ &${}$ &${}$\\
\end{tabular}
\\
\hline
\end{tabular}
\end{center}
\caption{Skeleton and PIA nodes}\label{Join:and:Meet:Friendly:Table}
\end{table}

\comment{
\begin{table}[\here]
\begin{center}
\begin{tabular}{| c | c |}
\hline
Syntactically Join-Friendly (SJF) & Syntactically Right Residual (SRR)\\
\hline
\begin{tabular}{ c c c c c }
\\
$+$ &$\vee$ &$\Diamond$ &$\lhd$ &$\circ$\\
$-$ &$\wedge$ &$\Box$ &$\rhd$ &$\star$\\
\end{tabular}
&\begin{tabular}{c | c}
Syntactically Right Adjoint (SRA)  \\
\begin{tabular}{cccc}
$+$ &$\wedge$ &$\Box$ &$\rhd$\\
$-$ &$\vee$ &$\Diamond$ &$\lhd$\\
\end{tabular}
&\begin{tabular}{c}
$\star$\\
$\circ$\\
\end{tabular}
\end{tabular}\\
\hline
\end{tabular}
\end{center}
\caption{SJF and SRA nodes}\label{Join:and:Meet:Friendly:Table}
\end{table}
}

An {\em order type} over $n\in \mathbb{N}$ is an $n$-tuple $\epsilon\in \{1, \partial\}^n$. For every order type $\epsilon$,  let $\epsilon^\partial$ be its {\em opposite} order type, i.e., $\epsilon^\partial_i = 1$ iff $\epsilon_i=\partial$ for every $1 \leq i \leq n$.

For any term $s(p_1,\ldots p_n)$, any order type $\epsilon$ over $n$, and any $1 \leq i \leq n$, an \emph{$\epsilon$-critical node} in a signed generation tree of $s$ is a leaf node $+p_i$ with $\epsilon_i = 1$ or $-p_i$ with $\epsilon_i = \partial$. An $\epsilon$-{\em critical branch} in the tree is a branch from an $\epsilon$-critical node. The intuition, which will be built upon later, is that variable occurrences corresponding to  $\epsilon$-critical nodes are \emph{to be solved for, according to $\epsilon$}.

For every term $s(p_1,\ldots p_n)$ and every order type $\epsilon$, we say that $+s$ (resp.\ $-s$) {\em agrees with} $\epsilon$, and write $\epsilon(+s)$ (resp.\ $\epsilon(-s)$), if every leaf in the signed generation tree of $+s$ (resp.\ $-s$) is $\epsilon$-critical.
In other words, $\epsilon(+s)$ (resp.\ $\epsilon(-s)$) means that all variable occurrences corresponding to leaves of $+s$ (resp.\ $-s$) are to be solved for according to $\epsilon$.

\begin{definition}\label{Def:Good:Branches}
Nodes in signed generation trees will be called \emph{$\Delta$-adjoints}, \emph{syntactically left residual (SLR)}, \emph{syntactically right residual (SRR)}, and \emph{syntactically right adjoint (SRA)}, according to the specification given in table \ref{Join:and:Meet:Friendly:Table}. We will find it useful to group these classes as \emph{Skeleton} and \emph{PIA} as indicated in the table.\footnote{This terminology is used also in \cite{CFPS} and \cite{UnifiedCor} to establish a connection with analogous terminology in \cite{Van:Bent:Bezh:Hodk:Studia}.}  A branch in a signed generation tree $\ast s$, with $\ast \in \{+, - \}$, is called a \emph{good branch} if it is the concatenation of two paths $P_1$ and $P_2$, one of which may possibly be of length $0$, such that $P_1$ is a path from the leaf consisting (apart from variable nodes) only of PIA-nodes, and $P_2$ consists (apart from variable nodes) only of Skeleton-nodes. A branch is \emph{excellent} if it is good and in $P_1$ there are only SRA-nodes. A good branch is \emph{Skeleton} if the length of $P_1$ is $0$ (hence Skeleton branches are excellent), and  is {\em SLR}, or {\em definite}, if  $P_2$ only contains SLR nodes.
\end{definition}


\begin{remark}\label{Remark:Pos:VS:Neg:Classification}
The current classification is similar to those in \cite{UnifiedCor} and \cite{CFPS}. The subclassification of nodes as SLR, SRR, SRA and $\Delta$-adjoints refers to the inherent order theoretic properties of the operations interpreting these connectives, whereas the grouping of these classifications into Skeleton and PIA nodes obeys a functional rationale. Indeed, as we will see later, the reduction strategy involves roughly two tasks, namely approximation and display. The order theoretic properties of Skeleton nodes facilitate approximation while those of PIA nodes facilitate display.  This will be further discussed and illustrated in sections \ref{Examples:Section} and \ref{Complete:For:Inductive:Section}. 

In \cite{ALBAPaper}, following \cite{GNV}, the nodes of the signed generation trees were classified according to the traditional \emph{choice} and \emph{universal} terminology. The reader is referred to \cite[Section 1.7.2]{UnifiedCor} for an expanded comparison of these two approaches.
\end{remark}
\begin{definition}\label{Inducive:Ineq:Def}
Given an order type $\epsilon$ and a strict partial order $<_{\Omega}$ on the variables $p_1,\ldots p_n$, the signed generation tree $\ast s$, $\ast \in \{-, + \}$, of a term $s(p_1,\ldots p_n)$ is \emph{$(\Omega, \epsilon)$-inductive}
if for all $1 \leq i \leq n$, every $\epsilon$-critical branch with leaf labelled $p_i$ is good, and moreover, for every binary SRR node $\ast(\alpha \circledast \beta)$ on the branch, it holds that for some $\gamma \in \{\alpha, \beta \}$
\begin{enumerate}
\item $\epsilon^\partial(\ast \gamma)$, and
\item $p_j <_{\Omega} p_i$ for every $p_j$ occurring in $\gamma$.
\end{enumerate}

We will refer to $<_{\Omega}$ as  the \emph{dependency order} on the variables. An inequality $s \leq t$ is \emph{$(\Omega, \epsilon)$-inductive} if the trees $+s$ and $-t$ are both $(\Omega, \epsilon)$-inductive.  An inequality $s \leq t$ is \emph{inductive} if it is $(\Omega, \epsilon)$-inductive for some $\Omega$ and $\epsilon$.
\end{definition}

\begin{definition}\label{Sahlqvist:Ineq:Def}
Given an order type $\epsilon$, the signed generation tree $\ast s$, $\ast \in \{-, + \}$, of a term $s(p_1,\ldots p_n)$ is \emph{$\epsilon$-Sahlqvist} if every $\epsilon$-critical branch is excellent. An inequality $s \leq t$ is \emph{$\epsilon$-Sahlqvist} if the trees $+s$ and $-t$ are both $\epsilon$-Sahlqvist.  An inequality $s \leq t$ is \emph{Sahlqvist} if it is $\epsilon$-Sahlqvist for some $\epsilon$.
\end{definition}
\begin{example}\label{Inductive:And:Sahlqvist:Example}
The inequality $\phi_1 \leq \psi_1:= (\Box p_1 \circ \Box {\lhd} p_1) \circ \Diamond p_2 \leq {\lhd} p_1 \star \Diamond\Box p_2$ is $\epsilon$-Sahlqvist for $\epsilon = (\partial,1)$, and for no other order type.

The inequality $\phi_2 \leq \psi_2:= \Box ({\lhd}p_1 \star p_2) \wedge \Box p_1 \leq \Diamond (p_1 \circ p_2)$ is $(\Omega, \epsilon)$-inductive with $p_1 <_{\Omega} p_2$ and $\epsilon = (1,1)$, and also for $p_1 <_{\Omega} p_2$ and $\epsilon = (1,\partial)$.

The inequality $\phi_3 \leq \psi_3:= \Diamond(\Box {\lhd}(q \circ r)  \wedge \Box (p \star \Box q)) \leq {\lhd}\Box (p \wedge r) \vee \Diamond p$ is $(\Omega, \epsilon)$-inductive with $p <_{\Omega} q$ and $r$  unrelated to $p$ and $q$ by $<_{\Omega}$, and $\epsilon_p = \partial$, $\epsilon_q = \epsilon_r = 1$.

The inequality $\phi_4 = s \leq t := \Box ({\lhd} q \star p) \leq \Diamond ({\lhd} q \circ \Box p)$ is \emph{not} inductive. Indeed, for every $\epsilon$-critical branch in $+s$ and $-t$ to be good, the only possible $\epsilon$ is $(1,1)$. Given this $\epsilon$, if $+s$ is to be $(\Omega, \epsilon)$-inductive, it will have to be the case that $q <_{\Omega} p$, and similarly, for $-t$ to be $(\Omega, \epsilon)$-inductive, it will have to be the case that $p <_{\Omega} q$, which is impossible if $<_{\Omega}$ is to be a strict partial order.
\end{example}

\comment{
\subsection{In the expanded language $\mathrm{LML}^+$}\label{Inductive:Fmls:Expnd:Lang:Subsection}

The machinery we will be using to compute pure or first-order equivalents for $\mathrm{LML}$-inequalities will introduce symbols proper to the expanded language $\mathrm{LML}^+$. We will find it necessary to still apply many of the notions introduced in section \ref{Inductive:Fmls:Subsection} also when these symbols get introduced. We therefore give the following definitions.

A path in a signed generation tree of a term $\phi \in \mathrm{LML}^+_{\mathit{term}}$ is \emph{conservative} if all nodes occurring on it are from the \emph{un}expanded language $\mathrm{LML}$. Given an order type $\epsilon$ a signed generation tree is \emph{$\epsilon$-conservative} if all its $\epsilon$-critical paths are conservative. The notions of $(\epsilon, \Omega)$-inductive and $\epsilon$-Sahlqvist inequalities, and hence also those of inductive and Sahlqvist inequalities \emph{simpliciter}, are now expanded to $\mathrm{LML}^+$ by adding the stipulation that the trees involved need to be $\epsilon$-conservative.\marginpar{\raggedright\tiny{W: Remove this subsection if not used further on.}}

}

\subsection{The distributive setting}

When interpreting LE-languages on perfect distributive lattices, the logical disjunction is interpreted by means of the coordinatewise completely $\wedge$-preserving join operation of the lattice, and the logical conjunction with the coordinatewise completely $\vee$-preserving meet operation of the lattice. Hence we are justified in listing $+\wedge$ and $-\vee$ among the SLRs, and $+\vee$ and $-\wedge$ among the SRRs, as is done in table \ref{Distr:Join:and:Meet:Friendly:Table}. Moreover, in the distributive setting, nominals and co-nominals are interpreted as completely join- (resp.\ meet-) prime elements of the perfect algebra. For reasons that will be discussed in Example \ref{Examp:Hopeless}, this makes it possible to apply $\Delta$-adjunction on $+\wedge$ and $-\vee$ nodes as part of the approximation task (cf.\ remark \ref{Remark:Pos:VS:Neg:Classification}), which justifies listing them among the Skeleton nodes in table \ref{Distr:Join:and:Meet:Friendly:Table}.

Consequently, we obtain enlarged classes of Sahlqvist and inductive inequalities
by simply applying definitions \ref{Def:Good:Branches}, \ref{Sahlqvist:Ineq:Def} and \ref{Inducive:Ineq:Def} with respect to table \ref{Distr:Join:and:Meet:Friendly:Table}.

\begin{table}[\here]
\begin{center}
\begin{tabular}{| c | c || c | c |}
\hline
choice & universal & Skeleton  &PIA\\
\hline
&&$\Delta$-adjoints  & SRA \\
\begin{tabular}{ c  c  c  c }
$+$ &$\vee$ &$\Diamond$ &$\lhd$\\

$-$ &$\wedge$ &$\Box$ &$\rhd$\\
\hline
\end{tabular}
&
\begin{tabular}{ c  c  c}
$+$ &$\Box$ &$\rhd$\\

 $-$ & $\Diamond$ &$\lhd$\\
 \hline
\end{tabular}
&
\begin{tabular}{ c c c c c  c}
$+$ &$\vee$ &$\wedge$ &$\phantom{\lhd}$ &$\phantom{\circ}$ &$\phantom{\circ}$\\
$-$ &$\wedge$ &$\vee$\\
\hline
\end{tabular}
&
\begin{tabular}{c c c c}
$+$ &$\Box$ &$\rhd$ &$\wedge$ \\
$-$ &$\Diamond$ &$\lhd$ &$\vee$ \\
\hline
\end{tabular}

\\
&&SLR  &SRR\\
&&
\begin{tabular}{c c c c c c}
$+$ & $\wedge$ & $\Diamond$ &$\lhd$ &$\circ$\\
$-$ & $\vee$ &$\Box$ &$\rhd$ &$\star$ &\\
\end{tabular}
&\begin{tabular}{c c c c}
$+$ &$\vee$ &$\star$ &$\phantom{\wedge}$\\
$-$ & $\wedge$ &$\circ$\\
\end{tabular}
\\
\hline
\end{tabular}
\end{center}
\caption{choice and universal nodes for $\mathrm{DML}$, and Skeleton and PIA nodes for distributive $\mathrm{LML}$.}\label{Distr:Join:and:Meet:Friendly:Table}
\end{table}

\noindent The signature treated in the present paper can be projected onto that of DML (cf.\ \cite{GNV} and \cite{ALBAPaper}) by identifying the occurrences of $\star$ and $\circ$ with $\vee$  and $\wedge$ respectively. Let us denote this projection by $\pi: \mathrm{LML} \rightarrow \mathrm{DML}$. Under this projection, the LML-inductive inequalities coincide with the DML-inductive inequalities of \cite{ALBAPaper}.

\begin{prop}
The projection $\pi$ maps the inductive and Sahlqvist $\mathrm{LML}$-inequalities onto the inductive and Sahlqvist $\mathrm{DML}$-inequalities, respectively.
\end{prop}
\begin{proof}
We sketch the proof in the case of inductive inequalities.
Let $s \leq t$ be an  $(\Omega, \epsilon)$-inductive $\mathrm{LML}$-inequality, and let $s' \leq t'$ be its image under $\pi$.
Given that all choice nodes are Skeleton, that all universal nodes are PIA, and that all $\epsilon$-critical paths in $+s$ and $-t$ are good, it follows that the only choice nodes in the scope of universal nodes on $\epsilon$-critical paths in $+s'$ and $-t'$ must be $+\vee$ and $-\wedge$ nodes resulting from the translation of $+\star$ and $- \circ$ nodes. But the condition imposed on these $+\star$ and $- \circ$ nodes by definition \ref{Inducive:Ineq:Def} are clearly equivalent to those imposed on $+\vee$ and $-\wedge$ nodes by the definition in the DML case. Hence $s' \leq t'$ is $(\Omega, \epsilon)$-inductive.

In order to show that every inductive $\mathrm{DML}$-inequality is in the range of $\pi$, let $s' \leq t'$ be an $(\Omega, \epsilon)$-inductive $\mathrm{DML}$-inequality. On all $\epsilon$-critical paths in the generation trees $+s'$ and $-t'$, begin by replacing every $+\vee$ and $-\wedge$ which has a universal node as ancestor with $+\star$ and $-\circ$ respectively. Then, on these same branches, replace every occurrence of $+\wedge$ and $-\vee$ which has a choice node as descendant  with $+\circ$ and $-\star$, respectively.  Let $t, s \in \mathrm{LML}$ be the terms corresponding to the resulting trees.

It is immediate that $\pi(s) = s'$ and $\pi(t) = t'$. It is also easy to see that all $\epsilon$-critical branches in $+s$ and $-t$ are good. Moreover, the introduced binary SRR-nodes $+\star$ and $-\circ$ replace binary choice nodes which were descendants of universal nodes and hence satisfied the conditions imposed by the definition of  inductive $\mathrm{DML}$-inequalities. Hence the introduced $+\star$ and $-\circ$ nodes will satisfy definition \ref{Inducive:Ineq:Def}.
\end{proof}

\begin{example}\label{Projection:Pi:Example}
Consider the DML-inequality $\Diamond (\Diamond p \wedge \Diamond q) \leq \Diamond \Box(p \wedge q)$, which is $(1,1)$-Sahlqvist according to definition 3.5 in \cite{ALBAPaper}. Note that it is not an LML-Sahlqvist inequality, so it cannot be its own image under $\pi$ restricted to Sahlqvist inequalities. However, it is the $\pi$-image of the Sahlqvist LML-inequality $\Diamond (\Diamond p \circ \Diamond q) \leq \Diamond \Box(p \wedge q)$, obtained by applying the strategy outlined in the proof above.
\end{example}

}

\section{Non-distributive ALBA}\label{Spec:Alg:Section}

ALBA takes an $\mathcal{L}_\mathrm{LE}$-inequality $\phi \leq \psi$ as input and then proceeds in three stages. The first stage preprocesses $\phi \leq \psi$ by eliminating all uniformly  occurring propositional variables, and applying distribution and splitting rules exhaustively. This produces a finite set of inequalities, $\phi'_i \leq \psi'_i$, $1 \leq i \leq n$.

Now ALBA forms the \emph{initial quasi-inequalities} $\bigamp S_i \Rightarrow \sf{Ineq}_i$, compactly represented as tuples $(S_i, \sf{Ineq}_i)$ referred as \emph{systems},  with each $S_i$ initialized to the empty set and $\sf{Ineq}_i$ initialized to $\phi'_i \leq \psi'_i$.

The second stage (called the reduction  stage) transforms $S_i$ and $\mathsf{Ineq}_i$ through the application of transformation rules, which are listed below. The aim is to eliminate all ``wild'' propositional variables from $S_i$ and $\mathsf{Ineq}_i$ in favour of ``tame'' nominals and co-nominals (for an expanded discussion on the general reduction strategy, the reader is referred to \cite{UnifiedCor,ConPalSur}). A system for which this has been done will be called \emph{pure} or \emph{purified}. The actual eliminations are effected through the Ackermann-rules, while the other rules are used to bring $S_i$ and $\mathsf{Ineq}_i$ into the appropriate shape which make these applications possible. Once all propositional variables have been eliminated, this phase terminates and returns the pure quasi-inequalities $\bigamp S_i \Rightarrow \mathsf{Ineq}_i$.

The third stage either reports failure if some system could not be purified, or else returns the conjunction of the pure quasi-inequalities $\bigamp S_i \Rightarrow \mathsf{Ineq}_i$, which we denote by $\mathsf{ALBA}(\phi \leq \psi)$.

We now outline each of the three stages in more detail:

\subsection{Stage 1: Preprocessing and initialization} ALBA receives an $\mathrm{LE}$-inequality $\phi \leq \psi$ as input. It applies the following {\bf rules for elimination of monotone variables}  to $\phi \leq \psi$ exhaustively, in order to eliminate any propositional variables which occur uniformly:

\begin{prooftree}
\AxiomC{$\alpha(p) \leq \beta(p)$}\UnaryInfC{$\alpha(\bot) \leq \beta(\bot)$}
\AxiomC{$\gamma(p) \leq \delta(p)$}\UnaryInfC{$\gamma(\top) \leq \delta(\top)$}
\noLine\BinaryInfC{}
\end{prooftree}
for $\alpha(p) \leq \beta(p)$ positive and $\gamma(p) \leq \delta(p)$ negative in $p$, respectively (see footnote \ref{footnote:uniformterms}).

Next, ALBA exhaustively distributes $f\in \mathcal{F}$ over $+\vee$, and  $g\in \mathcal{G}$ over $-\wedge$, so as to bring occurrences of $+\vee$ and $-\wedge$ to the surface wherever this is possible, and then eliminate them via exhaustive applications of {\em splitting} rules.
\paragraph{Splitting-rules.}

\begin{prooftree} \AxiomC{$\alpha \leq \beta \wedge \gamma
$}\UnaryInfC{$\alpha \leq \beta \quad \alpha \leq \gamma$}
\AxiomC{$\alpha \vee \beta \leq \gamma$}\UnaryInfC{$\alpha \leq \gamma \quad \beta \leq \gamma$}
\noLine\BinaryInfC{}
\end{prooftree}

This gives rise to  a set of inequalities $\{\phi_i' \leq \psi_i'\mid 1\leq i\leq n\}$. Now ALBA forms the \emph{initial quasi-inequalities} $\bigamp S_i \Rightarrow \sf{Ineq}_i$, compactly represented as tuples $(S_i, \sf{Ineq}_i)$ referred as \emph{systems},  with each $S_i$ initialized to the empty set and $\sf{Ineq}_i$ initialized to $\phi'_i \leq \psi'_i$. Each initial system is passed separately to stage 2, described below, where we will suppress indices $i$.

\subsection{Stage 2: Reduction and elimination}\label{Sec:ReductionElimination}

The aim of this stage is to eliminate all occurring propositional variables from a given system $(S, \mathsf{Ineq})$. This is done by means of the following \emph{approximation rules}, \emph{residuation rules}, \emph{splitting rules}, and \emph{Ackermann-rules}, collectively called \emph{reduction rules}. The terms and inequalities in this subsection are from $\mathcal{L}_\mathrm{LE}^{+}$.

\paragraph{Approximation rules.} There are four approximation rules. Each of these rules functions by simplifying $\mathsf{Ineq}$ and adding an inequality to $S$.

\begin{description}
\item[Left-positive approximation rule.] $\phantom{a}$
\begin{center}
\AxiomC{$(S, \;\; \phi'(\gamma / !x)\leq \psi)$}
\RightLabel{$(L^+A)$}
\UnaryInfC{$(S\! \cup\! \{ \nomj \leq \gamma\},\;\; \phi'(\nomj / !x)\leq \psi)$}
\DisplayProof
\end{center}
with $+x \prec +\phi'(!x)$,  the branch of $+\phi'(!x)$ starting at $+x$ being SLR (cf.\ definition \ref{def:good:branch}), $\gamma$ belonging to the original language $\mathcal{L}_\mathrm{LE}$
%
    %
    %
    and $\nomj$ being the first nominal variable not  occurring in $S$ or  $\phi'(\gamma / !x)\leq \psi$.
\item[Left-negative approximation rule.]$\phantom{a}$
\begin{center}
\AxiomC{$(S, \;\; \phi'(\gamma / !x)\leq \psi)$}
\RightLabel{$(L^-A)$}
\UnaryInfC{$(S\! \cup\! \{ \gamma\leq\cnomm\},\;\; \phi'(\cnomm / !x)\leq \psi)$}
\DisplayProof
\end{center}

with $-x \prec +\phi'(!x)$, the branch of $+\phi'(!x)$ starting at $-x$ being SLR, $\gamma$ belonging to the original language $\mathcal{L}_\mathrm{LE}$ and
     $\cnomm$ being the first co-nominal not  occurring in $S$ or $\phi'(\gamma / !x)\leq \psi$.
\item[Right-positive approximation rule.]$\phantom{a}$
\begin{center}
\AxiomC{$(S, \;\; \phi\leq \psi'(\gamma / !x))$}
\RightLabel{$(R^+A)$}
\UnaryInfC{$(S\! \cup\! \{ \nomj \leq \gamma\},\;\; \phi\leq \psi'(\nomj / !x))$}
\DisplayProof
\end{center}

with $+x \prec -\psi'(!x)$,  the branch of $-\psi'(!x)$ starting at $+x$ being SLR, $\gamma$ belonging to the original language $\mathcal{L}_\mathrm{LE}$ and
     $\nomj$ being the first nominal not  occurring in $S$ or $\phi\leq \psi'(\gamma / !x)$.
\item[Right-negative approximation rule.] $\phantom{a}$
\begin{center}
\AxiomC{$(S, \;\; \phi\leq \psi'(\gamma / !x))$}
\RightLabel{$(R^-A)$}
\UnaryInfC{$(S\! \cup\! \{ \gamma\leq \cnomm\},\;\; \phi\leq \psi'(\cnomm / !x))$}
\DisplayProof
\end{center}
with $-x \prec -\psi'(!x)$, the branch of $-\psi'(!x)$ starting at $-x$ being SLR, $\gamma$ belonging to the original language $\mathcal{L}_\mathrm{LE}$ and  $\cnomm$ being the first co-nominal not  occurring in $S$ or $\phi\leq \psi'(\gamma / !x))$.
\end{description}

\noindent The approximation rules above, as stated, will be shown to be sound both under admissible and under arbitrary assignments (cf.\ Proposition \ref{Rdctn:Rls:Crctnss:Prop}). However, their liberal application gives rise to topological complications in the proof of canonicity. Therefore, we will restrict the applications of approximation rules to nodes $!x$ giving rise to {\em maximal} SLR branches. Such applications will be called {\em pivotal}. Also, executions of ALBA in which approximation rules are applied only pivotally will be referred to as {\em pivotal}.\label{pivotal:approx:rule:application}

\paragraph{Residuation rules.} These rules operate on the inequalities in $S$, by rewriting a chosen inequality in $S$ into another inequality. For every $f\in \mathcal{F}$ and $g\in \mathcal{G}$, and any $1\leq i\leq n_f$ and $1\leq j\leq n_g$,

\begin{prooftree}
\AxiomC{$f(\phi_1,\ldots,\phi_i,\ldots,\phi_{n_f}) \leq \psi $}
\RightLabel{$\epsilon_f(i) = 1$}
\UnaryInfC{$\phi_i\leq f^\sharp_i(\phi_1,\ldots,\psi,\ldots,\phi_{n_f})$}
\AxiomC{$f(\phi_1,\ldots,\phi_i,\ldots,\phi_{n_f}) \leq \psi $}
\RightLabel{$\epsilon_f(i) = \partial$}
\UnaryInfC{$f^\sharp_i(\phi_1,\ldots,\psi,\ldots,\phi_{n_f})\leq \phi_i$}
\noLine\BinaryInfC{}
\end{prooftree}

\begin{prooftree}
\AxiomC{$\psi\leq g(\phi_1,\ldots,\phi_i,\ldots,\phi_{n_g})$}
\RightLabel{$\epsilon_g(i) = 1$}
\UnaryInfC{$g^\flat_i(\phi_1,\ldots,\psi,\ldots,\phi_{n_g})\leq \phi_i$}
\AxiomC{$\psi\leq g(\phi_1,\ldots,\phi_i,\ldots,\phi_{n_g})$}
\RightLabel{$\epsilon_g(i) = \partial$}
\UnaryInfC{$\phi_i\leq g^\flat_i(\phi_1,\ldots,\psi,\ldots,\phi_{n_g})$}
\noLine\BinaryInfC{}
\end{prooftree}

\paragraph{Right Ackermann-rule.} $\phantom{a}$
\begin{center}
\AxiomC{$(\{ \alpha_i \leq p \mid 1 \leq i \leq n \} \cup \{ \beta_j(p)\leq \gamma_j(p) \mid 1 \leq j \leq m \}, \;\; \mathsf{Ineq})$}
\RightLabel{$(RAR)$}
\UnaryInfC{$(\{ \beta_j(\bigvee_{i=1}^n \alpha_i)\leq \gamma_j(\bigvee_{i=1}^n \alpha_i) \mid 1 \leq j \leq m \},\;\; \mathsf{Ineq})$}
\DisplayProof
\end{center}
where:
\begin{itemize}
\item $p$ does not occur in $\alpha_1, \ldots, \alpha_n$ or in $\mathsf{Ineq}$,
\item $\beta_{1}(p), \ldots, \beta_{m}(p)$ are positive in $p$, and
\item $\gamma_{1}(p), \ldots, \gamma_{m}(p)$ are negative in $p$.

\end{itemize}

\paragraph{Left Ackermann-rule.}$\phantom{a}$
\begin{center}
\AxiomC{$(\{ p \leq \alpha_i \mid 1 \leq i \leq n \} \cup \{ \beta_j(p)\leq \gamma_j(p) \mid 1 \leq j \leq m \}, \;\; \mathsf{Ineq})$}
\RightLabel{$(LAR)$}
\UnaryInfC{$(\{ \beta_j(\bigwedge_{i=1}^n \alpha_i)\leq \gamma_j(\bigwedge_{i=1}^n \alpha_i) \mid 1 \leq j \leq m \},\;\; \mathsf{Ineq})$}
\DisplayProof
\end{center}
where:
\begin{itemize}
\item $p$ does not occur in $\alpha_1, \ldots, \alpha_n$ or in $\mathsf{Ineq}$,
\item $\beta_{1}(p), \ldots, \beta_{m}(p)$ are negative in $p$, and
\item $\gamma_{1}(p), \ldots, \gamma_{m}(p)$ are positive in $p$.

\end{itemize}

\subsection{Stage 3: Success, failure and output}

If stage 2 succeeded in eliminating all propositional variables from each system, the algorithm returns the conjunction of these purified quasi-inequalities, denoted by $\mathsf{ALBA}(\phi \leq \psi)$. Otherwise, the algorithm reports failure and terminates.

\section{Examples}\label{Examples:Section}

In the present section, we collect some examples of the execution of ALBA on various inequalities.

\begin{example}
\label{example: reduction involution}
The inequality $p^{\bot\bot}\leq p$ discussed in Example \ref{example:linear involution} is not $\epsilon$-inductive for $\epsilon_p = 1$, but  is clearly $\epsilon$-Sahlqvist for $\epsilon_p = \partial$. The initial system is given by

\[
S_0 = \emptyset \textrm{ \ and \ } \mathsf{Ineq}_0 = p^{\bot\bot}\leq p.
\]
Applying the left-positive and right-positive approximation rules yields
\[
S_1 = \{ \nomj\leq p^{\bot\bot}, p\leq\cnomm  \} \textrm{ \ and \ } \mathsf{Ineq}_1 = \nomj\leq \cnomm.
\]

The Left Ackermann-rule may now be applied to eliminate $p$:
\[
S_2 = \{ \nomj\leq \cnomm^{\bot\bot} \} \textrm{ \ and \ } \mathsf{Ineq}_2 = \nomj\leq \cnomm.
\]

Thus we can output the `purified' quasi inequality
\[
 \nomj\leq \cnomm^{\bot\bot} \Rightarrow \nomj\leq \cnomm,
\]
which is equivalent to the pure inequality $\cnomm^{\bot\bot} \leq \cnomm$.
\end{example}

\begin{example}
The Grishin interaction axiom (I a) of Example \ref{ex:grishin interact princ} $(q\star r)\circ p\leq q\star(r\circ p)$ is $(\Omega, \epsilon)$-inductive for  $\epsilon(p, q, r) = (1, \partial,1)$ and $q<_\Omega r$. The initial system is given by

\[
S_0 = \emptyset \textrm{ \ and \ } \mathsf{Ineq}_0 = (q\star r)\circ p\leq q\star(r\circ p).
\]
Applying the left-positive and right-positive approximation rules yields
\[
S_1 = \{ \nomj\leq q\star r, q\leq\cnomm  \} \textrm{ \ and \ } \mathsf{Ineq}_1 = \nomj\circ p\leq \cnomm\star (r \circ p).
\]
Applying the Left Ackermann rule  yields
\[
S_2 = \{ \nomj\leq \cnomm\star r  \} \textrm{ \ and \ } \mathsf{Ineq}_2 = \nomj\circ p\leq \cnomm\star (r \circ p).
\]
Applying the right-positive approximation rule yields
\[
S_3 = \{ \nomj\leq \cnomm\star r, r\circ p\leq \cnomn \} \textrm{ \ and \ } \mathsf{Ineq}_3 = \nomj\circ p\leq \cnomm\star \cnomn.
\]
Applying the residuation rule for $\star$ yields
\[
S_4 = \{ \cnomm\starback\nomj\leq r, r\circ p\leq \cnomn \} \textrm{ \ and \ } \mathsf{Ineq}_4 = \nomj\circ p\leq \cnomm\star \cnomn.
\]
Applying the Right Ackermann rule we can eliminate $r$:
\[
S_5 = \{ (\cnomm\starback\nomj)\circ p\leq \cnomn \} \textrm{ \ and \ } \mathsf{Ineq}_5 = \nomj\circ p\leq \cnomm\star \cnomn.
\]
Finally, applying the left-positive approximation rule yields
\[
S_6 = \{ (\cnomm\starback\nomj)\circ p\leq \cnomn, \nomi\leq p \} \textrm{ \ and \ } \mathsf{Ineq}_5 = \nomj\circ \nomi\leq \cnomm\star \cnomn.
\]
Applying the Right Ackermann rule we can eliminate $p$:
\[
S_7 = \{ (\cnomm\starback\nomj)\circ \nomi\leq \cnomn \} \textrm{ \ and \ } \mathsf{Ineq}_5 = \nomj\circ \nomi\leq \cnomm\star \cnomn.
\]
Thus we can output the `purified' quasi inequality
\[
 (\cnomm\starback\nomj)\circ \nomi\leq \cnomn\Rightarrow \nomj\circ \nomi\leq \cnomm\star \cnomn,
\]
which is equivalent to
\[
 (\cnomm\starback\nomj)\circ \nomi\leq \cnomn\Rightarrow \cnomm\starback(\nomj\circ \nomi)\leq \cnomn,
\]
which is equivalent to the pure inequality $\cnomm\starback(\nomj\circ \nomi)\leq (\cnomm\starback\nomj)\circ \nomi$.
\end{example}

\begin{remark}
Notice that the pure inequality in output in the example above has the same shape as axiom (I c), the only difference being that the second-order variables $p, q, r$ have been uniformly replaced by nominals and co-nominals. An analogous behaviour has been observed of the Fischer Servi axioms of intuitionistic modal logic (cf.\ \cite[Lemma 27]{ma2014algebraic}),\footnote{In \cite{ma2014algebraic}, this observation is used to give a very compact proof of the fact that  every perfect intuitionistic modal algebra $(\bbA, \Box, \Diamond)$ is a Fischer Servi algebra iff $(\bbA, \blacksquare, \Diamondblack)$ is (cf.\ \cite[Proposition 28]{ma2014algebraic}). So in particular, similar facts can be proved of lattices in the Lambek-Grishin signature.} and of other Grishin interaction axioms in
 \cite[Section 5.3.2]{Mai-Lorijn-lambek-grishin}, where it is observed:
\begin{quote}
Notice that the first-order condition [resulting from the reduction of (IV b)] is precisely the shape of the interaction axiom (e)
listed in (IV). What has happened however is that the second-order variables [$p,q,r$] in the
join-preserving coordinates of the operations have been replaced by variables from $X$, while
the variables in the meet-preserving coordinates have been replaced by variables from $Y$. It is
interesting that there is some kind of calculus behind this that gives the same replacements in
axioms (b), (c) and (d).
\end{quote}
The phenomenon of which all these observations are instances has been systematically explored with the tools of unified correspondence theory in \cite{GMPTZ}---where the algorithm ALBA serves as the calculus mentioned in the quotation above---and applied to the design of a methodology which effectively computes analytic structural rules of a proper display calculus from given axioms/inequalities in the language of any normal DLE-logic. Also, the second tool of unified correspondence, i.e., the uniform definition of Sahlqvist and inductive inequalities across normal DLE-signatures, has been used as a basis for  the syntactic  characterization of the class of those axioms/inequalities from which analytic rules can be extracted in a way which is guaranteed to preserve logical equivalence (these are the so-called {\em analytic inductive inequalities}, cf.\ \cite[Definition 53]{GMPTZ}).  In particular, all Grishin interaction axioms can be easily seen to be analytic inductive, and hence the general procedure can be applied to them. In \cite{Moortgat}, a blue-print was given to transform Grishin axioms into analytic structural rules. These same  rules can be obtained by instantiating the ALBA-based  procedure defined in \cite{GMPTZ}. For instance, the inverse of the first rule in \cite[(10)]{Moortgat} is obtained by observing that the pure inequality in output in the example above is definite left-primitive (cf.\ \cite[Definition 26]{GMPTZ}), hence its validity  is equivalent to that of  $p\starback(q\circ r)\leq (p\starback q)\circ r$ (cf.\ \cite[Proposition 35]{GMPTZ}),  and applying the following transformation steps to it we get the required rule:
\[p\starback(q\circ r)\leq (p\starback q)\circ r \quad \rightsquigarrow \quad \frac{(p\starback q)\circ r\vdash z}{p\starback(q\circ r)\vdash z} \quad \rightsquigarrow\quad \frac{p\starback q\vdash z\circfor r}{q\circ r\vdash p\star z}\]
\end{remark}

\begin{example}
\label{ex:kurtonina:ALBA} In Example \ref{ex:kurtonina}, we described the class of categorial principles treated by Kurtonina in \cite{kurtonina:PhD} in terms of a certain subclass of Sahlqvist inequalities $\alpha(\overline{p}, q)\leq \beta(\overline{p}, q)$ such that $-\beta$ is a Skeleton formula, all the variable occurrences of which are positive but for exactly one occurrence of the variable $q$, and $\alpha$ is an $\epsilon$-monotone term with some further requirements on $\epsilon$ guaranteeing the applicability of the Ackermann rules.
In what follows, we outline an alternative proof of \cite[Theorem 2.2.5]{kurtonina:PhD} in the form of a general ALBA  run on each such inequality.
Let us assume that the input inequality is not uniform in $q$. This means that either $+\alpha$ is positive in $q$ or there are positive occurrences of $q$ in $-\beta$. We proceed by cases: (a) if $+\alpha$ is positive in $q$, then  we solve for $\epsilon(\overline{p}, q) = (\overline{1}, \partial)$
After having eliminated all uniform variables, if any, the initial system is given by

\[
S_0 = \emptyset \textrm{ \ and \ } \mathsf{Ineq}_0 = \alpha(\overline{p}, q)\leq \beta(\overline{p}, q).
\]
Applying the right-positive approximation rules to  each positive variable occurrence in $-\beta$ (including positive occurrences of $q$ in $\beta$, if any),  the right-negative approximation rule to  the negative occurrence of $q$, and the left-positive approximation rule to the left-hand side of the inequality, we obtain:
\[
S_1 = \{ \nomi\leq \alpha(\overline{p}, q), \overline{\bigvee_{I_{i_p}}\nomj}\leq \overline{p}, \bigvee_{I_{i_q}}\nomj\leq q,  q\leq\cnomm \}   \textrm{ \ and \ } \mathsf{Ineq}_1 = \nomi\leq \beta(\overline{\nomj}, \cnomm).
\]
The assumptions on $+\alpha$ make it possible to apply the right Ackermann-rule to eliminate each $p$ in $\overline{p}$ and the left Ackermann-rule to eliminate $q$:
\[
S_2 = \{ \nomi\leq \alpha(\overline{\bigvee_{I_i}\nomj}, \cnomm), \bigvee_{I_{i_q}}\nomj\leq\cnomm \}   \textrm{ \ and \ } \mathsf{Ineq}_2 = \nomi\leq \beta(\overline{\nomj}, \cnomm).
\]
If (b) if $+\alpha$ is not positive in $q$, then the assumption that the input inequality is not uniform in $q$ implies that there are positive occurrences of $q$ in $-\beta$. Then we solve for $\epsilon(\overline{p}, q) = (\overline{1}, 1)$, in a similar way to that described above. If the input inequality is  uniform in $q$, then $q$ is eliminated during the preprocessing, and the remaining variables in $\overline{p}$ are eliminated as described above.
\end{example}

\begin{example}
Consider the $(1,1)$-Sahlqvist inequality $(\Box p_1\circ \Box {\lhd} p_1) \circ \Diamond p_2 \leq {\lhd} p_1 \star \Diamond\Box p_2$, (cf.\ Example \ref{Inductive:And:Sahlqvist:Example}). There are no uniform variables to eliminate, so the initial system is given by
\[
S_0 = \emptyset \textrm{ \ and \ } \mathsf{Ineq}_0 = (\Box p_1\circ \Box {\lhd} p_1) \circ \Diamond p_2 \leq {\lhd} p_1 \star \Diamond\Box p_2.
\]
Applying the left-positive approximation rule yields
\[
S_1 = \{\nomj_1 \leq  \Box p_1 \} \textrm{ \ and \ } \mathsf{Ineq}_1 = (\nomj_1 \circ \Box {\lhd} p_1) \circ \Diamond p_2 \leq {\lhd} p_1 \star \Diamond\Box p_2,
\]
which another application turns into
\[
S_2 = \{\nomj_1 \leq  \Box p_1,  \nomj_2 \leq \Box {\lhd} p_1\} \textrm{ \ and \ } \mathsf{Ineq}_2 = (\nomj_1 \circ \nomj_2) \circ \Diamond p_2 \leq {\lhd} p_1 \star \Diamond\Box p_2.
\]
Applying the right-negative approximation rule now yields
\[
S_3 = \{\nomj_1 \leq  \Box p_1,  \nomj_2 \leq \Box {\lhd} p_1, {\lhd} p_1 \leq \cnomm_1\} \textrm{ \ and \ } \mathsf{Ineq}_3 = (\nomj_1 \circ \nomj_2) \circ \Diamond p_2 \leq \cnomm_1\star \Diamond\Box p_2.
\]
Applying the residuation rules for $\Box$ and $\lhd$ turns this into
\[
S_4 = \{\Diamondblack \nomj_1 \leq  p_1,  \nomj_2 \leq \Box {\lhd} p_1, {\blacktriangleleft} \cnomm_1 \leq p_1\} \textrm{ \ and \ } \mathsf{Ineq}_4 = (\nomj_1 \circ \nomj_2) \circ \Diamond p_2 \leq \cnomm_1\star \Diamond\Box p_2,
\]
to which the Right Ackermann-rule may be applied to eliminate $p_1$, thus:
\[
S_5 = \{\nomj_2 \leq \Box {\lhd} (\Diamondblack \nomj_1 \vee {\blacktriangleleft} \cnomm_1)\} \textrm{ \ and \ } \mathsf{Ineq}_5 = (\nomj_1 \circ \nomj_2) \circ \Diamond p_2 \leq \cnomm_1\star \Diamond\Box p_2.
\]
Applying the left-positive and right-negative approximation rules yields
\[
S_6 = \{\nomj_2 \leq \Box {\lhd} (\Diamondblack \nomj_1 \vee {\blacktriangleleft} \cnomm_1), \ \nomj_3 \leq p_2, \ \Diamond\Box p_2 \leq \cnomm_2 \} \textrm{ \ and \ } \mathsf{Ineq}_6 = (\nomj_1 \circ \nomj_2) \circ \Diamond  \nomj_3 \leq \cnomm_1 \star \cnomm_2,
\]
which an application of the Ackermann-rule turns into
\[
S_7 = \{\nomj_2 \leq \Box {\lhd} (\Diamondblack \nomj_1 \vee {\blacktriangleleft} \cnomm_1), \ \Diamond\Box \nomj_3 \leq \cnomm_2 \} \textrm{ \ and \ } \mathsf{Ineq}_7 = (\nomj_1 \circ \nomj_2) \circ \Diamond  \nomj_3 \leq \cnomm_1 \star \cnomm_2.
\]
Thus we can output the `purified' quasi inequality
\[
\nomj_2 \leq \Box {\lhd} (\Diamondblack \nomj_1 \vee {\blacktriangleleft} \cnomm_1) \: \& \: \Diamond\Box \nomj_3 \leq \cnomm_2 \Rightarrow (\nomj_1 \circ \nomj_2) \circ \Diamond  \nomj_3 \leq \cnomm_1 \star \cnomm_2.
\]
\end{example}

\begin{example}
Consider the inductive inequality $\phi_3 \leq \psi_3:= \Diamond(\Box {\lhd}(q \circ r)  \wedge \Box (p \star \Box q)) \leq {\lhd}\Box(p \wedge r) \vee \Diamond p$ from example \ref{Inductive:And:Sahlqvist:Example}. Recall that this inequality is $(\Omega, \epsilon)$-inductive with $p <_{\Omega} q$ and $ r$ unrelated to $p$ and $q$ via$\leq_{\Omega}$, and $\epsilon_p = \partial$, $\epsilon_q = \epsilon_r = 1$. Once again there are no uniform variables to eliminate. The initial system is
\[
S_0 = \emptyset \textrm{ \ and \ } \mathsf{Ineq}_0 = \Diamond(\Box {\lhd}(q \circ r)  \wedge \Box (p \star \Box q)) \leq {\lhd}\Box(p \wedge r) \vee \Diamond p.
\]
Applying the left-positive and right-negative approximation rules give
\[
S_1 = \{\nomj \leq  \Box {\lhd}(q \circ r)  \wedge \Box (p \star \Box q), {\lhd}\Box(p \wedge r) \vee \Diamond p \leq \cnomm \}  \textrm{ \ and \ } \mathsf{Ineq}_1 = \Diamond \nomj \leq \cnomm.
\]
Proceeding in accordance with $\Omega$ and $\epsilon$, eliminate $p$ first, solving for its negative occurrences in the inequalities in $S$. Applying the $\vee$-splitting rule to ${\lhd}\Box(p \wedge r) \vee \Diamond p \leq \cnomm$ and then the $\Diamond$-residuation rule yields
\[
S_2 = \{\nomj \leq  \Box {\lhd}(q \circ r)  \wedge \Box (p \star \Box q), {\lhd}\Box(p \wedge r) \leq \cnomm, p \leq \blacksquare \cnomm \}  \textrm{ \ and \ } \mathsf{Ineq}_2 = \Diamond \nomj \leq \cnomm.
\]
Note that the first two inequalities in $S_2$ are positive in $p$. This, together with the shape of the third, implies that the left Ackerman-rule is applicable to eliminate p:
\[
S_3 = \{\nomj \leq  \Box {\lhd}(q \circ r)  \wedge \Box (\blacksquare \cnomm \star \Box q), {\lhd}\Box(\blacksquare \cnomm \wedge r) \leq \cnomm \}  \textrm{ \ and \ } \mathsf{Ineq}_3 = \Diamond \nomj \leq \cnomm.
\]
We next want to eliminate $q$. Applying the $\wedge$-splitting rule to the first inequality in $S_3$ followed by the $\Box$-,  $\star$-, and again $\Box$-residuation rules yields
\[
S_4 = \{\nomj \leq  \Box {\lhd}(q \circ r), \Diamondblack (\blacksquare  \cnomm \starback \Diamondblack \nomj) \leq  q, {\lhd}\Box(\blacksquare \cnomm \wedge r) \leq \cnomm \}  \textrm{ \ and \ } \mathsf{Ineq}_4 = \Diamond \nomj \leq \cnomm,
\]
to which the right Ackermann-rule is applicable with respect to $q$, resulting in the system
\[
S_5 = \{\nomj \leq  \Box {\lhd}(\Diamondblack (\blacksquare  \cnomm \starback \Diamondblack \nomj) \circ r),  {\lhd}\Box(\blacksquare \cnomm \wedge r) \leq \cnomm \}  \textrm{ \ and \ } \mathsf{Ineq}_5 = \Diamond \nomj \leq \cnomm.
\]
Only $r$ remains to be eliminated. Applying the ${\lhd}$-residuation rule to ${\lhd}\Box(\blacksquare \cnomm \wedge r) \leq \cnomm$ yields
\[
S_6 = \{\nomj \leq  \Box {\lhd}(\Diamondblack (\blacksquare  \cnomm \starback \Diamondblack \nomj) \circ r),  {\blacktriangleleft} \cnomm \leq  \Box(\blacksquare \cnomm \wedge r) \}  \textrm{ \ and \ } \mathsf{Ineq}_6 = \Diamond \nomj \leq \cnomm.
\]
Now applying the $\Box$-residuation rule to ${\blacktriangleleft} \cnomm \leq  \Box(\blacksquare \cnomm \wedge r)$ followed by the $\wedge$-splitting rule, gives
\[
S_7 = \{\nomj \leq  \Box {\lhd}(\Diamondblack (\blacksquare  \cnomm \starback \Diamondblack \nomj) \circ r),  \Diamondblack {\blacktriangleleft} \cnomm \leq  \blacksquare \cnomm,  \Diamondblack {\blacktriangleleft} \cnomm \leq  r \}  \textrm{ \ and \ } \mathsf{Ineq}_7 = \Diamond \nomj \leq \cnomm,
\]
to which the right Ackermann-rule is applicable with respect to $r$ resulting in the purified system
\[
S_8 = \{\nomj \leq  \Box {\lhd}(\Diamondblack (\blacksquare  \cnomm \starback \Diamondblack \nomj) \circ \Diamondblack {\blacktriangleleft} \cnomm ),  \Diamondblack {\blacktriangleleft} \cnomm \leq  \blacksquare \cnomm  \}  \textrm{ \ and \ } \mathsf{Ineq}_8 = \Diamond \nomj \leq \cnomm.
\]
\end{example}

\begin{example}
Consider the non-inductive inequality $\Box ({\rhd} q \star p) \leq \Diamond ({\lhd} q \circ \Box p)$ from example \ref{Inductive:And:Sahlqvist:Example}. The initial system is
\[
S_0 = \emptyset \textrm{ \ and \ } \mathsf{Ineq}_0 = \Box ({\rhd} q \star p) \leq \Diamond ({\lhd} q \circ \Box p).
\]
The only rules applicable to this system are the left-positive and right-negative approximation rules. Applying these gives
\[
S_1 = \{\nomj \leq  \Box ({\rhd} q \star p),  \Diamond ({\lhd} q \circ \Box p) \leq \cnomm \} \textrm{ \ and \ } \mathsf{Ineq}_1 =  \nomj \leq \cnomm.
\]
This system can be solved for either $p$ or $q$. Solving for $p$ we apply the $\Box$- and $\star$-residuation rules, giving us a system ready for the application of the right-Ackermann rule:
\[
S_2 = \{ {\rhd} q \starback \Diamondblack \nomj \leq  p,  \Diamond ({\lhd} q \circ \Box p) \leq \cnomm \} \textrm{ \ and \ } \mathsf{Ineq}_2 =  \nomj \leq \cnomm.
\]
Applying the Ackermann-rule gives
\[
S_3 = \{ \Diamond ({\lhd} q \circ \Box {\rhd} q \starback \Diamondblack \nomj) \leq \cnomm \} \textrm{ \ and \ } \mathsf{Ineq}_3 =  \nomj \leq \cnomm.
\]
This cannot be solved for $q$, whichever rules we apply to $\Diamond ({\lhd} q \circ \Box {\rhd} q \starback \Diamondblack \nomj) \leq \cnomm$, there will always be an occurrence of $q$ on both sides of the resulting inequalities. As the reader can check, one ends up in a similar situation with respect to $p$ when solving for $q$ first.
\end{example}

\begin{example}\label{Examp:Hopeless}
Consider the inequality $\Diamond(\Diamond p \wedge \Diamond q) \leq \Diamond \Box(p \wedge q)$ from example \ref{Projection:Pi:Example}. As remarked there, this is a DML Sahlqvist inequality and hence the distributive version of ALBA is guaranteed to succeed on it. It follows that it is elementary and canonical in the setting of DML.  However, this is not LML Sahlqvist (or even inductive) and, as we will see now, non-distributive ALBA fails on it. As far as the authors are aware, it is not known whether it is elementary and/or canonical in the setting of LML. In what follows we adopt a more compact notation.

\begin{center}
\begin{tabular}{cll}
&$\Diamond(\Diamond p \wedge \Diamond q) \leq \Diamond \Box(p \wedge q)$\\
&$(\emptyset, \; \;\Diamond(\Diamond p \wedge \Diamond q) \leq \Diamond \Box(p \wedge q))$ \\
&$(\{\nomj \leq \Diamond p \wedge \Diamond q \}, \; \;\Diamond \nomj \leq \Diamond \Box(p \wedge q))$ &($L^+A$)\\
&$(\{\nomj \leq \Diamond p, \; \nomj \leq \Diamond q \}, \; \;\Diamond \nomj \leq \Diamond \Box(p \wedge q))$ &($\wedge$-split)
\end{tabular}
\end{center}

At this point we are stuck. Had we been working in the distributive setting we could have proceeded further as follows:

\begin{center}
\begin{tabular}{cll}
&$(\{\nomj \leq \Diamond p, \; \nomj \leq \Diamond q \}, \; \;\Diamond \nomj \leq \Diamond \Box(p \wedge q))$\\
&$(\{\nomj \leq \Diamond \nomk, \; \nomk \leq p, \; \nomj \leq \Diamond \nomi, \; \nomi \leq q \}, \; \;\Diamond \nomj \leq \Diamond \Box(p \wedge q))$ &(DML approximation)\\
&$(\{\nomj \leq \Diamond \nomk, \; \nomk \leq p, \; \nomj \leq \Diamond \nomi, \; \nomi \leq q, \; \Diamond \Box(p \wedge q) \leq \cnomm \}, \; \;\Diamond \nomj \leq \cnomm)$ &($R^- A$)\\
&$(\{\nomj \leq \Diamond \nomk, \; \nomj \leq \Diamond \nomi, \; \nomi \leq q, \; \Diamond \Box(\nomk \wedge q) \leq \cnomm \}, \; \;\Diamond \nomj \leq \cnomm)$ &(RAR)\\
&$(\{\nomj \leq \Diamond \nomk, \; \nomj \leq \Diamond \nomi, \; \Diamond \Box(\nomk \wedge \nomi) \leq \cnomm \}, \; \;\Diamond \nomj \leq \cnomm)$ &(RAR)\\
\end{tabular}
\end{center}

However, the DML approximation is not sound in the setting of non-distributive lattices, precisely because here join-irreducibles are not in general join prime. Notice that, in the DML case, the two branches of $+ \Diamond(\Diamond p \wedge \Diamond q)$ are both Skeleton, and accordingly we succeed in preparing for the Ackermann rule purely through approximation rules. This example shows that failure need not be the result of a loss of order theoretic properties of interpretations of connectives when passing from the distributive to the non-distributive setting, but rather that in this setting the same properties cannot be used to perform similar tasks. In particular, the problem in the current example in the LML setting is not purely the presence of $+ \wedge$ in the antecedent, but rather the diamonds occurring in its scope, to which further approximation cannot be applied even if they are still completely join-preserving.

Finally, non-distributive ALBA would succeed on both the inequalities $\Diamond(\Box p \wedge \Box q) \leq \Diamond \Box(p \wedge q)$ and $\Diamond(\Diamond p \circ \Diamond q) \leq \Diamond \Box(p \wedge q)$. In the first case, the $\wedge$-split would form part of the display task rather than the approximation task (as was the case in DML), while in the second, preparation for the Ackermann rule would be accomplished purely through approximation rules.
\end{example}


\section{Justification of correctness}\label{Crrctnss:Section}

In this section we prove that ALBA is correct, in the sense that whenever it succeeds in eliminating all propositional variables from an inequality $\phi \leq \psi$, the conjunction of the quasi-inequalities returned is equivalent on perfect $\mathcal{L}_\mathrm{LE}$-algebras to $\phi \leq \psi$, for an arbitrarily fixed language $\mathcal{L}_\mathrm{LE}$.

Fix a perfect $\mathcal{L}_\mathrm{LE}$-algebra $\bba = (A, \wedge, \vee, \bot, \top, \mathcal{F}^\bba, \mathcal{G}^\bba)$ for the remainder of this section. We first give the statement of the correctness theorem and its proof, and subsequently prove the lemmas needed in the proof.

\begin{theorem}[Correctness]\label{Crctns:Theorem} If ALBA succeeds in reducing an $\mathcal{L}_\mathrm{LE}$-inequality $\phi \leq \psi$ and yields $\mathsf{ALBA}(\phi \leq \psi)$, then $\bba \models \phi \leq \psi$ iff $\bba \models \mathsf{ALBA}(\phi \leq \psi)$.
\end{theorem}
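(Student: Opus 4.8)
The plan is to prove the correctness theorem by a standard "rule-by-rule soundness" argument, tracking the equivalences through all three stages of the algorithm and collecting the necessary lemmas. The overall structure is: (i) preprocessing equivalences, (ii) the invariant maintained by the reduction rules, and (iii) the transition from the pure quasi-inequalities back to a statement about $\phi \leq \psi$ on $\bba$.

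First I would set up the framework. Since $\bba$ is a fixed perfect $\mathcal{L}_\mathrm{LE}$-algebra, it is a complete lattice whose operations satisfy the infinitary distribution laws of Definition \ref{def:perfect LE}, hence all operations have the appropriate coordinatewise adjoints, so that the language $\mathcal{L}_\mathrm{LE}^+$ (including the $f^\sharp_i$, $g^\flat_i$ connectives and the nominals/co-nominals ranging over $\jira$ and $\mira$) is interpretable on $\bba$. I would state the key semantic fact underpinning everything: since $\jira$ join-generates $\bba$ and $\mira$ meet-generates $\bba$, for any $u, w \in \bba$ we have $u \leq w$ iff for all $\nomi \in \jira$ and all $\cnomm \in \mira$, $\nomi \leq u$ and $w \leq \cnomm$ imply $\nomi \leq \cnomm$. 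This is what licenses the first approximation step and, read backwards, the final recovery of $\phi \leq \psi$ from the purified systems.

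Next I would handle Stage 1. The rules for elimination of monotone variables preserve validity on $\bba$ by the observation in footnote \ref{footnote:uniformterms} (an $\epsilon$-uniform inequality is equivalent to its instance at the appropriate constants), the distribution of $f \in \mathcal{F}$ over $+\vee$ and $g \in \mathcal{G}$ over $-\wedge$ is justified by the equational axioms of normal LEs (which hold in $\bba$), and the splitting rules are justified by the universal property of $\wedge$ and $\vee$. Hence after Stage 1 we have $\bba \models \phi \leq \psi$ iff $\bba \models \phi'_i \leq \psi'_i$ for all $1 \leq i \leq n$, and the latter is exactly $\bba \models (S_i, \mathsf{Ineq}_i)$ with $S_i = \varnothing$ since an empty conjunction of antecedents makes the quasi-inequality equivalent to its succedent. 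The core of the proof is Stage 2: I would prove a Reduction Lemma stating that each reduction rule, when transforming a system $(S, \mathsf{Ineq})$ into $(S', \mathsf{Ineq}')$, preserves the property $\bba \models \bigamp S \Rightarrow \mathsf{Ineq}$ iff $\bba \models \bigamp S' \Rightarrow \mathsf{Ineq}'$, where validity of a quasi-inequality on $\bba$ is understood with respect to \emph{all} assignments of propositional variables into $\bba$ and of nominals/co-nominals into $\jira$/$\mira$. This is proved case by case: the approximation rules are justified by the join-/meet-generation fact combined with the order-theoretic behaviour of SLR branches (an SLR node corresponds to a map that is a left residual / completely join-preserving in the relevant coordinate, so one can "push a nominal inside"); the residuation rules are justified directly by the adjunction properties listed in the definition of tense $\mathcal{L}_\mathrm{LE}$-algebras, which hold in the perfect $\bba$; the splitting rules again by the universal properties of $\wedge,\vee$; and the two Ackermann rules by the (order-dual forms of the) Ackermann Lemma, whose statement and proof in this setting I would isolate as a separate lemma (the minimal/maximal-valuation argument, using that $\bba$ is complete so that $\bigvee_i \alpha_i$ and $\bigwedge_i \alpha_i$ exist). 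I would remark that, although pivotality of approximation-rule applications is needed for the canonicity proof in Section \ref{section:canonicity}, it is irrelevant here, so all rules may be applied freely.

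Finally, Stage 3: if the algorithm succeeds, each system has been purified, so $\mathsf{ALBA}(\phi \leq \psi)$ is the conjunction of the pure quasi-inequalities $\bigamp S_i \Rightarrow \mathsf{Ineq}_i$. Chaining the Stage 1 equivalence with iterated application of the Reduction Lemma gives $\bba \models \phi \leq \psi$ iff $\bba \models \mathsf{ALBA}(\phi \leq \psi)$, which is the claim. The main obstacle I anticipate is the careful bookkeeping in the Reduction Lemma for the approximation rules: one must verify that the branch from the displayed occurrence $!x$ being SLR really does guarantee the relevant operation is completely join-preserving (resp. has a left residual) in that coordinate \emph{under the current, possibly mixed-sign, context}, so that introducing a fresh nominal $\nomj$ with $\nomj \leq \gamma$ and replacing $\gamma$ by $\nomj$ is an equivalence and not merely an implication; this is where the positive/negative classification of Table \ref{Join:and:Meet:Friendly:Table} and the propagation of signs in signed generation trees must be invoked precisely. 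The Ackermann rule soundness, while conceptually the "engine", is by now routine given completeness of $\bba$ and the monotonicity/antitonicity side conditions. I would relegate the detailed verifications of the individual rules to lemmas (as the excerpt announces) and keep the main proof a short chain-of-equivalences argument.
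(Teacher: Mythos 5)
Your proposal is correct and follows essentially the same route as the paper: a chain of equivalences through preprocessing, a rule-by-rule soundness proposition for the reduction stage (with the SLR-branch distribution property justifying the approximation rules, adjunction justifying residuation, and the minimal/maximal-valuation Ackermann lemmas justifying the Ackermann rules), and the observation that pivotality plays no role for correctness. The "main obstacle" you flag is exactly what the paper isolates as its Distribution Lemma, proved by induction on signed generation trees.
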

\begin{proof}
Let $\phi_i \leq \psi_i$, $1 \leq i \leq n$ be the quasi-inequalities produced by preprocessing $\phi \leq \psi$. Consider the chain of statements (\ref{Crct:Eqn1}) to (\ref{Crct:Eqn5}) below. The proof will be complete if we can show that they are all equivalent.
\begin{eqnarray}
&&\bba \models \phi \leq \psi\label{Crct:Eqn1}\\
&&\bba \models \phi_i \leq \psi_i,  \quad 1 \leq i \leq n \label{Crct:Eqn2}\\
&&\bba \models \bigamp \varnothing \Rightarrow  \phi_i \leq \psi_i, \quad 1 \leq i \leq n \label{Crct:Eqn3}\\
&&\bba \models \bigamp S_i \Rightarrow \mathsf{Ineq}_i, \quad 1 \leq i \leq n \label{Crct:Eqn4}\\
& &\bba \models \mathsf{ALBA}(\phi \leq \psi)\label{Crct:Eqn5}
\end{eqnarray}
For the equivalence of (\ref{Crct:Eqn1}) and (\ref{Crct:Eqn2}) we need to verify that the rules for the elimination of uniform variables, distribution and splitting preserve validity on $\bba$. Distribution and splitting are immediate. As to elimination, if $\alpha(p) \leq \beta(p)$ is positive in $p$, then for all $a \in \bba$ it is the case that $\alpha(a) \leq \alpha(\bot)$ and  $\beta(\bot) \leq \beta(a)$. Hence if $\alpha(\bot) \leq \beta(\bot)$ then $\alpha(a) \leq \beta(a)$, and hence $\bba \models \alpha(p) \leq \beta(p)$ iff $\bba \models \alpha(\bot) \leq \beta(\bot)$. The case for $\gamma(p) \leq \delta(p)$ negative in $p$ is similar.

That (\ref{Crct:Eqn2}) and (\ref{Crct:Eqn3}) are equivalent is immediate. The bi-implication between (\ref{Crct:Eqn3}) and (\ref{Crct:Eqn4}) follows from proposition \ref{Rdctn:Rls:Crctnss:Prop}, while (\ref{Crct:Eqn4}) and (\ref{Crct:Eqn5}) are the same by definition.
\end{proof}

\begin{lemma}[Distribution lemma]\label{Distribution:Lemma}
If  $\phi(!x), \psi(!x), \xi(!x), \chi(!x) \in \mathcal{L}_\mathrm{LE}^{+}$  and $\{ a_j \}_{j \in I} \subseteq A$, then
\begin{enumerate}
\item $\phi(\bigvee_{j \in I} a_j) = \bigvee\{ \phi(a_j) \mid j \in I \}$, when $+x \prec +\phi(!x)$ and in $+\phi(!x)$ the branch ending in $+x$ is SLR;
\item $\psi(\bigwedge_{j \in I} a_j) = \bigvee\{ \psi(a_j) \mid j \in I \}$, when $-x \prec +\psi(!x)$ and in $+\psi(!x)$ the branch ending in $-x$ is SLR;
\item $\xi(\bigwedge_{j \in I} a_j) = \bigwedge\{ \xi(a_j) \mid j \in I \}$, when $-x \prec -\xi(!x)$ and in $-\xi(!x)$ the branch ending in $-x$ is SLR;
\item $\chi(\bigvee_{j \in I} a_j) = \bigwedge\{ \chi(a_j) \mid j \in I \}$, when $+x \prec -\chi(!x)$ and in $-\chi(!x)$ the branch ending in $+x$ is SLR.
\end{enumerate}
\end{lemma}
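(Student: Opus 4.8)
The statement is a ``distribution lemma'' asserting that terms built entirely from SLR-nodes (syntactically left residual nodes) above a single occurrence of a variable $x$ send joins or meets, as appropriate, to joins or meets. The natural approach is induction on the complexity of the term (equivalently, on the length of the SLR-branch from the displayed leaf to the root). I will treat case (1) in detail; cases (2)--(4) follow by the same pattern, using the order-dual or sign-dual formulation, and appealing to the fact (recorded in the preliminaries) that taking canonical extensions, and hence the order-theoretic behaviour of the $\sigma$- and $\pi$-extended operations, commutes with order-duals and products, so that it suffices to handle monotone unary operations.

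\textbf{Base case.} If the SLR-branch from $+x$ has length $0$, then $\phi(!x) = x$ itself (the only way for a leaf to be SLR with an empty branch above it), and $\phi(\bigvee_{j\in I} a_j) = \bigvee_{j\in I} a_j = \bigvee\{\phi(a_j)\mid j\in I\}$ trivially.

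\textbf{Inductive step.} Suppose the SLR-branch above $+x$ has positive length, so its topmost node is an SLR-node. Consulting Table \ref{Join:and:Meet:Friendly:Table}, the SLR-nodes occurring positively are $+\vee$ (binary) and $+f$ for $f\in\mathcal{F}$ with $n_f\geq 1$; the child of this node continuing the SLR-branch has the sign dictated by Definition \ref{def: signed gen tree}. So $+\phi(!x)$ has root either $\psi_1 \vee \psi_2$ with (say) $+x \prec +\psi_1$ and the branch in $+\psi_1$ ending in $+x$ being SLR, or $f(\chi_1,\ldots,\chi_{n_f})$ with $x$ occurring in the $i$th coordinate; if $\varepsilon_f(i)=1$ then $+x \prec +\chi_i$ and that branch is SLR, while if $\varepsilon_f(i)=\partial$ then $-x \prec +\chi_i$ and that branch is SLR. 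In the $+\vee$ case, write $\phi(!x) = \psi_1(!x) \vee \psi_2$ where $\psi_2$ does not contain $x$; then $\phi(\bigvee_j a_j) = \psi_1(\bigvee_j a_j) \vee \psi_2 = (\bigvee_j \psi_1(a_j)) \vee \psi_2$ by the induction hypothesis on $\psi_1$ (case (1) if the branch is $+$-signed, case (2) if $-$-signed), and since $I$ is non-empty (we may assume $I\neq\varnothing$; if $I=\varnothing$ both sides are $\bot$ after the convention on empty joins) this equals $\bigvee_j(\psi_1(a_j)\vee\psi_2) = \bigvee_j \phi(a_j)$. In the $+f$ case, $\phi(\bigvee_j a_j) = f(\ldots, \chi_i(\bigvee_j a_j), \ldots)$; by the induction hypothesis applied to $\chi_i$ --- case (1) when $\varepsilon_f(i)=1$, giving $\chi_i(\bigvee_j a_j) = \bigvee_j \chi_i(a_j)$, and case (2) when $\varepsilon_f(i)=\partial$, giving $\chi_i(\bigwedge_j a_j) = \bigvee_j \chi_i(a_j)$ (note the input to $\phi$ is still a join; the $\partial$ case is where the sign-flip on the branch converts a meet hypothesis into a join conclusion, but here we want the argument of $f$ to be a join of the $\chi_i(a_j)$'s in either subcase) --- together with the fact that $f^{\bba}$, being the $\sigma$-extension of a normal operation, is \emph{completely} join-preserving in the $i$th coordinate when $\varepsilon_f(i)=1$ and completely meet-to-join in that coordinate when $\varepsilon_f(i)=\partial$ (Definition \ref{def:perfect LE}), we get $f(\ldots, \bigvee_j \chi_i(a_j), \ldots) = \bigvee_j f(\ldots, \chi_i(a_j), \ldots) = \bigvee_j \phi(a_j)$, as required.

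\textbf{Main obstacle.} The genuine content is not the induction itself but bookkeeping the interaction between the \emph{sign} of the branch (which flips at $\partial$-coordinates) and whether a join or a meet is being distributed. One must check that along any SLR-branch the combination ``sign $+$, input join'' and ``sign $-$, input meet'' is exactly what propagates, so that the appropriate clause of the four-part statement is always available as the induction hypothesis; this is precisely why the lemma is stated with four simultaneous clauses rather than one. I would organize the induction as a single simultaneous induction on all four statements, so that each inductive step can invoke whichever of the four is needed for the relevant subterm. The only other point needing care is the complete (infinitary) distributivity of the $f^{\bba}$ and $g^{\bba}$ on the perfect algebra $\bba$, which is exactly the content of Definition \ref{def:perfect LE} and so may be invoked directly; and the reduction of the $\vee$ and $\wedge$ (lattice) nodes, which uses only finite distributivity of the lattice operations together with the non-emptiness of $I$ to reabsorb the $x$-free conjunct or disjunct.
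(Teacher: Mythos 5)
Your overall strategy---a simultaneous induction on the four clauses, closed off by the complete (infinitary) distribution laws of Definition \ref{def:perfect LE}---is exactly the paper's, and your $\epsilon_f(i)=1$ subcase matches the paper's computation. However, the step you yourself single out as ``the genuine content'', namely the sign/join--meet bookkeeping at a $\partial$-coordinate, is wrong as written. Suppose $\phi(!x)=f(\ldots,\chi_i(!x),\ldots)$ with $\epsilon_f(i)=\partial$ and $+x\prec+\phi(!x)$. The $i$th child of $+f$ carries the \emph{opposite} sign, so the relevant signed subtree is $-\chi_i$ with $+x\prec-\chi_i$: the induction hypothesis that applies is clause (4), not clause (2), and it must be applied to the actual argument $\bigvee_{j\in I}a_j$, yielding $\chi_i(\bigvee_j a_j)=\bigwedge_j\chi_i(a_j)$, i.e.\ a \emph{meet}. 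One then concludes because $f$ sends arbitrary meets in a $\partial$-coordinate to joins. Your text instead invokes clause (2) evaluated at $\bigwedge_j a_j$ (which is not the argument being substituted into $\phi$) and insists that ``we want the argument of $f$ to be a join of the $\chi_i(a_j)$'s in either subcase''. If the $i$th argument really were $\bigvee_j\chi_i(a_j)$, the antitonicity of $f$ in that coordinate would only give $\phi(\bigvee_j a_j)\leq\bigwedge_j\phi(a_j)$, not the desired identity $\phi(\bigvee_j a_j)=\bigvee_j\phi(a_j)$; so the computation breaks at exactly that point. This is precisely why the lemma's four clauses alternate join-input/meet-input with the sign, and why the paper's proof of this case first produces $\bigwedge_j\psi_i(a_j)$ inside $f$ before distributing $f$ over it.

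A second, more minor, misreading: in Table \ref{Join:and:Meet:Friendly:Table} the node $+\vee$ is a $\Delta$-adjoint, not an SLR node (this distinction is the whole point of the non-distributive setting, cf.\ Example \ref{Examp:Hopeless}), so the $+\vee$ case you treat cannot occur on an SLR branch and should simply be dropped; the only positively signed SLR nodes are $+f$ with $f\in\mathcal{F}^*$, and dually $-g$ with $g\in\mathcal{G}^*$ for the negatively signed ones. Dropping it also disposes of your incorrect remark that for $I=\emptyset$ ``both sides are $\bot$'' in that case (there $\phi(\bot)=\psi_1(\bot)\vee\psi_2=\psi_2$ in general). The paper's base cases are just $\top$, $\bot$ and $x$, and its inductive cases are exclusively of the form $f(\ldots)$ for clauses (1)--(2) and $g(\ldots)$ for clauses (3)--(4), each split according to whether the coordinate containing $x$ has $\varepsilon$-value $1$ or $\partial$.
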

\begin{proof}
The proof is by simultaneous induction on $\phi$, $\psi$, $\xi$ and $\chi$. The base cases for $\bot$, $\top$, and $x$, when applicable, are trivial. We check the inductive cases for $\phi$, and list the other inductive cases, which all follow in a similar way.

\begin{description}
    \item[$\phi$ of the form $f(\phi_1,\ldots,\phi_i(!x),\ldots,\phi_{n_f})$ with $f\in \mathcal{F}$ and $\epsilon_f(i) = 1$:] By the assumption of a unique occurrence of $x$ in $\phi$, the variable $x$ occurs in $\phi_i$ for exactly one index $1\leq i\leq n_f$. 
        The assumption that $\epsilon_f(i) = 1$ implies that $+ x \prec + \phi_i$.
        Then
        \begin{center}
        \begin{tabular}{r c l}
        $\phi(\bigvee_{j \in I} a_j)$ &$ =$&$ f(\phi_1,\ldots,\phi_i(\bigvee_{j \in I} a_j)\ldots,\phi_{n_f})$\\
        &$ =$&$  f(\phi_1,\ldots, \bigvee_{j \in I}\phi_i( a_j)\ldots,\phi_{n_f})$\\
         &$ =$&$  \bigvee_{j \in I}f(\phi_1,\ldots,\phi_i(a_j)\ldots,\phi_{n_f})$\\
         &$ =$&$  \bigvee_{j \in I} \phi(a_j)$,
          \end{tabular}
          \end{center}where the second equality holds by the inductive hypothesis, since the branch of $+\phi$ ending in $+x$ is SLR, and it traverses $+\phi_i$.
\item[$\phi$ of the form $f(\phi_1,\ldots,\psi_i(!x),\ldots,\phi_{n_f})$ with $f\in \mathcal{F}$ and $\epsilon_f(i) = \partial$:] By the assumption of a unique occurrence of $x$ in $\phi$, the variable $x$ occurs in $\psi_i$ for exactly one index $1\leq i\leq n_f$. The assumption that $\epsilon_f(i) = \partial$ implies that $- x \prec + \psi_i$. 
    Then \begin{center}
        \begin{tabular}{r c l}
        $\phi(\bigvee_{j \in I} a_i)$ &$ =$&$ f(\phi_1,\ldots,\psi_i(\bigvee_{j \in I} a_j)\ldots,\phi_{n_f})$\\
        &$ =$&$  f(\phi_1,\ldots, \bigwedge_{j \in I}\psi_i( a_j)\ldots,\phi_{n_f})$\\
         &$ =$&$  \bigvee_{j \in I}f(\phi_1,\ldots,\psi_i(a_j)\ldots,\phi_{n_f})$\\
         &$ =$&$  \bigvee_{j \in I} \phi(a_j)$,
          \end{tabular}
          \end{center}
     where the second equality holds by the inductive hypothesis, since the branch of $+\phi$ ending in $+x$ is SLR, and it traverses $-\psi_i$.
    %




\item[$\psi$ of the form]  $f(\psi_1,\ldots,\psi_i(!x),\ldots,\psi_{n_f})$ with $f\in \mathcal{F}$ and $\epsilon_f(i) = 1$ or $f(\psi_1,\ldots,\xi_i(!x),\ldots,\phi_{n_f})$ with $f\in \mathcal{F}$ and $\epsilon_f(i) = \partial$.

\item[$\xi$ of the form] $g(\xi_1,\ldots,\xi_i(!x),\ldots,\xi_{n_g})$ with $g\in \mathcal{G}$ and $\epsilon_g(i) = 1$ or $g(\xi_1,\ldots,\chi_i(!x),\ldots,\xi_{n_g})$ with $g\in \mathcal{G}$ and $\epsilon_g(i) = \partial$. 


\item[$\chi$ of the form] $g(\chi_1,\ldots,\chi_i(!x),\ldots,\chi_{n_g})$ with $g\in \mathcal{G}$ and $\epsilon_g(i) = 1$ or $g(\chi_1,\ldots,\phi_i(!x),\ldots,\xi_{n_g})$ with $g\in \mathcal{G}$ and $\epsilon_g(i) = \partial$. 
\end{description}
\end{proof}

\begin{lemma}[Right Ackermann Lemma]\label{Ackermann:Right:Lemma}
Let $\alpha_1, \ldots, \alpha_n \in \mathcal{L}_\mathrm{LE}^{+}$ with $p \not \in \mathsf{PROP}(\alpha_i)$, $1 \leq i \leq n$, let $\beta_{1}(p), \ldots, \beta_{m}(p) \in \mathcal{L}_\mathrm{LE}^{+}$ be positive in $p$, and let $\gamma_{1}(p), \ldots, \gamma_{m}(p) \in \mathcal{L}_\mathrm{LE}^{+}$ be negative in $p$. Let $v$ be any assignment on a perfect LE $\bba$. Then
\[
\bba, v \models \beta_i(\alpha_1 \vee \cdots \vee \alpha_n / p) \leq \gamma_i(\alpha_1 \vee \cdots \vee \alpha_n / p), \textrm{ for all } 1 \leq i \leq m
\]
iff there exists a variant $v' \sim_p v$ such that
\[
\bba, v' \models \alpha_i \leq p \textrm{ for all } 1 \leq i \leq n, \textrm{ and } \bba, v' \models \beta_i(p) \leq \gamma_i(p), \textrm{ for all } 1 \leq i \leq m.
\]
\end{lemma}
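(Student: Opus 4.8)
The Right Ackermann Lemma is a standard fixed-point-style argument, and the plan is to prove both implications directly.

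For the \emph{converse} (``if'') direction, suppose $v'\sim_p v$ witnesses $\bba, v'\models \alpha_i\leq p$ for all $i$ and $\bba, v'\models \beta_i(p)\leq \gamma_i(p)$ for all $i$. Since each $\alpha_i$ does not contain $p$, its value under $v'$ equals its value under $v$; hence $v'(p)$ is an upper bound of $\{v(\alpha_1),\ldots,v(\alpha_n)\}$, so $v(\alpha_1\vee\cdots\vee\alpha_n)\leq v'(p)$. Now I would invoke monotonicity: since $\beta_i(p)$ is positive in $p$, the term function $a\mapsto v'[p:=a](\beta_i(p))$ is order-preserving, and since $\gamma_i(p)$ is negative in $p$, the corresponding function is order-reversing. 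Applying this with $a := v(\alpha_1\vee\cdots\vee\alpha_n) \leq v'(p)$ gives $v(\beta_i(\alpha_1\vee\cdots\vee\alpha_n/p)) = v'[p:=v(\textstyle\bigvee_k\alpha_k)](\beta_i(p)) \leq v'(\beta_i(p)) \leq v'(\gamma_i(p)) \leq v'[p:=v(\textstyle\bigvee_k\alpha_k)](\gamma_i(p)) = v(\gamma_i(\alpha_1\vee\cdots\vee\alpha_n/p))$, which is exactly what is required. (Here I am using that $\alpha_k$ and $\mathsf{Ineq}$ do not contain $p$, so substituting $\bigvee_k\alpha_k$ for $p$ coincides with re-assigning $p$.)

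For the \emph{forward} direction, assume $\bba, v\models \beta_i(\bigvee_k\alpha_k/p)\leq \gamma_i(\bigvee_k\alpha_k/p)$ for all $i$. The natural candidate is the variant $v'\sim_p v$ defined by $v'(p) := v(\alpha_1\vee\cdots\vee\alpha_n) = v(\alpha_1)\vee\cdots\vee v(\alpha_n)$. Then $\bba, v'\models\alpha_i\leq p$ for each $i$ is immediate, since $v'(\alpha_i) = v(\alpha_i)\leq v(\bigvee_k\alpha_k) = v'(p)$ (again using $p\notin\mathsf{PROP}(\alpha_i)$). Moreover, because substituting $\bigvee_k\alpha_k$ for $p$ in $\beta_i$ and $\gamma_i$ yields the same value as evaluating under $v'$, the hypothesis gives directly $v'(\beta_i(p)) = v(\beta_i(\bigvee_k\alpha_k/p))\leq v(\gamma_i(\bigvee_k\alpha_k/p)) = v'(\gamma_i(p))$, i.e.\ $\bba, v'\models\beta_i(p)\leq\gamma_i(p)$, as needed.

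The only genuinely non-trivial ingredient is the monotonicity fact used in the converse direction: that a term positive (resp.\ negative) in $p$ induces an order-preserving (resp.\ order-reversing) term function on a perfect LE. I would establish this by a routine induction on term structure, using that each $f\in\mathcal{F}^*$ and $g\in\mathcal{G}^*$ is $\varepsilon$-monotone coordinatewise (which holds for the primitive connectives by normality and for the residuals/adjoints by their order-theoretic definition in a perfect LE), together with the fact that the sign of a subterm in the signed generation tree is exactly the composite of the order-types along the branch. This is the step I expect to spell out most carefully, though it is conceptually standard; the substitution bookkeeping (``substituting a $p$-free term for $p$ equals re-assigning $p$'') is the only other place to be precise, and it is entirely routine.
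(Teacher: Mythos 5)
Your proof is correct and follows essentially the same route as the paper's: the forward direction takes $v'(p):=v(\alpha_1\vee\cdots\vee\alpha_n)$ and uses the substitution/re-assignment identity, and the converse direction chains the inequalities via the monotonicity of the $\beta_i$ and antitonicity of the $\gamma_i$ in $p$. The paper simply takes the monotonicity fact for granted rather than flagging the routine induction, but the argument is otherwise identical.
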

\begin{proof}
For the implication from top to bottom, let $v'(p) = v(\alpha_1 \vee \cdots \vee \alpha_n)$. Since the $\alpha_i$ does not contain $p$, we have $v(\alpha_i) = v'(\alpha_i)$, and hence that $v'(\alpha_i) \leq v(\alpha_1) \vee \cdots \vee v(\alpha_n) = v(\alpha_1 \vee \cdots \vee \alpha_n) = v'(p)$. Moreover, $v'(\beta_i(p)) = v(\beta_i(\alpha_1 \vee \cdots \vee \alpha_n / p)) \leq v(\gamma_i(\alpha_1 \vee \cdots \vee \alpha_n / p)) = v'(\gamma_i(p))$.

For the implication from bottom to top, we make use of the fact that the $\beta_i$ are monotone (since positive) in $p$, while the $\gamma_i$ are antitone (since negative) in $p$. Since $v(\alpha_i) = v'(\alpha_i) \leq v'(p)$ for all $1 \leq n \leq n$, we have $v(\alpha_1 \vee \cdots \vee \alpha_n) \leq v'(p)$, and hence $v(\beta_i(\alpha_1 \vee \cdots \vee \alpha_n / p)) \leq v'(\beta_i(p)) \leq v'(\gamma_i(p)) \leq (\gamma_i(\alpha_1 \vee \cdots \vee \alpha_n / p))$.
\end{proof}

The proof of the following version of the lemma is similar.

\begin{lemma}[Left Ackermann Lemma]\label{Ackermann:Left:Lemma}
Let $\alpha_1, \ldots, \alpha_n \in \mathcal{L}_\mathrm{LE}^{+}$ with $p \not \in \mathsf{PROP}(\alpha_i)$, $1 \leq i \leq n$, let $\beta_{1}(p), \ldots, \beta_{m}(p) \in \mathcal{L}_\mathrm{LE}^{+}$ be negative in $p$, and let $\gamma_{1}(p), \ldots, \gamma_{m}(p) \in \mathcal{L}_\mathrm{LE}^{+}$ be positive in $p$. Let $v$ be any assignment on a perfect LE $\bba$. Then
\[
\bba, v \models \beta_i(\alpha_1 \wedge \cdots \wedge \alpha_n / p) \leq \gamma_i(\alpha_1 \wedge \cdots \wedge \alpha_n / p), \textrm{ for all } 1 \leq i \leq m
\]
iff there exists a variant $v' \sim_p v$ such that
\[
\bba, v' \models p \leq \alpha_i  \textrm{ for all } 1 \leq i \leq n, \textrm{ and } \bba, v' \models \beta_i(p) \leq \gamma_i(p), \textrm{ for all } 1 \leq i \leq m.
\]
\end{lemma}

\begin{prop}\label{Rdctn:Rls:Crctnss:Prop}
If a system $(S,\mathsf{Ineq})$ is obtained from a system $(S_0,\mathsf{Ineq}_0)$ by the application of reduction rules, then
\[
\bba \models \forall \overline{var_0} \: [\bigamp S_0 \Rightarrow \mathsf{Ineq}_0] \textrm{ \ iff \ } \bba \models \forall \overline{var} \: [\bigamp S \Rightarrow \mathsf{Ineq}],
\]
where $\overline{var_0}$ and $\overline{var}$ are the vectors of all variables occurring in $\bigamp S_0 \Rightarrow \mathsf{Ineq}_0$ and $\bigamp S \Rightarrow \mathsf{Ineq}$, respectively.
\end{prop}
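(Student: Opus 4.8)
The plan is to prove Proposition \ref{Rdctn:Rls:Crctnss:Prop} by induction on the length of the derivation of $(S,\mathsf{Ineq})$ from $(S_0,\mathsf{Ineq}_0)$, reducing everything to a single-rule soundness statement. Since validity of a quasi-inequality on $\bba$ is, by definition, preservation across all assignments, it suffices to show that each reduction rule, viewed as a transformation $(S,\mathsf{Ineq})\rightsquigarrow (S',\mathsf{Ineq}')$, satisfies: for every assignment $v$ on $\bba$, there is an assignment $v'$ witnessing $\bigamp S' \Rightarrow \mathsf{Ineq}'$ exactly when there is one witnessing $\bigamp S \Rightarrow \mathsf{Ineq}$. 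More precisely I would phrase it in the contrapositive/satisfaction form used in Theorem \ref{Crctns:Theorem}: the sets of assignments making $\bigamp S_0 \Rightarrow \mathsf{Ineq}_0$ fail and making $\bigamp S \Rightarrow \mathsf{Ineq}$ fail stand in a suitable correspondence. For the rules that merely rewrite an inequality in $S$ or rewrite $\mathsf{Ineq}$ (residuation rules, splitting rules) this is pointwise: the rewriting is an \emph{equivalence} of (quasi-)inequalities under \emph{every} fixed assignment, so the variable vectors $\overline{var}$ and $\overline{var_0}$ coincide and the claim is immediate from the adjunction/residuation laws holding in every perfect LE (these hold because a perfect LE supports the interpretation of $\mathcal{L}_\mathrm{LE}^*$, cf.\ Section \ref{Subsec:Expanded:Land}) and from the lattice identities $a\leq b\wedge c \Leftrightarrow (a\leq b \ \&\ a\leq c)$ and its dual.

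The two substantive cases are the approximation rules and the Ackermann rules. For the Ackermann rules the work is already done: the Left and Right Ackermann Lemmas (Lemmas \ref{Ackermann:Right:Lemma} and \ref{Ackermann:Left:Lemma}) state precisely that, for a fixed assignment $v$, the post-rule system is satisfiable by some $p$-variant $v'\sim_p v$ iff the pre-rule system holds at $v$ (noting that $\mathsf{Ineq}$ does not contain $p$, so its truth value is unaffected by the choice of $v'$); quantifying over all $v$ gives the desired equivalence of universally quantified quasi-inequalities, with the only difference in variable vectors being the elimination of $p$. For the approximation rules — say $(L^+A)$, which passes from $(S,\ \phi'(\gamma/!x)\leq \psi)$ to $(S\cup\{\nomj\leq\gamma\},\ \phi'(\nomj/!x)\leq\psi)$ with $\nomj$ fresh — I would argue as follows. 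From top to bottom: if $v$ falsifies $\bigamp S_0 \Rightarrow \phi'(\gamma/!x)\leq \psi$, then $v\models \bigamp S_0$ and $v(\phi'(\gamma/!x))\not\leq v(\psi)$; since in $+\phi'(!x)$ the branch from $+x$ is SLR, the Distribution Lemma (Lemma \ref{Distribution:Lemma}(1)) gives $v(\phi'(\gamma/!x)) = \bigvee\{v(\phi'(\nomi/!x)) \mid \nomi\in\jty(\bba),\ v(\nomi)\leq v(\gamma)\}$ using that $\bba$ is perfect so $v(\gamma)$ is the join of the completely join-irreducibles below it; hence some completely join-irreducible $j\leq v(\gamma)$ must satisfy $v[\nomj:=j](\phi'(\nomj/!x))\not\leq v(\psi)$, and this $p$-variant (actually $\nomj$-variant) witnesses failure of the new quasi-inequality. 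Conversely, from bottom to top: if a $\nomj$-variant $v'$ falsifies the new system, then $v'(\nomj)\leq v'(\gamma)$ and monotonicity of $\phi'$ in the positive coordinate $x$ (the branch being SLR in particular gives $+x\prec+\phi'$) yields $v'(\phi'(\nomj/!x))\leq v'(\phi'(\gamma/!x)) = v(\phi'(\gamma/!x))$ (as $\nomj$ is fresh in $\gamma$ and $\psi$), so $v$ falsifies the original. The three other approximation rules are handled symmetrically, using parts (2)–(4) of the Distribution Lemma and the dual fact that in a perfect LE every element is a meet of completely meet-irreducibles, with co-nominals playing the role nominals play here.

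So the main steps, in order, are: (i) reduce to single-rule soundness by induction on derivation length and observe that the statement quantifies over all assignments; (ii) dispose of the residuation and splitting rules by invoking the residuation/adjunction laws valid in every perfect LE and the basic lattice identities, noting these give fixed-assignment equivalences; (iii) invoke Lemmas \ref{Ackermann:Right:Lemma} and \ref{Ackermann:Left:Lemma} for the two Ackermann rules; (iv) prove the approximation-rule case using the Distribution Lemma together with the complete join- and meet-generation of a perfect LE by $\jty(\bba)$ and $\mty(\bba)$, keeping careful track of the freshness of the introduced (co-)nominal so that the variable vectors $\overline{var_0}$ and $\overline{var}$ match up correctly. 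I expect the only genuinely delicate point to be the bookkeeping in the approximation case — specifically making sure that, when passing from existence of a falsifying assignment for one system to the other, the freshly introduced nominal/co-nominal can be assigned independently of the other variables (so the implicit existential quantifier over that variable can be pushed in and out), and that monotonicity in the critical coordinate is correctly extracted from the branch being SLR rather than merely good; everything else is an invocation of results already established earlier in the paper. A final remark: since the approximation rules are here stated without the pivotality restriction, this proposition covers their unrestricted use, which is exactly what is needed for the correctness theorem (pivotality being relevant only later, for the topological arguments in the canonicity proof).
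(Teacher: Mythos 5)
Your proposal is correct and follows essentially the same route as the paper: reduce to single-rule soundness, handle residuation/splitting by the adjunction laws holding on perfect LEs, invoke Lemmas \ref{Ackermann:Right:Lemma} and \ref{Ackermann:Left:Lemma} for the Ackermann rules, and use Lemma \ref{Distribution:Lemma} together with complete join-/meet-generation by $\jira$ and $\mira$ for the approximation rules. The only difference is one of detail: you spell out both directions of the approximation-rule equivalence (existence of a witnessing join-irreducible, and the monotonicity argument for the converse) which the paper leaves implicit after displaying the identity $\phi'(\gamma/!x)=\bigvee\{\phi'(j)\mid \gamma\geq j\in\jira\}$.
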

\begin{proof}
It is sufficient to verify that each rule preserves this equivalence, i.e., that if $S$ and $\mathsf{Ineq}$ are obtained from $S'$ and $\mathsf{Ineq}'$ by the application of a single transformation rule then
\[
\bba \models \forall \overline{var'} \: [\bigamp S' \Rightarrow \mathsf{Ineq}'] \textrm{ \ iff \ } \bba \models \forall \overline{var} \: [\bigamp S \Rightarrow \mathsf{Ineq}].
\]
\paragraph{Left-positive approximation rule:} Let $\mathsf{Ineq}$ be $ \phi'(\gamma / !x) \leq \psi$, with $+x \prec +\phi'(!x)$, and the branch of $+\phi'(!x)$ starting at $+x$ being SLR. Then, under any assignment to the occurring variables, $\phi'(\gamma / !x) = \phi'(\bigvee \{j \in \jira \mid j \leq \gamma \}) = \bigvee \{\phi'(j) \mid \gamma \geq j \in \jira\}$, where the latter equality holds by lemma \ref{Distribution:Lemma}.1. But then
\[
\bba \models \forall \overline{var'} \: [\bigamp S' \Rightarrow \phi'(\gamma/!x) \leq \psi] \textrm{ \ iff \ } \bba \models \forall \overline{var'} \forall \nomj \: [\bigamp S' \& \; \nomj \leq \gamma \Rightarrow \phi'(\nomj) \leq \psi].
\]
The other approximation rules are justified in a similar manner, appealing to the other clauses of lemma \ref{Distribution:Lemma}.

The residuation rules are justified by the fact that (the algebraic interpretation of) every $f\in \mathcal{F}$ (resp.\ $g\in \mathcal{G}$) is a left (resp.\ right) residual in each positive coordinate and left (resp.\ right) Galois-adjoint in each negative coordinate.  

The Ackermann-rules are justified by the Ackermann lemmas \ref{Ackermann:Right:Lemma} and \ref{Ackermann:Left:Lemma}.
\end{proof}
\begin{remark}
\label{rem: towards constructive can}
Notice that, in the proof above, we have not used the fact that nominals and co-nominals are interpreted as completely join-irreducible and meet-irreducible elements respectively. We only used the fact that completely join-irreducibles (resp.\ meet-irreducibles) completely join-generate (resp.\ meet-generate) the algebra $\bba$. This is a notable difference with the distributive setting of \cite{ALBAPaper}, where the soundness of the approximation rules essentially depends upon the complete join-primeness (resp.\ meet-primeness) of the interpretation of nominals and co-nominals respectively. This observation will be crucially put to use in the conclusions, where we discuss how the constructive canonicity theory of \cite{GhMe97,Suzuki:RSL:2013} can be derived from the results of the present paper.
\end{remark}

\begin{remark}\label{Adapt:To:Cncl:Ext:Remark}
In the next section we will prove that, for each language $\mathcal{L}_\mathrm{LE}$, all $\mathcal{L}_\mathrm{LE}$-inequalities on which ALBA succeeds are canonical. For this it is necessary to show that ALBA transformations preserve validity with respect to admissible assignments on canonical extensions of LEs. Towards this, it is an easy observation that  all ALBA transformations other than the Ackermann rules preserve validity under admissible assignments.  The potential difficulty with the Ackermann rules stems from the fact that they,  unlike all other rules, involve changing the assignments to propositional variables.
\end{remark}

\section{Canonicity}
\label{section:canonicity}

This section is devoted to proving that all inequalities on which non-distributive ALBA succeeds are canonical. In e.g.\ \cite{ALBAPaper}, the analogous proof for the distributive setting was given in terms of descriptive general frames. Here, however, we will proceed purely algebraically, and our motivation for this is twofold. Firstly, all relevant considerations are inherently algebraic and order-theoretic, and are therefore most perspicuously presented as such. Secondly, as discussed previously, the logics of the present paper do not have a single, established relational semantics and, moreover, the available options for relational semantics are rather  involved.

Fix an $\mathcal{L}_\mathrm{LE}$-algebra  $\bba$ and let $\bbas$ be its canonical extension. We write $\bbas \models_{\mathbb{A}} \phi \leq \psi$ to indicate that $\bbas, v \models \phi \leq \psi$ for all \emph{admissible} assignments $v$, as defined in Subsection \ref{Subsec:Expanded:Land}, page \pageref{admissible:assignment}. Recall that pivotal executions of ALBA are defined on page \pageref{pivotal:approx:rule:application} in Section \ref{Sec:ReductionElimination}.

\begin{theorem}\label{Thm:ALBA:Canonicity}
All $\mathcal{L}_\mathrm{LE}$-inequalities on which ALBA succeeds pivotally are canonical.
\end{theorem}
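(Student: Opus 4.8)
The plan is to follow the well-established U-shaped (``spiral'') argument for algebraic canonicity, adapted to the non-distributive LE-setting, and to track carefully the topological subtleties that motivate restricting to \emph{pivotal} executions. Fix an $\mathcal{L}_\mathrm{LE}$-algebra $\bba$ with canonical extension $\bbas$, and suppose ALBA succeeds pivotally on $\phi\leq\psi$, outputting the conjunction of pure quasi-inequalities $\mathsf{ALBA}(\phi\leq\psi)$. Since $\mathsf{ALBA}(\phi\leq\psi)$ is pure, its validity on $\bbas$ does not depend on the subalgebra $\bba$, so $\bbas\models\mathsf{ALBA}(\phi\leq\psi)$ iff $\bbas\models_{\bba}\mathsf{ALBA}(\phi\leq\psi)$ (admissibility only constrains the interpretation of propositional variables, of which there are none). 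Hence canonicity, i.e.\ $\bba\models\phi\leq\psi$ iff $\bbas\models\phi\leq\psi$, will follow once we establish the chain
\[
\bba\models\phi\leq\psi \iff \bbas\models_{\bba}\phi\leq\psi \iff \bbas\models_{\bba}\mathsf{ALBA}(\phi\leq\psi) \iff \bbas\models\phi\leq\psi.
\]
The first biconditional is the routine fact that $\bba$ embeds into $\bbas$ and the embedding preserves and reflects validity of $\mathcal{L}_\mathrm{LE}$-inequalities (the value of an $\mathcal{L}_\mathrm{LE}$-term under an admissible assignment lands in $\bba$, cf.\ the discussion of admissible assignments). The last biconditional is Theorem~\ref{Crctns:Theorem} (Correctness) applied to the perfect algebra $\bbas$. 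So the whole content is the \emph{middle} equivalence: ALBA's reduction steps preserve validity under \emph{admissible} assignments on $\bbas$.

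First I would reduce the middle equivalence, exactly as in the proof of Theorem~\ref{Crctns:Theorem}, to showing that each individual transformation rule preserves validity under admissible assignments: the preprocessing rules (elimination of uniform variables, distribution, splitting) and, in Stage 2, the approximation, residuation, splitting and Ackermann rules. As noted in Remark~\ref{Adapt:To:Cncl:Ext:Remark}, all rules other than the Ackermann rules present no difficulty: preprocessing and splitting are purely equational/propositional; the residuation rules hold because each $f^{\bbas}$ (resp.\ $g^{\bbas}$) is genuinely residuated/Galois-adjoint coordinatewise (this is part of the definition of perfect LE and of $\mathcal{L}_\mathrm{LE}^*$-algebras), and this holds for all assignments, admissible or not; and for the approximation rules one invokes the Distribution Lemma~\ref{Distribution:Lemma}. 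Here the first real point to check is that Lemma~\ref{Distribution:Lemma} still applies: in an approximation step $\phi'(\gamma/!x)\leq\psi$ the subterm $\gamma$ belongs to the \emph{original} language $\mathcal{L}_\mathrm{LE}$, so under an admissible assignment $v$ its value $v(\gamma)$ lies in $\bba$; hence $v(\gamma)=\bigvee\{j\in\jir \mid j\le v(\gamma)\}$ holds (the closed element $v(\gamma)$ is a join of completely join-irreducibles of $\bbas$, by perfection), and Lemma~\ref{Distribution:Lemma}.1 gives $v(\phi'(\gamma/!x))=\bigvee\{v(\phi'(j)) \mid j\le v(\gamma),\ j\in\jir\}$ because the branch from $+x$ in $+\phi'$ is SLR. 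This yields the equivalence with the system $S\cup\{\nomj\le\gamma\}$ where $\nomj$ now ranges over completely join-irreducibles of $\bbas$; since admissible assignments send nominals to $\jir$ and co-nominals to $\mir$, this is precisely the admissible-assignment reading of the rule. The other three approximation rules are handled symmetrically via the remaining clauses of Lemma~\ref{Distribution:Lemma}.

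The main obstacle is, as flagged in Remark~\ref{Adapt:To:Cncl:Ext:Remark}, the soundness of the \emph{Ackermann rules} under admissible assignments. The Ackermann Lemmas~\ref{Ackermann:Right:Lemma} and~\ref{Ackermann:Left:Lemma} as stated produce a variant $v'\sim_p v$ with $v'(p)=v(\alpha_1\vee\cdots\vee\alpha_n)$; but for $v'$ to be admissible we need $v'(p)\in\bba$, i.e.\ $v(\alpha_1\vee\cdots\vee\alpha_n)\in\bba$, and at the point where an Ackermann rule is applied the $\alpha_i$ are in general $\mathcal{L}_\mathrm{LE}^+$-terms (built up using residual connectives, nominals and co-nominals), so their value need not lie in $\bba$. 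This is the standard topological gap in algebraic canonicity proofs, and it is exactly why the approximation rules must be applied \emph{pivotally}. The plan for closing it is the syntactic-closure/Esakia-style argument familiar from \cite{ALBAPaper}, recast algebraically: one shows that, in a pivotal execution, at the moment an Ackermann rule eliminates $p$, the relevant side-formulas $\alpha_i$ (the minorants of $p$ for the right rule, majorants for the left rule) are in fact \emph{closed} (resp.\ \emph{open}) terms — that is, under any admissible assignment their values lie in $\kbbas$ (resp.\ $\obbas$) — and the formulas $\beta_j,\gamma_j$ in which $p$ is substituted are \emph{syntactically closed/open} in the appropriate sense, built from the original connectives, nominals/co-nominals and the right kinds of residuals. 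One then proves a topological Ackermann lemma: if $\beta_j$ is positive and syntactically closed in $p$, $\gamma_j$ negative and syntactically open in $p$, and the $\alpha_i$ are closed, then $\bbas,v\models\beta_j(\bigvee\alpha_i)\le\gamma_j(\bigvee\alpha_i)$ for all admissible $v$ iff there is an \emph{admissible} $v'\sim_p v$ with $v'\models\alpha_i\le p$ and $v'\models\beta_j(p)\le\gamma_j(p)$ — the nontrivial direction using compactness of $\bba$ in $\bbas$ together with the fact that a $\sigma$-extension $f^{\bbas}$ is continuous (sends down-directed meets of closed elements to the corresponding meets, etc.) and dually for $\pi$-extensions, so that the value of a syntactically closed term at $v'(p)=v(\bigvee\alpha_i)$ is approximated from inside $\bba$ by its values at closed minorants of $v'(p)$. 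Verifying that pivotal application of the approximation rules indeed guarantees these syntactic-shape invariants along every ALBA run — i.e.\ an inductive bookkeeping argument on the structure of the systems produced, showing that ``wild'' propositional variables only ever appear inside syntactically closed/open contexts of the requisite form — is the bulk of the technical work, and it is here that the restriction to maximal SLR branches is essential: applying an approximation rule non-pivotally could leave a residual connective wrapped around a propositional variable in a position that destroys closure/openness. Once this invariant and the topological Ackermann lemma are in place, the middle equivalence follows by the same rule-by-rule induction as Proposition~\ref{Rdctn:Rls:Crctnss:Prop}, and the theorem is proved.
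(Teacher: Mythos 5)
Your proposal is correct and follows essentially the same route as the paper: the same U-shaped chain of equivalences reducing everything to soundness of the rules under admissible assignments, with the Ackermann rules handled via a topological Ackermann lemma for syntactically closed/open terms (the paper's Lemmas \ref{Ackermann:Dscrptv:Right:Lemma} and \ref{Ackermann:Dscrptv:Left:Lemma}) and an invariant, maintained precisely by pivotal applications of the approximation rules, that non-pure inequalities keep syntactically closed left-hand sides and syntactically open right-hand sides (the paper's Lemma \ref{Syn:Shape:Lemma}). The compactness/denseness argument you sketch for the nontrivial direction of the topological Ackermann lemma is exactly the content of the paper's Lemmas \ref{Syn:Opn:Clsd:Appld:ClsdUp:Lemma} and \ref{Esakia:Syn:Clsd:Opn:Lemma}.
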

\begin{proof}
Let $\phi \leq \psi$ be an $\mathcal{L}_\mathrm{LE}$-inequality on which ALBA succeeds pivotally. The required canonicity proof is summarized in the following U-shaped diagram:\\
\begin{center}
\begin{tabular}{l c l}
$\bba \ \ \models \phi \leq \psi$ & &$\bbas \models \phi \leq \psi$\\
$\ \ \ \ \ \ \ \ \ \ \ \Updownarrow$ \\
$\bbas \models_{\bba} \phi \leq \psi$ & & \ \ \ \ \ \ \ \ \ \ \  $\Updownarrow $\\
$\ \ \ \ \ \ \ \ \ \ \ \Updownarrow$\\
$\bbas \models_{\bba} \mathsf{ALBA}(\phi \leq \psi)$
&\ \ \ $\Leftrightarrow$ \ \ \ &$\bbas \models \mathsf{ALBA}(\phi \leq \psi)$
\end{tabular}
\end{center}
The uppermost bi-implication on the left is given by the definition of validity on algebras and $\bba$ being  a subalgebra of $\bbas$. The lower bi-implication on the left is given by Proposition \ref{Propo:Crtns:CanExtns} below. The horizontal bi-implication follows from the facts that, by assumption, $\mathsf{ALBA}(\phi \leq \psi)$ is pure, and that, when restricted to pure formulas, the ranges of admissible and arbitrary assignments coincide. The bi-implication on the right is given by Theorem \ref{Crctns:Theorem}.
\end{proof}
Towards the proof of  Proposition \ref{Propo:Crtns:CanExtns}, the following definitions and lemmas will be useful:
\begin{definition}\label{Syn:Opn:Clsd:Definition}
An $\mathcal{L}_\mathrm{LE}^{+}$-term is \emph{syntactically closed} if in it all occurrences of nominals and $f^\ast\in \mathcal{F}^\ast\setminus \mathcal{F}$ 
are positive, while all occurrences of co-nominals and $g^\ast\in \mathcal{G}^\ast\setminus \mathcal{G}$ 
are negative.

Similarly, an $\mathcal{L}_\mathrm{LE}^{+}$-term is \emph{syntactically open} if in it all occurrences of nominals and $f^\ast\in \mathcal{F}^\ast\setminus \mathcal{F}$ 
are negative, while all occurrences of co-nominals and $g^\ast\in \mathcal{G}^\ast\setminus \mathcal{G}$  
are positive.
\end{definition}
In the following two lemmas, $\alpha$, $\beta_1,\ldots,\beta_n$ and $\gamma_1,\ldots,\gamma_n$ are $\mathcal{L}_\mathrm{LE}^{+}$-terms. We work under the assumption that the values of all parameters occurring in them (propositional variables, nominals and conominals) are given by some fixed admissible assignment.
\begin{lemma}[Righthanded Ackermann lemma for admissible assignments]\label{Ackermann:Dscrptv:Right:Lemma}
Let $\alpha$ be syntactically closed, $p \not \in \mathsf{PROP}(\alpha)$, let $\beta_{1}(p), \ldots, \beta_{n}(p)$ be syntactically closed and positive in $p$, and let $\gamma_{1}(p), \ldots, \gamma_{n}(p)$ be syntactically open and negative in $p$.  Then
\[
{\beta_i}(\alpha) \leq {\gamma_i}(\alpha), \textrm{ for all } 1 \leq i \leq n
\]
iff there exists some $u \in \bba$ such that
\[
\alpha \leq u \textrm{ and } {\beta_i}(u) \leq {\gamma_i}(u), \textrm{ for all } 1 \leq i \leq n.
\]
\end{lemma}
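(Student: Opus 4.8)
The plan is to prove the Righthanded Ackermann lemma for admissible assignments (Lemma \ref{Ackermann:Dscrptv:Right:Lemma}) by mimicking the proof of the ordinary Right Ackermann lemma (Lemma \ref{Ackermann:Right:Lemma}), but with the crucial extra care that the witnessing value for $p$ must lie in the subalgebra $\bba$ rather than merely in $\bbas$. The right-to-left direction should be essentially identical to the arbitrary-assignment case: given $u\in\bba$ with $\alpha\leq u$ and $\beta_i(u)\leq\gamma_i(u)$, monotonicity of the $\beta_i$ in $p$ (they are positive) and antitonicity of the $\gamma_i$ in $p$ (they are negative) give $\beta_i(\alpha)\leq\beta_i(u)\leq\gamma_i(u)\leq\gamma_i(\alpha)$, using that $p$ does not occur in $\alpha$. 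Note this direction does not even use syntactic closedness/openness, only the monotonicity data.

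The left-to-right direction is where the hypotheses on syntactic shape do the work. In the arbitrary-assignment proof one simply sets $v'(p):=v(\alpha)$; here $v(\alpha)$ is a general element of $\bbas$, not necessarily in $\bba$, so this is not an admissible reassignment. The fix is to use that $\alpha$ is syntactically closed: I would invoke the topological/order-theoretic fact (standard in this setting, and the reason the syntactic-closed/open terminology is introduced — cf. the $\sigma$- and $\pi$-extension definitions and the facts that $\jir\subseteq\kbbas$ and $\mir\subseteq\obbas$) that under an admissible assignment the value of a syntactically closed term is a closed element of $\bbas$, i.e. $v(\alpha)\in\kbbas$, hence $v(\alpha)=\bigwedge\{a\in\bba\mid v(\alpha)\leq a\}$. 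Then I would argue that the inequalities $\beta_i(\alpha)\leq\gamma_i(\alpha)$, together with $\beta_i$ being syntactically closed and positive in $p$ and $\gamma_i$ being syntactically open and negative in $p$, can be "approximated" down to a single $u\in\bba$ with $v(\alpha)\leq u$: for each fixed $i$, since $\beta_i(p)$ is syntactically closed its value is closed whenever $p$ is assigned a closed value, and since $\gamma_i(p)$ is syntactically open its value is open whenever $p$ is assigned a closed value; compactness of $\bba$ in $\bbas$ then lets one replace the (closed) value $v(\alpha)$ of $p$ by some $a_i\in\bba$ above it while keeping $\beta_i(a_i)\leq\gamma_i(a_i)$ — here one uses that a closed element below an open element already has an element of $\bba$ squeezed strictly in between by compactness, combined with the appropriate monotonicity in $p$ of the maps $a\mapsto\beta_i(a)$ and $a\mapsto\gamma_i(a)$ restricted to $\bba$-values. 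Finally set $u:=a_1\wedge\cdots\wedge a_n\in\bba$; then $v(\alpha)\leq u$ and, by the positivity/negativity of the $\beta_i,\gamma_i$ in $p$, $\beta_i(u)\leq\beta_i(a_i)\leq\gamma_i(a_i)\leq\gamma_i(u)$ for all $i$, as required.

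The main obstacle I anticipate is pinning down precisely the "squeezing" step: one needs that for a closed element $c=v(\alpha)$ and the syntactically closed term $\beta_i$, syntactically open term $\gamma_i$, the inequality $\beta_i(c)\leq\gamma_i(c)$ upgrades to $\beta_i(a)\leq\gamma_i(a)$ for some $a\in\bba$ with $c\leq a$. This is exactly the point where one cannot argue purely algebraically with monotonicity — it genuinely uses compactness together with the fact that $\sigma$-type (closed-valued) operations are "continuous from above" on closed elements and $\pi$-type (open-valued) operations are "continuous from below" on open elements, so that $\beta_i(c)=\bigwedge\{\beta_i(a)\mid a\in\bba,\ c\leq a\}$ is a meet of closed elements and $\gamma_i(c)=\bigvee\{\gamma_i(a)\mid a\in\bba,\ c\leq a\}$ is a join of open elements, and $\beta_i(c)\leq\gamma_i(c)$ plus compactness yields a single $a$ that works for the pair. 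I would isolate this as the heart of the argument, likely citing or reproving a lemma to the effect that syntactically closed terms take closed values and syntactically open terms take open values under admissible assignments with closed/open inputs, and that such values are computed as the relevant directed meets/joins over $\bba$; with that lemma in hand the rest of the left-to-right direction is a short compactness argument, and the companion Lefthanded Ackermann lemma for admissible assignments follows by order duality.
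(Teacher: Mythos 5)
Your proposal is correct and follows essentially the same route as the paper's proof: the easy direction by monotonicity alone, and the hard direction by observing that $\alpha(V)$ is a closed element (hence the down-directed meet of the clopen elements of $\bba$ above it), pushing that meet through $\beta_i$ as a meet of closed elements and through $\gamma_i$ as a join of open elements via the Esakia-type continuity lemma, applying compactness to extract finitely many witnesses, and taking their meet to obtain a single $u\in\bba$. The two auxiliary facts you flag as needing isolation are exactly the paper's Lemmas \ref{Syn:Opn:Clsd:Appld:ClsdUp:Lemma} and \ref{Esakia:Syn:Clsd:Opn:Lemma}.
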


\begin{lemma}[Lefthanded Ackermann lemma for admissible assignments]\label{Ackermann:Dscrptv:Left:Lemma}
Let $\alpha$ be syntactically open, $p \not \in \mathsf{PROP}(\alpha)$, let $\beta_{1}(p), \ldots, \beta_{n}(p)$ be syntactically closed and negative in $p$, and let $\gamma_{1}(p), \ldots, \gamma_{n}(p)$ be syntactically open and positive in $p$.  Then
\[
{\beta_i}(\alpha) \leq {\gamma_i}(\alpha), \textrm{ for all } 1 \leq i \leq n
\]
iff there exists some $u \in \bba$ such that
\[
u \leq \alpha \textrm{ and } {\beta_i}(u) \leq {\gamma_i}(u), \textrm{ for all } 1 \leq i \leq n.
\]
\end{lemma}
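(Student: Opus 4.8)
The plan is to prove this as the order-dual of Lemma \ref{Ackermann:Dscrptv:Right:Lemma}, carrying out the argument directly in $\bbas$ under the fixed admissible assignment (so propositional variables are valued in $\bba$, nominals in the closed elements $\jira$, and co-nominals in the open elements $\mira$). The implication from bottom to top is the routine half: given $u\in\bba$ with $u\leq\alpha$ and ${\beta_i}(u)\leq{\gamma_i}(u)$ for all $i$, I would simply invoke antitonicity of each $\beta_i$ in $p$ and monotonicity of each $\gamma_i$ in $p$ to obtain ${\beta_i}(\alpha)\leq{\beta_i}(u)\leq{\gamma_i}(u)\leq{\gamma_i}(\alpha)$.

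For the converse, assume ${\beta_i}(\alpha)\leq{\gamma_i}(\alpha)$ for all $1\leq i\leq n$. I would first record the two topological facts that do the real work, both instances of the lemmas collected in Section \ref{appendix}: first, under an admissible assignment a syntactically open term takes an open value and a syntactically closed term takes a closed value; second, since $\alpha$ is syntactically open we have $\alpha=\bigvee A$ with $A:=\{a\in\bba\mid a\leq\alpha\}$ up-directed (indeed an ideal, being closed under the binary joins of $\bba$), and each $\gamma_i$, being syntactically open and positive in $p$, commutes with this up-directed join, i.e.\ ${\gamma_i}(\alpha)=\bigvee\{{\gamma_i}(a)\mid a\in A\}$. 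Granting these, I would fix $i$, observe that ${\beta_i}(\alpha)$ is closed and ${\gamma_i}(\alpha)$ is open with ${\beta_i}(\alpha)\leq{\gamma_i}(\alpha)$, and use compactness of $\bba$ in $\bbas$ to interpolate an element $b_i\in\bba$ with ${\beta_i}(\alpha)\leq b_i\leq{\gamma_i}(\alpha)$; then, since $b_i$ is compact and ${\gamma_i}(\alpha)$ is the up-directed join $\bigvee\{{\gamma_i}(a)\mid a\in A\}$, I would extract $a_i\in A$ with $b_i\leq{\gamma_i}(a_i)$, whence ${\beta_i}(\alpha)\leq{\gamma_i}(a_i)$.

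The final step is to patch the finitely many witnesses together. Put $u:=a_1\vee\cdots\vee a_n\in\bba$; then $u\leq\alpha$, and for each $i$ one checks ${\beta_i}(u)\leq{\beta_i}(\alpha)\leq{\gamma_i}(a_i)\leq{\gamma_i}(u)$, using once more antitonicity of $\beta_i$ and monotonicity of $\gamma_i$ in $p$ together with $a_i\leq u\leq\alpha$. This $u$ is the required element.

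I expect the only genuine obstacle to be the topological input, namely the precise verification that syntactically open/closed terms take open/closed values under admissible assignments and that the syntactically open $\gamma_i$ commute with the relevant up-directed joins: here one must trace how the $\sigma$-extensions of the $f\in\mathcal{F}$, the $\pi$-extensions of the $g\in\mathcal{G}$, and above all the additional adjoint/residual connectives of $\mathcal{F}^\ast\setminus\mathcal{F}$ and $\mathcal{G}^\ast\setminus\mathcal{G}$ interact with open and closed elements, and it is at this point that the restriction to admissible assignments is used essentially. This part is exactly dual to what is needed for Lemma \ref{Ackermann:Dscrptv:Right:Lemma}, so it can be imported from there; the remaining bookkeeping — directedness of $A$, the interpolation via compactness, and the final finite join — is routine.
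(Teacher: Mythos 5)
Your overall strategy (dualize Lemma \ref{Ackermann:Dscrptv:Right:Lemma}, use openness of $\alpha$ under admissible assignments, continuity of the $\gamma_i$ over up-directed joins, and compactness) is essentially the paper's, and the bottom-to-top half is fine; but the top-to-bottom argument breaks at the very last step. Having found $a_i\in A$ with $\beta_i(\alpha)\leq b_i\leq\gamma_i(a_i)$ and set $u:=a_1\vee\cdots\vee a_n$, you claim $\beta_i(u)\leq\beta_i(\alpha)$ ``by antitonicity of $\beta_i$ together with $u\leq\alpha$''. Antitonicity gives exactly the opposite: from $u\leq\alpha$ you get $\beta_i(\alpha)\leq\beta_i(u)$. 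So the chain $\beta_i(u)\leq\beta_i(\alpha)\leq\gamma_i(a_i)\leq\gamma_i(u)$ collapses at its first link, and nothing in your argument bounds $\beta_i(u)$ from above. The defect is structural, not typographical: your interpolant $b_i$ only lets you descend from $\gamma_i(\alpha)$ to some $\gamma_i(a_i)$, but gives no control over how $\beta_i$ \emph{grows} as its argument shrinks from $\alpha$ to $u$.

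The repair is to treat the $\beta_i$ side symmetrically, which is what the paper's proof does. Since $\beta_i$ is syntactically closed and negative in $p$, Lemma \ref{Esakia:Syn:Clsd:Opn:Lemma}(2) gives $\beta_i(\alpha)=\bigwedge\{\beta_i(a)\mid a\in A\}$ alongside $\gamma_i(\alpha)=\bigvee\{\gamma_i(a)\mid a\in A\}$, so the hypothesis becomes $\bigwedge\{\beta_i(a)\mid a\in A\}\leq\bigvee\{\gamma_i(a)\mid a\in A\}$, a meet of closed elements below a join of open ones (Lemma \ref{Syn:Opn:Clsd:Appld:ClsdUp:Lemma}); compactness then yields finitely many $b_1,\dots,b_k,c_1,\dots,c_m\in A$ with $\beta_i(b_1)\wedge\cdots\wedge\beta_i(b_k)\leq\gamma_i(c_1)\vee\cdots\vee\gamma_i(c_m)$. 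Taking $a_i:=b_1\vee\cdots\vee b_k\vee c_1\vee\cdots\vee c_m\in A$, the element $a_i$ now dominates everything used on \emph{both} sides, so antitonicity of $\beta_i$ and monotonicity of $\gamma_i$ both point the right way and give $\beta_i(a_i)\leq\gamma_i(a_i)$; then $u:=a_1\vee\cdots\vee a_n$ works. Alternatively, you can keep your interpolant $b_i$, but you must add a second compactness step on the $\beta_i$ side (the set $\{\beta_i(a)\mid a\in A\}$ is a down-directed family of closed elements with meet $\beta_i(\alpha)\leq b_i$, and $b_i$ is open), extracting $a_i'\in A$ with $\beta_i(a_i')\leq b_i$, and then use $a_i\vee a_i'$ as the witness for index $i$.
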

The two lemmas above are proved in section \ref{Sec: topological Ackermann}.

\begin{lemma}\label{Syn:Shape:Lemma}
If the system $(S, \mathsf{Ineq})$ is obtained by running ALBA pivotally on some $\mathcal{L}_\mathrm{LE}$-inequality $\phi \leq \psi$, then the left-hand side of  $\mathsf{Ineq}$ as well as the left-hand side of every non-pure inequality  in $S$ is syntactically closed, while the corresponding right-hand sides  are syntactically open.
\end{lemma}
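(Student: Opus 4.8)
The plan is an induction on the number of reduction-rule applications in a pivotal ALBA run, carrying the book-keeping invariant exactly as in the statement: at every stage, the left-hand side of $\mathsf{Ineq}$ and of every non-pure inequality in $S$ is syntactically closed, and the corresponding right-hand sides are syntactically open. It is convenient to reformulate Definition \ref{Syn:Opn:Clsd:Definition} in terms of signed (sub)trees: call a signed subtree $\ast\gamma$ \emph{closed-oriented} if in it every nominal and every connective from $\mathcal{F}^\ast\setminus\mathcal{F}$ is positive while every co-nominal and every connective from $\mathcal{G}^\ast\setminus\mathcal{G}$ is negative; then a term $t$ is syntactically closed iff $+t$ is closed-oriented, and syntactically open iff $-t$ is closed-oriented. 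Two elementary facts drive the whole argument: (i) any signed subtree of a closed-oriented signed tree is again closed-oriented, since signs propagate consistently; and (ii) if $\ast\sigma$ is closed-oriented and is substituted, at an occurrence whose inherited sign is $\ast$, into a closed-oriented signed tree, the result is closed-oriented. In particular $+(\sigma_1\vee\cdots\vee\sigma_n)$ and $-(\sigma_1\wedge\cdots\wedge\sigma_n)$ are closed-oriented whenever all the $+\sigma_k$, resp.\ all the $-\sigma_k$, are, as $\vee$ and $\wedge$ are neither (co)nominals nor connectives from $\mathcal{F}^\ast\setminus\mathcal{F}$ or $\mathcal{G}^\ast\setminus\mathcal{G}$.

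For the base case, after Stage 1 every system has $S=\varnothing$ and $\mathsf{Ineq}$ of the form $\phi'_i\leq\psi'_i$ with $\phi'_i,\psi'_i$ terms of the \emph{original} language $\mathcal{L}_\mathrm{LE}$, which contains no nominals, co-nominals, or connectives from $\mathcal{F}^\ast\setminus\mathcal{F}$ or $\mathcal{G}^\ast\setminus\mathcal{G}$; every signed generation tree of such a term is vacuously closed-oriented, so the invariant holds. For the inductive step I would verify rule by rule that the reduction rules of Stage 2 preserve the invariant. The splitting rules are immediate from (i). For the approximation rules it suffices to note, taking $(L^+A)$ as representative, that the side condition $+x\prec+\phi'(!x)$ places the substituted nominal $\nomj$ at a positive occurrence of the closed-oriented tree $+\phi'(\gamma/!x)$ (recall $\gamma\in\mathcal{L}_\mathrm{LE}$, so replacing $\gamma$ there changes nothing else), whence $+\phi'(\nomj/!x)$ stays closed-oriented; moreover the inequality $\nomj\leq\gamma$ added to $S$ has $+\nomj$ closed-oriented and, since $\gamma$ is in the base language, $-\gamma$ closed-oriented. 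The rules $(L^-A)$, $(R^+A)$, $(R^-A)$ are symmetric, each time introducing a nominal at a positive or a co-nominal at a negative occurrence on the side dictated by the rule. For the residuation rules one tracks signs through $f^\sharp_i$ and $g^\flat_i$ using the order-type prescriptions of Section \ref{ssec:expanded tense language}: rewriting $f(\phi_1,\dots,\phi_i,\dots,\phi_{n_f})\leq\psi$ with $\epsilon_f(i)=1$ into $\phi_i\leq f^\sharp_i(\phi_1,\dots,\psi,\dots,\phi_{n_f})$, the new left-hand side $\phi_i$ occupies a positive occurrence of the closed-oriented $+f(\phi_1,\dots,\phi_{n_f})$, so it is syntactically closed; and in $-f^\sharp_i(\phi_1,\dots,\psi,\dots,\phi_{n_f})$ the root $f^\sharp_i\in\mathcal{G}^\ast\setminus\mathcal{G}$ is negative as required, coordinate $i$ carries $-\psi$ (closed-oriented, since the old right-hand side $\psi$ was syntactically open and $\epsilon_{f^\sharp_i}(i)=1$), and each remaining coordinate $j$ carries $\phi_j$ with precisely the sign it had inside $+f(\phi_1,\dots,\phi_{n_f})$ (this is exactly what $\epsilon_{f^\sharp_i}(j)=(\epsilon_f(j))^\partial$ buys us once the root sign flips), so $-f^\sharp_i(\phi_1,\dots,\psi,\dots,\phi_{n_f})$ is closed-oriented. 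The $\epsilon_f(i)=\partial$ case and the two $g$-residuation rules are entirely analogous.

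The Ackermann rules are the case where I expect the argument to require the most care, since they substitute a compound term for a propositional variable and hence must appeal to the inductive hypothesis applied to the members of $S$ being consumed. Consider the right Ackermann rule. The premises $\alpha_i\leq p$ contain $p$, hence are non-pure, so by the inductive hypothesis each $+\alpha_i$ is closed-oriented, and therefore $+(\alpha_1\vee\cdots\vee\alpha_n)$ is closed-oriented. The premises $\beta_j(p)\leq\gamma_j(p)$ are non-pure as well, so $+\beta_j(p)$ and $-\gamma_j(p)$ are closed-oriented. Since $\beta_j$ is positive in $p$, every occurrence of $p$ in $+\beta_j$ is positive, so substituting the closed-oriented $+(\alpha_1\vee\cdots\vee\alpha_n)$ there keeps $+\beta_j(\alpha_1\vee\cdots\vee\alpha_n)$ closed-oriented; since $\gamma_j$ is negative in $p$, every occurrence of $p$ in $-\gamma_j$ is likewise positive, so the same substitution keeps $-\gamma_j(\alpha_1\vee\cdots\vee\alpha_n)$ closed-oriented. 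Thus the resulting non-pure inequalities still satisfy the invariant. The left Ackermann rule is dual: the premises $p\leq\alpha_i$ give $-\alpha_i$ closed-oriented, hence $-(\alpha_1\wedge\cdots\wedge\alpha_n)$ closed-oriented, and $\beta_j$ negative / $\gamma_j$ positive in $p$ means that the occurrences of $p$ relevant to the substitution are negative occurrences in $+\beta_j$ and in $-\gamma_j$. Since inequalities of $S$ not containing the eliminated variable, and $\mathsf{Ineq}$ itself, are left untouched by the Ackermann rules, the invariant is preserved, which completes the induction and hence the proof. (The hypothesis that the run is pivotal is not used here; it is imposed only because it is needed elsewhere in the canonicity development.)
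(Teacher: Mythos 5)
Your proof is correct and follows essentially the same route as the paper's: an induction over the run, verifying rule by rule that the polarities of nominals, co-nominals and adjoint connectives are preserved, with the Ackermann rules handled by substituting a syntactically closed (resp.\ open) term at positive occurrences of $p$ in $+\beta_j$ (resp.\ in $-\gamma_j$). Two small points of divergence are worth flagging. First, your closing parenthetical --- that pivotality is not used --- is at odds with the paper's own proof: the paper allows approximation rules to be applied where the extracted subterm $\gamma$ is \emph{not} in $\mathcal{L}_\mathrm{LE}$, and it is precisely pivotality that guarantees such a $\gamma$ must be a nominal or co-nominal introduced by an earlier approximation step, so that the newly created inequality (e.g.\ $\gamma\leq\cnomm$) is pure and hence exempt from the invariant; your argument silently excludes this case by reading the side condition ``$\gamma$ belonging to the original language'' literally, which is defensible from the rule statements but does not cover the executions the paper has in mind. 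Second, not every premise $\beta_j(p)\leq\gamma_j(p)$ of an Ackermann rule need be non-pure (``positive in $p$'' is vacuously satisfied when $p$ does not occur); the pure ones are untouched by the substitution and remain pure, so the invariant is unaffected, but your blanket assertion that they are all non-pure is a harmless inaccuracy.
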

\begin{proof}
Since $\phi \leq \psi$ comes from the base language $\mathcal{L}_\mathrm{LE}$, it is immediate that $\phi$ and $\psi$ are both syntactically open and closed. Since preprocessing does not introduce any symbols not in $\mathcal{L}_\mathrm{LE}$, in any inequality $\phi' \leq \psi'$ resulting from the preprocessing, $\phi'$ and $\psi'$ are both syntactically open and closed. Thus, the claim holds for each initial system $(\varnothing, \phi' \leq \psi')$. In order to complete the proof, it now remains to check that each reduction rule preserves the desired syntactic shape. This is straightforward for all residuation rules. By way of illustration we will consider the right-negative approximation rule and the righthand Ackermann-rule.

The right-negative approximation rule transforms a system $(S, s \leq t'(\gamma / !x))$ with  $-x \prec -t'(!x)$ into $(S \cup \{\gamma \leq \cnomm \}, s \leq t'(\cnomm/!x))$. If $\gamma$ belongs to the original language, it is both syntactically open and closed. Hence, $\gamma \leq \cnomm$ is of the right shape. Moreover,  $-x \prec -t'(!x)$ implies that $x$ occurs positively in $t'$, which by assumption is syntactically open. Hence, $ t'(\cnomm/!x)$ is syntactically open. If $\gamma$ does not belong to $\mathcal{L}_\mathrm{LE}$, then the assumption that all approximation rules are applied pivotally guarantees that $\gamma$ must be a conominal $\cnomn$ (which has been introduced by some previous application of the same approximation rule). Hence $\gamma\leq \cnomm$ is pure.

As for the righthand Ackermann rule, it transforms a system
\[
\{\alpha_1 \leq p, \ldots, \alpha_n \leq p, \beta_1 \leq \gamma_1, \ldots, \beta_m \leq \gamma_m \}
\]
into
\[
\{\beta_1(\alpha/p) \leq \gamma_1(\alpha / p), \ldots, \beta_m(\alpha / p) \leq \gamma_m(\alpha / p) \},
\]
where $\alpha = \alpha_1 \vee \cdots \vee \alpha_n$. Firstly, note that all the pure inequalities among the $\beta_i \leq \gamma_i$ remain unaffected by the rule, and hence remain pure. For non-pure $\beta_i \leq \gamma_i$, we have by assumption that $\beta_i$ is syntactically closed and positive in $p$ while $\gamma_i$ is syntactically open and negative in $p$. Thus, in $\beta_i(\alpha / p)$ each occurrence of a symbol within any occurrence of the subformula $\alpha$ has the same polarity as it had in $\alpha$ before substitution. Hence, since $\alpha$ is syntactically closed, $\beta_i(\alpha /p)$ is syntactically closed. Similarly, in $\gamma_i(\alpha /p)$ any occurrence of a symbol within each occurrence of the subformula $\alpha$ has the opposite polarity from that which it had in $\alpha$ before substitution. Hence, $\gamma_i(\alpha /p)$ is syntactically open. \end{proof}

\begin{prop}[Correctness of pivotal executions of \textrm{ALBA} on canonical extensions under admissible assignments]\label{Propo:Crtns:CanExtns} If \textrm{ALBA} succeeds in reducing an $\mathcal{L}_\mathrm{LE}$-inequality $\phi \leq \psi$ through some pivotal execution and yields $\mathsf{ALBA}(\phi \leq \psi)$, then $\bbas \models_{\bba} \phi \leq \psi$ iff $\bbas \models_{\bba} \mathsf{ALBA} (\phi \leq \psi)$.
\end{prop}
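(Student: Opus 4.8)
The plan is to mirror the proof of Theorem~\ref{Crctns:Theorem}, but with $\bbas$ as the ambient (perfect) $\mathcal{L}_\mathrm{LE}$-algebra and with validity understood under \emph{admissible} assignments throughout. Writing $\phi_i\leq\psi_i$ ($1\leq i\leq n$) for the inequalities produced by preprocessing, I would establish the chain of equivalences linking $\bbas\models_{\bba}\phi\leq\psi$, then $\bbas\models_{\bba}\phi_i\leq\psi_i$ for all $i$, then $\bbas\models_{\bba}(\,$$\bigamp$$\varnothing\Rightarrow\phi_i\leq\psi_i)$ for all $i$, then $\bbas\models_{\bba}(\,$$\bigamp$$S_i\Rightarrow\mathsf{Ineq}_i)$ for all $i$, and finally $\bbas\models_{\bba}\mathsf{ALBA}(\phi\leq\psi)$. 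The first link is handled exactly as in Theorem~\ref{Crctns:Theorem}: elimination of uniformly occurring variables preserves validity under admissible assignments because the substituted constants $\top,\bot$ lie in $\bba$, so the modified assignment is still admissible, and because $\top$ (resp.\ $\bot$) is the top (resp.\ bottom) element; distribution and splitting are immediate. The second and the last links are, respectively, trivial and definitional. Hence the real content is an admissible-assignment analogue of Proposition~\ref{Rdctn:Rls:Crctnss:Prop}: every reduction rule, applied in a pivotal execution, preserves $\bbas\models_{\bba}(\,$$\bigamp$$S\Rightarrow\mathsf{Ineq})$.

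For the residuation and approximation rules this is the ``easy observation'' of Remark~\ref{Adapt:To:Cncl:Ext:Remark}. The residuation rules only rewrite a single inequality using the residuation/adjunction laws holding in $\bbas$, with no change of assignment, so they preserve validity under any class of assignments. For the approximation rules one argues as in Proposition~\ref{Rdctn:Rls:Crctnss:Prop}, now with $\bbas$ as the ambient perfect algebra: since the displaced subterm $\gamma$ belongs to $\mathcal{L}_\mathrm{LE}$, its value under an admissible assignment lies in $\bba\subseteq\bbas$, and since $\bbas$ is perfect it equals the join of the completely join-irreducibles below it (dually for the co-nominal rules); the Distribution Lemma~\ref{Distribution:Lemma}, applied along the SLR branch singled out by the rule, then pulls this join out and yields the equivalence with the quasi-inequality in which the fresh nominal ranges over those join-irreducibles.

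The rule requiring a genuinely different argument is the Ackermann rule, since it is the only one that modifies the assignment to a propositional variable, and the value $\bigvee_i\alpha_i$ prescribed by the ordinary Ackermann Lemma~\ref{Ackermann:Right:Lemma} need not lie in $\bba$. Here I would first invoke Lemma~\ref{Syn:Shape:Lemma} --- whose proof is where the \emph{pivotal} hypothesis is needed --- to conclude that, at the point the right Ackermann rule fires, every minimand $\alpha_i$ and every left-hand side $\beta_j$ is syntactically closed and every right-hand side $\gamma_j$ is syntactically open, with $p$ positive in the $\beta_j$ and negative in the $\gamma_j$ (dually for the left Ackermann rule). Since joins of syntactically closed terms are syntactically closed, $\alpha:=\bigvee_i\alpha_i$ is syntactically closed. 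I would then apply the topological Ackermann Lemma~\ref{Ackermann:Dscrptv:Right:Lemma} (resp.\ Lemma~\ref{Ackermann:Dscrptv:Left:Lemma}), which is tailored to exactly this situation: it supplies an element $u\in\bba$ with $\alpha\leq u$ (resp.\ $u\leq\alpha$) witnessing the equivalence, so that the corresponding $p$-variant assignment is admissible. Combining this with the pure inequalities in $S$, which the rule leaves untouched, gives the Ackermann step and closes the chain.

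The main obstacle is precisely this last point: performing the Ackermann step \emph{inside} $\bba$. It rests on two ingredients that must be in place --- the syntactic-closure invariant of Lemma~\ref{Syn:Shape:Lemma}, which is why the execution is required to be pivotal (non-pivotal approximation steps could insert extended connectives with the ``wrong'' polarity and break the invariant), and the topological Ackermann Lemmas~\ref{Ackermann:Dscrptv:Right:Lemma}--\ref{Ackermann:Dscrptv:Left:Lemma}, which replace the ordinary move ``set $p:=\bigvee_i\alpha_i$'' by an approximation from above (resp.\ below) by an element of $\bba$, using the compactness of $\bba$ in $\bbas$ together with the fact that syntactically closed (resp.\ open) terms evaluate to closed (resp.\ open) elements under admissible assignments. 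Everything else is a transcription of the correctness proof with $\models$ replaced by $\models_{\bba}$.
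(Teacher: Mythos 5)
Your proposal is correct and follows essentially the same route as the paper's own (much terser) proof: reuse the chain of equivalences from Theorem~\ref{Crctns:Theorem} under admissible assignments, observe that only the Ackermann rules are problematic, and then combine Lemma~\ref{Syn:Shape:Lemma} (where pivotality is used) with the topological Ackermann Lemmas~\ref{Ackermann:Dscrptv:Right:Lemma} and~\ref{Ackermann:Dscrptv:Left:Lemma}. The only difference is one of exposition: you spell out the soundness of preprocessing, residuation and approximation under admissible assignments, which the paper compresses into Remark~\ref{Adapt:To:Cncl:Ext:Remark}.
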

\begin{proof}
It has already been indicated in Remark \ref{Adapt:To:Cncl:Ext:Remark} that the proof is essentially the same as that of Theorem \ref{Crctns:Theorem}. The only difficulty that arises  is that the Ackermann-rules are generally not invertible under admissible assignments (cf.\ \cite[Example 9.1]{ALBAPaper}). However, by Lemmas \ref{Ackermann:Dscrptv:Left:Lemma} and \ref{Ackermann:Dscrptv:Right:Lemma}, in the special case that the left and right hand sides of all non-pure inequalities involved in the application of an Ackermann-rule are, respectively, syntactically closed and open, the rule is sound and invertible
under admissible assignments. By Lemma \ref{Syn:Shape:Lemma}, this requirement on the syntactic shape is always satisfied when the rule is applied in pivotal executions of ALBA.
\end{proof}

\section{ALBA successfully reduces all inductive inequalities}\label{Complete:For:Inductive:Section}

\begin{definition}
An $(\Omega, \epsilon)$-inductive inequality  is {\em definite} if in its critical branches, all Skeleton nodes are SLR nodes.
\end{definition}

\begin{lemma}\label{Pre:Process:Lemma}
Let $\{ \phi_i \leq \psi_i\}$ be the set of inequalities obtained by
preprocessing an $(\Omega, \epsilon)$-inductive $\mathcal{L}_\mathrm{LE}$-inequality $\phi \leq
\psi$. Then each $\phi_i \leq \psi_i$ is a definite $(\Omega,
\epsilon)$-inductive inequality.
\end{lemma}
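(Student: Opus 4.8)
The plan is to follow the three operations that make up preprocessing --- elimination of the uniformly occurring variables, exhaustive distribution of the connectives in $\mathcal F$ over $+\vee$ and of those in $\mathcal G$ over $-\wedge$, and exhaustive splitting --- and to argue that (i) each of them maps $(\Omega,\epsilon)$-inductive inequalities to $(\Omega,\epsilon)$-inductive inequalities (with $\Omega$ and $\epsilon$ restricted to the surviving variables in the case of elimination), and that (ii) on the inequalities $\phi_i\le\psi_i$ at which the procedure halts no $\epsilon$-critical branch carries a $\Delta$-adjoint node in its Skeleton part, which is exactly definiteness. Throughout I treat $+\phi_i$ explicitly, the case of $-\psi_i$ being dual.

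For (i): Elimination replaces a uniformly occurring $p$ by $\top$ or $\bot$ according to footnote \ref{footnote:uniformterms}; the constant so introduced is absorbed wherever the defining identities of LEs (Definition \ref{def:DLE}) and the lattice identities permit, and in any case the only leaves it affects are $p$-leaves, which lie on no $\epsilon$-critical branch of a surviving variable, so neither the goodness of the remaining $\epsilon$-critical branches nor conditions (2a)--(2b) on SRR-nodes are disturbed. For distribution: a redex $h(\ldots,\delta,\ldots)$, with $\delta$ a $+\vee$ (resp.\ $-\wedge$) in coordinate $i$ of $h\in\mathcal F$ (resp.\ $h\in\mathcal G$), is rewritten via the identities of Definition \ref{def:DLE} so that the $\Delta$-adjoint junction moves one node towards the root while the other arguments of $h$ are duplicated. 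On an $\epsilon$-critical branch this amounts to swapping, near the top of the branch, an SLR-node with the $\Delta$-adjoint immediately below it --- indeed, on a good branch the only node from $\mathcal F\cup\mathcal G$ that may sit directly above a $\Delta$-adjoint is an SLR-node, since a PIA-node cannot lie above a Skeleton-node on a good branch (Definition \ref{def:good:branch}); both nodes are Skeleton, so the decomposition of the branch into a PIA-path followed by a Skeleton-path is preserved. Since no PIA-node and no off-branch subterm $\gamma_h$ is altered except by duplication, and conditions (2a)--(2b) constrain only the leaves of $\gamma_h$ and the variables occurring in it, they, together with the dependency order $\Omega$, are untouched. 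Splitting merely deletes a top-level $+\vee$ from the left-hand side or a top-level $-\wedge$ from the right-hand side, i.e.\ removes a single Skeleton node at the root of a critical branch, which again preserves goodness and (2a)--(2b). Hence each $\phi_i\le\psi_i$ is $(\Omega,\epsilon)$-inductive.

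For (ii): After exhaustive distribution no SLR-node on an $\epsilon$-critical branch can have a $\Delta$-adjoint child, since by the identities of Definition \ref{def:DLE} every such configuration is a distribution redex (it still admits surfacing of a $+\vee$ or a $-\wedge$); and after exhaustive splitting the root of $+\phi_i$ is not a $+\vee$. Suppose towards a contradiction that some $\epsilon$-critical branch of $+\phi_i$ carried a $\Delta$-adjoint node in its Skeleton part $P_2$, and let $N$ be the one closest to the root. Every node strictly above $N$ on the branch lies in $P_2$ and, by the choice of $N$, is not a $\Delta$-adjoint, hence is SLR; so, if $N$ is not the root, its parent $M$ on the branch is an SLR-node with the $\Delta$-adjoint $N$ as a child --- a distribution redex, contradicting exhaustiveness. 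And $N$ cannot be the root of $+\phi_i$, because that root is positively signed, hence is not a $-\wedge$, and it is not a $+\vee$ after exhaustive splitting. Therefore $P_2$ consists exclusively of SLR-nodes, i.e.\ $\phi_i\le\psi_i$ is definite.

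The step that requires the most care is (i): one must check that distribution and, above all, uniform-variable elimination preserve not merely validity (that is already subsumed by Theorem \ref{Crctns:Theorem}) but the whole syntactic package of $(\Omega,\epsilon)$-inductivity --- goodness of every $\epsilon$-critical branch together with the agreement-with-$\epsilon^\partial$ and dependency-order requirements on the off-branch arguments $\gamma_h$ of SRR-nodes. Distribution needs attention because it duplicates subterms; elimination because the constants it introduces must be seen to be absorbed (or, failing that, confined off all critical branches) so that they cannot spoil the leaf conditions defining agreement with $\epsilon^\partial$.
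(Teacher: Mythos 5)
Your proof is correct and follows essentially the same route as the paper's (three-sentence) argument: distribution only swaps an SLR node with the $\Delta$-adjoint below it on a critical branch, so goodness and the SRR side conditions survive, and exhaustive distribution followed by splitting surfaces and removes all $\Delta$-adjoints from the critical Skeleton parts, yielding definiteness. You supply considerably more detail than the paper does --- in particular the explicit treatment of uniform-variable elimination and the minimal-counterexample argument for (ii) --- but no new ideas are involved.
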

\begin{proof}
Notice that the distribution during preprocessing only swap the order of Skeleton nodes on (critical) paths, and hence does not affect the goodness of critical branches. Moreover, PIA parts are entirely unaffected, and in particular the side conditions on SRR nodes of critical branches are maintained. Finally, notice that SLR nodes commute exhaustively with $\Delta$-adjoints, and hence all $\Delta$-adjoints are effectively surfaced and eliminated via splitting, thus producing definite inductive inequalities.
\end{proof}

The following definition intends to capture the state of a system after approximation rules have been applied pivotally until no propositional variable remains in   $\mathsf{Ineq}$:

\begin{definition}\label{Stripped:Def}
Call a system $(S, \mathsf{Ineq})$ \emph{$(\Omega, \epsilon)$-stripped} if $\mathsf{Ineq}$ is pure, and for each $\xi \leq \chi \in S$ the following conditions hold:
\begin{enumerate}
\item one of  $-\xi$ and $+\chi$ is pure, and the other is $(\Omega, \epsilon)$-inductive;
%
%
\item every $\epsilon$-critical branch in $-\xi$ and $+\chi$ is PIA.
\end{enumerate}
\end{definition}

\begin{lemma}\label{Stripping:Lemma}
For any definite $(\Omega, \epsilon)$-inductive inequality $\phi \leq \psi$ the system $(\emptyset, \phi \leq \psi)$ can be transformed into an $(\Omega, \epsilon)$-stripped system by the application of approximation rules.
\end{lemma}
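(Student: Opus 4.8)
The strategy is to peel off Skeleton nodes from the outside in, working on both the positive generation tree $+\phi$ and the negative generation tree $-\psi$, by repeated pivotal application of the four approximation rules. Recall that since $\phi \leq \psi$ is definite, every $\epsilon$-critical branch in $+\phi$ and $-\psi$ decomposes as $P_1 \cdot P_2$ where $P_1$ consists only of PIA nodes and $P_2$ consists only of SLR nodes (Definition of definite inductive, together with Lemma \ref{Pre:Process:Lemma}). The plan is to induct on the total number of SLR nodes lying on $\epsilon$-critical branches of $+\phi$ and $-\psi$. If this number is $0$, then $\mathsf{Ineq} = (\phi \leq \psi)$ already has the property that every $\epsilon$-critical branch in $+\phi$ and $-\psi$ is PIA; moreover since $\phi \leq \psi$ is $(\Omega,\epsilon)$-inductive, each of $+\phi$, $-\psi$ is $(\Omega,\epsilon)$-inductive, and $\mathsf{Ineq}$ vacuously satisfies the conditions of Definition \ref{Stripped:Def} once we observe that $S = \emptyset$ (so there is nothing to check in clauses 1 and 2), \emph{unless} there is still a propositional variable in $\mathsf{Ineq}$ — but in the base case, with no SLR nodes to strip, the outermost connective above any critical leaf is already PIA, which is handled by the induction being on the position of the outermost SLR node rather than trivially; I will in fact organize the induction so that at each stage I either strip an outermost SLR node via an approximation rule (moving a subterm into $S$), or else $\mathsf{Ineq}$ is pure and I stop.

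\textbf{Inductive step.} Suppose $\mathsf{Ineq}$ still contains a propositional variable. Since the inequality is definite inductive, locate a maximal SLR branch: following the root of $+\phi$ (or of $-\psi$) downward, the Skeleton part $P_2$ of each critical branch sits on top, so there is an outermost SLR node, say $f(\ldots, \gamma, \ldots)$ with $+\phi'(\gamma/!x)$ where the branch from $+x$ through the displayed coordinate of $f$ up to the root is SLR. Depending on whether $x$ occurs positively or negatively under this SLR node, and whether we are on the $+\phi$ side or the $-\psi$ side, one of the four approximation rules $(L^+A)$, $(L^-A)$, $(R^+A)$, $(R^-A)$ applies pivotally, replacing the displayed occurrence by a fresh nominal (or co-nominal) and adding an inequality of the form $\nomj \leq \gamma$ (or $\gamma \leq \cnomm$, etc.) to $S$. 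The key points to verify are: (i) this application is legitimate, i.e.\ $\gamma$ belongs to $\mathcal{L}_\mathrm{LE}$ — which holds because no reduction rule preceding this point has introduced expanded-language connectives into the parts of $\mathsf{Ineq}$ lying below Skeleton nodes (the approximation rules only ever move original-language subterms into $S$); (ii) after the move, the new $\mathsf{Ineq}$ has strictly fewer SLR nodes on critical branches, so the induction hypothesis applies; and (iii) the newly added inequality $\nomj \leq \gamma$ satisfies clauses 1 and 2 of Definition \ref{Stripped:Def}. For (iii): the side $+\gamma$ (resp.\ $-\gamma$) is the subtree hanging below the stripped SLR node; since the original branch was $P_1 \cdot P_2$ with $P_2$ all SLR and we stripped the \emph{outermost} SLR node, $\gamma$ itself still contains SLR nodes in general, so one more round of the induction handles it — which is why the induction must be phrased globally over all of $S \cup \{\mathsf{Ineq}\}$, stripping one outermost SLR node anywhere in the system at each step, and terminating when no non-pure inequality in $S \cup \{\mathsf{Ineq}\}$ has an SLR node on a critical branch. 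At that terminal stage, $\mathsf{Ineq}$ is pure (every variable got pushed into $S$), and each $\xi \leq \chi$ in $S$ has one pure side (the nominal/co-nominal introduced) and the other side $(\Omega,\epsilon)$-inductive with only PIA nodes on its critical branches — exactly Definition \ref{Stripped:Def}.

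\textbf{Main obstacle.} The delicate part is the bookkeeping that keeps the two defining conditions of $(\Omega,\epsilon)$-stripped invariant under each approximation step, in particular verifying clause 1 (one side pure, the other $(\Omega,\epsilon)$-inductive): when we strip an SLR node from $+\phi'(\gamma/!x)$ and add $\nomj \leq \gamma$, we must check that $+\gamma$ inherits $(\Omega,\epsilon)$-inductivity from $+\phi$. This is where the definition of inductive inequality does its work — the SRR side-conditions (clauses 2(a),(b) of Definition \ref{Inducive:Ineq:Def}) on critical branches are preserved because stripping an SLR node above $\gamma$ does not touch any SRR or PIA node inside $\gamma$, nor does it alter the signs of subterms of $\gamma$ (the sign a node carries in $+\gamma$ as a standalone tree equals the sign it carried as part of $+\phi$, since $+x \prec +\phi'$ and $\gamma$ is substituted at $!x$). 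One must also handle the case where a previous step has already introduced a nominal or co-nominal as the "$\gamma$" being moved — here pivotality is essential: it guarantees that the only expanded-language symbols appearing in positions that would otherwise violate "$\gamma \in \mathcal{L}_\mathrm{LE}$" are themselves nominals or co-nominals, so the resulting inequality is pure and clause 1 is satisfied trivially. I expect the bulk of the write-up to be a careful enumeration of these cases for the four approximation rules, modelled closely on the analogous argument in \cite{ALBAPaper}, with the simplification that we never need $\Delta$-adjunction here since the inequality is definite.
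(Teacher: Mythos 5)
Your overall idea — use the approximation rules to purify $\mathsf{Ineq}$ and push the remaining material into $S$, then check that the extracted signed subtrees inherit $(\Omega,\epsilon)$-inductivity — is the right one, and your observations about sign preservation and the role of definiteness are correct. However, the iterative scheme you propose (peel off one \emph{outermost} SLR node at a time, so that the extracted $\gamma$ ``still contains SLR nodes in general'' and is handled by ``one more round of the induction ... stripping one outermost SLR node anywhere in the system'') does not go through with the rules as defined. The approximation rules act only on $\mathsf{Ineq}$: each rule rewrites the second component of the system and \emph{adds} an inequality to $S$; there is no rule that further decomposes an inequality already sitting in $S$ (only the residuation and Ackermann rules operate on $S$, and they do not extract subterms into fresh nominals). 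So at the moment your process first renders $\mathsf{Ineq}$ pure, $S$ contains inequalities $\nomj \leq \gamma$ whose critical branches still carry SLR nodes, clause 2 of Definition \ref{Stripped:Def} fails, and there is no legal move left to repair this. A secondary problem is that extracting at the child of the topmost SLR node is not a \emph{pivotal} application (the SLR branch is not maximal), which matters for the downstream use of this lemma in Lemma \ref{Syn:Shape:Lemma} and Theorem \ref{Thm:ALBA:Canonicity}.

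The paper avoids the iteration entirely by choosing the extraction point correctly in a single pass: for each propositional variable occurrence, apply the approximation rule at the \emph{first non-SLR node} on the path from the root to that occurrence, i.e.\ at the end of the maximal SLR prefix of the branch (this is exactly what pivotality demands, and it is possible precisely because definiteness rules out $\Delta$-adjoints on critical branches). Since a good branch decomposes as PIA below Skeleton, and the Skeleton part is all SLR, the first non-SLR node on a critical branch is the top of its PIA segment; hence the extracted $\gamma$ has \emph{all} of its critical branches entirely PIA already — including any other critical branches passing through the same node, since that node is PIA on each of them. Moreover these extraction points form an antichain (if one such node were a proper ancestor of another, the lower variable's first non-SLR node would have to lie at or above it), so the extractions are independent, each extracted subterm is still in $\mathcal{L}_\mathrm{LE}$, and one round over all variable occurrences leaves $\mathsf{Ineq}$ pure and every member of $S$ satisfying both clauses of Definition \ref{Stripped:Def}. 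You should replace your induction with this one-shot choice of extraction position; the verifications you already sketched (subtrees of inductive trees are inductive, signs are preserved) then complete the proof.
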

\begin{proof}
By assumption, $+\phi$ and $-\psi$ are both definite $(\Omega, \epsilon)$-inductive.  Hence, for any propositional variable occurrences $p$, we can apply an approximation rule  on the first non SLR-node in the path which goes from the root to $p$ (this is always possible thanks to the absence of $\Delta$-adjoints in the Skeleton). These applications are all pivotal. 

Let us show that the resulting system $(S, \mathsf{Ineq})$ is $(\Omega, \epsilon)$-stripped. Clearly, the procedure reduces $\mathsf{Ineq}$ to a pure inequality. Each inequality in $S$ is generated by the application of some approximation rule, and is therefore  either of the form $\nomj \leq \alpha$ or $\beta \leq \cnomm$,  where $+\alpha$ and $-\beta$ are subtrees of $(\Omega, \epsilon)$-inductive trees,  and hence are $(\Omega, \epsilon)$-inductive, as required by item 1 of the definition.


Finally, if in an inequality in $S$ contained  a critical variable occurrence such that its associated path has an SLR node, then, because such a path is by assumption good, this would contradict the fact that the inequality has been generated by a pivotal application of an approximation rule. This shows item 2.

\comment{
For every critical node  in either signed tree, apply an approximation rule pivotally on the branch starting from that node (this is always possible, since each such branch is good). After every critical node has been removed from $\phi \leq \psi$  this way, for every non critical node  in either signed tree, apply an approximation rule pivotally on the branch starting from that node. Notice that this is always possible, and  that the latter  applications extract subtrees which belong to the original language. Indeed, if the whole current tree belongs to the original language, then all the proposition variables are non critical. In this case if the root of the signed generation tree is an SLR node, then follow the branch up to the node which gives the maximal SLR path and the apply approximation rule there. If  the root of the signed generation tree is not an SLR node, then we can extract the whole tree. If the signed generation tree contains some nominal or conominal, then these must have been introduced by a pivotal application of an approximation rule. Hence the root of the subtree must be an SLR node. hence as before, we can follow the branch up to the node which gives the maximal SLR path and the apply approximation rule there. Now if a nominal or a conominal occurred in the extracted  subtree, then the non critical variable and this nominal or conominal share an ancestor. Since the nominal/conominal must have been introduced by a pivotal application of an approximation rule, the shared ancestor must be an SLR node. But this implies that the current application is not pivotal. Notice also that if th


Every $\epsilon$-critical path in $+\phi$ and $-\psi$ contains a pivot node, since they are all good. Call these the \emph{critical pivot nodes}, and the subtrees rooted at them the \emph{critical subtrees}.

Every critical sub-tree contains at most one $\epsilon$-critical node. Indeed, if it contained at least two $\epsilon$-critical nodes, they would share a binary SRR ancestor, which would violate the definition of an $(\Omega, \epsilon)$-inductive tree.

From the above remarks, it is clear that we can apply the approximation rules to extract all critical subtrees from $+\phi$ and $-\psi$ yielding a system $(S_1, \phi_1 \leq \psi_1)$ where each inequality in $S_1$ is either of the form $\nomj \leq \alpha$ or $\beta \leq \cnomm$,  where $+\alpha$ and $-\beta$ are critical subtree of $+\phi$ or $-\psi$, and hence contain exactly one $\epsilon$-critical node. Moreover, since every signed subtree of an signed $(\Omega, \epsilon)$-inductive tree is

If $\phi_1 \leq \psi_1$ is pure we are done. If $\phi_1 \leq \psi_1$ still contains any propositional variables, we apply the approximation rules to obtain $(S_2, \mathsf{Ineq}_2) = (S_1 \cup \{\nomj \leq \phi_1, \psi_1 \leq \cnomm \}, \nomj \leq \cnomm)$. It is easy to see that the latter system satisfies the requirement of the lemma.
}
\end{proof}

\begin{definition}\label{Ackermann:Ready:Def}
An $(\Omega, \epsilon)$-stripped system $(S, \mathsf{Ineq})$ is \emph{Ackermann-ready} with respect to a propositional variable $p_i$ with $\epsilon_i = 1$ (respectively, $\epsilon_i = \partial$) if every inequality $\xi \leq \chi \in S$ is of one of the following forms:
\begin{enumerate}
\item $\xi \leq p$ where $\xi$ is pure (respectively, $p \leq \chi$ where $\chi$ is pure), or
\item $\xi \leq \chi$ where neither $- \xi$ nor $+ \chi$ contain any $+p_i$ (respectively, $-p_i$) leaves.
\end{enumerate}
\end{definition}

Note that the right or left Ackermann-rule (depending on whether $\epsilon_i = 1$ or $\epsilon_i = \partial$) is applicable to a system which is Ackermann-ready with respect to $p_i$. In fact, this would still have been the case had we weakened the requirement that $\xi$ and $\chi$ must be pure to simply require that they do not contain $p_i$.

\begin{lemma}\label{Ackermann:Ready:LEmma}
If $(S, \mathsf{Ineq})$ is $(\Omega, \epsilon)$-stripped and $p_i$ is $\Omega$-minimal among propositional variables occurring in $(S, \mathsf{Ineq})$, then $(S, \mathsf{Ineq})$ can be transformed, through the application of residuation- and splitting-rules, into a system which is \emph{Ackermann-ready} with respect to $p_i$.
\end{lemma}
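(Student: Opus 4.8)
The plan is to transform $S$ inequality by inequality, leaving $\mathsf{Ineq}$ (which in a stripped system is already pure) untouched, and to show that each $\xi \leq \chi \in S$ can be driven into one of the two shapes of Definition~\ref{Ackermann:Ready:Def} using only residuation and splitting rules. I would first dispose of the easy cases: a pure inequality, and a non-pure inequality in which $p_i$ has no $\epsilon$-critical occurrence, already satisfy clause~2 of Definition~\ref{Ackermann:Ready:Def}, since for $\epsilon_i = 1$ (resp.\ $\epsilon_i = \partial$) every $+p_i$ (resp.\ $-p_i$) leaf is by definition an $\epsilon$-critical occurrence of $p_i$. So the work concentrates on a non-pure $\xi \leq \chi \in S$ carrying an $\epsilon$-critical occurrence of $p_i$. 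By clause~1 of Definition~\ref{Stripped:Def}, exactly one of $-\xi$, $+\chi$ is pure and the other is $(\Omega, \epsilon)$-inductive; since an $\epsilon$-critical leaf is a variable leaf, it must lie on the inductive side, and I will treat the case that this side is $+\chi$ (so that $\xi$ is pure), the case of $-\xi$ being symmetric with left and right interchanged. By clause~2 of Definition~\ref{Stripped:Def} the branch of $+\chi$ ending in that leaf is PIA, hence (apart from the leaf itself) consists of SRA- and SRR-nodes only.

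The heart of the argument is two observations, both flowing from the $\Omega$-minimality of $p_i$ together with the side-conditions of Definition~\ref{Inducive:Ineq:Def}. First, no SRR-node of $+\chi$ can lie on two distinct $\epsilon$-critical branches: if it did, with leaves $p_k$ and $p_l$ reached through two of its coordinates, then each of $p_k, p_l$ would occur in an $\epsilon^\partial$-subtree $\gamma_h$ belonging to the other's critical branch, forcing $p_k <_\Omega p_l$ and $p_l <_\Omega p_k$, against irreflexivity and transitivity of $\Omega$; consequently distinct $\epsilon$-critical leaves of $+\chi$ are separated only by SRA $\wedge$- or $\vee$-nodes. Second, every $\epsilon^\partial$-subtree $\gamma_h$ attached at an SRR-node on the critical branch ending in the $p_i$-leaf contains no propositional variable at all, because Definition~\ref{Inducive:Ineq:Def} forces every variable of $\gamma_h$ to be $<_\Omega p_i$ while $p_i$ is $\Omega$-minimal; thus each such $\gamma_h$ is pure. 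With these in hand I would first apply the splitting rules exhaustively at the SRA $+\wedge$-nodes (resp.\ $-\vee$-nodes) lying above $\epsilon$-critical leaves, so that by the first observation each resulting inequality contains at most one $\epsilon$-critical leaf and has a pure complement; those with no critical $p_i$-leaf then fall under clause~2. For each remaining one, of the form $\xi' \leq \chi'$ with $\xi'$ pure and a unique critical $p_i$-leaf sitting on a PIA branch of $+\chi'$, I would descend that branch from the root, applying at each node the appropriate residuation rule (in the critical coordinate, for a unary SRA-node or for an SRR-node) or a further splitting rule when a $\wedge$/$\vee$ is met. Each residuation step transfers $\xi'$, the residual connectives accumulated so far, and the off-subtrees $\gamma_h$ to the opposite side of the inequality; by the second observation and the purity of $\xi'$ and of the residual connectives, that side stays pure, while the critical subterm, carried along according to its polarity, remains alone on the other side. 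Since the branch is finite the process terminates in an inequality $\alpha \leq p_i$ if $\epsilon_i = 1$, resp.\ $p_i \leq \alpha$ if $\epsilon_i = \partial$, with $\alpha$ pure, i.e.\ clause~1 of Definition~\ref{Ackermann:Ready:Def}. Carrying this out for every inequality of $S$ yields an Ackermann-ready system.

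The step I expect to be the main obstacle is pinning down the two structural observations of the previous paragraph rigorously — in particular, verifying that $\Omega$-minimality genuinely purifies all off-subtrees $\gamma_h$ met along a critical branch and that, after splitting, it leaves a single critical leaf per inequality; once these are secured, what remains is a routine structural induction down the PIA branch, with the only bookkeeping being the careful tracking of signs so that the displayed critical variable lands on the side dictated by its polarity.
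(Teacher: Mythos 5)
Your proposal is correct and follows essentially the same route as the paper's proof, which organizes the very same procedure as a simultaneous structural induction on the non-pure sides of the inequalities in $S$: splitting at $+\wedge$/$-\vee$ nodes, residuating at $f$/$g$ nodes, and using the $\Omega$-minimality of $p_i$ to conclude that the off-coordinates $\gamma_h$ at SRR nodes on the critical branch are variable-free, hence pure, so that the accumulating side stays pure. The only cosmetic imprecision is that your first observation, as literally stated, fails when two critical branches pass through the \emph{same} coordinate of an SRR node; what you actually need (and use) is only that the lowest common ancestor of two distinct critical leaves cannot be an SRR node, and is therefore a $+\wedge$ or $-\vee$, which your dependency-order argument does establish.
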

\begin{proof}
If $\xi \leq \chi \in S$ and $- \xi$ and $+ \chi$ contain no $\epsilon$-critical $p_i$-nodes then this inequality already satisfies condition 2 of Definition \ref{Ackermann:Ready:Def}. So suppose that $- \xi$ and $+ \chi$ contain some $\epsilon$-critical $p_i$-node among them. This means $\xi \leq \chi$ is of the form $\alpha \leq \mathsf{Pure}$ with the $\epsilon$-critical $p_i$-node in $\alpha$ and $\mathsf{Pure}$ pure, or of the form $\mathsf{Pure} \leq \delta$ with $\mathsf{Pure}$ pure and the $\epsilon$-critical $p_i$-node in $\delta$. We can now prove by simultaneous induction on $\alpha$ and $\delta$ that these inequalities can be transformed into the form specified by clause 1 of definition \ref{Ackermann:Ready:Def}. 

The base cases are when $- \alpha = -p_i$ and $+\delta = +p_i$. Here the inequalities are in desired shape and no rules need be applied to them. We will only check a few of the inductive cases. If $- \alpha = - (\alpha_1 \vee \alpha_2)$, then 
applying the $\vee$-splitting rule we transform $\alpha_1 \vee \alpha_2 \leq \mathsf{Pure}$ into $\alpha_1 \leq \mathsf{Pure}$ and $\alpha_2 \leq \mathsf{Pure}$. The resulting system is clearly still $(\Omega, \epsilon)$-stripped, and we may apply the inductive hypothesis to $\alpha_1 \leq \mathsf{Pure}$ and $\alpha_2 \leq \mathsf{Pure}$.
If $- \alpha = - f(\overline{\alpha})$, then, as per definition of inductive inequalities and given that $p_i$ is by assumption $\Omega$-minimal, exactly one of the formulas in $\overline{\alpha}$  contains an $\epsilon$-critical node, and all the others (if any) are pure. Assume that the critical node is in $\alpha_j$ for $1\leq j\leq n_f$. Then, applying the appropriate $f$-residuation rule transforms $f(\overline{\alpha})\leq \mathsf{Pure}$ into either $\alpha_j\leq \mathsf{Pure}$ if $\epsilon_f = 1$ or $\mathsf{Pure}\leq \alpha_j$ if $\epsilon_f = \partial$, yielding an $(\Omega, \epsilon)$-stripped system, and the inductive hypothesis is applicable.


If $+ \delta = + g(\overline{\delta})$, then, as per definition of inductive inequalities and given that $p_i$ is by assumption $\Omega$-minimal, exactly one of the formulas in $\overline{\delta}$  contains an $\epsilon$-critical node, and all the others (if any) are pure. Assume that the critical node is in $\delta_j$ for $1\leq j\leq n_g$. Then, applying the appropriate $g$-residuation rule transforms $\mathsf{Pure}\leq g(\overline{\delta})$ into either $\mathsf{Pure}\leq \delta_j$  if $\epsilon_g = 1$ or $\delta_j\leq \mathsf{Pure}$  if $\epsilon_f = \partial$, yielding an $(\Omega, \epsilon)$-stripped system, and the inductive hypothesis is applicable.
%
%
\end{proof}

\begin{lemma}\label{App:Of:Ackermann:Lemma}
Applying the appropriate Ackermann-rule with respect to $p_i$ to an $(\Omega, \epsilon)$-stripped system which is Ackermann-ready with respect to $p_i$, again yields an $(\Omega, \epsilon)$-stripped system.
\end{lemma}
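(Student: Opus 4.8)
The plan is to analyze what the Ackermann-rule does to the system and verify, item by item, that the defining conditions of an $(\Omega, \epsilon)$-stripped system (Definition \ref{Stripped:Def}) are preserved. Recall that in an Ackermann-ready system (Definition \ref{Ackermann:Ready:Def}) the inequalities in $S$ split into two groups: the "solving" inequalities, which (say, for $\epsilon_i = 1$) have the form $\alpha_k \leq p_i$ with $\alpha_k$ pure, and the "remaining" inequalities $\beta_j \leq \gamma_j$, in which no $+p_i$ leaf occurs in $-\beta_j$ or $+\gamma_j$. Applying the right Ackermann-rule replaces the system $S$ by $\{\beta_j(\alpha/p_i) \leq \gamma_j(\alpha/p_i) \mid 1 \leq j \leq m\}$, where $\alpha = \bigvee_k \alpha_k$ is pure, and leaves $\mathsf{Ineq}$ untouched (the side condition of the rule requires $p_i \notin \mathsf{Ineq}$, which holds since the system is stripped and hence $\mathsf{Ineq}$ is pure). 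The case $\epsilon_i = \partial$ is symmetric, using the left Ackermann-rule and $\alpha = \bigwedge_k \alpha_k$.

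First I would check that $\mathsf{Ineq}$ remains pure: this is immediate, since the rule does not modify $\mathsf{Ineq}$. Next, fix a resulting inequality $\beta_j(\alpha/p_i) \leq \gamma_j(\alpha/p_i)$. If $\beta_j \leq \gamma_j$ was already pure, then, since $\alpha$ is pure, the substitution produces another pure inequality, and conditions 1 and 2 of Definition \ref{Stripped:Def} hold vacuously (or trivially). So assume $\beta_j \leq \gamma_j$ is non-pure; by hypothesis one of $-\beta_j$, $+\gamma_j$ is pure and the other is $(\Omega, \epsilon)$-inductive, with every $\epsilon$-critical branch PIA, and moreover (Ackermann-readiness) this inequality contains no $\epsilon$-critical $p_i$-node. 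The key observation is therefore that $p_i$ occurs in $\beta_j \leq \gamma_j$ only at \emph{non-$\epsilon$-critical} leaves, i.e.\ as $-p_i$ in $-\beta_j$ or $+p_i$ in $+\gamma_j$ when $\epsilon_i = 1$ (and dually when $\epsilon_i = \partial$). Consequently, substituting the pure term $\alpha$ for these occurrences: (a) does not create or destroy any $\epsilon$-critical branch, since $\alpha$ is pure and the replaced occurrences were non-critical — hence condition 2 (all $\epsilon$-critical branches PIA) is preserved, as is the property of being $(\Omega, \epsilon)$-inductive (the critical branches and the SRR side-conditions on them are untouched); (b) keeps the "pure side" pure, since a pure term with a pure term substituted in remains pure; and (c) keeps the $(\Omega, \epsilon)$-inductive side $(\Omega, \epsilon)$-inductive, by (a). This establishes condition 1.

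The main obstacle, and the point requiring the most care, is (a): one must confirm that replacing the non-critical $p_i$-leaves by $\alpha$ cannot turn a PIA critical branch into a non-PIA one, nor violate the SRR side-conditions of Definition \ref{Inducive:Ineq:Def}. This is where Ackermann-readiness is essential — it guarantees that no $\epsilon$-critical branch passes through a $p_i$-leaf, so the substitution happens entirely "off" the critical branches; any new nodes introduced by $\alpha$ lie in subtrees hanging off non-critical leaves and therefore play no role in the inductive classification of the critical branches. A secondary point to verify is that $\alpha$, being a join (resp.\ meet) of pure terms, is itself a legitimate pure $\mathcal{L}_\mathrm{LE}^+$-term, so that the substituted inequalities are again well-formed $\mathcal{L}_\mathrm{LE}^+$-inequalities; this is routine. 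Assembling these observations yields that the output system satisfies both clauses of Definition \ref{Stripped:Def}, completing the proof.
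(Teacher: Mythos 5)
Your proposal is correct and follows essentially the same route as the paper's own proof: both reduce to checking the two clauses of Definition \ref{Stripped:Def} for each substituted inequality, using the facts that $\alpha$ is pure, that Ackermann-readiness forces all remaining $p_i$-occurrences to be non-critical, and hence that the $\epsilon$-critical paths of $-\beta_j(\alpha/p_i)$ and $+\gamma_j(\alpha/p_i)$ coincide with those of $-\beta_j$ and $+\gamma_j$ while all newly created paths are variable-free. Your write-up is somewhat more explicit about the side conditions (purity of $\mathsf{Ineq}$, the pure inequalities being unaffected, well-formedness of $\alpha$), but the substance is identical.
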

\begin{proof}
Let $(S, \phi \le \psi)$ be an $(\Omega, \epsilon)$-stripped system which is Ackermann-ready with respect to $p_i$. We only consider the case in which the right Ackermann-rule is applied, the case for the left Ackermann-rule being dual. This means that $S = \{\alpha_k\leq p \mid 1 \leq k \leq n \} \cup \{\beta_j(p_i) \leq \gamma_{j}(p_i) \mid 1 \leq j \leq m \}$ where the $\alpha$s are pure and the $-\beta$s and $+\gamma$s contain no $+ p_i$ nodes. Let denote the pure formula $\bigvee_{k=1}^{n} \alpha_k$ by $\alpha$. It is sufficient to show that for each $1 \leq j \leq m$, the trees $- \beta(\alpha / p_i)$ and $+ \gamma(\alpha / p_i)$ satisfy the conditions of Definition \ref{Stripped:Def}. Conditions 2  follows immediately once we notice that, since $\alpha$ is pure and is being substituted everywhere for variable occurrences corresponding to non-critical nodes, $- \beta(\alpha / p_i)$ and $+ \gamma(\alpha / p_i)$ have exactly the same $\epsilon$-critical paths as $- \beta(p_i)$ and $+ \gamma(p_i)$, respectively. Condition 1, namely that $- \beta(\alpha / p_i)$ and $+ \gamma(\alpha / p_i)$ are $(\Omega, \epsilon)$-inductive, also follows using additionally the observation that all new paths that arose from the substitution are variable free.
\end{proof}

\begin{theorem}
$\mathrm{ALBA}$ succeeds on all inductive inequalities, and pivotal executions suffice.
\end{theorem}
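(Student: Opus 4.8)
The plan is to assemble the lemmas of this section into a single termination argument, processing each inequality produced by Stage~1 separately. Fix an $(\Omega,\epsilon)$-inductive $\mathcal{L}_\mathrm{LE}$-inequality $\phi\leq\psi$. First I would invoke Lemma \ref{Pre:Process:Lemma}: ALBA's preprocessing replaces the input by a finite set $\{\phi_i\leq\psi_i\}$ of \emph{definite} $(\Omega,\epsilon)$-inductive inequalities, and since these are passed to Stage~2 independently, it suffices to show that ALBA purifies each initial system $(\emptyset,\phi_i\leq\psi_i)$. Degenerate cases—variables eliminated already as uniform occurrences during preprocessing—only make the remaining work easier and are absorbed by the same loop.

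For a fixed definite $(\Omega,\epsilon)$-inductive inequality, Lemma \ref{Stripping:Lemma} transforms $(\emptyset,\phi_i\leq\psi_i)$ into an $(\Omega,\epsilon)$-stripped system using only (pivotal) applications of the approximation rules. From here I would run an induction on the number of propositional variables still occurring in the system. Whenever at least one such variable remains, the restriction of $\Omega$ to those variables is still a strict partial order on a finite set, so an $\Omega$-minimal variable $p_i$ exists; Lemma \ref{Ackermann:Ready:LEmma} then turns the current $(\Omega,\epsilon)$-stripped system into one that is Ackermann-ready with respect to $p_i$, using only residuation and splitting rules. Applying the appropriate Ackermann rule (right if $\epsilon_i=1$, left if $\epsilon_i=\partial$) eliminates $p_i$, and by Lemma \ref{App:Of:Ackermann:Lemma} the resulting system is again $(\Omega,\epsilon)$-stripped, now with one fewer propositional variable, so the inductive hypothesis applies. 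After finitely many rounds no propositional variable remains, i.e.\ the system is purified, and Stage~3 outputs the corresponding pure quasi-inequality; taking the conjunction over all $i$ yields $\mathsf{ALBA}(\phi\leq\psi)$. Since the approximation rules are invoked only inside Lemma \ref{Stripping:Lemma}, where every such application is pivotal by construction, and all other steps (residuation, splitting, Ackermann) carry no pivotality condition, the whole execution is pivotal.

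The main obstacle is the invariant bookkeeping rather than anything conceptually deep: one must be certain that being $(\Omega,\epsilon)$-stripped is genuinely preserved across a full elimination round—strip via approximation, then prepare via residuation/splitting, then eliminate via Ackermann—and in particular that the residuation/splitting preparation in Lemma \ref{Ackermann:Ready:LEmma} does not break strippedness before the Ackermann rule fires. This is exactly what the simultaneous induction in the proof of Lemma \ref{Ackermann:Ready:LEmma} secures (critical branches stay PIA, inductive subtrees stay inductive, and pure sides stay pure), and what Lemma \ref{App:Of:Ackermann:Lemma} re-establishes afterwards, using that $\alpha$ is pure and substituted only at non-critical leaves. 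Once this interlocking of the four lemmas is in place, the termination argument closes with no further computation.
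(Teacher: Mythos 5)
Your proposal is correct and follows essentially the same route as the paper's own proof: preprocess to definite inductive inequalities (Lemma \ref{Pre:Process:Lemma}), strip via pivotal approximation rules (Lemma \ref{Stripping:Lemma}), then iterate the cycle of choosing an $\Omega$-minimal variable, preparing with Lemma \ref{Ackermann:Ready:LEmma}, eliminating with the Ackermann rule, and restoring strippedness with Lemma \ref{App:Of:Ackermann:Lemma}. Your explicit observation that pivotality holds because approximation rules occur only inside the stripping phase is a correct and slightly more careful articulation of a point the paper leaves implicit.
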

\begin{proof}
Let $\phi \leq \psi$ be an $(\Omega, \epsilon)$-inductive inequality. By Lemma \ref{Pre:Process:Lemma}, applying preprocessing yields a finite set of definite $(\Omega, \epsilon)$-inductive inequalities, each of which gives rise to  an initial system $(\emptyset, \phi' \leq \psi')$. By Lemma \ref{Stripping:Lemma}, pivotal applications of the approximation rules convert this system into an $(\Omega, \epsilon)$-stripped system, say $(S_1, \phi'' \leq \psi'')$. Now pick any $\Omega$-minimal variable occurring in $(S_1, \phi'' \leq \psi'')$, say $p$. By lemma \ref{Ackermann:Left:Lemma}, the system can be made Ackermann-ready with respect to $p$ by applying residuation and splitting rules. Now apply the appropriate Ackermann-rule to eliminate $p$ from the system. By lemma \ref{App:Of:Ackermann:Lemma}, the result is again an $(\Omega, \epsilon)$-stripped system, now containing one propositional variable less, and to which lemma \ref{Ackermann:Left:Lemma} can again be applied. This process is iterated until all occurring propositional variables are eliminated and a pure system is obtained.
\end{proof}

\section{Constructive canonicity of ALBA inequalities}
\label{sec:conclusions}
The problem of canonicity, defined as the preservation of inequalities under the canonical extension construction, can be meaningfully investigated in a constructive meta-theory, as first shown in the work of Ghilardi and Meloni \cite{GhMe97}. Indeed, the canonical extension construction, as given in \cite{GH01,DGP}, is formulated in terms of general filters and ideals, and does not depend on any form of the axiom of choice (such as the existence of `enough' optimal filter-ideal pairs). Thus, while the constructive canonical extension need not be perfect anymore, the canonical embedding retains the properties of denseness and compactness. We have already observed in Remark \ref{rem: towards constructive can} that the soundness of the approximation rules does not rely on the fact that nominals and co-nominals are interpreted as completely join-irreducible and meet-irreducible elements respectively, but only on the fact that these elements completely join-generate and completely meet-generate the canonical extension, respectively. Since, by denseness, the closed and open elements have precisely these generating properties, one can take nominals and co-nominals to range over these sets, respectively, without affecting the soundness of the rules. In fact, all the other ALBA rules also remain sound when interpreted in the constructive canonical extensions. Hence, the constructive canonicity of the inequalities on which ALBA succeeds immediately follows by the same argument illustrated in Section \ref{section:canonicity}. In particular, all inductive inequalities are constructively canonical, and since inductive inequalities include the fragments treated in \cite{GhMe97,Suzuki:RSL:2013} (cf.\ Example \ref{rem:ghilardi suzuki}), these results follow from those in the present paper.

\bibliographystyle{siam}
\bibliography{WANonDist}

\section{Appendix}\label{appendix}

\subsection{Topological properties of the modalities and their adjoints}

Fix a language $\mathcal{L}_\mathrm{LE}$, and an $\mathcal{L}_\mathrm{LE}$-algebra  $\bba = (L, \mathcal{F}^\bba, \mathcal{G}^\bba)$ for the rest of this section. This section collects the relevant properties of the additional operations of $\bba$ and their adjoints, which will be critical for the proof of the ``topological versions'' of the Ackermann lemmas in Section \ref{Sec: topological Ackermann}. These properties are 
well known to hold in closely related settings (e.g.\ \cite{ALBAPaper}), or following easily from other results (viz.\ \cite[Lemma 3.4]{DGP}).
In what follows we use the terminology $\partial$-monotone (respectively $\partial$-antitone, $\partial$-positive, $\partial$-negative, $\partial$-open, $\partial$-closed) to mean its opposite, i.e. antitone (respectively, monotone, negative, positive, closed, open). By $1$-monotone (respectively antitone, positive, negative, open, closed) we simply mean monotone (antitone, positive, negative, open, closed). Also in symbols, for example we will write $(O(\bbas))^1$ for $O(\bbas)$ and $(O(\bbas))^\partial$ for $K(\bbas)$, and similarly $(K(\bbas))^1$ for $K(\bbas)$ and $K(\bbas)^\partial$ for $O(\bbas)$. This convention generalizes to order-types and tuples in the obvious way.  Thus, for example,  $(O(\bbas))^{\epsilon}$ is the cartesian product of sets with $O(\bbas)$ as $i$th coordinate where $\epsilon_i = 1$ and $K(\bbas)$ for $j$th coordinate where $\epsilon_j = \partial$.

\begin{lemma}
\label{cor: open upset to open upset for white box} For all $f\in \mathcal{F}^\bba$, $g\in \mathcal{G}^\bba$, $\overline{k}\in (K(\bbas))^{\epsilon_f}$, and $\overline{o}\in (O(\bbas))^{\epsilon_g}$,  
\begin{enumerate}
\item $g(\overline{ o}) \in \obbas$,
\item $f(\overline{ k}) \in \kbbas$.
\end{enumerate}
\end{lemma}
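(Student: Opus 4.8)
The statement claims that the $\sigma$-extension of any $f\in\mathcal F$ maps $\epsilon_f$-tuples of closed elements to closed elements, and dually that the $\pi$-extension of any $g\in\mathcal G$ maps $\epsilon_g$-tuples of open elements to open elements. The plan is to prove item (2) directly from the simplified formula for $f^\sigma$ on closed tuples recorded just before Definition~\ref{def:perfect LE}, and then obtain item (1) by order-duality. Recall that for $\overline k\in K((\bbas)^{\epsilon_f})$ we have
$$f(\overline k)=f^\sigma(\overline k)=\bigwedge\{f(\overline a)\mid \overline a\in(\bbas)^{\epsilon_f}\text{ and }\overline k\leq^{\epsilon_f}\overline a\}.$$
The key point is that in this meet we may restrict the $\overline a$ to range over tuples of elements of $\bba$ (i.e.\ $\overline a\in\bba^{\epsilon_f}$) without changing the value: each coordinate $k_i$ is, by density together with the fact that $\jira\subseteq\kbbas$ and the dual fact for $O$, a meet (in the appropriate $\epsilon_f(i)$-sense) of elements of $\bba$ lying above it, and $f$ is $\epsilon_f(i)$-monotone in coordinate $i$, so the family $\{f(\overline a)\mid \overline a\in\bba^{\epsilon_f},\ \overline k\leq^{\epsilon_f}\overline a\}$ is downward cofinal in $\{f(\overline a)\mid \overline a\in(\bbas)^{\epsilon_f},\ \overline k\leq^{\epsilon_f}\overline a\}$, hence has the same meet. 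Since each such $f(\overline a)$ with $\overline a\in\bba^{\epsilon_f}$ lies in $\bba$, the element $f(\overline k)$ is a meet of elements of $\bba$, i.e.\ it lies in the meet-closure $\kbbas$. This proves (2).

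For item (1), the cleanest route is to invoke the commutation of canonical extension with order-duals and products: ${(\bba^\partial)}^\delta$ is identified with ${(\bbas)}^\partial$, under which identification $K$ and $O$ swap, closed and open swap, and $\pi$-extension becomes $\sigma$-extension. More precisely, $g^\bba:\bba^{\epsilon_g}\to\bba$ is, by normality, a map which is $\epsilon_g^{\partial}$-monotone into $\bba^\partial$ and sends the relevant joins/meets to joins in $\bba^\partial$, i.e.\ it is an operator of the same sort as an $f$ when we pass to duals; its $\pi$-extension $g^{\bbas}$ then corresponds to the $\sigma$-extension of that operator on $(\bba^\partial)^\delta$. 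Applying the already-proved statement (2) in $(\bba^\partial)^\delta$, with $O(\bbas)=K((\bbas)^\partial)$ playing the role of closed elements, yields $g(\overline o)\in K((\bbas)^\partial)=O(\bbas)$ for $\overline o\in O((\bbas)^{\epsilon_g})$, which is exactly item (1). Alternatively one can argue (1) symmetrically and directly from the displayed formula $g^\pi(\overline o)=\bigvee\{g(\overline a)\mid \overline a\in(\bbas)^{\epsilon_g},\ \overline a\leq^{\epsilon_g}\overline o\}$, cutting down the join to $\overline a\in\bba^{\epsilon_g}$ by density and $\epsilon_g$-monotonicity, so that $g(\overline o)$ is a join of elements of $\bba$ and hence lies in $\obbas$.

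The main obstacle, and the only step requiring genuine care, is the cofinality argument that lets us replace arbitrary $\overline a\in(\bbas)^{\epsilon_f}$ above $\overline k$ by $\overline a\in\bba^{\epsilon_f}$ above $\overline k$. This rests on the density of $\bba$ in $\bbas$ in its sharpened form — namely that every closed element is a meet of elements of $\bba$ and every open element a join of elements of $\bba$ (the observation on page~\pageref{Page:JIr:Clsd:MIr:Opn} that $\jira\subseteq\kbbas$ and $\mira\subseteq\obbas$, combined with complete join/meet-generation) — together with coordinatewise $\epsilon_f$-monotonicity of $f$ and a finitary interchange of meets (there are only $n_f$ coordinates, so one replaces the coordinates one at a time). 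Everything else is bookkeeping with order-types and the standard identifications ${(\bba^\varepsilon)}^\delta\cong(\bbas)^\varepsilon$, ${(\bba^\partial)}^\delta\cong{(\bbas)}^\partial$ already recorded in the preliminaries.
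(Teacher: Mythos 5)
Your proof is essentially the paper's: the paper disposes of both items in one line by observing that, by definition of the $\sigma$- and $\pi$-extensions, $f^\sigma(\overline k)$ is a meet of elements $f(\overline a)$ with $\overline a\in\bba^{\epsilon_f}$ (hence lies in $\kbbas$), and dually $g^\pi(\overline o)$ is a join of elements $g(\overline a)$ with $\overline a\in\bba^{\epsilon_g}$ (hence lies in $\obbas$). The one place where you do extra work is the ``cofinality'' step cutting the meet down from $\overline a\in(\bbas)^{\epsilon_f}$ to $\overline a\in\bba^{\epsilon_f}$: this is unnecessary, since the intended definition already has $\overline a$ ranging over $\bba^{\epsilon_f}$ (the occurrence of $(\bbas)^{\epsilon_f}$ in the displayed $n$-ary formula is a slip --- compare the unary definition and the paper's own later uses of the formula). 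Moreover, the cofinality claim as you state it is not actually true: taking $\overline u=\overline k$ in the larger family, downward cofinality would force the meet $\bigwedge\{f(\overline a)\mid\overline a\in\bba^{\epsilon_f},\ \overline k\leq^{\epsilon_f}\overline a\}$ to be attained at some $\overline a\in\bba^{\epsilon_f}$, which need not happen. The two meets nevertheless coincide, but for the trivial reason that both equal $f^\sigma(\overline k)$ (the smaller index set is contained in the larger, and $\overline u=\overline k$ already witnesses the reverse inequality by monotonicity). Your conclusion and overall route are therefore correct; just drop or repair that intermediate justification.
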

\begin{proof}
These facts straightforwardly follow from  the fact that each $f\in \mathcal{F}^{\bbas}$ (resp.\  $g\in \mathcal{G}^{\bbas}$) is the $\sigma$-extension  (resp.\ $\pi$-extension) of the corresponding operation in $\bba$: for instance, $g(\overline{ o})= g^{\pi}(\overline{ o})  = \bigvee \{ g(\overline{a}) \mid \overline{a} \in \bba^{\epsilon_g} \textrm{ and } \overline{a} \leq^{\epsilon_g} \overline{o} \}$, and $g(\overline{a}) \in \bba$.
\end{proof}

\begin{lemma}
\label{cor:congenial top for whites and blacks} For all $f\in \mathcal{F}^\bba$, $g\in \mathcal{G}^\bba$, $\overline{k}\in (K(\bbas))^{\epsilon_g}$, and $\overline{o}\in (O(\bbas))^{\epsilon_f}$,
\begin{enumerate}
\item $g(\overline{ k}) \in \kbbas$,
\item $f(\overline{ o}) \in \obbas$.
\end{enumerate}
\end{lemma}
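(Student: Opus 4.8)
\textbf{Proof plan for Lemma \ref{cor:congenial top for whites and blacks}.}

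The plan is to prove both statements by exploiting the alternative, simplified characterizations of the $\sigma$- and $\pi$-extensions spelled out just before Definition \ref{def:perfect LE}, together with the fact (established in Lemma \ref{cor: open upset to open upset for white box}) that $f$ maps tuples of closed elements to closed elements and $g$ maps tuples of open elements to open elements. Concretely, for (1), let $\overline{k}\in (K(\bbas))^{\epsilon_g}$. By the defining formula for the $\pi$-extension,
$$g(\overline{k}) = \bigwedge\{ g(\overline{o})\mid \overline{o}\in O({(\bbas)}^{\epsilon_g})\mbox{ and } \overline{k}\leq^{\epsilon_g} \overline{o}\},$$
and by Lemma \ref{cor: open upset to open upset for white box}(1) each $g(\overline{o})$ occurring in this meet is an open element. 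Hence $g(\overline{k})$ is a meet of open elements, i.e. an element of $\obbas$'s closure under meets; I would then invoke the standard fact (recorded on page \pageref{Page:JIr:Clsd:MIr:Opn}, or following from \cite[Lemma 3.4]{DGP}) that the meet of a family of open elements is closed — more precisely, that $\kbbas$ is closed under arbitrary meets and contains $\obbas\cap\bba$-generated meets appropriately — to conclude $g(\overline{k})\in\kbbas$. The symmetric argument using the $\sigma$-extension formula
$$f(\overline{o}) = \bigvee\{ f(\overline{k})\mid \overline{k}\in K({(\bbas)}^{\epsilon_f})\mbox{ and } \overline{k}\leq^{\epsilon_f} \overline{o}\}$$
and Lemma \ref{cor: open upset to open upset for white box}(2) gives (2): $f(\overline{o})$ is a join of closed elements, hence open.

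More carefully, I would phrase the key intermediate fact as: the meet of any family of closed elements need not be closed in general, so the argument cannot be quite that glib, and instead one should observe that a meet of \emph{open} elements is always closed (this is dual to the statement that a join of closed elements is open, which is exactly what makes the $\sigma$- and $\pi$-extensions well behaved), and likewise a join of closed elements is open. These are precisely the denseness-driven facts about the interplay of $\kbbas$ and $\obbas$ in a canonical extension, and they are the ones cited as ``well known'' or attributed to \cite[Lemma 3.4]{DGP} in the paragraph introducing this appendix subsection. Granting them, the two displayed computations above close the proof immediately.

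The one point requiring a little care — and the step I expect to be the main (mild) obstacle — is making sure the coordinatewise bookkeeping with the $\epsilon$-superscripts is correct: in Lemma \ref{cor: open upset to open upset for white box} the hypothesis on $f$ is $\overline{k}\in (K(\bbas))^{\epsilon_f}$, whereas here in statement (1) we feed $g$ a tuple $\overline{k}\in (K(\bbas))^{\epsilon_g}$, and we must check that when we pass to the meet over $\overline{o}\leq^{\epsilon_g}\overline{k}$... wait, the inequality direction: in the $\pi$-extension formula the open tuples $\overline{o}$ satisfy $\overline{k}\leq^{\epsilon_g}\overline{o}$, and these $\overline{o}$ lie in $O({(\bbas)}^{\epsilon_g})=\prod_i O(\bbas)^{\epsilon_g(i)}$, which is exactly the index set on which Lemma \ref{cor: open upset to open upset for white box}(1) asserts $g(\overline{o})\in\obbas$. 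So the superscripts match up without any adjustment. An entirely analogous check works for (2). Thus the proof is a short two-line computation in each case, once the dual denseness facts about open and closed elements are in hand; I would simply write it out as above and cite Lemma \ref{cor: open upset to open upset for white box} and \cite[Lemma 3.4]{DGP}.
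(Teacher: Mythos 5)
Your overall strategy (apply the second clause of the $\pi$- and $\sigma$-extension formulas and then invoke Lemma \ref{cor: open upset to open upset for white box}) founders precisely on the step you yourself single out as the ``key intermediate fact'': it is \emph{not} true that a meet of open elements is closed, nor that a join of closed elements is open. (You also have the trivial fact backwards: a meet of closed elements \emph{is} always closed, since a meet of meets of elements of $\bba$ is again a meet of elements of $\bba$; it is meets of \emph{opens} and joins of \emph{closeds} that fail.) To see that your principle is untenable, note that any $o\in\obbas$ is the meet of the one-element family $\{o\}$, so the principle would force $\obbas\subseteq\kbbas$, which fails whenever the canonical extension has an open element that is not closed --- e.g.\ the set of even numbers in the canonical extension of the finite--cofinite algebra on $\mathbb{N}$. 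Consequently your (correct) identity $g(\overline{k})=\bigwedge\{g(\overline{o})\mid \overline{o}\in O((\bbas)^{\epsilon_g}),\ \overline{k}\leq^{\epsilon_g}\overline{o}\}$ exhibits $g(\overline{k})$ as a meet of open elements, which by itself yields no membership in $\kbbas$. The denseness facts you attribute to \cite[Lemma 3.4]{DGP} are that every element is a meet of opens and a join of closeds, not that such meets and joins land back in $\kbbas$ or $\obbas$.

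The paper's proof sidesteps this by descending directly to the \emph{clopen} level: since $\overline{k}\in (K(\bbas))^{\epsilon_g}$, each coordinate is the appropriate meet of elements of $\bba$ above it, so $\overline{k}$ is the $\leq^{\epsilon_g}$-meet of the tuples $\overline{a}\in\bba^{\epsilon_g}$ with $\overline{k}\leq^{\epsilon_g}\overline{a}$; because $g=g^{\pi}$ turns arbitrary $\epsilon_g$-meets in the domain into meets in the codomain, $g(\overline{k})=\bigwedge\{g(\overline{a})\mid \overline{a}\in\bba^{\epsilon_g},\ \overline{k}\leq^{\epsilon_g}\overline{a}\}$, and each $g(\overline{a})$ lies in $\bba$, so this is a meet of elements of $\bba$ and hence belongs to $\kbbas$ by definition; part (2) is order-dual. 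If you insist on routing the argument through open tuples, you would additionally need a compactness/interpolation step producing a clopen tuple between $\overline{k}$ and each open $\overline{o}$ above it, which amounts to re-deriving the clopen formula anyway. Note also that Lemma \ref{cor: open upset to open upset for white box} is not needed here at all; the present lemma uses only the extension formulas and the definition of $\kbbas$ and $\obbas$.
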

\begin{proof}
1. By assumption, $\overline{ k}  = \bigwedge \{\overline{a} \mid \overline{a} \in \bba^{\epsilon_g} \textrm{ and } \overline{k} \leq^{\epsilon_g} \overline{a}\}$. Hence $g(\overline{ k})  = g(\bigwedge \{\overline{a} \mid \overline{a} \in \bba^{\epsilon_g} \textrm{ and } \overline{k} \leq^{\epsilon_g} \overline{a}\}) = \bigwedge \{g(\overline{a}) \mid \overline{a} \in \bba^{\epsilon_g} \textrm{ and } \overline{k} \leq^{\epsilon_g} \overline{a}\}$. Since $g(\overline{a})  \in \bba$ for all $\overline{a} \in \bba^{\epsilon_g}$, we immediately have $g(\overline{ k})\in \kbbas$, as required.


2. 
is  order-dual to 1.

\end{proof}
\begin{lemma}\label{Blk:Diam:c:Clsd:Lemma}
For all $f\in \mathcal{F}^\bba$, $g\in \mathcal{G}^\bba$, $1\leq i\leq n_f$, and $1\leq j\leq n_g$,
,
\begin{enumerate}
\item If $\epsilon_g(j) = 1$, then $g^\flat_j(\overline{k})\in \kbbas$ for every $\overline{k}\in (K(\bbas))^{\epsilon_{g^\flat_j}}$;
\item If $\epsilon_g(j) = \partial$, then $g^\flat_j(\overline{o})\in \obbas$ for every $\overline{o}\in (O(\bbas))^{\epsilon_{g^\flat_j}}$;
\item If $\epsilon_f(i) = 1$, then $f^\sharp_i(\overline{o})\in \obbas$ for every $\overline{o}\in (O(\bbas))^{\epsilon_{f^\sharp_i}}$;
\item If $\epsilon_f(i) = \partial$, then $f^\sharp_i(\overline{k})\in \kbbas$ for every $\overline{k}\in (K(\bbas))^{\epsilon_{f^\sharp_i}}$.
%
%
%
%
%
%
%
%
\end{enumerate}
\end{lemma}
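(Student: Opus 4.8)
The plan is to prove all four statements simultaneously, exploiting the fact that the residual/adjoint connectives $g^\flat_j$ and $f^\sharp_i$ are themselves among the operations $\mathcal{G}^*$ or $\mathcal{F}^*$ of the tense expansion, and that their order-theoretic behaviour is pinned down by the residuation/adjunction identities recorded in Subsection \ref{ssec:expanded tense language}. The key observation is that each $g^\flat_j$ (resp.\ $f^\sharp_i$) with the appropriate sign in its distinguished coordinate behaves, in the perfect algebra $\bbas$, exactly as an operation sending certain arbitrary meets (resp.\ joins) to arbitrary meets (resp.\ joins), because it is obtained by adjunction from $g$ (resp.\ $f$), which by the infinitary distribution laws of perfect LEs (Definition \ref{def:perfect LE}) already enjoys such complete (co)continuity in each coordinate. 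More precisely, an operation which is a right adjoint in a coordinate is completely meet-preserving there, and a left adjoint is completely join-preserving there; combined with the sign flips encoded in the order-types $\epsilon_{g^\flat_j}$ and $\epsilon_{f^\sharp_i}$, this lets us reduce each case to an argument of exactly the same shape as the proofs of Lemma \ref{cor: open upset to open upset for white box} and Lemma \ref{cor:congenial top for whites and blacks}.

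First I would fix a case, say item 1: $\epsilon_g(j) = 1$, and recall that then $g^\flat_j\in\mathcal{F}^*$ is the left residual of $g$ in its $j$th coordinate, hence $g^\flat_j(\overline{a})\in\bba$ whenever all arguments $\overline{a}$ lie in $\bba$ (this is inherited from $g$ being an operation on $\bba$, together with the residuation being computed inside $\bba$). Next I would use that $g^\flat_j$, regarded as a map $\bbas^{\epsilon_{g^\flat_j}}\to\bbas$, is completely join-preserving in each coordinate (since in coordinate $j$ it is a left adjoint and hence join-preserving, and in the other coordinates the sign-adjustments in the definition of $\epsilon_{g^\flat_j}$ make it join-preserving as a map from the suitably dualized factor). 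Then, given $\overline{k}\in (K(\bbas))^{\epsilon_{g^\flat_j}}$, I would write each coordinate $k_\ell$ as $k_\ell = \bigwedge\{a\in\bba \mid k_\ell\leq^{(\cdot)} a\}$ using density — here the order in which the meet is taken is the one appropriate to the coordinate's sign, so that in every coordinate $g^\flat_j$ turns this expression of $\overline{k}$ into a meet of values $g^\flat_j(\overline a)$ with $\overline a\in\bba^{\epsilon_{g^\flat_j}}$; since each such $g^\flat_j(\overline a)\in\bba$, we conclude $g^\flat_j(\overline k)\in\kbbas$. This is the verbatim analogue of the computation displayed in the proof of Lemma \ref{cor:congenial top for whites and blacks}.1. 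Items 2, 3, 4 follow by the same recipe, replacing ``left adjoint/join/meet/closed'' by their order-duals as dictated by which of $g^\flat_j$, $f^\sharp_i$ is in play and by the value of $\epsilon_g(j)$ or $\epsilon_f(i)$: for instance item 3 ($\epsilon_f(i)=1$, so $f^\sharp_i\in\mathcal{G}^*$ is the right residual of $f$, hence completely meet-preserving in its coordinates after the sign adjustments) dualizes item 1, and item 4 dualizes item 2.

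The main obstacle I anticipate is purely bookkeeping: getting the sign conventions straight. One must check that, coordinate by coordinate, the order-type $\epsilon_{g^\flat_j}$ (resp.\ $\epsilon_{f^\sharp_i}$) is exactly what is needed so that (a) $g^\flat_j$ sends tuples of closed elements (in the $\epsilon_{g^\flat_j}$-sense) to closed elements, using the density representation of a closed element as a meet of elements of $\bba$ in the coordinates where $\epsilon_{g^\flat_j}=1$ and as a meet over the dual order (i.e.\ a join in the original order) in the coordinates where $\epsilon_{g^\flat_j}=\partial$; and (b) the relevant adjunction property of $g^\flat_j$ really is ``preserves those meets''. The warning in the footnote of Subsection \ref{ssec:expanded tense language} about how the coordinates of identified connectives like $g_1^\flat$ and $f_1^\sharp$ need not match up in the conventional order shows that this is a genuine (if routine) source of error, so I would handle it by systematically appealing to the adjunction clauses (1) and (2) in the definition of tense $\mathcal{L}_\mathrm{LE}$-algebras, which make the sign conventions unambiguous, rather than by ad hoc sign-chasing. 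Once those conventions are correctly unwound, each of the four items is a one-paragraph argument identical in structure to those already given for Lemmas \ref{cor: open upset to open upset for white box} and \ref{cor:congenial top for whites and blacks}.
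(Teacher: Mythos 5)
Your reduction of the four items to one by sign bookkeeping is fine, but the core computation you propose does not work, and it fails for reasons that are precisely the point of this lemma. Two claims you rely on are false. First, you assert that $g^\flat_j(\overline{a})\in\bba$ whenever $\overline{a}\in\bba^{\epsilon_{g^\flat_j}}$, ``inherited from $g$ being an operation on $\bba$, together with the residuation being computed inside $\bba$''. But $\bba$ is an arbitrary normal LE and need not be closed under residuation: $g^\flat_j$ is defined only on $\bbas$, as the coordinatewise adjoint of the completely meet-preserving extension $g^\pi$. Already for a unary $\Box$ on a modal algebra with no tense companion, $\Diamondblack a$ for clopen $a$ is a closed element of $\bbas$ that is in general not clopen. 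So the elements $g^\flat_j(\overline{a})$ whose meet you want to take are not in $\bba$, and exhibiting $g^\flat_j(\overline{k})$ as their meet would not place it in $\kbbas$. Second, and more fundamentally, your distribution step conflates joins with meets: you correctly observe that $g^\flat_j$ is completely join-preserving in each coordinate as a map $(\bbas)^{\epsilon_{g^\flat_j}}\to\bbas$, but then use this to turn the representation of each $k_\ell$ as an $\epsilon_{g^\flat_j}(\ell)$-\emph{meet} of clopens into a meet of values. Complete join-preservation gives no control over meets (compare $\Diamond\bigcap S$ with $\bigcap\{\Diamond s\mid s\in S\}$). The analogous computations in Lemmas \ref{cor: open upset to open upset for white box} and \ref{cor:congenial top for whites and blacks} go through only because $f$ and $g$ are \emph{by definition} the $\sigma$- and $\pi$-extensions of their restrictions to $\bba$, so the needed meet/join formulas over clopens are literally their defining clauses; the residuals are not defined as extensions of anything, and the statement that $g^\flat_j(\overline{k})$ is a meet of its values on clopen tuples is essentially equivalent to the lemma you are trying to prove.

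What is actually required is a compactness argument, which is what the paper supplies. By denseness it suffices to show that every open $o$ with $g^\flat_j(\overline{k})\leq o$ dominates some $a\in\bba$ with $g^\flat_j(\overline{k})\leq a$. Residuating $g^\flat_j(\overline{k})\leq o$ back to $k_j\leq g(\overline{k}[o/k_j])$ places an open tuple inside $g$, where the $\pi$-extension formula exhibits the right-hand side as a join of elements of $\bba$; compactness of the canonical extension then yields finitely many clopen witnesses, which directedness and $\epsilon_g$-monotonicity collapse to a single clopen tuple, and residuating back produces the desired $a\in\bba$. Your remarks about getting $\epsilon_{g^\flat_j}$ right coordinate by coordinate are correct and are indeed needed in that argument (to check that $\overline{k}[o/k_j]\in(O(\bbas))^{\epsilon_g}$), but they are bookkeeping around a compactness step that your proposal omits entirely.
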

\begin{proof}
1. By denseness, $g^\flat_j(\overline{k}) = \bigwedge \{ o \in \obbas \mid g^\flat_j(\overline{k})  \leq o \}$. Let $Y := \{ o \in \obbas \mid g^\flat_j(\overline{k})  \leq o \}$ and $X : = \{ a \in \bba \mid g^\flat_j(\overline{k})  \leq a \}$
To show that $g^\flat_j(\overline{k}) \in \kbbas$, it is enough to show that $\bigwedge X = \bigwedge Y$.

Since clopens are opens, $X \subseteq Y$, so $\bigwedge Y \leq \bigwedge X$. In order to show that $\bigwedge X \leq \bigwedge Y$, it suffices to show that for every $o \in Y$ there exists some $a \in X$ such that $a \leq o$. Let $o \in Y$, i.e., $g^\flat_j(\overline{k})  \leq o$. By residuation, $k_j \leq g(\overline{k}[o/k_j])$, where $\overline{k}[o/k_j]$ denotes the $n_g$-array obtained by replacing the $j$th coordinate of $\overline{k}$ by $o$. Notice that $\overline{k}[o/k_j]\in (O(\bbas))^{\epsilon_g}$. This immediately follows from the fact that by assumption, $\epsilon_{g^\flat_j}(\ell) = \epsilon_g(\ell) = 1$ if $\ell = j$ and $\epsilon_{g^\flat_j}(\ell) = \epsilon_g^\partial(\ell)$ if $\ell\neq j$.

Since $k_j \in \kbbas$, and $g(\overline{k}[o/k_j]) = g^\pi(\overline{k}[o/k_j]) = \bigvee \{g(\overline{a})\mid \overline{a}\in \bba^{\epsilon_g} \textrm{ and } \overline{a} \leq^{\epsilon_g} \overline{k}[o/k_j] \}$ and $g(\overline{ a })\in \bba \subseteq \obbas$, we may apply compactness and get that
$k_j \leq g(\overline{ a_1 })\vee \cdots \vee g(\overline{ a_n })$ for some $\overline{ a_1 },\ldots, \overline{ a_n }\in \bba^{\epsilon_g}$ s.t.\ $\overline{ a_1 },\ldots, \overline{ a_n } \leq^{\epsilon_g} \overline{k}[o/k_j]$. Let $\overline{ a} = \overline{ a_1 } \vee^{\epsilon_g} \cdots \vee^{\epsilon_g} \overline{ a_n }$. The $\epsilon_g$-monotonicity of $g$ implies that $k_j \leq g(\overline{ a_1 })\vee \cdots \vee g(\overline{ a_n })\leq  g(\overline{ a })$, and hence $g^\flat_j(\overline{a}[k_j/a_j])\leq a_j$. The proof is complete if we show that
$g^\flat_j(\overline{k}) \leq g^\flat_j(\overline{a}[k_j/a_j])$. By the $\epsilon_{g^\flat_j}$-monotonicity of $g^\flat_j$, it is enough to show that $\overline{k} \leq^{\epsilon_{g^\flat_j}}\overline{a}[k_j/a_j]$. Since the two arrays coincide in their $j$th coordinate, we only need to check that this is true for every $\ell \neq j$. Recall that $\epsilon_{g^\flat_j}(\ell) = \epsilon_g^\partial(\ell)$ if $\ell\neq j$. Hence, the statement immediately follows from this and the fact that, by construction, $\overline{ a }\leq^{\epsilon_g} \overline{k}[o/k_j]$.

2.\ 3.\ and 4.\ are order-variants of 1.
\end{proof}

\begin{lemma}\label{lemma:uncongenial for the whites}
For all $f\in \mathcal{F}$ and $g\in \mathcal{G}$,
%
\begin{enumerate}
\item $g(\bigvee^{\epsilon_g(1)} \mathcal{U}_1,\ldots, \bigvee^{\epsilon_g(n_g)} \mathcal{U}_{n_g}) = \bigvee \{g(u_1,\ldots,u_{n_g})\ |\ u_j \in \mathcal{U}_j\mbox{ for every } 1\leq j\leq n_g\}$ for every  $n_g$-tuple $(\mathcal{U}_1,\ldots, \mathcal{U}_{n_g})$ such that
    $\mathcal{U}_j\subseteq \obbas^{\epsilon_g(j)}$ and $\mathcal{U}_j$ is $\epsilon_g(j)$-up-directed for each $1\leq j\leq n_g$.
\item $f(\bigwedge^{\epsilon_f(1)} \mathcal{D}_1,\ldots, \bigwedge^{\epsilon_f(n_f)} \mathcal{D}_{n_f}) = \bigwedge \{f(d_1,\ldots,d_{n_f})\ |\ d_j \in \mathcal{D}_j\mbox{ for every } 1\leq j\leq n_f\}$ for every  $n_f$-tuple $(\mathcal{D}_1,\ldots, \mathcal{D}_{n_f})$ such that
    $\mathcal{D}_j\subseteq \kbbas^{\epsilon_f(j)}$ and $\mathcal{D}_j$ is $\epsilon_f(j)$-down-directed for each $1\leq j\leq n_f$.
%
%
%
%
%
\end{enumerate}
\end{lemma}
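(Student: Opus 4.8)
\textbf{Proof plan for Lemma~\ref{lemma:uncongenial for the whites}.}
The two statements are order-dual, so I would prove only item~1 (the claim about $g\in\mathcal{G}$), and note that item~2 follows by dualizing (applying item~1 in $\bbA^\partial$, under the identifications $K((\bbA^\partial)^\delta)=O(\bbas)$, etc., recalled in Subsection~\ref{Subsec:Expanded:Land}). The overall strategy is a ``one coordinate at a time'' reduction: show that $g$ sends an $\epsilon_g(j)$-up-directed join of open elements in its $j$th coordinate to the corresponding join, with the other coordinates held fixed, and then iterate over $j=1,\ldots,n_g$. So the core lemma to establish is: if $\mathcal{U}\subseteq O(\bbas)^{\epsilon_g(j)}$ is $\epsilon_g(j)$-up-directed and the remaining arguments $\overline{o}$ are arbitrary (in fact it suffices to take them in $O(\bbas)$ in the appropriate coordinates, since that is how the iteration will feed them in), then
\[
g(\overline{o}[\textstyle\bigvee^{\epsilon_g(j)}\mathcal{U}/x_j]) \;=\; \bigvee\{\, g(\overline{o}[u/x_j]) \mid u\in\mathcal{U}\,\}.
\]

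For this single-coordinate claim I would argue as follows. The $\geq$ direction (resp.\ $\leq$ after taking joins) is just $\epsilon_g(j)$-monotonicity of $g$, which holds since $g^\pi$ preserves the order-type. For the nontrivial inclusion, I would use denseness to write each open argument, and in particular $\bigvee^{\epsilon_g(j)}\mathcal{U}$, which is open by Lemma~\ref{cor: open upset to open upset for white box}(1) applied coordinatewise (an up-directed join of opens is open), as a meet of elements of $\bbA$: note that the family $X_j := \{a\in\bbA \mid \bigvee^{\epsilon_g(j)}\mathcal{U} \leq^{\epsilon_g(j)} a\}$ has $\bigwedge^{\epsilon_g(j)} X_j = \bigvee^{\epsilon_g(j)}\mathcal{U}$. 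Since $g(\overline{o})=g^\pi(\overline{o})=\bigvee\{g(\overline{a}) : \overline{a}\in\bbA^{\epsilon_g},\ \overline{a}\leq^{\epsilon_g}\overline{o}\}$, I would fix an approximant $\overline{a}\leq^{\epsilon_g}\overline{o}[\bigvee^{\epsilon_g(j)}\mathcal{U}/x_j]$ with $\overline a \in \bbA^{\epsilon_g}$, and show $g(\overline a)\leq \bigvee\{g(\overline{o}[u/x_j]) : u\in\mathcal{U}\}$. The key point: in the $j$th coordinate $a_j\in\bbA$ satisfies $a_j \leq^{\epsilon_g(j)} \bigvee^{\epsilon_g(j)}\mathcal{U}$, and $\bigvee^{\epsilon_g(j)}\mathcal{U}$ is an up-directed join of opens; since $a_j$ is (in particular) a closed element and $\bigvee^{\epsilon_g(j)}\mathcal U$ is an open element, compactness applies to $a_j \leq^{\epsilon_g(j)} \bigvee^{\epsilon_g(j)}\mathcal{U}$ to yield finitely many $u_1,\ldots,u_m\in\mathcal{U}$ with $a_j\leq^{\epsilon_g(j)} u_1\vee^{\epsilon_g(j)}\cdots\vee^{\epsilon_g(j)}u_m$, and by $\epsilon_g(j)$-up-directedness of $\mathcal{U}$ there is a single $u\in\mathcal{U}$ with $a_j\leq^{\epsilon_g(j)} u$. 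Then $\epsilon_g$-monotonicity of $g$ gives $g(\overline a)\leq g(\overline{a}[u/a_j]) \leq g(\overline{o}[u/x_j])$ (using $\overline a \leq^{\epsilon_g}\overline o$ in the other coordinates), which is one of the joinands on the right. Taking the join over all approximants $\overline a$ yields the desired inequality.

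Finally, to obtain the full statement, iterate: write
\[
g(\textstyle\bigvee^{\epsilon_g(1)}\mathcal{U}_1,\ldots,\bigvee^{\epsilon_g(n_g)}\mathcal{U}_{n_g})
\]
and peel off the coordinates one by one, at each step invoking the single-coordinate claim. For this iteration to go through one must check that after replacing the $k$th coordinate by a fixed $u_k\in\mathcal{U}_k$, the remaining super-joins are still super-joins of up-directed families of opens with the other arguments in the admissible sets — this is immediate since the $\mathcal{U}_\ell$ are unchanged and each $u_k\in O(\bbas)^{\epsilon_g(k)}$. A routine finite induction on the number of coordinates then delivers the identity. I expect the only delicate point to be the correct bookkeeping of order-types: one must consistently read $\leq^{\epsilon_g(j)}$, $\bigvee^{\epsilon_g(j)}$, ``up-directed'' and ``$K$ vs.\ $O$'' through the $\epsilon_g(j)=\partial$ flips, and verify that the compactness step is applied to a genuine closed/open pair in each coordinate (which is exactly what the $\epsilon_g(j)$-decoration ensures). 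No genuinely new idea beyond denseness, compactness, and $\sigma/\pi$-extension formulas is needed — the content is that it \emph{works coordinatewise}, and the proof is a careful unwinding of the definitions in Subsection~\ref{Subsec:Expanded:Land} together with Lemmas~\ref{cor: open upset to open upset for white box}--\ref{Blk:Diam:c:Clsd:Lemma}.
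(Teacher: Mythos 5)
Your proposal is correct, but it takes a genuinely different route from the paper. The paper's proof of item 1 starts from denseness (it suffices to show that every closed $c$ below the left-hand side lies below a single joinand $g(u_1,\ldots,u_{n_g})$), and then isolates one coordinate at a time by \emph{residuation}: from $c\leq g(\bigvee^{\epsilon_g(1)}\mathcal{U}_1,\ldots)$ it passes to $g^\flat_1(c,\overline{\bigvee^{\epsilon_g}\mathcal{U}})\leq^{\epsilon_g(1)}\bigvee^{\epsilon_g(1)}\mathcal{U}_1$, invokes Lemma \ref{Blk:Diam:c:Clsd:Lemma} to see that the left-hand side of this inequality is closed, and only then applies compactness and up-directedness to extract a single $u_1$, iterating over the coordinates. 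You instead bypass the adjoints entirely: you expand $g=g^\pi$ on open tuples via its defining formula, approximate from below by lattice elements $\overline{a}\in\bbA^{\epsilon_g}$, and apply compactness directly to $a_j\leq^{\epsilon_g(j)}\bigvee^{\epsilon_g(j)}\mathcal{U}_j$ (a closed element under an $\epsilon$-join of $\epsilon$-opens), then use up-directedness and monotonicity. Your argument is sound — the order-type bookkeeping you flag is exactly the delicate point, and you handle it — and it is arguably more self-contained, since it needs neither the residuals $g^\flat_j$ nor Lemma \ref{Blk:Diam:c:Clsd:Lemma}. What the paper's residuation template buys is reusability: essentially the same argument is run again for the adjoint operations $g^\flat_j$ and $f^\sharp_i$ in Lemma \ref{lemma: uncongenial for the blacks}, where your $\pi$-extension formula is unavailable because those operations are defined only by adjunction. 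Two small corrections: the fact that $\bigvee^{\epsilon_g(j)}\mathcal{U}_j$ is $\epsilon_g(j)$-open follows directly from $\obbas$ being the join-closure of $\bba$ (any join of opens is open), not from Lemma \ref{cor: open upset to open upset for white box}(1), which concerns $g(\overline{o})$; and your handling of item 2 by order-duality matches the paper exactly.
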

\begin{proof}
1. The `$\geq$' direction easily follows from the $\epsilon_g$-monotonicity of $g$. Conversely, by denseness it is enough to show that if $c \in \kbbas$ and $c \leq g(\bigvee^{\epsilon_g(1)} \mathcal{U}_1,\ldots, \bigvee^{\epsilon_g(n_g)} \mathcal{U}_{n_g})$, then $c \leq g(u_1,\ldots,u_{n_g})$ for some tuple $(u_1,\ldots,u_{n_g})$ such that $u_j\in \mathcal{U}_j$ for each $1\leq j\leq n_g$. Indeed, if $c \leq g(\bigvee^{\epsilon_g(1)} \mathcal{U}_1,\ldots, \bigvee^{\epsilon_g(n_g)} \mathcal{U}_{n_g})$ then $g^\flat_1(c, \overline{\bigvee^{\epsilon_g} \mathcal{U}}) \leq^{\epsilon(1)} \bigvee^{\epsilon_g(1)} \mathcal{U}_1$, where, to enhance readability, we suppress sub- and superscripts and write $\overline{\bigvee^{\epsilon_g} \mathcal{U}}$ for $(\bigvee^{\epsilon_g(2)} \mathcal{U}_2, \ldots, \bigvee^{\epsilon_g(n_g)} \mathcal{U}_{n_g})$. If $\epsilon_g(1) = 1$, then  $\epsilon_{g^\flat_1}(1) = 1$ and $\epsilon_{g^\flat_1}(\ell) = \epsilon_{g}^\partial(\ell)$ for every $2\leq \ell\leq n_g$. Hence $\mathcal{U}_\ell\subseteq \obbas^{\epsilon_g(\ell)} = \kbbas^{\epsilon_{g^\flat_1}(\ell)}$, hence $\bigvee^{\epsilon_g(\ell)} \mathcal{U}_\ell = \bigwedge^{\epsilon_{g^\flat_1}(\ell)}\mathcal{U}_\ell\in \kbbas^{\epsilon_{g^\flat_1}(\ell)}$ for every $2\leq \ell\leq n_g$.
By Lemma \ref{Blk:Diam:c:Clsd:Lemma}(1), this implies that $g^\flat_1(c, \overline{\bigvee^{\epsilon_g} \mathcal{U}})\in \kbbas$. Hence, by  compactness,  $g^\flat_1(c, \overline{\bigvee^{\epsilon_g} \mathcal{U}})\leq \bigvee_{i=1}^n o_i$ for some $o_1,\ldots,o_n\in \mathcal{U}_1$. Since $\mathcal{U}_1$ is up-directed, $\bigvee_{i=1}^n o_i \leq u_1$ for some $u_1 \in \mathcal{U}_1$. Hence $c \leq g(u_1, \overline{\bigvee^{\epsilon_g} \mathcal{U}})$. The same conclusion can be reached via a similar argument if $\epsilon_g(1) = \partial$. Therefore,  $g^\flat_2(u_1, c, \overline{\bigvee^{\epsilon_g} \mathcal{U}}) \leq^{\epsilon_g(2)} \bigvee^{\epsilon_g(2)} \mathcal{U}_2$, where $\overline{\bigvee^{\epsilon_g} \mathcal{U}}$ now stands for $(\bigvee^{\epsilon_g(3)} \mathcal{U}_3, \ldots, \bigvee^{\epsilon_g(n_g)} \mathcal{U}_{n_g})$.  By applying the same reasoning, we can conclude that $c \leq g(u_1, u_2, \overline{\bigvee^{\epsilon_g} \mathcal{U}})$ for some $u_2\in \mathcal{U}_2$, and so on. Hence, we can then construct a sequence $u_j\in \mathcal{U}_j$ for $1\leq j\leq n_g$ such that $c\leq g(u_1,\ldots u_{n_g})$, as required.


2.\ is order-dual to 1.
%
\end{proof}

\begin{lemma}
\label{lemma: uncongenial for the blacks} For all $f\in \mathcal{F}$, $g\in \mathcal{G}$, $1\leq i\leq n_f$, and $1\leq j\leq n_g$,
\begin{enumerate}
\item If $\epsilon_g(j) = 1$, then $g^\flat_j(\bigwedge^{\epsilon_{g^\flat_j}(1)} \mathcal{D}_1,\ldots, \bigwedge^{\epsilon_{g^\flat_j}(n_g)} \mathcal{D}_{n_g}) = \bigwedge \{g^\flat_j(d_1,\ldots,d_{n_g})\ |\ d_h \in \mathcal{D}_h\mbox{ for every } 1\leq h\leq n_g\}$ for every  $n_g$-tuple $(\mathcal{D}_1,\ldots, \mathcal{D}_{n_g})$ such that
    $\mathcal{D}_h\subseteq \kbbas^{\epsilon_{g^\flat_j}(h)}$ and $\mathcal{D}_h$ is $\epsilon_{g^\flat_j}(h)$-down-directed for each $1\leq h\leq n_g$.

    \item If $\epsilon_g(j) = \partial$, then $g^\flat_j(\bigvee^{\epsilon_{g^\flat_j}(1)} \mathcal{U}_1,\ldots, \bigvee^{\epsilon_{g^\flat_j}(n_g)} \mathcal{U}_{n_g}) = \bigvee \{g^\flat_j(u_1,\ldots,u_{n_g})\ |\ u_h \in \mathcal{U}_h\mbox{ for every } 1\leq h\leq n_g\}$ for every  $n_g$-tuple $(\mathcal{U}_1,\ldots, \mathcal{U}_{n_g})$ such that
    $\mathcal{U}_h\subseteq \obbas^{\epsilon_{g^\flat_j}(h)}$ and $\mathcal{U}_h$ is $\epsilon_{g^\flat_j}(h)$-up-directed for each $1\leq h\leq n_g$.

\item If $\epsilon_f(i) = 1$, then $f^\sharp_i(\bigvee^{\epsilon_{f^\sharp_i}(1)} \mathcal{U}_1,\ldots, \bigvee^{\epsilon_{f^\sharp_i}(n_f)} \mathcal{U}_{n_f}) = \bigvee \{f^\sharp_i(u_1,\ldots,u_{n_f})\ |\ u_h \in \mathcal{U}_h\mbox{ for every } 1\leq h\leq n_f\}$ for every  $n_f$-tuple $(\mathcal{U}_1,\ldots, \mathcal{U}_{n_f})$ such that
    $\mathcal{U}_h\subseteq \obbas^{\epsilon_{f^\sharp_i}(h)}$ and $\mathcal{U}_h$ is $\epsilon_{f^\sharp_i}(h)$-up-directed for each $1\leq h\leq n_f$.
\item If $\epsilon_f(i) = \partial$, then $f^\sharp_i(\bigwedge^{\epsilon_{f^\sharp_i}(1)} \mathcal{D}_1,\ldots, \bigwedge^{\epsilon_{f^\sharp_i}(n_f)} \mathcal{D}_{n_f}) = \bigwedge \{f^\sharp_i(d_1,\ldots,d_{n_f})\ |\ d_h \in \mathcal{D}_h\mbox{ for every } 1\leq h\leq n_f\}$ for every  $n_f$-tuple $(\mathcal{D}_1,\ldots, \mathcal{D}_{n_f})$ such that
    $\mathcal{D}_h\subseteq \kbbas^{\epsilon_{f^\sharp_i}(h)}$ and $\mathcal{D}_h$ is $\epsilon_{f^\sharp_i}(h)$-down-directed for each $1\leq h\leq n_f$.
%
%
%
%
%
%
%
%
%
%
\end{enumerate}
\end{lemma}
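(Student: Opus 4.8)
\textbf{Proof plan for Lemma \ref{lemma: uncongenial for the blacks}.}

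The plan is to derive this lemma from the already-established Lemma \ref{lemma:uncongenial for the whites} together with the adjunction relationships between the $f$, $g$ and their residuals $f^\sharp_i$, $g^\flat_j$. The four items are pairwise order-variants of each other, so it suffices to prove one of them carefully — say item (1), where $\epsilon_g(j)=1$ — and the other three follow by dualizing (replacing $\bbas$ by $(\bbas)^\partial$, which interchanges closed and open elements, up- and down-directedness, and so on; this is the same kind of reduction used repeatedly in the preceding lemmas). Fix $g\in\mathcal{G}$ and $1\le j\le n_g$ with $\epsilon_g(j)=1$. Recall from the definition of the expanded language that in this case $\epsilon_{g^\flat_j}(j)=1$ and $\epsilon_{g^\flat_j}(\ell)=\epsilon_g^\partial(\ell)$ for $\ell\ne j$, and that $g^\flat_j$ is the left residual of $g$ in the $j$th coordinate, i.e.\ $g^\flat_j(\overline{a}[b/a_j])\leq a_j$ iff $b\leq g(\overline{a})$; in particular $g^\flat_j$ is $\epsilon_{g^\flat_j}$-monotone (Lemma \ref{cor: open upset to open upset for white box} and the general theory).

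For the substantive direction, the ``$\geq$'' inequality $g^\flat_j(\bigwedge^{\epsilon_{g^\flat_j}(1)}\mathcal{D}_1,\ldots,\bigwedge^{\epsilon_{g^\flat_j}(n_g)}\mathcal{D}_{n_g}) \leq \bigwedge\{g^\flat_j(\overline{d})\mid d_h\in\mathcal{D}_h\}$ is immediate from $\epsilon_{g^\flat_j}$-monotonicity. For ``$\leq$'', by denseness it suffices to show that whenever $o\in\obbas$ satisfies $\bigwedge\{g^\flat_j(\overline{d})\mid d_h\in\mathcal{D}_h\}\leq o$ — equivalently $g^\flat_j(\overline{d})\leq o$ for all choices $d_h\in\mathcal{D}_h$ — then $g^\flat_j(\bigwedge^{\epsilon_{g^\flat_j}}\overline{\mathcal{D}})\leq o$. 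The idea is to peel off the coordinates one at a time, exactly as in the proof of Lemma \ref{lemma:uncongenial for the whites}, but now moving the computation from the $g^\flat_j$-side to the $g$-side via residuation. Using $\epsilon_g(j)=1$, the inequality $g^\flat_j(\overline{d})\leq o$ can be rewritten by residuation as $d_j\leq g(\overline{d}[o/d_j])$; note $\overline{d}[o/d_j]\in(O(\bbas))^{\epsilon_g}$ since $d_j$ sits in coordinate $j$ (which becomes $o$, open) while for $\ell\ne j$ we have $d_\ell\in\kbbas^{\epsilon_{g^\flat_j}(\ell)}=\obbas^{\epsilon_g(\ell)}$. Then one invokes Lemma \ref{lemma:uncongenial for the whites}(1) to distribute $g$ over the directed meets in the $\ell\ne j$ coordinates (these are directed suprema in $(\bbas)^{\epsilon_g(\ell)}$ of opens), obtaining $g(\overline{\bigvee^{\epsilon_g}\mathcal{D}}[o/d_j]) = \bigvee\{g(\overline{d}[o/d_j])\mid d_\ell\in\mathcal{D}_\ell,\ \ell\ne j\}$; combine this with compactness and directedness to conclude $\bigwedge^{\epsilon_{g^\flat_j}(j)}\mathcal{D}_j = \bigvee^{\epsilon_g(j)}\mathcal{D}_j \leq g(\overline{\bigvee^{\epsilon_g}\mathcal{D}}[o/d_j])$, which residuates back to $g^\flat_j$ of the meet of all the $\mathcal{D}$'s being $\leq o$. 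Some care is needed to track whether each coordinate $\ell\ne j$ contributes an open or a closed element and hence whether $\bigvee^{\epsilon_g(\ell)}$ is literally a join or a meet, but the bookkeeping is dictated by the identities $\epsilon_{g^\flat_j}(\ell)=\epsilon_g^\partial(\ell)$ and causes no genuine difficulty.

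I expect the main obstacle to be precisely this coordinate-bookkeeping: making sure that all the directedness hypotheses on the $\mathcal{D}_h$ translate correctly through the residuation step into directedness hypotheses of the right polarity for the appeal to Lemma \ref{lemma:uncongenial for the whites}, and that closedness/openness of the auxiliary element $o$ (and of the $d_\ell$) lines up so that $\overline{d}[o/d_j]$ lies in $(O(\bbas))^{\epsilon_g}$ as required. Once item (1) is done, items (2), (3), (4) follow by the standard dualizations: (2) is obtained from (1) by passing to the order-dual lattice (which swaps $\epsilon_g(j)=1$ for $\epsilon_g(j)=\partial$ and interchanges the roles of $\kbbas$ and $\obbas$, of down-directed and up-directed, and of $\bigwedge$ and $\bigvee$); and (3), (4) are the $\mathcal{F}$-analogues, obtained by the same argument with $f$ in place of $g$ and $f^\sharp_i$ in place of $g^\flat_j$, using that $f^\sharp_i$ is the right residual of $f$, together with Lemma \ref{lemma:uncongenial for the whites}(2) and Lemma \ref{Blk:Diam:c:Clsd:Lemma}(3)--(4). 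I would state item (1) in full and then simply say ``items (2), (3) and (4) are order-variants, proved analogously''.
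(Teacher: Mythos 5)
Your toolkit is the right one --- residuation to transfer the computation from $g^\flat_j$ to $g$, Lemma \ref{lemma:uncongenial for the whites}, compactness, and directedness are exactly the ingredients of the paper's proof, which writes out item (3) for $f^\sharp_i$ in full and declares the other items order-variants, just as you propose to do with item (1) --- but your denseness reduction points in the wrong direction, so the argument as sketched only re-establishes the inequality you have already obtained from monotonicity. Having correctly noted that $g^\flat_j(\bigwedge^{\epsilon_{g^\flat_j}}\overline{\mathcal{D}})\leq \bigwedge\{g^\flat_j(\overline{d})\mid d_h\in\mathcal{D}_h\}$ is the easy direction, the remaining task is to prove $\bigwedge\{g^\flat_j(\overline{d})\}\leq g^\flat_j(\bigwedge^{\epsilon_{g^\flat_j}}\overline{\mathcal{D}})$; by denseness this means fixing an arbitrary $o\in\obbas$ with $g^\flat_j(\bigwedge^{\epsilon_{g^\flat_j}}\overline{\mathcal{D}})\leq o$ and showing $\bigwedge\{g^\flat_j(\overline{d})\}\leq o$. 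You instead fix $o$ above $\bigwedge\{g^\flat_j(\overline{d})\}$ and aim to show $g^\flat_j(\bigwedge^{\epsilon_{g^\flat_j}}\overline{\mathcal{D}})\leq o$, which can only ever yield the easy direction again. Two further slips are symptoms of the same inversion: the claimed equivalence of $\bigwedge\{g^\flat_j(\overline{d})\}\leq o$ with ``$g^\flat_j(\overline{d})\leq o$ for all choices'' is false (a meet may lie below $o$ with no single member doing so), and the identity $\bigwedge^{\epsilon_{g^\flat_j}(j)}\mathcal{D}_j=\bigvee^{\epsilon_g(j)}\mathcal{D}_j$ is false because the $j$th coordinate does \emph{not} flip polarity: $\epsilon_{g^\flat_j}(j)=\epsilon_g(j)=1$, so the left-hand side is a genuine meet and the right-hand side a genuine join.

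The repaired argument is the exact dual of the paper's proof of item (3). Given $o\in\obbas$ with $g^\flat_j(\bigwedge^{\epsilon_{g^\flat_j}(1)}\mathcal{D}_1,\ldots,\bigwedge^{\epsilon_{g^\flat_j}(n_g)}\mathcal{D}_{n_g})\leq o$, residuation in the $j$th coordinate gives $\bigwedge\mathcal{D}_j\leq g(\overline{u})$, where $\overline{u}$ has $o$ in position $j$ and $\bigwedge^{\epsilon_{g^\flat_j}(\ell)}\mathcal{D}_\ell=\bigvee^{\epsilon_g(\ell)}\mathcal{D}_\ell$ in positions $\ell\neq j$; here the flip \emph{is} legitimate, since $\epsilon_{g^\flat_j}(\ell)=\epsilon_g^\partial(\ell)$ for $\ell\neq j$, so each such $\mathcal{D}_\ell$ is an $\epsilon_g(\ell)$-up-directed subset of $\obbas^{\epsilon_g(\ell)}$, and the singleton $\{o\}$ in position $j$ is trivially so. Lemma \ref{lemma:uncongenial for the whites}(1) then rewrites the right-hand side as $\bigvee\{g(\overline{d}[o/d_j])\mid d_\ell\in\mathcal{D}_\ell,\ \ell\neq j\}$, a join of open elements by Lemma \ref{cor: open upset to open upset for white box}(1); since $\bigwedge\mathcal{D}_j$ is a meet of closed elements, compactness applies, and the directedness of the $\mathcal{D}_h$ collapses the resulting finite families to a single tuple $\overline{d^\ast}$ with $d^\ast_j\leq g(\overline{d^\ast}[o/d^\ast_j])$, whence $g^\flat_j(\overline{d^\ast})\leq o$ by residuating back, and therefore $\bigwedge\{g^\flat_j(\overline{d})\}\leq g^\flat_j(\overline{d^\ast})\leq o$. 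Note the essentially existential character of this step --- one needs a \emph{single} witness $\overline{d^\ast}$ --- which is precisely what your universally quantified reformulation discards.
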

\begin{proof}
3.\ The `$\geq$' direction easily follows from the
$\epsilon_{f^\sharp_i}$-monotonicity of $f^\sharp_i$. For the converse inequality, by denseness it is enough to show that if $c\in \kbbas$ and
$c\leq f^\sharp_i(\bigvee^{\epsilon_{f^\sharp_i}(1)} \mathcal{U}_1,\ldots, \bigvee^{\epsilon_{f^\sharp_i}(n_f)} \mathcal{U}_{n_f})$, then $c\leq f^\sharp_i(u_1,\ldots,u_{n_f})$ for some tuple $(u_1,\ldots,u_{n_f})$ such that $u_h \in \mathcal{U}_h$ for every $1\leq h\leq n_f$. By residuation,
$c\leq f^\sharp_i(\bigvee^{\epsilon_{f^\sharp_i}(1)} \mathcal{U}_1,\ldots, \bigvee^{\epsilon_{f^\sharp_i}(n_f)} \mathcal{U}_{n_f})$ implies that $f(
\bigvee^{\epsilon_{f^\sharp_i}(1)} \mathcal{U}_1,\ldots,c,\ldots, \bigvee^{\epsilon_{f^\sharp_i}(n_f)} \mathcal{U}_{n_f})\leq \bigvee^{\epsilon_{f^\sharp_i}(i)} \mathcal{U}_i$.
The assumption $\epsilon_f(i) = 1$ implies that  $\epsilon_{f^\sharp_i}(i) = 1$ and $\epsilon_{f^\sharp_i}(\ell) = \epsilon_{f}^\partial(\ell)$ for every $\ell\neq i$. Hence $\mathcal{U}_\ell\subseteq \obbas^{\epsilon_{f^\sharp_i}(\ell)} = \kbbas^{\epsilon_f(\ell)}$, and  $\mathcal{U}_\ell$ is $\epsilon_f(\ell)$-down-directed 
for every $\ell\neq i$. Recalling that $\bigvee^{\epsilon_{f^\sharp_i}(\ell)}$ coincides with $\bigwedge^{\epsilon_{f}(\ell)}$,
we can apply  Lemma \ref{lemma:uncongenial for the whites}(2) and get:
\[f(\bigvee{}^{\epsilon_{f^\sharp_i}(1)} \mathcal{U}_1,\ldots,c,\ldots, \bigvee{}^{\epsilon_{f^\sharp_i}(n_f)} \mathcal{U}_{n_f}) =
\bigwedge \{f(u_1,\ldots,c,\ldots, u_{n_f})\mid u_\ell\in \mathcal{U}_\ell \mbox{ for every } \ell\neq i\}.\]
Hence, by compactness, 
$f(\bigvee^{\epsilon_{f^\sharp_i}(1)} \mathcal{U}_1,\ldots,c,\ldots, \bigvee^{\epsilon_{f^\sharp_i}(n_f)} \mathcal{U}_{n_f})\leq \bigvee^{\epsilon_{f^\sharp_i}(i)} \mathcal{U}_i$ implies that
\[\bigwedge_{1\leq j\leq m}\{f(o_1^{(j)},\ldots,c,\ldots, o_{n_f}^{(j)})\mid o_\ell^{(j)}\in \mathcal{U}_\ell\mbox{ for all } \ell\neq i \}\leq o_i^{(1)}\vee\cdots \vee o_i^{(n)}\]
for some $o_i^{(1)},\ldots, o_i^{(n)}\in \mathcal{U}_i$.
The assumptions that $\epsilon_f(i) = 1$ and that each $\mathcal{U}_h$ is $\epsilon_{f^\sharp_i}(h)$-up-directed for every $1\leq h\leq n_f$ imply that $\mathcal{U}_i$ is up-directed and $\mathcal{U}_\ell$ is $\epsilon_f(\ell)$-down-directed for each $\ell\neq i$. Hence, some $u_1,\ldots, u_{n_f}$ exist such that  $u_\ell\leq^{\epsilon_f(\ell)}\bigwedge^{\epsilon_f(\ell)}_{1\leq j\leq m}o_\ell^{(j)}$  and $o_i^{(1)}\vee\cdots \vee o_i^{(n)}\leq u_i.$
The $\epsilon_f$-monotonicity of $f$ implies  the following chain of inequalities:
\begin{center}
\begin{tabular}{r c l}
$f(u_1,\ldots,c, \ldots, u_{n_f})$ & $\leq$ &$ f(\bigwedge^{\epsilon_f(1)}_{1\leq j\leq m}o_1^{(j)},\ldots,c, \ldots, \bigwedge^{\epsilon_f(n_f)}_{1\leq j\leq m}o_{n_f}^{(j)})$\\
& $\leq$ &$ \bigwedge_{1\leq j\leq m}\{f(o_1^{(j)},\ldots,c,\ldots, o_{n_f}^{(j)})\mid o_\ell^{(j)}\in \mathcal{U}_\ell\mbox{ for all } \ell\neq i \}$\\
& $\leq$ &$ o_i^{(1)}\vee\cdots \vee o_i^{(n)}$\\
& $\leq$ &$ u_i$,\\
\end{tabular}
\end{center}
which implies that $c\leq f^\sharp_i(u_1,\ldots,u_{n_f})$, as required.

1.\ 2.\ and 4.\ are order-variants of 3.
%
%
\end{proof}

\subsection{Proof of the restricted Ackermann lemmas (lemmas \ref{Ackermann:Dscrptv:Right:Lemma} and \ref{Ackermann:Dscrptv:Left:Lemma})}
\label{Sec: topological Ackermann}

For any $\mathcal{L}_{\mathrm{LE}}^+$-formula $\phi$, any  $\mathcal{L}_{\mathrm{LE}}$-algebra $\bba$  and assignment $V$ on $\bbas$, we write $\phi(V)$ to denote the extension of $\phi$ in $\bbas$ under the assignment $V$. Note that if $\phi$ is in the basic signature  and $V$ is  admissible, then $\phi(V) \in \mathbb{A}$. This, however, is not the case for formulas from the expanded signature or for non-admissible valuations.
Let $p$ be a propositional variable occurring in $\phi$ and $V$ be any assignment. For any $x \in \bbas$, let $V[p:= x]$ be the assignment which is identical to $V$ except that it assigns $x$ to $p$. Then $x\mapsto \phi(V[p:= x])$  defines an operation on $\bbas$, which we will denote $\phi^{V}_{p}(x)$.


\begin{lemma}\label{Syn:Opn:Clsd:Appld:ClsdUp:Lemma}
Let $\phi$ be syntactically closed and $\psi$ syntactically open. Let $V$ be an admissible assignment, $c \in K(\bbas)$  and $o \in O(\bbas)$.
\begin{enumerate}
\item
    \begin{enumerate}
    \item If $\phi(p)$ is positive in $p$, then $\phi^{V}_{p}(c) \in K(\bbas)$, and
    \item if $\psi(p)$ is negative in $p$, then $\psi^{V}_{p}(c) \in O(\bbas)$.
    \end{enumerate}
\item
    \begin{enumerate}
    \item If $\phi(p)$ is negative in $p$, then $\phi^{V}_{p}(o) \in K(\bbas)$, and
    \item if $\psi(p)$ is positive in $p$, then $\psi^{V}_{p}(o) \in O(\bbas)$.
    \end{enumerate}
\end{enumerate}
\end{lemma}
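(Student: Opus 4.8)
The plan is to prove all four statements by a single simultaneous induction on the complexity of the term ($\phi$ in case 1(a) and 2(a), $\psi$ in case 1(b) and 2(b)), exactly as in the analogous statement in \cite{ALBAPaper}. The inductive invariant bundles the four claims together because the connectives with $\partial$-coordinates swap a positive subterm for a negative one (and a closed element for an open one), so one cannot establish 1(a) without simultaneously having 1(b), 2(a), 2(b) available for subterms. Throughout, I would use the convention introduced in the appendix whereby $(K(\bbas))^1 = K(\bbas)$, $(K(\bbas))^\partial = O(\bbas)$, etc., which lets one state all the sign/polarity bookkeeping uniformly.

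\textbf{Base cases.} The leaves of a syntactically closed term are: propositional variables (whose value under an admissible assignment $V$ lies in $\bba \subseteq K(\bbas) \cap O(\bbas)$), nominals (which $V$ sends to $\jty(\bbas) \subseteq K(\bbas)$), co-nominals (sent to $\mty(\bbas) \subseteq O(\bbas)$, but these may only occur negatively in a syntactically closed $\phi$, so a co-nominal leaf cannot be the variable $p$ we substitute into when $\phi$ is positive in $p$), and the constants $\bot, \top \in \bba$. For the variable $p$ itself: if $\phi = p$ is positive in $p$ and $V$ is admissible, then $\phi^V_p(c) = c \in K(\bbas)$, giving 1(a); the other base cases are analogous or vacuous.

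\textbf{Inductive step.} For a term of the form $\phi = h(\phi_1, \ldots, \phi_{n_h})$ with $h \in \mathcal{F}^* \cup \mathcal{G}^*$, I would locate the (unique, by the single-occurrence convention implicit in the substitution $\phi^V_p$ — or, more carefully, argue coordinatewise for each occurrence) coordinate(s) in which $p$ appears, and use the sign rules of the signed generation tree to determine, for each such coordinate $i$, whether $\phi_i$ is positive or negative in $p$ and whether the overall polarity relative to $h$ is $1$ or $\partial$. The inductive hypothesis then tells us each argument $\phi_i^V_p(c)$ (or $\phi_i^V_p(o)$) lands in the appropriate one of $K(\bbas)$ or $O(\bbas)$; in coordinates not containing $p$, syntactic closedness/openness of the constant subterm $\phi_i$ forces its value into $K(\bbas)$ or $O(\bbas)$ by the same inductive hypothesis applied with a trivial (constant) occurrence, or directly. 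One then feeds the resulting tuple, which lies in $(K(\bbas))^{\epsilon_h}$ or $(O(\bbas))^{\epsilon_h}$ (with the correct $\epsilon$ depending on which of $\mathcal{F}, \mathcal{F}^*\setminus\mathcal{F}, \mathcal{G}, \mathcal{G}^*\setminus\mathcal{G}$ contains $h$), into the appropriate closure lemma from the appendix: Lemmas \ref{cor: open upset to open upset for white box}, \ref{cor:congenial top for whites and blacks} for $h \in \mathcal{F} \cup \mathcal{G}$, and Lemma \ref{Blk:Diam:c:Clsd:Lemma} for the adjoint connectives $f^\sharp_i \in \mathcal{F}^*\setminus\mathcal{F}$ and $g^\flat_j \in \mathcal{G}^*\setminus\mathcal{G}$. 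The cases $h \in \{\wedge, \vee\}$ are handled directly: $K(\bbas)$ is closed under finite meets (it is the meet-closure of $\bba$) and $O(\bbas)$ under finite joins, and both are closed under the $\bba$-generated operation in the relevant direction, which covers $+\wedge, -\vee$ (sending closed to closed) and $+\vee, -\wedge$ (sending open to open) appropriately.

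\textbf{Main obstacle.} I expect the real bookkeeping burden — and the only genuinely delicate point — to be matching up the order-type $\epsilon_h$ of a connective with the polarity of the distinguished coordinate relative to the term's signed generation tree, so that one applies exactly the right clause of the right closure lemma; the definition of syntactically closed/open was tailored precisely so that, for the \emph{extra} connectives $\mathcal{F}^*\setminus\mathcal{F}$ (closed, hence occurring only positively) and $\mathcal{G}^*\setminus\mathcal{G}$ (open, hence only negatively), the sign forces the coordinate into the regime covered by Lemma \ref{Blk:Diam:c:Clsd:Lemma}. Once this alignment is set up cleanly in notation, each inductive case is a one-line appeal to the appropriate appendix lemma, so there is no conceptual difficulty beyond careful case management; I would present one or two representative cases ($h = g^\flat_j$ with $\epsilon_g(j) = 1$, and $h = f$ with a $\partial$-coordinate) in detail and state that the rest are analogous.
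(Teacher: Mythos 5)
Your proposal is correct and follows essentially the same route as the paper: a simultaneous induction over the (syntactically closed, positive/negative) and (syntactically open, positive/negative) cases, with base cases settled by admissibility of $V$ together with $\jty(\bbas)\subseteq K(\bbas)$ and $\mty(\bbas)\subseteq O(\bbas)$, and inductive steps discharged by the closure lemmas of the appendix (Lemmas \ref{cor: open upset to open upset for white box}, \ref{cor:congenial top for whites and blacks} for $\mathcal{F}\cup\mathcal{G}$ and Lemma \ref{Blk:Diam:c:Clsd:Lemma} for the residuals, plus closure of $K(\bbas)$ under meets and $O(\bbas)$ under joins). The only cosmetic difference is that you bundle all four claims into one induction while the paper runs items (1) and (2) as two separate simultaneous inductions; the content is the same.
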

\begin{proof}
We prove (1) by simultaneous induction of $\phi$ and $\psi$. Assume that $\phi(p)$ is positive in $p$ and that $\psi(p)$ is negative in $p$. The base cases of the induction are those when $\phi$ is of the form $\top$, $\bot$, $p$, $q$ (for propositional variables $q$ different from $p$) or $\nomi$, and when $\psi$ is of the form $\top$, $\bot$, $q$ (for propositional variables $q$ different from $p$), or $\cnomm$. (Note that the $\phi$ cannot be a co-nominal $\cnomm$, since it is syntactically closed. Also, $\psi$ cannot be $p$ or a nominal $\nomi$, since is negative in $p$ and syntactically open, respectively.) These cases follow by noting (1) that $V[p:= c](\bot) = 0 \in \mathbb{A}$, $V[p:= c](\top) = 1 \in \mathbb{A}$, and $V[p:= c](q) = V(q) \in \mathbb{A}$, (2) that $V[p:= c](p) = c \in K(\bbas)$ and $V[p:= c](\nomi) \in J^{\infty}(\bbas) \subset K(\bbas)$, and (3) that $V[p:= c](\cnomm) \in M^{\infty}(\bbas) \subset O(\bbas)$ (see discussion on page \pageref{Page:JIr:Clsd:MIr:Opn}).

For the remainder of the proof we will not need to refer to the valuation $V$ and will hence omit reference to it. We will accordingly write $\phi$ and $\psi$ for $\phi^V_p$ and $\psi^V_p$, respectively.

In the cases $\phi(p) = f(\overline{\phi'(p)})$ for $f\in \mathcal{F}$ or $\phi(p) = \phi_1(p) \wedge \phi_2(p)$, both $\phi_1(p)$ and $\phi_2(p)$ are syntactically closed and positive in $p$, and each $\phi'_i(p)$ in $\overline{\phi'(p)}$ is $\epsilon_f(i)$-syntactically closed
and $\epsilon_f(i)$-positive in $p$. Hence, the claim follows by the inductive hypothesis, and lemma \ref{cor: open upset to open upset for white box}(2) and the fact that meets of closed elements are closed, respectively.

Similarly, if $\psi(p) = g(\overline{\psi'(p)})$ for $g\in \mathcal{G}$ or $\psi(p) = \psi_1(p) \vee \psi_2(p)$, then both $\psi_1(p)$ and $\psi_2(p)$ are syntactically open and negative in $p$, and each $\psi'_i(p)$ in $\overline{\psi'(p)}$ is syntactically $\epsilon_g(i)$-open and $\epsilon_g(i)$-negative in $p$. Hence,  the claim follows by the inductive hypothesis, and lemma \ref{cor: open upset to open upset for white box}(1) and the fact that joins of open elements are open, respectively.

If $\phi(p) = g_i^\flat(\overline{\phi'(p)})$ for some $g\in \mathcal{G}$ such that $\epsilon_g(i) = 1$, then each $\phi_h'(p)$ is syntactically $\epsilon_{g_i^\flat}(h)$-closed and $\epsilon_{g_i^\flat}(h)$-positive in $p$. Hence,  the claim follows by the inductive hypothesis and lemma \ref{Blk:Diam:c:Clsd:Lemma}(1).

The remaining cases are similar and are proven making use of the remaining items of lemma \ref{Blk:Diam:c:Clsd:Lemma}.

Item (2) can similarly be proved by simultaneous induction on negative $\phi$ and positive $\psi$. 
%
\end{proof}

\begin{lemma}\label{Esakia:Syn:Clsd:Opn:Lemma} Let $\phi(p)$ be syntactically closed, $\psi(p)$ syntactically open, $V$ an admissible assignment, $\mathcal{D} \subseteq \kbbas$ be down-directed, and $\mathcal{U} \subseteq \obbas$ be up-directed.
\begin{enumerate}
\item
    \begin{enumerate}
    \item If $\phi(p)$ is positive in $p$, then $\phi^V_p(\bigwedge\mathcal{D}) = \bigwedge\{ \phi^V_p (d)\mid d\in \mathcal{D}\}$, and
    \item if $\psi(p)$ is negative in $p$, then $\psi^V_p(\bigwedge\mathcal{D}) = \bigvee\{\psi^V_p (d)\mid d\in \mathcal{D}\}$.
    \end{enumerate}
\item
    \begin{enumerate}
    \item If $\phi(p)$ is negative in $p$, then $\phi^V_p(\bigvee\mathcal{U}) = \bigwedge\{\phi^V_p (u)\mid u\in \mathcal{U}\}$, and
    \item if $\psi(p)$ is positive in $p$, then $\psi^V_p(\bigvee\mathcal{U}) = \bigvee\{ \psi^V_p (u)\mid u\in \mathcal{U}\}$.
    \end{enumerate}
\end{enumerate}
\end{lemma}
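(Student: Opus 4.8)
The statement to be proved (Lemma~\ref{Esakia:Syn:Clsd:Opn:Lemma}) asserts that syntactically closed (resp.\ open) $\mathcal{L}_\mathrm{LE}^+$-terms, viewed as term functions of a single variable $p$ under an admissible assignment, interact well with directed joins and meets of closed/open elements. The natural approach is a simultaneous induction on the complexity of $\phi$ and $\psi$, exactly parallel to the proof of Lemma~\ref{Syn:Opn:Clsd:Appld:ClsdUp:Lemma}, and using the ``uncongeniality'' lemmas of the previous subsection (Lemmas~\ref{lemma:uncongenial for the whites} and~\ref{lemma: uncongenial for the blacks}) as the workhorses for the inductive steps involving $\mathcal{F}$, $\mathcal{G}$, and the adjoint connectives $f^\sharp_i$, $g^\flat_j$. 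I will prove item~(1) in detail; item~(2) is its order-dual (swap closed$\leftrightarrow$open, down-directed$\leftrightarrow$up-directed, $\bigwedge\leftrightarrow\bigvee$, and reverse all polarities), and items (1)(b) and (2)(a)--(b) are obtained from (1)(a) by the same duality bookkeeping, so it suffices to carry out (1)(a) and (1)(b) together.

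\textbf{Base cases.} As in Lemma~\ref{Syn:Opn:Clsd:Appld:ClsdUp:Lemma}, the base cases are when $\phi$ is $\top$, $\bot$, a variable $q\neq p$, $p$ itself, or a nominal $\nomi$ (it cannot be a co-nominal, being syntactically closed), and when $\psi$ is $\top$, $\bot$, a variable $q\neq p$, or a co-nominal $\cnomm$ ($\psi$ cannot be $p$ since it is negative in $p$, nor a nominal since it is syntactically open). When $p$ does not occur, $\phi^V_p$ and $\psi^V_p$ are constant, and a directed meet of copies of a single element is that element, so the equalities hold trivially. When $\phi = p$, both sides equal $\bigwedge\mathcal{D}$. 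The key point is that $\mathcal{D}\subseteq\kbbas$ and $\mathcal{U}\subseteq\obbas$ by hypothesis, so the substituted elements stay in the right families and the inductive invariants are maintained.

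\textbf{Inductive steps.} For $\phi = \phi_1\wedge\phi_2$ with both $\phi_i$ syntactically closed and positive in $p$: apply the inductive hypothesis to each, then use the fact that binary meet distributes over down-directed meets (which follows because $\{d_1\wedge d_2 : d_1,d_2\in\mathcal{D}\}$ is cofinal in $\mathcal{D}$ by down-directedness, or more simply from $\bigwedge_{d\in\mathcal{D}}\phi_1(d)\wedge\bigwedge_{d'\in\mathcal{D}}\phi_2(d') = \bigwedge_{d\in\mathcal{D}}(\phi_1(d)\wedge\phi_2(d))$ again using down-directedness to merge). For $\psi = \psi_1\vee\psi_2$ with both $\psi_i$ open and negative in $p$, argue dually with binary join over down-directed families. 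The principal cases are $\phi = f(\overline{\phi'})$ for $f\in\mathcal{F}$ and $\phi = g^\flat_j(\overline{\phi'})$ for $g^\flat_j\in\mathcal{F}^*\setminus\mathcal{F}$ (and symmetrically for $\psi$ with $g\in\mathcal{G}$ and $f^\sharp_i\in\mathcal{G}^*\setminus\mathcal{F}$): here each argument $\phi'_h$ is syntactically $\varepsilon(h)$-closed and $\varepsilon(h)$-positive in $p$ for the appropriate order type, so by the inductive hypothesis the family $\{\phi'^V_{h,p}(d) : d\in\mathcal{D}\}$ is $\varepsilon(h)$-down-directed and consists of $\varepsilon(h)$-closed elements, with $\phi'^V_{h,p}(\bigwedge\mathcal{D})$ equal to its $\varepsilon(h)$-meet. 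One then invokes Lemma~\ref{lemma:uncongenial for the whites}(2) (for $f$) or Lemma~\ref{lemma: uncongenial for the blacks}(1),(4) (for $g^\flat_j$, $f^\sharp_i$ in the relevant $\varepsilon$-coordinate cases) to pull the outer operation inside the meet, and finally uses down-directedness of $\mathcal{D}$ once more to collapse the resulting multi-indexed meet $\bigwedge\{h(\ldots,\phi'^V_{h,p}(d_h),\ldots) : d_h\in\mathcal{D}\}$ to the diagonal $\bigwedge_{d\in\mathcal{D}}h(\ldots,\phi'^V_{h,p}(d),\ldots)$, which is $\bigwedge_{d\in\mathcal{D}}\phi^V_p(d)$. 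The diagonal-collapse step needs the observation that all the $\phi'^V_{h,p}(d)$ are monotone or antitone in $d$ according to the sign, so evaluating at a common lower bound $d\leq d_1,\dots,d_{n}$ in $\mathcal{D}$ dominates, and conversely.

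\textbf{Main obstacle.} The technical heart, and the step most likely to require care, is matching the order-type bookkeeping: the connectives $f^\sharp_i$ and $g^\flat_j$ have order types that differ from those of $f$ and $g$ (Section~\ref{ssec:expanded tense language}), and in the $\partial$-coordinates one must systematically reinterpret ``down-directed set of closed elements'' as ``up-directed set of open elements'' and vice versa, so that the hypotheses of Lemma~\ref{lemma: uncongenial for the blacks} are literally met. This is precisely the kind of coordinatewise $\varepsilon$-versus-$\varepsilon^\partial$ juggling that was already handled in Lemmas~\ref{Blk:Diam:c:Clsd:Lemma} and~\ref{lemma: uncongenial for the blacks}, so no new idea is needed, but the notation must be kept scrupulously straight. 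A secondary point is that directedness must be threaded through the induction: at a compound term the relevant family of values is directed because it is the monotone/antitone image of a directed family, and this must be explicitly recorded at each step so that the lemmas on uncongeniality and the diagonal-collapse argument apply at the next level up.
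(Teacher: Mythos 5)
Your overall strategy coincides with the paper's: simultaneous induction on $\phi$ and $\psi$, the same base cases, directedness of the image families threaded through the induction, Lemmas \ref{lemma:uncongenial for the whites} and \ref{lemma: uncongenial for the blacks} for the connectives that do not automatically commute with the relevant (co)limits, and the compactness-plus-diagonal-collapse argument at each such node. There is, however, one genuine gap. Your case analysis for the lattice connectives only covers the easy combinations, namely $\phi=\phi_1\wedge\phi_2$ for item (1)(a) and $\psi=\psi_1\vee\psi_2$ for item (1)(b), where the outer connective is of the same kind as the operation being pushed past it and only associativity of meets (resp.\ joins) plus directedness is needed. You omit the cases $\phi=\phi_1\vee\phi_2$ in (1)(a) and $\psi=\psi_1\wedge\psi_2$ in (1)(b), and these are not recoverable by ``arguing dually'': after the inductive hypothesis one must prove $\bigwedge_{d\in\mathcal{D}}\phi_1(d)\vee\bigwedge_{d\in\mathcal{D}}\phi_2(d)=\bigwedge_{d\in\mathcal{D}}\bigl(\phi_1(d)\vee\phi_2(d)\bigr)$, i.e.\ that binary join distributes over down-directed meets of closed elements. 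This fails in an arbitrary complete lattice, so the step would break without an additional input; the paper supplies it by invoking the restricted distributivity law for canonical extensions of bounded lattices (\cite[Lemma 3.2]{GH01}), applied to the down-directed families $\{\phi_i(d)\mid d\in\mathcal{D}\}$. This lemma is the one ingredient absent from the toolkit you list, so the proposal as written has a hole precisely at the $+\vee$ and $-\wedge$ nodes.

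A secondary, more cosmetic issue: you assign outer connectives from $\mathcal{G}$ exclusively to the $\psi$-side and from $\mathcal{F}$ to the $\phi$-side, but syntactic closedness only restricts nominals, co-nominals and the \emph{adjoint} connectives, so a syntactically closed $\phi$ positive in $p$ may perfectly well have outer connective $g\in\mathcal{G}$ (e.g.\ $\Box p$), and dually for $\psi$ and $f\in\mathcal{F}$. Those cases are handled not by the uncongeniality lemmas but by the complete meet- (resp.\ join-) preservation of $g$ (resp.\ $f$) on the perfect algebra $\bbas$, followed by the same compactness and diagonal-collapse argument; your machinery covers them once the case list is corrected, so this is a bookkeeping omission rather than a failing step.
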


\begin{proof}
We prove (1) by simultaneous induction on $\phi$ and $\psi$. The base cases of the induction on $\phi$ are those when it is of the form $\top$, $\bot$, $p$, a propositional variable $q$ other than $p$, or $\nomi$, and for $\psi$ those when it is of the form $\top$, $\bot$, a propositional variable $q$ other than $p$ or $\cnomm$. In each of these cases the claim is trivial.

For the remainder of the proof we will omit reference to the assignment $V$, and simply write $\phi$ and $\psi$ for $\phi^V_p$ and $\psi^V_p$, respectively.

In the cases in which $\phi(p) = \phi_1(p) \vee \phi_2(p)$, $\phi(p) = \phi_1(p) \wedge \phi_2(p)$, $\phi(p) = f(\overline{\phi(p)})$, $\phi(p) = g(\overline{\phi(p)})$, $\psi(p) = \psi_1(p) \wedge \psi_2(p)$, $\psi(p) = \psi_1(p) \vee \psi_2(p)$, $\psi(p) = f(\overline{\psi(p)})$, $\psi(p) = g(\overline{\psi(p)})$,  we have that $\phi_1$ and $\phi_2$
are syntactically closed and positive in $p$ and $\psi_1$ and $\psi_2$
are syntactically open and negative in $p$, and moreover, each $\phi_h(p)$ in $\overline{\phi(p)}$ is syntactically $\epsilon_g(h)$-closed and $\epsilon_g(h)$-positive in $p$, and each $\psi_j(p)$ in $\overline{\psi(p)}$ is syntactically $\epsilon_f(j)$-open and $\epsilon_f(j)$-negative in $p$.

Hence, when $\phi(p) = \phi_1(p) \wedge \phi_2(p)$ and  $\psi(p) = \psi_1(p) \vee \psi_2(p)$, the claim follows by the inductive hypothesis and the associativity of, respectively, meet and join.

If $\phi(p) = \phi_1(p) \vee \phi_2(p)$, then
\begin{center}
\begin{tabular}{r c ll}
$\phi(\bigwedge \mathcal{D})$ &$ = $& $\phi_1(\bigwedge \mathcal{D})\vee \phi_2(\bigwedge \mathcal{D})$ & \\
&$ = $& $\bigwedge \{\phi_1(c_i)\mid c_i\in \mathcal{D}\}\vee \bigwedge \{\phi_2(c_i)\mid c_i\in \mathcal{D}\}$ & (induction hypothesis)\\
&$ = $& $\bigwedge \{\phi_1(c_i)\vee \phi_2(c_j)\mid c_i, c_j\in \mathcal{D}\}$ & ($\ast$)\\
&$ = $& $\bigwedge \{\phi_1(c)\vee \phi_2(c)\mid c\in \mathcal{D}\}$ & ($\phi$ monotone and $\mathcal{D}$ down-directed)\\
&$ = $& $\bigwedge \{\phi(c)\mid c\in \mathcal{D}\}$, & \\
\end{tabular}
\end{center}
where the equality marked with ($\ast$) follows from a restricted form of distributivity enjoyed by canonical extensions of general bounded lattices (cf.\ \cite[Lemma 3.2]{GH01}), applied to the family $\{A_1, A_2\}$ such that $A_i: = \{\phi_i(c_j)\mid c_j\in \mathcal{D}\}$ for $i\in \{1, 2\}$. Specifically, the monotonicity in $p$ of $\phi_i(p)$ and $\mathcal{D}$ being down-directed imply that $A_1$ and $A_2$ are down-directed subsets, which justifies the application of \cite[Lemma 3.2]{GH01}.

If $\phi(p) = g(\overline{\phi(p)})$, then
\begin{center}
$\phi(\bigwedge\mathcal{D}) = g(\phi_1(\bigwedge\mathcal{D}),\ldots,\phi_{n_g}(\bigwedge\mathcal{D}))
= g(\bigwedge_{d \in \mathcal{D}}^{\epsilon_g(1)} \phi_1(d),\ldots, \bigwedge_{d \in \mathcal{D}}^{\epsilon_g(n_g)} \phi_{n_g}(d))$.
\end{center}
%
%
%
The second equality above holds by the inductive hypothesis. To finish the proof, we need to show that
\begin{center}
$g(\bigwedge_{d \in \mathcal{D}}^{\epsilon_g(1)} \phi_1(d),\ldots, \bigwedge_{d \in \mathcal{D}}^{\epsilon_g(n_g)} \phi_{n_g}(d)) = \bigwedge_{d \in \mathcal{D}} g( \phi_1(d)\ldots,\phi_{n_g}(d)).$
\end{center}
The `$\leq$' direction immediately follows from the $\epsilon_g$-monotonicity of $g$. For the converse inequality, by denseness, it is enough to show that if $o\in \obbas$ and $g(\bigwedge_{d \in \mathcal{D}}^{\epsilon_g(1)} \phi_1(d),\ldots, \bigwedge_{d \in \mathcal{D}}^{\epsilon_g(n_g)} \phi_{n_g}(d)) \leq o$, then $ \bigwedge_{d \in \mathcal{D}} g( \phi_1(d)\ldots,\phi_{n_g}(d)) \leq o$. Since $g\in \mathcal{G}$, we have:

\begin{center}
$g(\bigwedge_{d \in \mathcal{D}}^{\epsilon_g(1)} \phi_1(d),\ldots, \bigwedge_{d \in \mathcal{D}}^{\epsilon_g(n_g)} \phi_{n_g}(d)) = \bigwedge\{g( \phi_1(d_1)\ldots,\phi_{n_g}(d_{n_g}))\mid d_h\in \mathcal{D} \mbox{ for every } 1\leq h\leq n_g\}$.
\end{center}
By compactness (which can be applied by lemmas \ref{cor:congenial top for whites and blacks}(1) and \ref{Syn:Opn:Clsd:Appld:ClsdUp:Lemma}(1)),
\begin{center}
$\bigwedge\{g( \phi_1(d^{(i)}_1)\ldots,\phi_{n_g}(d^{(i)}_{n_g}))\mid 1\leq i\leq n\}\leq o$.
\end{center}
Let $\mathcal{D}': = \{d^{(i)}_h\mid 1\leq i\leq n\mbox{ and } 1\leq h\leq n_g\}$. Since $\mathcal{D}$ is down-directed, $d^\ast\leq \bigwedge \mathcal{D}'$ for some $d^\ast\in \mathcal{D}$. Then, since $g$ is $\epsilon_g$-monotone and each $\phi_h(p)$ is $\epsilon_g(h)$-positive in $p$, the following chain of inequalities holds
\begin{center}
\begin{tabular}{r c l}
$\bigwedge_{d \in \mathcal{D}} g( \phi_1(d)\ldots,\phi_{n_g}(d)) $& $\leq$ &
$g(\phi_1(d^\ast),\ldots,\phi_{n_g}(d^\ast))$\\
& $\leq$ & $ g(\phi_1(\bigwedge_{1\leq i\leq n}d^{(i)}_1),\ldots, \phi_{n_g}(\bigwedge_{1\leq i\leq n}d^{(i)}_{n_g}))$\\
& $\leq$ & $ \bigwedge\{g( \phi_1(d^{(i)}_1)\ldots,\phi_{n_g}(d^{(i)}_{n_g}))\mid 1\leq i\leq n\}$\\
& $\leq$ & $o.$
\end{tabular}
\end{center}


 If $\phi(p) = f(\overline{\phi(p)})$, then
\begin{center}
$\phi(\bigwedge\mathcal{D}) = f(\phi_1(\bigwedge\mathcal{D}),\ldots,\phi_{n_f}(\bigwedge\mathcal{D}))
= f(\bigwedge_{d \in \mathcal{D}}^{\epsilon_f(1)} \phi_1(d),\ldots, \bigwedge_{d \in \mathcal{D}}^{\epsilon_f(n_f)} \phi_{n_f}(d))$.
\end{center}
%
%
%
The second equality above holds by the inductive hypothesis. To finish the proof, we need to show that
\begin{center}
$f(\bigwedge_{d \in \mathcal{D}}^{\epsilon_f(1)} \phi_1(d),\ldots, \bigwedge_{d \in \mathcal{D}}^{\epsilon_f(n_f)} \phi_{n_f}(d)) = \bigwedge_{d \in \mathcal{D}} f( \phi_1(d)\ldots,\phi_{n_f}(d)).$
\end{center}
The `$\leq$' direction immediately follows from the $\epsilon_f$-monotonicity of $f$. For the converse inequality, by denseness, it is enough to show that if $o\in \obbas$ and $f(\bigwedge_{d \in \mathcal{D}}^{\epsilon_f(1)} \phi_1(d),\ldots, \bigwedge_{d \in \mathcal{D}}^{\epsilon_f(n_f)} \phi_{n_f}(d)) \leq o$, then $ \bigwedge_{d \in \mathcal{D}} f( \phi_1(d)\ldots,\phi_{n_f}(d)) \leq o$. By lemmas \ref{lemma:uncongenial for the whites}(2) and \ref{Syn:Opn:Clsd:Appld:ClsdUp:Lemma}(1), we have:

\begin{center}
$f(\bigwedge_{d \in \mathcal{D}}^{\epsilon_f(1)} \phi_1(d),\ldots, \bigwedge_{d \in \mathcal{D}}^{\epsilon_f(n_f)} \phi_{n_f}(d)) = \bigwedge\{f( \phi_1(d_1)\ldots,\phi_{n_f}(d_{n_f}))\mid d_h\in \mathcal{D} \mbox{ for every } 1\leq h\leq n_f\}$.
\end{center}
By compactness (which can be applied by lemmas \ref{cor: open upset to open upset for white box}(2) and \ref{Syn:Opn:Clsd:Appld:ClsdUp:Lemma}(1)),
\begin{center}
$\bigwedge\{f( \phi_1(d^{(i)}_1)\ldots,\phi_{n_f}(d^{(i)}_{n_f}))\mid 1\leq i\leq n\}\leq o$.
\end{center}
Let $\mathcal{D}': = \{d^{(i)}_h\mid 1\leq i\leq n\mbox{ and } 1\leq h\leq n_f\}$. Since $\mathcal{D}$ is down-directed, $d^\ast\leq \bigwedge \mathcal{D}'$ for some $d^\ast\in \mathcal{D}$. Then, since $f$ is $\epsilon_f$-monotone and each $\phi_h(p)$ is $\epsilon_f(h)$-positive in $p$, the following chain of inequalities holds
\begin{center}
\begin{tabular}{r c l}
$\bigwedge_{d \in \mathcal{D}} f( \phi_1(d)\ldots,\phi_{n_f}(d)) $& $\leq$ &
$f(\phi_1(d^\ast),\ldots,\phi_{n_f}(d^\ast))$\\
& $\leq$ & $ f(\phi_1(\bigwedge_{1\leq i\leq n}d^{(i)}_1),\ldots, \phi_{n_f}(\bigwedge_{1\leq i\leq n}d^{(i)}_{n_f}))$\\
& $\leq$ & $ \bigwedge\{f( \phi_1(d^{(i)}_1)\ldots,\phi_{n_f}(d^{(i)}_{n_f}))\mid 1\leq i\leq n\}$\\
& $\leq$ & $o.$
\end{tabular}
\end{center}

 If $\phi(p) = g^\flat_i(\overline{\phi(p)})$, and $\epsilon_g(i) = 1$ then
\begin{center}
$\phi(\bigwedge\mathcal{D}) = g^\flat_i(\phi_1(\bigwedge\mathcal{D}),\ldots,\phi_{n_g}(\bigwedge\mathcal{D}))
= g^\flat_i(\bigwedge_{d \in \mathcal{D}}^{\epsilon_{g^\flat_i}(1)} \phi_1(d),\ldots, \bigwedge_{d \in \mathcal{D}}^{\epsilon_{g^\flat_i}(n_g)} \phi_{n_g}(d))$.
\end{center}
%
%
%
The second equality above holds by the inductive hypothesis. To finish the proof, we need to show that
\begin{center}
$g^\flat_i(\bigwedge_{d \in \mathcal{D}}^{\epsilon_{g^\flat_i}(1)} \phi_1(d),\ldots, \bigwedge_{d \in \mathcal{D}}^{\epsilon_{g^\flat_i}(n_g)} \phi_{n_g}(d)) = \bigwedge_{d \in \mathcal{D}} g^\flat_i( \phi_1(d)\ldots,\phi_{n_g}(d)).$
\end{center}
The `$\leq$' direction immediately follows from the $\epsilon_{g^\flat_i}$-monotonicity of $g^\flat_i$. For the converse inequality, by denseness, it is enough to show that if $o\in \obbas$ and $g^\flat_i(\bigwedge_{d \in \mathcal{D}}^{\epsilon_{g^\flat_i}(1)} \phi_1(d),\ldots, \bigwedge_{d \in \mathcal{D}}^{\epsilon_{g^\flat_i}(n_g)} \phi_{n_g}(d)) \leq o$, then $ \bigwedge_{d \in \mathcal{D}} g^\flat_i( \phi_1(d)\ldots,\phi_{n_g}(d)) \leq o$. By lemmas \ref{lemma: uncongenial for the blacks}(1) and \ref{Syn:Opn:Clsd:Appld:ClsdUp:Lemma}(1), we have:

\begin{center}
$g^\flat_i(\bigwedge_{d \in \mathcal{D}}^{\epsilon_{g^\flat_i}(1)} \phi_1(d),\ldots, \bigwedge_{d \in \mathcal{D}}^{\epsilon_{g^\flat_i}(n_g)} \phi_{n_g}(d)) = \bigwedge\{f( \phi_1(d_1)\ldots,\phi_{n_g}(d_{n_g}))\mid d_h\in \mathcal{D} \mbox{ for every } 1\leq h\leq n_g\}$.
\end{center}
By compactness (which can be applied by lemmas \ref{Blk:Diam:c:Clsd:Lemma}(1) and \ref{Syn:Opn:Clsd:Appld:ClsdUp:Lemma}(1)),
\begin{center}
$\bigwedge\{g^\flat_i( \phi_1(d^{(i)}_1)\ldots,\phi_{n_g}(d^{(i)}_{n_g}))\mid 1\leq i\leq n\}\leq o$.
\end{center}
Let $\mathcal{D}': = \{d^{(i)}_h\mid 1\leq i\leq n\mbox{ and } 1\leq h\leq n_g\}$. Since $\mathcal{D}$ is down-directed, $d^\ast\leq \bigwedge \mathcal{D}'$ for some $d^\ast\in \mathcal{D}$. Then, since $g^\flat_i$ is $\epsilon_{g^\flat_i}$-monotone and each $\phi_h(p)$ is $\epsilon_{g^\flat_i}(h)$-positive in $p$, the following chain of inequalities holds
\begin{center}
\begin{tabular}{r c l}
$\bigwedge_{d \in \mathcal{D}} g^\flat_i( \phi_1(d)\ldots,\phi_{n_g}(d)) $& $\leq$ &
$g^\flat_i(\phi_1(d^\ast),\ldots,\phi_{n_g}(d^\ast))$\\
& $\leq$ & $ g^\flat_i(\phi_1(\bigwedge_{1\leq i\leq n}d^{(i)}_1),\ldots, \phi_{n_g}(\bigwedge_{1\leq i\leq n}d^{(i)}_{n_g}))$\\
& $\leq$ & $ \bigwedge\{g^\flat_i( \phi_1(d^{(i)}_1)\ldots,\phi_{n_g}(d^{(i)}_{n_g}))\mid 1\leq i\leq n\}$\\
& $\leq$ & $o.$
\end{tabular}
\end{center}

The remaining cases are similar, and left to the reader.

Thus the proof of item (1) is concluded. Item (2) can be proved similarly by simultaneous induction on $\phi$ negative in $p$ and $\psi$ positive in $p$.
\end{proof}

\paragraph{Proof of the Righthanded Ackermann lemma for admissible assignments (Lemma \ref{Ackermann:Dscrptv:Right:Lemma})}
To keep the notation uncluttered, we will simply write $\beta_{i}$ and $\gamma_{i}$ for ${\beta_i}^V_p$ and ${\gamma_i}^V_p$, respectively. The implication from bottom-to-top follows by the monotonicity of the $\beta_i$ and the antitonicity of the $\gamma_i$ in $p$. Indeed, if $\alpha(V) \leq u$, then, for each $1 \leq i \leq n$, ${\beta_i}(\alpha(V)) \leq {\beta_i}(u) \leq {\gamma_i}(u) \leq {\gamma_i}(\alpha(V))$.

For the sake of the converse implication assume that ${\beta_i}(\alpha(V)) \leq {\gamma_i}(\alpha(V))$ for all $1 \leq i \leq n$. By lemma \ref{Syn:Opn:Clsd:Appld:ClsdUp:Lemma}, $\alpha(V)\in \kbbas$. Hence $\alpha(V) = \bigwedge \{a \in \bba \mid \alpha(V) \leq a \}$, making it the meet of a down-directed subset of $\kbbas$.  Thus, for any $1 \leq i \leq n$, we have
\[
\beta_i(\bigwedge \{a \in \bba \mid \alpha(V) \leq a \}) \leq \gamma_i(\bigwedge \{a \in \bba \mid \alpha(V) \leq a \}).
\]
Since $\gamma_i$ is syntactically open and negative in $p$, and $\beta_i$ is syntactically closed and positive in $p$, we may apply lemma \ref{Esakia:Syn:Clsd:Opn:Lemma} and equivalently obtain
\[
\bigwedge \{\beta_i(a)  \mid a \in \bba, \: \alpha(V) \leq a \} \leq \bigvee \{\gamma_i(a) \mid a \in \bba, \: \alpha(V) \leq a \}.
\]
By lemma \ref{Syn:Opn:Clsd:Appld:ClsdUp:Lemma}, $ \beta_i(a)\in \kbbas$ and  $\gamma_i(a)\in \obbas$ for each $a \in \bba$. Hence 
by compactness 
\[
\beta_i(b_1) \wedge \cdots \beta_i(b_k) \leq \gamma_i(a_1) \vee \cdots \vee \gamma_i(a_m).
\]
for some $a_1, \ldots, a_m, b_1, \ldots b_k \in \bba$ with $\alpha(V) \leq a_j$, $1 \leq j \leq m$, and $\alpha(V) \leq b_h$, $1 \leq h \leq k$.
Let $a_i = b_1 \wedge \cdots \wedge b_{k} \wedge a_1 \wedge \cdots \wedge a_m$. Then $\alpha(V) \leq a_i \in \bba$. By the monotonicity of $\beta_i$ and the antitonicity of $\gamma_i$ it follows that
\[
\beta_i(a_i) \leq \gamma_i(a_i).
\]
Now, letting $u = a_1 \wedge \cdots \wedge a_n$, we have $\alpha(V) \leq u \in \bba$, and by the  monotonicity of the $\beta_i$ and the antitonicity of the $\gamma_i$ we get that
\[
\beta_i(u) \leq \gamma_i(u) \textrm{ for all } 1 \leq i \leq n.
\]

\paragraph{Proof of the Lefthanded Ackermann lemma for admissible assignments (Lemma \ref{Ackermann:Dscrptv:Left:Lemma})}
As in the previous lemma we will write $\beta_{i}$ and $\gamma_{i}$ for ${\beta_i}^V_p$ and ${\gamma_i}^V_p$, respectively. The implication from bottom to top follows by the antitonicity  of the $\beta_i$ and the monotonicity of the $\gamma_i$.

For the sake of the converse implication assume that ${\beta_i}^V_p(\alpha(V)) \leq {\gamma_i}^V_p(\alpha(V))$ for all $1 \leq i \leq n$. But $\alpha$ is syntactically open and (trivially) negative in $p$, hence by lemma \ref{Syn:Opn:Clsd:Appld:ClsdUp:Lemma}(2), $\alpha(V)\in \obbas$, i.e.\ $\alpha(V) = \bigvee\{a \in \bba \mid a \leq  \alpha(V)\}$. Thus, for any $1 \leq i \leq n$, it is the case that
\[
{\beta_i}(\bigvee\{a \in \bba \mid a \leq  \alpha(V)\}) \leq {\gamma_i}(\bigvee\{a \in \bba \mid a \leq  \alpha(V)\}).
\]
Hence by lemma \ref{Esakia:Syn:Clsd:Opn:Lemma} (3) and (4)
\[
\bigwedge \{ {\beta_i}(a) \mid a \in \bba, a \leq  \alpha(V) \}  \leq \bigvee \{ \gamma_i(a) \mid a \in \bba,  a \leq  \alpha(V) \}.
\]
The proof now proceeds like that of lemma \ref{Ackermann:Dscrptv:Right:Lemma}.

\end{document}